\theoremstyle{plain}
\newtheorem{theorem}{Theorem}[section]
\newtheorem{corollary}[theorem]{Corollary}
\newtheorem{lemma}[theorem]{Lemma}
\newtheorem{proposition}[theorem]{Proposition}
\newtheorem{conjecture}[theorem]{Conjecture}
\theoremstyle{definition}
\newtheorem{definition}[theorem]{Definition}
\newtheorem{examplewr}[theorem]{Example}
\theoremstyle{remark}
\newtheorem{obswr}[theorem]{Observation}
\newtheorem{remarkwr}[theorem]{Remark}
\newenvironment{remark}{\begin{remarkwr}\begin{upshape}}{\end{upshape}\end{remarkwr}}
\newenvironment{example}{\begin{examplewr}\begin{upshape}}{\end{upshape}\end{examplewr}}
\DeclareMathOperator{\Span}{span}
\newcommand{\Ker}{{\mbox{Ker}}}
\newcommand{\res}{\mathrm{res}}
\DeclareMathOperator{\Hom}{Hom} 
\DeclareMathOperator{\End}{End}
\DeclareMathOperator{\Gal}{Gal}
\DeclareMathOperator{\rk}{rk}
\newcommand{\absgr}{\mathcal{G}}
\newcommand{\absel}{g}
\newcommand{\latt}{L}
\newcommand{\iso}{\mathrm{iso}}
\newcommand{\geom}{\mathrm{geom}}
\newcommand{\cgroup}{\mathscr{G}}
\newcommand{\legendre}[2]{\genfrac{(}{)}{}{}{#1}{#2}}
\newcommand{\sign}{\mathrm{sign}}
\newcommand{\rec}{\mathrm{rec}}
\newcommand{\Ev}{\mathscr{E}}
\newcommand{\lra}{\longrightarrow}
\newcommand{\Div}{\mathrm{Div^{\dagger}_{rq}}}
\newcommand{\rM}{\mathcal{M}_{\mathrm{rq}}}
\newcommand{\divmap}{\mathrm{div}}
\newcommand{\adm}{\mathrm{adm}}
\newcommand{\ord}{{\mathrm{ord}}}
\newcommand{\Gammao}{{\Gamma_{\!\circ}}}
\newcommand{\sD}{{\mathscr D}}
\newcommand{\cA}{\mathcal A}
\newcommand{\cH}{\mathcal H}
\newcommand{\cZ}{\mathcal Z}
\newcommand{\cO}{\mathcal O}
\newcommand{\Z}{\mathbb{Z}}
\newcommand{\F}{\mathbb{F}}
\newcommand{\Q}{\mathbb{Q}}
\newcommand{\R}{\mathbb{R}}
\newcommand{\C}{\mathbb{C}}
\newcommand{\A}{\mathbb{A}}
\newcommand{\I}{\mathbb{I}}
\newcommand{\G}{\mathbb{G}}
\newcommand{\D}{\mathbb D}
\newcommand{\GL}{\mathrm{GL}}
\newcommand{\Mat}{\mathrm{M}}
\newcommand{\SL}{{\mathrm{SL}}}
\newcommand{\Mp}{{\mathrm{Mp}}}
\newcommand{\PGL}{{\mathrm{PGL}}}
\newcommand{\PSL}{{\mathrm{PSL}}}
\newcommand{\SO}{{\mathrm{SO}}}
\newcommand{\Ch}{\mathrm{Ch}}
\newcommand{\sbullet}{{\mbox{\tiny\textbullet}}}
\newcommand{\mat}[4]{\left(\begin{array}{cc}#1&#2\\#3&#4\end{array}\right)}
\newcommand{\smallmat}[4]{\bigl(\begin{smallmatrix}#1&#2\\#3&#4\end{smallmatrix}\bigr)}
\newcommand{\Setminus}{\!-\!}
\newcommand{\Smallsetminus}{\!-\!}
\begin{document}

\title[Rigid meromorphic cocycles for orthogonal groups]
{Rigid meromorphic cocycles \\ for orthogonal groups}

\author{Henri Darmon, Lennart Gehrmann, and Michael Lipnowski}

\address{H.~D.: McGill University, Montreal, Canada}
\email{darmon@math.mcgill.ca}
\address{L.~G.: Universitat de Barcelona, Barcelona, Spain}
\email{gehrmann.math@gmail.com}
\address{M.~L.: Ohio State University, Columbus, USA}
\email{lipnowski.1@osu.edu}
\dedicatory{}

\begin{abstract}
  {\em Rigid meromorphic cocycles} are defined 
in the setting of orthogonal groups of arbitrary real signature 
and constructed in some instances
via a $p$-adic analogue of Borcherds' singular theta lift.
The {\em values} of rigid meromorphic cocycles
at special points of an associated $p$-adic symmetric space are
then conjectured to belong to class fields of suitable global \emph{reflex fields},
suggesting an eventual framework for explicit class field theory
beyond the setting of CM fields explored in the treatise of Shimura and Taniyama. 
\end{abstract}

\maketitle

\tableofcontents

\section*{Introduction}
Let $(V,q)$ be a non-degenerate quadratic 
 space over $\Q$ of dimension $n\geq 3$ with associated bilinear form $\langle\cdot,\cdot\rangle$, satisfying $\langle v,v\rangle = 2 q(v)$, and  
let   $G=\SO_{V}$  denote the special orthogonal 
  group of $V$, viewed as a reductive algebraic group over $\Q$.
The real Lie group $G(\R)$  is determined up to isomorphism 
by the signature $(r,s)$ of the real quadratic space $V_\R:= V\otimes \R$.  
 In the special case of signature $(r,2)$, arithmetic quotients attached to $G$ give  rise to a collection of Shimura varieties  endowed with a systematic supply of algebraic cycles of all possible codimensions which have been the object of extensive study in recent decades. These cycles conjecturally arise, notably, as Fourier coefficients of modular generating series with coefficients in the arithmetic Chow groups of orthogonal Shimura varieties, following a general program initiated by Kudla \cite{kudla-msri}.
 
 \medskip
 The present work proposes a conjectural framework for extending some aspects of the arithmetic theory to the setting of arbitrary real signature, in which case the Archimedean symmetric spaces associated with $G$ is not endowed with any complex structure unless $r = 2$ or $s = 2.$ This framework rests on the notion of {\em rigid meromorphic cocycles} for $p$-arithmetic subgroups of $G$, whose study is initiated in this paper.

\bigskip Fix a prime $p$ for which the $p$-adic quadratic space $V_{\Q_p}:= V\otimes \Q_p$ admits a \textit{self-dual} $\Z_p$-lattice $\Lambda$, i.e., the bilinear pairing $\langle\cdot,\cdot\rangle$ identifies $\Lambda$ with its $\Z_p$-dual, or,  equivalently, 
the induced $\F_p:=\Z/p\Z$-valued pairing on $\Lambda/p\Lambda$ is non-degenerate.
Furthermore, fix a $\Z[1/p]$-lattice  $\latt\subseteq V$ on which the quadratic form $q$ is $\Z[1/p]$-valued with associated
dual lattice
\[
\latt^{\#}=\{v\in V \ \vert \    \langle v,w\rangle\in \Z[1/p] \  \forall w\in \latt\} \supseteq \latt
\]
and \emph{discriminant module}
\[
\D_{\latt} := \latt^\#/ \latt.
\]

 \bigskip \noindent
For any field $K$ of  characteristic different from $2$,
 denote by $G(K)^{+}$
the kernel of the spinor norm
\begin{equation}
\label{eqn:spin-gen}
\mathrm{sn}_K\colon G(K)  \rightarrow  K^\times/ (K^\times)^2
\end{equation}
and let 
\[ \Gamma \subseteq \SO(\latt) \]
be a $p$-arithmetic congruence subgroup
that lies in the kernel of the real spinor norm.
It is a discrete subgroup of $G(\R)^{+}\times G(\Q_p)$.

\bigskip
Chapter  \ref{sec:symmetric-space}
attaches to $V$ certain Archimedean and  $p$-adic symmetric spaces,
denoted $X_\infty$ and  $X_p$ respectively.
The former is a real analytic Riemannian manifold of dimension $rs$
 equipped with an action of $G(\R)$, and the 
latter  has the structure of a rigid analytic space of dimension $n-2$
 on which $G(\Q_p)$ acts naturally.  The $p$-arithmetic group $\Gamma$   acts on both $X_\infty$ and $X_p$, and acts discretely on the product $X_\infty \times X_p$.

\bigskip
Chapter \ref{sec:divisors} describes special cycles on $X_\infty,$ special divisors on 
$X_p$, and the attendant structures that arise from them.
More precisely,
the symmetric space  $X_\infty$ is equipped with a systematic collection of special cycles having real codimension $s$, which were studied by Kudla and Millson (see \cite{KM}). 
These cycles admit $p$-adic avatars having codimension one in $X_p$ 
 and are referred to as {\em rational quadratic divisors}.
 Combining the real and $p$-adic objects leads to the study of so-called
 {\em Kudla--Millson divisors}, 
namely, special classes in the $s$-th cohomology of $\Gamma$
with values in a module $\Div(X_p)$ of rational quadratic divisors
  satisfying a suitable 
 ``local finiteness" condition which is spelled out in Chapter
 \ref{sec:divisors}.
Under the assumption that $\Gamma$ acts trivially on the discriminant module $\D_{\latt}$ there is---in analogy with the theory of Heegner divisors on orthogonal Shimura varieties---a Kudla--Millson divisor
\begin{equation}
\label{eqn:hkmdd} 
\sD_{m,\beta} \in H^s(\Gamma,\Div (X_p))\  \mbox{for each}\  m\in\Q_{>0}\ \mbox{and}\  
 \beta\in  {\mathbb D}_{\latt},
\end{equation}
admitting a representative 
whose values are supported on the rational quadratic 
divisors attached to vectors
$v\in \beta$ satisfying $q(v)=m$.
 
 \bigskip
 Chapter \ref{sec:rmc} 
introduces  the {\em rigid meromorphic cocycles} themselves. To this end, let
   $\rM^\times$ be the multiplicative group of 
rigid meromorphic functions on $X_p$ with divisor in $\Div(X_p)$,
 equipped with the  natural  $\Gamma$-module structure via left translation on the arguments:
\[
\gamma f(x) := f(\gamma^{-1} x), \qquad \mbox{ for } \gamma \in \Gamma, \ \ f \in \rM^\times, \ \ x \in X_p.
\]
A class in   $H^s(\Gamma, \rM^\times)$ is called a 
{\em rigid meromorphic cocycle}\footnote{While ``rigid meromorphic cohomology class" would have been a more accurate if slightly less euphonious name, he first author's original sin of misleading terminology \cite{DV1} has been perpetrated again for the sake of consistency.} for $\Gamma$ if its divisor is a Kudla--Millson divisor.
Theorem \ref{maindefinite2} and Theorem \ref{mainhyperbolic2} describe the construction of a $p$-adic variant of Borcherds' singular theta lift in certain special signatures - namely, in signature $(n,0)$ for $n$ arbitrary, in signature $(3,1),$ and in signature $(4,1)$ - having rigid meromorphic cocycles whose divisors are prescribed Kudla-Millson divisors.  This implies that rigid meromorphic cocycles exist in abundance, at least in the latter special signatures.

\bigskip
Chapter \ref{sec:special-points} defines a notion of \emph{special points} on the symmetric space $X_p$ (relative to the action of $G(\Q)$).
These special points are meant to generalise CM points on orthogonal Shimura varieties, but the tori that arise from their stabilisers are not compact at infinity unless $r =0$ or $s = 0.$ 
If $r\geq s$, one can make sense of the {\em value} of a 
 rigid meromorphic cocycle $J$ at a special point $x$, leading to an analytically defined, and a priori transcendental, numerical
quantity  $J[x]\in\C_p$.

\bigskip
The main  conjecture of this work, formulated in
Conjecture \ref{conj:main} of 
 Section \ref{sec:conjecture}, asserts that the quantities $J[x]$ are in fact
 {\em algebraic}.
 More precisely, $J[x]$ is predicted to  lie in a  class field  of an appropriate \emph{reflex field} attached to the special point $x$.  This conjecture  suggests a program to  extend  the scope of the   Shimura--Taniyama--Weil theory of complex multiplication to the setting of arbitrary, not necessarily CM, number fields.
More precisely, it suggests that the algebraicity properties of  CM points on orthogonal Shimura varieties attached to quadratic spaces of signature $(n,2)$ can be meaningfully extended to special points on $p$-adic symmetric spaces for orthogonal groups attached to quadratic spaces of {\em arbitrary} real signature.

\bigskip
Chapter \ref{sec:examples}
concludes with a discussion of rigid meromorphic cocycles in a few of the simplest concrete settings. 
When $V$ is positive definite, i.e., when $s=0$,
the group $\Gamma$ acts discretely on $X_p$, and,
when
$r=3$ or $4$, the quotient 
$\Gamma\backslash X_p$
can be identified with the $\C_p$-points of a Shimura curve or a quaternionic Hilbert--Blumenthal surface respectively. Rigid meromorphic cocycles, which belong to  $H^0(\Gamma, \rM^\times)$, define rational functions on these varieties, and special points $\Gamma\backslash X_p$  correspond to CM points.  
These considerations lead to a proof of Conjecture \ref{conj:main} when $s=0$ and $r\leq 5$. (See the upcoming work \cite{GeBa} for details.)  For this reason, Chapter \ref{sec:examples} focusses on the more subtle indefinite setting.

Section \ref{sec:example-21} discusses the first case  going  truly beyond the setting of complex multiplication,
where $V$ is a quadratic space of signature $(2,1)$ and  special points correspond to real quadratic elements of the Drinfeld $p$-adic upper half-plane. The study of this setting, which serves as the basic prototype for the general conjecture of this paper,
   was initiated in 
  \cite{DV1} and a  great deal of evidence, both experimental and theoretical, has been  amassed in its support  (cf.~for instance~\cite{dpv2}, \cite{Ge-quaternionic}, \cite{GMX}, \cite{DV2}).

Section \ref{sec:example-31} ends with a discussion of the first non-trivial setting  not covered by these
prior works, that of a quadratic space of signature $(3,1)$  where the associated $p$-arithmetic group is a congruence subgroup of the Bianchi group 
$\Gamma := \SL_2(\cO_K[1/p])$, with $K$ an imaginary quadratic field.
The resulting ``Bianchi cocycles" play the role of meromorphic Hilbert modular forms arising as Borcherds lifts of weakly holomorphic modular functions.
Their ``values" at special points of  $\Gamma\backslash (\cH_p \times \cH_p)$ belong  conjecturally to abelian extensions of quadratic extensions of real quadratic fields with a single complex place and can be envisaged as the counterpart of  CM values of Hilbert modular functions.

\bigskip
\noindent\textbf{Acknowledgements.}
The work on this article began while Lennart Gehrmann was visiting McGill University, supported by Deutsche Forschungsgemeinschaft, and he thanks both of these institutions. The other two authors were supported by NSERC Discovery grants.
The authors are grateful to the Centre de Recherches Math\'ematiques in  Montreal, which continued to host in-person activities   during  the 2020 thematic semester on number theory, at the height of the covid-19 pandemic.
Notable among these was a   ``$p$-adic Kudla seminar''  whose participants  provided a congenial 
and stimulating atmosphere for the  elaboration of this article. 
The authors would like to thank Tonghai Yang and Sören Sprehe for useful comments on an earlier draft of this manuscript.
More recently, Lennart Gehrmann has received funding from the Maria Zambrano Grant for the attraction of international talent in Spain.

%%%%%%%%%%%%%%%%%%%%%%%%%%%%%%%%

\section{Symmetric spaces}
\label{sec:symmetric-space}

\subsection{Quadratic lattices}
Some notations and basic facts regarding lattices in quadratic spaces are collected in this section.

Fix a principal ideal domain $R$ with $\mathrm{char}(R)\neq 2$ and let $K$ be its field of fractions.
Let $(W,q_W)$ be a finite-dimensional non-degenerate quadratic space over $K$ with associated bilinear form $b_W(\cdot,\cdot)$, that is, $b_W(w,w)=2q(w)$ for all $w\in W$.
An $R$-lattice in $W$ is a finitely generated $R$-submodule $\Lambda\subseteq W$ of maximal rank.
The dual of an $R$-lattice $\Lambda\subseteq W$ is
\[
\Lambda^{\#}=\{w\in W\ \vert\ b_W(w,\Lambda)\subseteq R\}.
\]
The dual of an $R$-lattice is also an $R$-lattice. Moreover, the equality
\[
(\Lambda^{\#})^{\#}=\Lambda
\]
holds.
An $R$-lattice $\Lambda$ is contained in its dual if and only if $b_W(\Lambda,\Lambda)\subseteq R$.
In that case the \emph{discriminant module of $\Lambda$} is defined to be the quotient
\[
\D_\Lambda=\Lambda^{\#}/\Lambda.
\]
It is a finitely generated $R$-torsion module.
A lattice $\Lambda\subseteq W$ is called self-dual if $\Lambda=\Lambda^{\#}$.
Let $\Lambda\subseteq W$ be a self-dual lattice and let $\mathfrak{q}\subseteq R$ be a maximal ideal with $\mathrm{char}(R/\mathfrak{q})\neq 2$.
Then the reduction of the quadratic form $q_W$ defines a non-degnerate quadratic form on the quotient $\Lambda/\mathfrak{q}\Lambda$.

The following basic lemma describes the behaviour of self-dual lattices under orthogonal projection.
\begin{lemma}\label{lem-orthdecomp}
Let $(W,q_W)$ be a finite-dimensional non-degenerate quadratic space over $K$ with an orthogonal direct sum decomposition: $W=W_1 \bigoplus W_2$ with $W_1 \perp W_2.$
Denote the orthogonal projections by $\pi_i\colon W \rightarrow W_i$, $i=1,2$.
Let $\Lambda\subseteq W$ be self-dual lattice and put $\Lambda_i=\Lambda \cap W_i$, $i=1,2$.
Then the following holds:
\begin{enumerate}[(a)]
\item $\Lambda_i=\pi(\Lambda)^{\#}$ for $i=1,2$
\item $\pi_1(\Lambda)/\Lambda_1\cong \pi_2(\Lambda)/\Lambda_2$ as $R$-modules
\end{enumerate}
\end{lemma}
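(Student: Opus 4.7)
For part (a), my plan is to chase the definitions using the self-duality of $\Lambda$ together with the orthogonality $W_1\perp W_2$. Fix $i$ and let $j\neq i$. For any $w\in W_i$, orthogonality implies $b_W(w,\lambda)=b_W(w,\pi_i(\lambda))$ for every $\lambda\in\Lambda$, since $b_W(w,\pi_j(\lambda))=0$. Hence $w$ lies in $\pi_i(\Lambda)^{\#}$ (dual taken inside $W_i$, whose restricted form is non-degenerate because $W$ is) if and only if $b_W(w,\Lambda)\subseteq R$, i.e., if and only if $w\in\Lambda^{\#}=\Lambda$. Intersecting with $W_i$ gives $\Lambda_i=\pi_i(\Lambda)^{\#}$.

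For part (b), the strategy is to exhibit a common middle quotient. Because $\pi_i\colon \Lambda\to\pi_i(\Lambda)$ is surjective with kernel $\Lambda\cap W_j=\Lambda_j$, and because $\pi_i$ acts as the identity on $\Lambda_i\subseteq W_i$, the image of the sublattice $\Lambda_1\oplus\Lambda_2\subseteq\Lambda$ under $\pi_i$ is exactly $\Lambda_i$. The induced map on quotients therefore yields an isomorphism
\[
\Lambda/(\Lambda_1\oplus\Lambda_2)\ \xrightarrow{\ \sim\ }\ \pi_i(\Lambda)/\Lambda_i
\]
for each $i\in\{1,2\}$. Composing the $i=1$ isomorphism with the inverse of the $i=2$ one gives the desired isomorphism $\pi_1(\Lambda)/\Lambda_1\cong \pi_2(\Lambda)/\Lambda_2$.

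There is no serious obstacle here; everything reduces to unpacking the definitions of self-duality, orthogonal projection, and quotient modules. The only subtle point worth spelling out is the identification of the form on $W_i$ used to define $\pi_i(\Lambda)^{\#}$ in part (a), which is the restriction of $b_W$ and is non-degenerate thanks to the orthogonal decomposition. Once that is noted, both statements follow from a short diagram chase.
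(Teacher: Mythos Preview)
Your proof is correct. Part (a) is exactly the paper's argument. For part (b), the paper writes down the isomorphism directly: given $w_1\in\pi_1(\Lambda)$, choose $w_1'\in\pi_2(\Lambda)$ with $w_1+w_1'\in\Lambda$ and send $w_1+\Lambda_1\mapsto w_1'+\Lambda_2$, then checks well-definedness and bijectivity by hand. You instead pass through the common middle $\Lambda/(\Lambda_1\oplus\Lambda_2)$ via the two projections. These are the same map in different clothing (your composite $\pi_2\circ\pi_1^{-1}$ on cosets is precisely the paper's assignment), and your packaging via the isomorphism theorems is a bit cleaner. The one step you leave implicit---that $\pi_i^{-1}(\Lambda_i)=\Lambda_1\oplus\Lambda_2$, needed for injectivity of the induced map---follows immediately from the two facts you record (kernel $\Lambda_j\subseteq\Lambda_1\oplus\Lambda_2$ and $\pi_i(\Lambda_1\oplus\Lambda_2)=\Lambda_i$), so there is no gap.
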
 
\begin{proof}
Fix $w\in W_1$.
Since $\Lambda$ is self-dual, it follows that $w\in \Lambda_1$ if and only if $b_W(w,\lambda)\subseteq R$ for all $\lambda\in\Lambda$.
Write $\lambda=\pi_1(\lambda)+\pi_2(\lambda)$.
Then, by definition of the orthogonal projection we have
\[
b_W(w,\lambda)=b_W(w,\pi_1(\lambda)).
\]
Thus, it follows that $w\in \Lambda_1$ if and only if $w\in \pi_1(\Lambda)^{\#}$.
This proves the first claim.

For the second claim let $w_1$ be an element of $\pi_1(\Lambda)$.
By definition, there exists $w_1'\in \pi_2(\Lambda)$ such that $w_1+w_1'\in \Lambda$.
Moreover, the coset $w_1'+\Lambda_2$ is independent of the choice of $w_1'$.
In case $w_1\in \Lambda_1$ one may take $w_1'=0$.
Hence, the map
\begin{align*}
\pi_1(\Lambda)/\Lambda_1 &\longrightarrow \pi_2(\Lambda)/\Lambda_2 \\
w_1+\Lambda_1 &\longmapsto w_1'+\Lambda_2
\end{align*}
is well-defined.
It is easy to check that this defines an isomorphism of $R$-modules.
\end{proof}

\subsection{Archimedean symmetric spaces} \label{Archimedeansymmetricspace}
\subsubsection{Definition}
The real symmetric space attached to $V_\R$ 
is the manifold, denoted $X_\infty$, parametrizing maximal negative-definite subspaces of 
$V_\R$.
Note that $X_\infty$ is also in natural bijection with 
the set of maximal positive definite subspaces of $V_\R$, under the map $Z\mapsto Z^\perp$.

The real Lie group $G(\R)$ acts transitively on  $X_\infty$.
The stabiliser of a point  $Z\in X_\infty$ in $\mathrm{O}_V(\R)$ 
is  identified with the maximal compact subgroup
\[
\mathrm{O}_{Z^\perp}(\R) \times \mathrm{O}_Z(\R) \simeq \mathrm{O}(r)\times \mathrm{O}(s) \subset \mathrm{O}_V(\R).
\]
Hence the inverse of the map $ \gamma \mapsto \gamma Z$ gives an identification
\begin{equation}
\label{eqn:Xinfty-G}
 X_\infty  = \mathrm{O}_V(\R)/ (\mathrm{O}_{Z^\perp}(\R) \times \mathrm{O}_Z(\R)).
 \end{equation}
Since the orthogonal group of a non-degenerate quadratic form of rank $n$ has dimension $n(n-1)/2$, it follows 
from 
\eqref{eqn:Xinfty-G}
 that the dimension of the real manifold
$X_\infty$  is  given by
\[
\dim X_\infty = rs.
\]
Since the inclusion $O_{Z^\perp}(\R)\times O_Z(\R) \hookrightarrow O_V(\R)$ induces a surjection on the groups of connected components, it follows that the space $X_\infty$ is connected.
As $X_\infty$ is an open subset of the Grassmannian of $s$-dimensional subspaces in $V_{\R}$,
the tangent space to $X_\infty$ at $Z$ is canonically identified with
\begin{align}\label{eqn:tangent-space}
\begin{split}
 T_Z(X_\infty) &= {\rm Lie}(\GL(V_\R))/{\rm Lie}({\mathrm{Stab}_{\GL(V_\R)}}(Z))  \\ 
&\cong \Hom_\R(Z,V_\R/Z) \\
&\cong   \Hom_\R(Z,Z^\perp).
\end{split}
\end{align}
The penultimate isomorphism is obtained by restricting an endomorphism of $V_{\R}$ to $Z$ and composing it with the natural surjection $V_\R\rightarrow V_\R/Z$.

Every arithmetic subgroup $\Gammao\subseteq G(\Q)$ acts discretely on $X_\infty$ and the quotient space $\Gammao\backslash X_\infty$ has finite volume, and is compact if and only if $V$ is anisotropic.

\subsubsection{Functoriality of $X_{\infty}$}
More generally, given a finite-dimensional real non-degenerate quadratic space $(W,q_W)$ one defines $X_\infty(W,q_W)$ to be the space of maximal negative-definite subspaces of $W$.
Sending a maximal negative-definite subspace of $W$ subspace to its orthogonal complement yields an isomorphism between $X_\infty(W,q_W)$ and $X_\infty(W,-q_W)$.
Let $(W',q_W')$ be another real non-degenerate quadratic spaces such that the dimension of a maximal negative subspace of $(W,q_W)$ and $(W',q_W')$ agree. 
For any isometric embedding $f\colon (W,q_W) \hookrightarrow (W',q_W')$ the induced map
\begin{align*}
X_{\infty}(W,q_W) &\hookrightarrow X_{\infty}(W',q_W') \\
Z &\mapsto f(Z)
\end{align*} 
is an isometric embedding of Riemannian manifolds.
In particular, if $f$ is an isomorphism of quadratic spaces, then it induces an isometry $f\colon X_{\infty}(W,q_W) \simeq X_{\infty} (W',q_W')$ of symmetric spaces.\footnote{This explains why the orthogonal group $O(W,q_W)$ acts by isometries on the symmetric space $X_{\infty}(W,q_W).$} The latter observation implies that the symmetric space $X_{\infty}$ is isometric to $X_{\infty}(V_0,q_0)$ for any particular quadratic space $(V_0,q_0)$ over $\R$ having signature $(r,s)$.
In particular examples, it is sometimes convenient to work with a particular model quadratic space $(V_0,q_0)$.
In the below examples for small values of $s$, specializing to some particular models allows us to recover classical descriptions of certain well-known symmetric spaces.

\subsubsection{Examples of $X_{\infty}$ for small $s$} 
\text{}

\smallskip

\noindent
{\em Signature $(r,0)$}.
In case $V_\R$ is positive or negative definite, the group  $G(\R)$ is compact and the space $X_\infty$ consists of a single point.
 
 \medskip
\noindent
{\em Signature $(r,1)$}.
When $s=1$, the space $X_\infty$ is isometric to the set of points $(x_1:\cdots: x_r: x_{r+1})\in \mathbb{P}^r(\R)$  satisfying the inequality
\[ x_1^2 + \cdots + x_r^2 - x_{r+1}^2 <0,\]
and can be identified via stereographic projection with the open unit ball in $r$-dimensional euclidean space, endowed with its natural hyperbolic structure.

\medskip
\noindent
{\em Signature $(r,2)$}.
The case $s=2$ plays a special role in the theory. 
Firstly, it represents essentially the only setting where $X_\infty$ 
has the structure of a Hermitian symmetric domain, possessing a complex structure. Secondly,  the corresponding $X_\infty$ admits a convenient description which  motivates the definition of the $p$-adic symmetric space $X_p$ taken up in \S \ref{sec:def-Xp}.
 
Let $Q$ be the smooth quadric  over $\Q$ whose points over a field $K$ of characteristic zero are 
 identified with
the set of   isotropic lines in $V_K:=V\otimes_\Q K$:
\[
Q(K):=  \ \left\{ v\in V_K \Smallsetminus\{0\}\ \vert\ q(v) =0 \right\}/K^\times \  \subseteq  \ \mathbb{P}_V(K).
\] 
Letting $[v]$ denote the  class in $Q(K)$
of the non-zero isotropic vector $v\in V_K$,
we define
\begin{equation}
\label{eqn:Xinfty-r2}
	\widetilde{X}_\infty:= \{[v]\in Q(\C)\ \vert\ \langle v, \overline v \rangle < 0 \},
\end{equation}
	where $v\mapsto \overline v$ denotes the complex 
	conjugation on $V_\C$.
	The Hermitian symmetric space $\widetilde{X}_\infty$ has two connected components
	which are interchanged by  
	$[v]\mapsto [\overline{v}]$.
	The map sending the line $[v]\in Q(\C)$ spanned by $v = v_1 + i v_2$, with $v_1, v_2 \in V_\R$, to the negative definite two-dimensional subspace spanned by $v_1$ and $v_2$ gives a two-to-one map $\widetilde{X}_\infty \rightarrow X_\infty$ 
	and identifies each of the connected components with $X_\infty$.
	Because $s = 2,$ the orthogonal complement of $\{v_1,v_2\}$ in $V_{\R}$ is positive definite. In particular, it does not contain any isotropic vectors.
	It follows that
	\begin{equation}\label{eqn:Xinfty-r3}
	\widetilde{X}_\infty \subseteq \widetilde{X}_\infty':=\{[v]\in Q(\C)\ \vert\  \langle v, w \rangle \ne 0, \mbox{ for all } [w]\in Q(\R) \}.
	\end{equation}
This is in fact an equality except if $r=2$.
In signature $(2,2)$, the space $\widetilde{X}_\infty'$ has four connected components: two of them are given by $\widetilde{X}_\infty$, while the other two correspond to positive definite planes in $V_{\R}$.

\bigskip

We now specialise further to discuss indefinite quadratic spaces of dimensions $3$ and $4$, in which   $V_{\R}$ and $X_\infty$ admit useful concrete descriptions:

\medskip \noindent
{\em Signature $(1,2)$}.
When $V_\R$ is of signature $(1,2)$ (resp.~signature $(2,1)$),
it can be identified with the subspace of trace zero elements
$\mathrm{M}_2^0(\R)$  of the space of real $2\times 2$ matrices, with quadratic form given by $q=\det$ (resp.~$q=-\det$).
The special orthogonal group $G(\R)$ is identified with 
$\PGL_2(\R)$ and  $g\in G(\R)$ 
acts on $V_\R$ by conjugation:
\[
g \cdot v := gv g^{-1}.
\]
Isotropic vectors in $V_\C$ correspond to non-zero linear endomorphisms of $\C^2$ of trace 0 and determinant 0, that is, non-zero nilpotent endomorphisms.  Such an endomorphism of $\C^2$ is uniquely determined by its kernel, which equals its image.  The nilpotent endomorphism of $\C^2$ with kernel (and image) spanned by $\left(\begin{array}{c} \tau \\ 1 \end{array}\right)$ equals    
\[
 M_\tau := \left(\begin{array}{c} \tau \\ 1 \end{array}\right) ( 1,-\tau) = \left(\begin{array}{cc} \tau & -\tau^2 \\ 1 & -\tau\end{array}\right),
\qquad \tau \in \C \cup \{ \infty\},
\]
up to scaling.  The line spanned by $M_\tau$ belongs to
$\widetilde{X}_\infty$ if and only if this vector
is not defined over $\R$, i.e., $\tau$ belongs to 
the union of the complex upper and lower half planes.
The map $\tau \mapsto M_\tau$ therefore identifies 
the complex upper half space $\cH$ with a connected component of $\widetilde{X}_\infty$:
\begin{equation}
\label{eqn:Xinfty-21}
X_\infty \cong \cH := \{\tau\in \C\ \vert\ \mbox{Im}(\tau)>0 \}.
\end{equation}
The action of $G(\R)^{+}$ on $X_\infty$ corresponds to the usual action of $\PSL_2(\R)$ on $\cH$ by
M\"obius transformations via this identification.

\medskip\noindent
{\em Signature $(2,2)$}.
 When $V_{\R}$ is of signature $(2,2)$,
it can be identified with the space  $\mathrm{M}_2(\R)$ 
of $2\times 2$ real matrices endowied with the determinant quadratic form. 
The group $G(\R)^{+}$ is identified with 
$(\SL_2(\R) \times \SL_2(\R))/\{\pm 1\}$ and acts on $V_\R$ by left and right multiplication:
\begin{equation}
\label{eqn:leftrightmultact}
 (\gamma_1, \gamma_2).v := \gamma_1 v \gamma_2^{-1}.
\end{equation}
Isotropic vectors in $V_\C$ correspond to linear endomorphisms of $\C^2$ of rank one.  Such an endomorphism is uniquely determined, up to scaling, by its kernel and image.  For $\tau_1, \tau_2 \in \C,$ the endomorphism of $\C^2$ with image and kernel respective spanned by the vectors $\left(\begin{array}{c} \tau_1 \\ 1 \end{array} \right)$ and $\left( \begin{array}{c} \tau_2 \\ 1 \end{array} \right)$ equals
\[
M_{\tau_1,\tau_2} := \left(\begin{array}{c} \tau_1 \\ 1 \end{array}\right) \left( 1,  -\tau_2 \right) = \left(\begin{array}{cc} \tau_1 & -\tau_1\tau_2 \\ 1 & -\tau_2\end{array}\right),
\qquad \tau_1,\tau_2 \in \C.
\]
up to scaling.
Furthermore, the line spanned by $M_{\tau_1,\tau_2}$ belongs to
the space $\widetilde{X}_\infty'$ defined in \eqref{eqn:Xinfty-r3} if and only if
$\tau_1,\tau_2\notin \R$, i.e., if both $\tau_1$ and $\tau_2$ belong  to 
the union of the complex upper and lower half planes.
The map $(\tau_1,\tau_2) \mapsto [M_{\tau_1,\tau_2}]$ thus identifies 
$\cH\times \cH$ with one connected component of $\widetilde{X}'_\infty$ and, therefore, one may identify: 
\[
X_\infty \cong \cH \times \cH.
\]
Under this identification, the action of $G(\R)^+$ on $X_\infty$ corresponds to the natural action of $\PSL_2(\R)\times \PSL_2(\R)$ on $\cH\times \cH$ by M\"obius transformations on each coordinate.

\medskip\noindent
{\em Signature $(3,1)$}.
 When $V_{\R}$ is of signature $(3,1)$,
it can be identified with the space
\[
\{ M\in \mathrm{M}_2(\C) \ \vert\ M^\dagger = -\overline{M} \}, \qquad  q(M) = -{\det}(M),
\]
where $M^\dagger$ denotes the principal involution on $M_2(\C)$ satisfying $MM^\dagger = \det(M)$, and 
$\overline{M}$ is the complex conjugate of $M$. 
The action 
\[
 \gamma.M := \gamma \cdot M \cdot \overline{\gamma}^{-1}
\]
of $\PSL_2(\C)$ on $V_\R$ by
 twisted conjugation identifies this group with the connected component $G(\R)^{+}$ of the special orthogonal group of $V_\R$.
The isomorphism
 $$ X_\infty \cong \cH^{(3)} :=  \C\times \R^{>0},$$
of the symmetric space with hyperbolic three-space is given via the assignment
 $$\left(\begin{array}{cc} z & t \\ 1 & -\overline{z} \end{array}\right) \mapsto (z, -t-z\overline{z}).$$
The case of signature $(3,1)$ is noteworthy for  representing  the first  scenario in  ranks $3$ or $4$
 where $X_\infty$ does not admit a complex structure.
 It will be discussed at greater length in \S  
  \ref{sec:example-31}.

\subsection{$p$-adic symmetric spaces}
\label{sec:def-Xp}
\subsubsection{Definition}
Recall from the introduction that $p$ is assumed to be an odd  prime for
which $V_{\Q_p}$ admits a self-dual lattice $\Lambda$. 
When $n\geq 3$, 
the non-degenerate quadratic $\F_p$-space
$\Lambda/p\Lambda$ is  isotropic, i.e., contains a non-zero isotropic vector, and the same is thus true for
$V_{\Z_p}$ by Hensel's Lemma. It follows that
  $V_{\Q_p}$ is isotropic as well:  in fact, the {\em Witt index} of $V_{\Q_p}$ 
(the dimension of a maximal isotropic subspace) is essentially maximal: equal to $(n-1)/2$ if $n$ is odd, and    greater or equal to $(n-2)/2$ if $n$ is even.

 The description 
 of $X_\infty$ (or rather, of its double cover $\widetilde{X}_{\infty}$) 
  given in 
  \eqref{eqn:Xinfty-r3}
  in the special case of signature 
   $(r,2)$ 
  motivates the following definition of the 
 $p$-adic symmetric space 
 $X_p$, which shall be adopted {\em for all signatures}:
  \begin{equation}
\label{eqn:Xp-def}
	X_p := \{[v]\in Q(\C_p) \mbox{ for which  } \langle v, w \rangle \ne 0, \mbox{ for all } [w]\in Q(\Q_p) \}.
	\end{equation}
	The running  assumption  on $V_{\Q_p}$ implies that 
	the boundary $Q(\Q_p)$ of $X_p$ is non-empty.

\subsubsection{Rigid analytic structure}\label{sec-rigidstructure}
Like the real symmetric  space $X_\infty$ of \eqref{eqn:Xinfty-r2} in signature $(r,2)$, the space
$X_p$ admits a natural analytic structure: in this instance, 
  a
rigid analytic structure obtained by
 expressing $X_p$ 
 as an increasing union of {\em affinoid subsets}, as will now be described.
 
The choice of a self-dual $\Z_p$-lattice  $\Lambda\subseteq V_{\Q_p}$ extends the projective space $\mathbb{P}_{V_{\Q_p}}$
over $\Q_p$ to a smooth proper model  $\mathbb{P}_\Lambda$ over
${\rm Spec}(\Z_p)$. 
Since $\Lambda$ is self-dual, the zero locus of $q$ in $\mathbb{P}_\Lambda$ extends the quadric $Q_{\Q_p}$
to  a smooth  integral model over  ${\rm Spec}(\Z_p)$.
By the valuative criterion for properness,
\[
Q(\Q_p)=Q_{\Lambda}(\Z_p)\quad \mbox{ and } 
\quad Q(\C_p)=Q_{\Lambda}(\cO_{\C_p}).
\]
In more concrete terms, put $\Lambda_{\cO_{\C_p}}=\Lambda\otimes_{\Z_p} \cO_{\C_p}$
and let
$$\Lambda':=  \Lambda \Setminus p \Lambda, \qquad \Lambda_{\cO_{\C_p}}' := \Lambda_{\cO_{\C_p}} \Setminus {\mathfrak{m}}_{\cO_{\C_p}} \Lambda_{\cO_{\C_p}} $$ 
denote the sets of primitive vectors in $\Lambda$ and $\Lambda_{\cO_{\C_p}}$ respectively.
Each $\xi\in Q(\Q_p)$ 
(resp.~$Q(\C_p)$)  can be  represented by an
isotropic vector $v_\xi$  in $\Lambda'$ (resp.~in
$\Lambda_{\cO_{\C_p}}'$)  which is well-defined up to multiplication by 
$\Z_p^\times$ (resp.~$\cO_{\C_p}^\times$) and extends $\xi$ to a point of $Q_{\Lambda}$ over $\Z_p$ or $\cO_{\C_p}$ respectively.

For each integer $k\geq 0$ and every self-dual lattice $\Lambda\subseteq V_{\Q_p}$, the attached {\em basic affinoid subset} $X_{p,\Lambda}^{\leq k}\subseteq X_p$ is defined 
 to be
\[
X_{p,\Lambda}^{\leq k} = \left\{\xi \in Q(\C_p), \mbox{ with }  
 \ord_p(\langle v_\xi,w\rangle) \leq k  \mbox{ for all }    w\in (\Lambda')_0\right\},
\]
where the subscript $0$ denotes the set of isotropic vectors.
 \begin{lemma}\label{affinoidlemma}
 The space $X_p$ is the increasing union of the   
 $X_{p,\Lambda}^{\leq k}$.
  \end{lemma}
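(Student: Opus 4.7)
The plan is to verify the two inclusions separately.

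First, I would establish $X_{p,\Lambda}^{\leq k}\subseteq X_p$ for every $k\geq 0$. Every class $[w]\in Q(\Q_p)$ admits a primitive isotropic representative $w\in(\Lambda')_0$ (clear denominators in an arbitrary representative of $[w]$, then divide out the largest possible power of $p$). For $\xi\in X_{p,\Lambda}^{\leq k}$, the defining bound $\ord_p(\langle v_\xi,w\rangle)\leq k<\infty$ forces $\langle v_\xi,w\rangle\neq 0$, showing that $\xi$ satisfies the defining condition of $X_p$.

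The reverse direction is where the content lies, and I would argue by compactness. Fix $\xi\in X_p$. Since $\ord_p(\langle v_\xi,uw\rangle)=\ord_p(\langle v_\xi,w\rangle)$ for $u\in\Z_p^\times$, the assignment $w\mapsto\ord_p(\langle v_\xi,w\rangle)$ descends to a well-defined function $f_\xi\colon Q(\Q_p)\to\Z_{\geq 0}\cup\{\infty\}$ which, by the defining condition of $X_p$, is in fact finite-valued. The valuative criterion applied to the smooth proper $\Z_p$-model $Q_\Lambda$ gives $Q(\Q_p)=Q_\Lambda(\Z_p)$, which is compact in its natural $p$-adic topology (it is closed inside the compact space $\mathbb{P}_\Lambda(\Z_p)$). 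A continuous $\Z_{\geq 0}$-valued function on a compact space is bounded, and the bound $f_\xi\leq k$ translates exactly into $\xi\in X_{p,\Lambda}^{\leq k}$.

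The main technical point, which I expect to be the crux, is the continuity of $f_\xi$. If $w,w'\in(\Lambda')_0$ satisfy $w-w'\in p^N\Lambda$, then self-duality of $\Lambda$ (extended to $\Lambda_{\cO_{\C_p}}$) yields $\langle v_\xi,w-w'\rangle\in p^N\cO_{\C_p}$, whence $\ord_p(\langle v_\xi,w\rangle)=\ord_p(\langle v_\xi,w'\rangle)$ as soon as $N>\ord_p(\langle v_\xi,w'\rangle)$. Thus $f_\xi$ is locally constant on $Q(\Q_p)$. Combined with the compactness of $Q_\Lambda(\Z_p)$, this completes the argument.
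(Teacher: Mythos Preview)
Your proof is correct and follows essentially the same compactness strategy as the paper: the paper argues contrapositively, extracting a convergent subsequence from a sequence of primitive isotropic vectors $w_k$ with $\ord_p(\langle v_\xi,w_k\rangle)\geq k$ using compactness of $(\Lambda')_0$, while you phrase the same compactness directly as ``a locally constant function on the compact space $Q_\Lambda(\Z_p)$ is bounded.'' One minor imprecision: since $v_\xi\in\Lambda_{\cO_{\C_p}}'$, the valuation $\ord_p(\langle v_\xi,w\rangle)$ takes values in $\Q_{\geq 0}$ rather than $\Z_{\geq 0}$, but this does not affect your argument.
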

  \begin{proof}
  It suffices to prove the inclusion 
      $$ \left(\bigcup_{k=0}^{\infty}
   X_{p,\Lambda}^{\leq k}\right)^c \subseteq X_p^c,$$
   where $A^c$ denotes the  complement of a subset $A$
	inside the projective space $\mathbb{P}_V(\C_p).$
If $\xi$ belongs to $ \bigcap (X_{p,\Lambda}^{\leq k})^c$, 
  then  for each $k \geq 1$ there is a vector
  $w_k \in (\Lambda')_0$ satisfying
  $$ \langle v_\xi, w_k\rangle \equiv 0 \bmod{p^k \cO_{\C_p}}.$$
  By the compactness of $(\Lambda')_0$, the sequence $(w_k)_{k\geq 1}$ contains a convergent subsequence, whose limit is an element $w\in (\Lambda')_0$ satisfying
  $$ \langle v_\xi,w\rangle = 0.$$
  It follows that $\xi$ belongs to $X_p^c$, as was to be shown.
  \end{proof}
	
	\begin{remark}
	It is easily checked that the covering $X_{p,\Lambda}^{\leq 0}\subseteq X_{p,\Lambda}^{\leq 1} \subseteq X_{p,\Lambda}^{\leq 2}\subseteq\ldots$ fulfils the conditions of \cite[Definition 2.3]{Kiehl}.
	Hence, the $p$-adic symmetric space $X_p$ is a rigid analytic Stein space.
	\end{remark}

\medskip\noindent
\subsubsection{Low-dimensional examples of $X_p$}

Assume for this section that $V_{\Q_p}$ is \emph{split}, i.e., there is a basis such that the corresponding Gram matrix is of the form 
\[\begin{pmatrix} & & 1\\ &\iddots & \\ 1 & & \end{pmatrix}.\]
Let $Q^{\adm}$ be the connected rigid analytic space over $\Q_p$ for which 
\[
Q^{\adm}(\C_p)\ =\ \{\xi \in Q(\C_p)\ \vert\ \xi \cap D_{\C_p}=0 \mbox{ for all totally isotropic } D\subseteq V_{\Q_p} \}.
\]
By definition, the $p$-adic symmetric space $X_p$ is an admissible open subspace of $Q^{\adm}.$  In dimensions $n\leq 4,$  the two spaces agree and admit familiar descriptions, as will be explained below. On the other hand, for $n\geq 5$ the inclusion $X_p\subseteq Q^{\adm}$ is strict.

\begin{remark}
	The rigid analytic space $Q^{\adm}$ naturally appears in the theory of local Shimura varieties as a $p$-adic period domain.
	To any triple $(H,\mu,b)$ consisting of
	\begin{itemize}
	\item a connected reductive group $H$ over $\Q_p$,
	\item a miniscule cocharacter $\mu\colon \G_{m,\overline{\Q}_p}\to H_{\overline{\Q}_p}$ and
	\item an element $b\in H(\breve{\Q}_p)$, where $\breve{\Q}_p$ denotes the completion of the maximal unramified extension of $\Q_p$,
	\end{itemize}
	one can attach the weakly admissible period domain $\mathcal{F}^{\mathrm{wa}}(H,\mu,b)$ (\cite[\S 2]{CFS})
	and the admissible period domain $\mathcal{F}^{\mathrm{a}}(H,\mu,b)\subseteq \mathcal{F}^{\mathrm{wa}}(H,\mu,b)$ (see Definition 3.1 of \cite{CFS}).
	They are open rigid analytic subvarieties of the Flag variety $\mathcal{F}(H,\mu)$ attached to the pair $(H,\mu)$ defined via $p$-adic Hodge theoretic means.
	Moreover, they are stable under the action of the Frobenius stabilizer $J_b$ of $b$ in $H$ (see \cite[p.~229]{CFS}).
	Now let $H$ be the special orthogonal group of the split $n$-dimensional quadratic form, $\mu$ the standard cocharacter given by $z \mapsto \mathrm{diag}(z,1\ldots,1,z^{-1})$ and $b=1$.
	Then $J_b$ is equal to $H$, the admissible and weakly admissible period spaces agree and are equal to $Q^{\adm}$ (cf.~\cite[Appendix A]{Shen}).
	In particular, the local reductive group $G_{\Q_p}$ should be viewed as the Frobenius stabilizer $J_b$ of some different orthogonal group.

\end{remark}

\begin{example}[Three-dimensional quadratic spaces]
 The dimension of a totally isotropic subspace of a three-dimensional quadratic space is at most one.  Hence,  
 \[
 X_p=Q^{\adm}= Q(\C_p) \Smallsetminus Q(\Q_p).
 \]
 An explicit model for the split quadratic space of dimension 3 over $\Q_p$ is given by the space of $2 \times 2$-matrices with trace zero.
Just as   in the discussion of  real quadratic spaces of
signature $(2,1)$, one obtains an identification 
of $G(\Q_p)$ with $\PGL_2(\Q_p)$.
 The same reasoning that led to 
\eqref{eqn:Xinfty-21}
leads to the analogous identification
\begin{equation}
\label{eqn:XpHp}
 X_p\cong \mathbb{P}^1(\C_p) \Smallsetminus \mathbb{P}^1(\Q_p) =: \mathcal{H}_p
 \end{equation}
of $X_p$ with the Drinfeld 
 $p$-adic upper half plane $\mathcal{H}_p$,
 on which $\G(\Q_p) = \PGL_2(\Q_p)$ 
acts via
M\"obius transformations.
\end{example}

\begin{example}[Four-dimensional quadratic spaces]
 Let $\xi=[v_\xi]$ be an element of $Q(\C_p)$ that is not contained in $X_p$, i.e.~for which there is an element
 $[w]\in Q(\Q_p)$ with $\langle w,v_\xi\rangle=0.$
 The two-dimensional regular quadratic space $V'=w^\perp/\Span(w)$ is split over $\Q_p$.
Hence, the image of $v_\xi$ in $V'_{\C_p}$ is a multiple of a $\Q_p$-rational isotropic vector.
 Let $w'\in V_{\Q_p}$ be a preimage of this isotropic vector.
 By construction, $w$ and $w'$ span a totally isotropic subspace $D\subseteq V_{\Q_p}$ and $v_\xi$ is an element of $D_{\C_p}.$
 Thus, $v$ is not an element of $Q^{\adm},$
and it follows that
 \[
 X_p=Q^{\adm}.
 \]

 An explicit model for the split quadratic space of dimension 4 over $\Q_p$
 is given by setting
\begin{equation}
 \label{eqn:example4}
 V_{\Q_p}=\Mat_2(\Q_p)
 \quad \mbox{with} \quad 
 \langle A,A \rangle=\det(A).
 \end{equation}
 The group $\SL_2(\Q_p) \times \SL_2(\Q_p)$ acts on $V_p$ via
 \eqref{eqn:leftrightmultact}, just as in the real case, inducing an isomorphism 
\begin{equation}
 \label{excisom2}
 G(\Q_p)^{+} =  (\SL_{2}(\Q_p) \times \SL_{2}(\Q_p))/\{\pm 1\},
\end{equation} 
as well as an identification
$$
 X_p = \mathcal{H}_p \times \mathcal{H}_p
$$
 of rigid analytic varieties. The action
 of $G(\Q_p)^{+}$ on $X_p$ is by componentwise M\"obius transformations under these
 identifications.
\end{example}

\subsubsection{GIT characterization of $X_p$.}

We now turn to a description of the points in 
$X_p$ in the spirit of ``geometric invariant theory",  
characterizing them in terms of their stabilisers and their orbits under the action of split tori in $G(\Q_p)$. 

In dimension $3$ and $4$, the symmetric space $X_p$ consists of exactly the points in $Q(\C_p)$ that are not stabilized by a $\Q_p$-split torus in $G_{\Q_p}.$
The following illustrative example shows: in dimension $> 4$, yet more points of $Q(\C_p)$ beyond those stabilized by a $\Q_p$-split torus in $G_{\Q_p},$ must be removed in order to obtain $X_p$ from $Q(\C_p).$ 

\begin{example}\label{ex-5dim}
 Let $V_{\Q_p}$ be the split quadratic space of dimension $5$.
 Write $V_{\Q_p}$ as the orthogonal direct sum
 \[
 V_{\Q_p}=H \oplus U
 \]
 of a hyperbolic plane $H$ and a three-dimensional split quadratic space $U$.
 Let $0\neq w_H\in H$ be an isotropic vector, $w_U\in U_{\C_p}$ an isotropic vector such that
 \[
 \langle w_U,u \rangle\notin \Q_p\quad \forall u\in U\Smallsetminus\{0\},
 \]
 and put $v:=w_H + w_U$.
 It is easy to check that $v^\perp\cap V_{\Q_p}=\Q_p w.$
 In particular,  the image $\xi=[v]$ of $v$ in $ Q(\C_p)$ does not belong to $X_p$.  We argue next that $\xi$ is not stabilized by any $\Q_p$-split maximal torus.
	
 Suppose that $T\subseteq G_{\Q_p}$ is a $\Q_p$-split torus that stabilizes the line $\xi=[v]$, and let 
 \[
 V_{\Q_p}=\bigoplus_{\chi \in X^\ast(T)} V_{\Q_p,\chi}
 \]
be the  eigenspace decomposition
of the vector space $V_{\Q_p}$
 with respect to  the action of $T$.
 Since $T$ acts via orthogonal transformations for a non-degenerate quadratic space, it follows  that 
  \begin{itemize}
  \item $\dim_{\Q_p}V_{\Q_p,\chi}=\dim_{\Q_p} V_{\Q_p,\chi^{-1}}$ for every character $\chi\in X^\ast(T)$;
  \item $V_{\Q_p,\chi_1} \perp V_{\Q_p,\chi_2}$ for all characters $\chi_1,\chi_2\in X^\ast(T)$ with $\chi_1\neq \chi_2^{-1}$.
  \end{itemize}
Thus, $\dim V_{\C_p,0}$ must be odd, hence $V_{\Q_p,0} \neq \{0 \}.$  On the other hand, since $G_{\Q_p},$ and hence $T,$ acts faithfully on $V_{\Q_p}$ we must have that $V_{\Q_p, \chi^{\pm 1}} \neq \{0\}$ for some non-trivial character $\chi\in X^{\ast}(T)$.
It follows that $\dim v^\perp \cap V_{\Q_p} \geq 2$.
Indeed:
\begin{itemize}
\item
Since $T$ stabilizes $\xi=[v]$, the vector $v$ is an eigenvector for the action of $T.$  If $v \in V_{\C_p,0},$ then it is perpendicular to $V_{\Q_p, \chi} \oplus V_{\Q_p,\chi^{-1}},$ which has dimension at least 2.  If $v \in V_{\C_p,\chi^{\pm 1}}$ on the other hand, then $v$ is perpendicular to $V_{\Q_p,0} \oplus V_{\Q_p, \chi^{\mp 1}},$ which has dimension at least 2 
\end{itemize}
Since $v^{\perp} \cap V_{\Q_p} = \Q_p w$ is only one-dimensional, the above gives a contradiction.
Thus, the stabilizer of $\xi$ contains no $\Q_p$-split torus.
\end{example}

The point $\xi$ in the example above is not stabilized by a $\Q_p$-split subtorus of $G_{\Q_p}$.
Nevertheless, it is still unstable with respect to such a subtorus.
Recall the definition of stability with respect to one-parameter subgroups: 
fix a one-parameter $\Q_p$-subgroup
$$\mu\colon\G_{m,\Q_p}\rightarrow G_{\Q_p},$$
i.e, a non-trivial homomorphism of algebraic groups over $\Q_p$, 
and a point $\xi \in Q(\C_p)$.
As before, we fix a lift $v_{\xi}$ of $\xi$ to $V_{\C_p}$.
Let us recall that $\xi$ is called \emph{stable with respect to $\mu$} if
\begin{itemize}
\item the stabilizer of $v_{\xi}$ in $\mathbb{G}_{m,\C_p}$ is finite and
\item the $\G_{m,\C_p}$-orbit of $v_{\xi}$ is Zariski-closed in $V_{\C_p}$.
\end{itemize}
The action of $\G_{m,\Q_p}$ on $V_{\Q_p}$ induces an eigenspace decomposition
	\begin{align}\label{eigenspace}
	V_{\Q_p}=\bigoplus_{n\in \Z} V_{\Q_p,n}.
	\end{align}
Here we used the canonical identification $X^\ast(\G_m)=\Z$.
As $\G_{m,\Q_p}$ acts via orthogonal transformations we see that $V_{\Q_p,n}$ and $V_{\Q_p,m}$ are perpendicular unless $n=-m$.
In particular, $V_{\Q_p,n}$ is totally isotropic for all $n\neq 0$.
It is easy to see that $\xi$ is stable with respect to $\mu$ if and only if $v_\xi$ is neither contained in
\begin{align*}
	V_{\C_p, \geq 0}&=\bigoplus_{n\geq 0} V_{\C_p,n}\\
\intertext{nor in}
	V_{\C_p, \leq 0}&=\bigoplus_{n\geq 0} V_{\C_p,n}.
\end{align*}
In particular, if $x$ is fixed by $\G_{m,\C_p}$, then $x$ is not stable.

 \begin{proposition} \label{GITXp}
 Let $\xi$ be an element of $Q(\C_p)$.
 The following conditions are equivalent:
  \begin{itemize} 
	\item $\xi\in X_p$ and
  \item $\xi$ is stable with respect to every one-parameter $\Q_p$-subgroup of $G_{\Q_p}$.
  \end{itemize}
 \end{proposition}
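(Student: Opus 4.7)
The plan is to prove the equivalence by two natural directions, both reducing to the weight decomposition \eqref{eigenspace} attached to a one-parameter subgroup $\mu$.

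For the implication $X_p \Rightarrow \text{stability}$, I would argue contrapositively: suppose some non-trivial $\mu\colon \G_{m,\Q_p}\to G_{\Q_p}$ fails to stabilize $\xi$ in the GIT sense, say $v_\xi \in V_{\C_p,\geq 0} = \bigoplus_{n\geq 0} V_{\C_p,n}$. Using the orthogonality properties recalled just before the proposition (that $V_{\Q_p,n} \perp V_{\Q_p,m}$ unless $n=-m$, and that $V_{\Q_p,n}$ is totally isotropic for $n\neq 0$), the vector $v_\xi$ is then orthogonal to $\bigoplus_{m>0} V_{\C_p,m}$. Since $\mu$ is non-trivial, there exists some $n>0$ with $V_{\Q_p,n}\neq 0$; any non-zero $w\in V_{\Q_p,n}$ is $\Q_p$-rational and isotropic, yet satisfies $\langle v_\xi, w\rangle = 0$. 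This contradicts the defining condition \eqref{eqn:Xp-def} of $X_p$. The case $v_\xi\in V_{\C_p,\leq 0}$ is symmetric (replace $\mu$ by $\mu^{-1}$).

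For the converse $\text{stability} \Rightarrow X_p$, I would again argue contrapositively: assume $\xi \notin X_p$, so that there exists $[w]\in Q(\Q_p)$ with $\langle v_\xi, w\rangle = 0$. The key construction is to complete $w$ to a $\Q_p$-rational hyperbolic pair $(w,w^\ast)$ inside $V_{\Q_p}$ — this is possible because $V_{\Q_p}$ is non-degenerate, so one picks any $u\in V_{\Q_p}$ with $\langle w,u\rangle\neq 0$ and sets $w^\ast := \langle w,u\rangle^{-1}\bigl(u - \tfrac{\langle u,u\rangle}{2\langle w,u\rangle} w\bigr)$, which is isotropic and paired with $w$ to $1$. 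Writing $H := \Span(w,w^\ast)$ and $V_{\Q_p} = H \oplus H^\perp$, define $\mu\colon \G_{m,\Q_p}\to G_{\Q_p}$ by $\mu(t)w = tw$, $\mu(t)w^\ast = t^{-1}w^\ast$, and $\mu(t)|_{H^\perp} = \id$. This $\mu$ is a non-trivial $\Q_p$-rational one-parameter subgroup of $\SO_V$, and its eigenspace decomposition has $V_{\Q_p,1} = \Q_p w$, $V_{\Q_p,-1} = \Q_p w^\ast$, $V_{\Q_p,0} = H^\perp$. Since $w^\perp = \Q_p w \oplus H^\perp$, the assumption $\langle v_\xi, w\rangle = 0$ gives $v_\xi \in V_{\C_p,\geq 0}$, so $\xi$ fails to be stable with respect to $\mu$.

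There is no substantial obstacle; the only thing to double-check is that in the converse direction one really can build a $\Q_p$-rational hyperbolic pair from $w$ alone, which is a standard non-degeneracy argument and was used implicitly in the four-dimensional example above. The main conceptual point — and the reason the proposition works uniformly in all dimensions, unlike the naive ``stabilized by a split torus'' characterization — is that the presence of any isotropic $\Q_p$-rational vector in $v_\xi^\perp$ already produces, via the hyperbolic pair construction, a one-parameter subgroup destabilizing $\xi$, even when $\xi$ itself is not fixed by any $\Q_p$-split torus (as in Example~\ref{ex-5dim}).
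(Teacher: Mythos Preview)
Your proposal is correct and follows essentially the same route as the paper's proof: both directions are argued contrapositively, the unstable-implies-not-in-$X_p$ direction uses a positive weight space to produce a $\Q_p$-rational isotropic vector orthogonal to $v_\xi$, and the converse builds the destabilizing one-parameter subgroup from a hyperbolic pair $(w,w')$ extending the given isotropic $w$. Your version is slightly more explicit in constructing $w^\ast$, but the argument is otherwise identical.
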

  \begin{proof}
  Suppose there exists an one-parameter $\Q_p$-subgroup $\mu\colon\mathbb{G}_{m,\Q_p}\hookrightarrow G_{\Q_p}$
	such that $\xi$ is not stable with respect to $\mu$.
	After possibly replacing $\mu$ by $-\mu$ we may assume that $v_{\xi}$ is an element
	of the non-negative part $V_{\C_p, \geq 0}$ of the eigenspace decomposition \eqref{eigenspace} induced by $\mu$.
	Let $n_0 > 0$ be an integer with $V_{\C_p,n_0}\neq 0$, which exists as $\mu$ is non-trivial.
	Then $\langle w,v_{\xi}\rangle=0$ for all $w\in V_{\Q_p,n_0}$ and, hence, $\xi$ is not an element of $X_p$.
	
	Conversely, suppose that there exists an isotropic vector $w\in V_{\Q_p}$ such that $\langle w,v_{\xi}\rangle=0.$
	There exists a isotropic vector $w'\in V_{\Q_p}$ with 
	\[
	\langle w,w'\rangle=1.
	\]
	Let $U$ be the orthogonal complement of $\Span\{w,w'\}$.
	This yields a one-parameter $\Q_p$-subgroup $\mu\colon\mathbb{G}_{m,\Q_p}\rightarrow G_{\Q_p}$ 
	as follows:
  \[
	\mu(t)(y) = \begin{cases}
                 t\cdot y & \text{ if } y \in \Q_p w \\
	       t^{-1}\cdot y &\text{ if } y \in \Q_p w' \\ 
	          y & \text{ if } y \in U.
			\end{cases}
	\]
	In other words, there is an eigenspace decomposition of the form
	\[
	V_{\Q_p}=V_{\Q_p,1}\oplus V_{\Q_p,-1}\oplus V_{\Q_p,0}\ \mbox{with}\ V_{\Q_p,1}=\Q_p w.
	\]
	Because $w$ and $v_{\xi}$ are perpendicular we see that $v_{\xi}\in V_{\C_p,1}\oplus V_{\C_p,0}$
	and, therefore, $\xi$ is not stable with respect to $\mu$.
	\end{proof}
	
 \begin{remark}
In \cite{Totaro} Totaro shows that the weakly admissible period domain attached to a triple $(H,\mu,b)$ is given by the subspace of the Flag variety $\mathcal{F}(H,\mu)$ consisting of those points that are \textit{stable} with respect to all one-parameter $\Q_p$-subgroups of $J_b$. 
Let $\mathcal{F}$ be a generalized flag variety for some split reductive group $J$ over $\Q_p$.
Voskuil and van der Put analysed when the subspace of $\mathcal{F}$ given by those points that are stable with respect to all one-parameter $\Q_p$-subgroups of $J$ agrees with the space of points which are semistable with respect to all one-parameter $\Q_p$-subgroups of $J$ (cf.~\cite{VoskuilvanderPut}), which explains the phenomenon observed in Example \ref{ex-5dim}.
 \end{remark}

%%%%%%%%%%%%%%%%%%%%%%%%%%%%%%%%%

\section{Kudla--Millson divisors}
\label{sec:divisors}

This chapter introduces the key notion
 of \emph{Kudla--Millson divisors}.
In the theory of rigid meromorphic cocycles, they 
  play the role 
  of Heegner divisors on orthogonal Shimura varieties.

\subsection{Homological algebra notation}
Some notations and conventions regarding homological algebra  are collected in this  section.

Let $R$ be a (not necessarily commutative) ring.
A {\em chain complex}  of $R$-modules is  a chain complex of left $R$-modules concentrated in non-negative degree, i.e., a complex of the form
\[\ldots \xlongrightarrow{d_3} A_2 \xlongrightarrow{d_2} A_1 \xlongrightarrow{d_1} A_0 \xlongrightarrow {d_0} 0 \xlongrightarrow {d_{-1}} 0 \xlongrightarrow{d_{-2}} \ldots \]
The category of such chain complexes is denoted by $\Ch_{\geq 0}(R)$. 
For a chain complex $A_{\sbullet}\in \Ch_{\geq 0}(R)$ and an integer $t\geq 0$, 
write $A_{\sbullet}[-t]$ for the chain complex given by
\[A_q[-t]=A_{q-t}.\]
The canonical $t$-truncation of $\tau_{\geq t} A_{\sbullet}$ of $A_{\sbullet}$ is the subcomplex given by
\[
(\tau_{\geq t} A)_q  =\begin{cases}
0 & \mbox{if}\ q< t,\\
\Ker(d_q)& \mbox{if}\ q=t,\\
A_q & \mbox{if}\ q > t.
\end{cases}
\]
By construction, the homology of the $t$-truncation is given by
\[
H_q \left(\tau_{\geq t} A_{\sbullet} \right)=\begin{cases}
0 & \mbox{if}\ q< t,\\
H_q(A_{\sbullet}) & \mbox{if}\ q \geq t.
\end{cases}
\]
A left $R$-module $M$ will often be viewed as a  chain
 complex concentrated in degree $0$.
A {\em resolution} of $M$ is a chain complex $A_{\sbullet}\in \Ch_{\geq 0}$ together with a quasi-isomorphism $f\colon A_{\sbullet} \rightarrow M$.
It is called a {\em projective resolution} if each $A_q$ is a projective $R$-module.

Given a group $\absgr$ and a commutative ring $R$, write $R[\absgr]$ for the group ring of $\absgr$ over $R$.
Given a  $\absgr$-set $I$,  let 
$\underline{I}$  denote the category
whose objects are the elements of $I$, 
with morphisms given by
$$ \Hom(i,j) := \{ \absel\in\absgr \mbox{ with } gi =j \}.$$
An $R$-linear representation of $I$ is a functor from $\underline{I}$ to the category of $R$-modules.
More concretely, an $R$-linear representation of $I$ consists of
\begin{itemize}
\item a collection of $R$-modules $M_{i}$ indexed by  $i\in I$,
\item for each $\absel\in\absgr$ and $i\in I$, an $R$-linear homomorphism
$$\absel \colon M_{i} \longrightarrow M_{\absel i}$$
satisfying the properties implicit in functoriality: the neutral element of $\absgr$ represents
the identity transformation from $M_i$ to $M_i$  for each $i$, and the following diagram commutes, for all 
$i\in I$ and $\absel, \absel' \in \absgr$:
\begin{equation}
 \label{compatibility}
 \xymatrix{ M_i \ar[r]^{\absel}   \ar@/^1.5pc/[rr]^{\absel'\absel} &  M_{\absel i}  \ar[r]^{\absel'}  
  & M_{\absel'\absel i}. }  
\end{equation}

\end{itemize}
Given an $R$-linear representation $M = ( M_i)_{i\in I}$ of $I$, both the direct sum $\oplus_{i\in I} M_{i}$ and the product $\prod_{i\in I} M_{i}$ are equipped with a $\absgr$-module structure via
$$\absel.(m_i)_{i\in I} = (\absel m_{\absel^{-1} i})_{i\in I}.$$
Likewise, a chain complex $M_{\sbullet}$ of $R$-linear representations of $I$ is a functor from
${\underline I}$ to the category of complexes of 
$R$-modules, i.e.,  a collection of  chain complexes $M_{{\sbullet},i}$ of $R$-modules indexed by $i\in I$, together with homomorphisms of  complexes
$$\absel\colon M_{{\sbullet}, i} \longrightarrow M_{{\sbullet},\absel i}$$
for all $\absel \in \absgr$ and $i\in I$,  for which 
the analogue of \eqref{compatibility} commutes.

\subsection{Special cycles on $X_\infty$}
\label{Archimedean}
\subsubsection{Definitions}
A vector $v\in V_{\R}$ is said to have \emph{positive length} if $q(v)> 0$.
Denote by $V_{\R,+}$ (resp.~$V_{+}$) the subset of $V_\R$ (resp.~$V$) of vectors of positive length.
Given a vector $v\in V_{\R,+}$,  let $\Delta_{v,\infty}\subseteq X_\infty$ be the real cycle defined by
\[
\Delta_{v,\infty}=\left\{Z\in X_\infty \ \vert\ v \perp Z \right\}.
\]
The cycle $\Delta_{v,\infty}$ is identified with the Archimedean symmetric space attached to $(\R v)^\perp$, a quadratic space
of signature $(r-1,s)$. Hence $\Delta_{v,\infty}$ is of dimension $(r-1)s$ and defines
 a cycle of real codimension 
$s$ in $X_\infty$.
Alternately,    let 
$$ \tau := \{ (w, Z)\ \vert\ w \in Z \} \  \subseteq \ V_{\R} \times Z$$
be the tautological  rank $s$ vector bundle over $X_\infty$
whose fibre over $Z$  (relative to the second coordinate projection) is the vector space $Z$ itself.
A positive vector $v$ defines a section $s_v\colon X_\infty \rightarrow \tau$ sending $Z$ to $(v_Z,Z)$, where $v_Z$ is the orthogonal projection of $v$ onto  $Z$.
The  cycle $\Delta_{v,\infty}$ is the zero locus of 
this  section.

\subsubsection{Orientations.}\label{sec-orientations}
We recall some of the generalities concerning orientation on special cycles $\Delta_{v,\infty}$ as described in \cite[pp. 130-131]{KM}.

An orientation on a real vector space $W$ of dimension $d$ is a choice of a connected component $\mathfrak{o}\subset \wedge^d W -\{0\}\simeq \R-\{0\}$. 
A basis $(w_1,\ldots,w_d)$ of $W$ is then said to be {\em positively oriented}, or simply {\em positive}, if $w_1 \wedge\cdots \wedge w_d$ belongs to $\mathfrak{o}$.
An orientation on $W$ induces one on its dual $W^\ast$,
 by declaring that the dual basis of any positive basis for $W$  is positive.
Given oriented  real vector spaces $W_1$ and $W_2$ with positive bases $\{w_{1,1},\ldots, w_{1,k}\}$ and $\{w_{2,1},\ldots, w_{1,\ell}\}$, the basis $\{w_{1,i}\otimes w_{2,j}\ \vert\ 1\leq i \leq k, 1\leq j \leq \ell\}$ ordered  lexicographically from right to left is a positive  basis for $W_1\otimes W_2$.
The space $\Hom_{\R}(W_1\otimes_\R W_2)\cong W_1^\ast \otimes_\R W_2$ is oriented accordingly.
Note that if $\dim_\R W_2=1$, changing the orientation on $W_2$ changes the orientation on $W_1\otimes  W_2$ if and only if $\dim W_1$ is odd.

Fix  an orientation on $V_\R$ once and for all.
Because $X_{\infty}$ is contractible, the  real vector bundle $\tau \rightarrow X_{\infty}$  is orientable. An orientation on one of the fibres of $\tau$ thus equips each maximal negative-definite subspace of $V_\R$ with a consistent choice of orientation, compatible with the natural action of $G(\R)^+$. 
With these two choices of orientation in hand, a basis for a maximal positive-definite subspace $W\subseteq V_\R$ is said to be positively oriented if completing it  by a positive  basis of $W^\perp$ yields a positive  basis of $V_\R$.

Let $\Delta_{v,\infty}$ be the  special cycle attached to $v\in V$,
 and let $Z\in \Delta_{v,\infty}$.
The subspace $U\subseteq V_\R$ generated by $v$ is oriented  by saying that $\{v\}$ is a positive basis.
Following the reasoning in 
\eqref{eqn:tangent-space}, 
there are canonical isomorphisms
$$
T_Z(X_\infty) \cong \Hom_\R(Z,Z^\perp), \qquad \nu_Z(X_\infty) \cong \Hom_\R(Z,U),
$$
where $\nu_Z(X_\infty)$ denotes the normal space of $\Delta_{v,\infty}$ at $Z$. 
These isomorphisms lead to 
 orientations on both the tangent and normal spaces of $X_\infty$ at $Z$.
In other words,   the tangent space $T_Z(\Delta_{v,\infty})$ is oriented  by the convention  that a positive basis of $T_Z(\Delta_{v,\infty})$ completed by a positive basis of $\nu_Z(\Delta_{v,\infty})$ is a positive basis of $T_Z(X_\infty)$.
The cycle  $\Delta_{v,\infty}$ is equipped with the induced orientation. 
The spaces
$\Delta_{v,\infty}$ and $\Delta_{-v,\infty}$ carry the same orientation if $s$ is even, while they carry opposite orientations if $s$ is odd.
One immediately deduces that the action of 
 $g \in G(\R)^{+}$ preserves the chosen orientation on the cycles $\Delta_{v,\infty}$, i.e., there is an equality of \emph{oriented} cycles
\begin{align*}
g\Delta_{v,\infty}=\Delta_{g\cdot v,\infty}.
\end{align*}

\subsubsection{Homology of complements of special cycles}
Given a topological space $X$, 
 denote by $C_{\sbullet}(X)$ the singular chain complex of $X$ with integer coefficients, i.e., $C_q(X)$ is the free abelian group generated by all continuous maps from the standard $q$-simplex to $X$.
If $A$ is a subspace of $X$,   write
$$C_{\sbullet}(X,A)=C_{\sbullet}(X)/C_{{\sbullet}}(A)$$
for the relative singular chain complex with 
associated relative homology groups $H_q(X,A)$ for $q\geq 0$.
\begin{proposition}\label{relativehomology}
Let $v$ be an element of $V_\R$ of positive length.
Then
$$H_q(X_\infty,X_\infty\Setminus \Delta_{v,\infty})
\cong\begin{cases}
\Z & \mbox{if}\ q=s \\
0 & \mbox{ otherwise.}
\end{cases}
$$
\end{proposition}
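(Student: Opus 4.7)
The plan is to compute the relative homology by recognizing $\Delta_{v,\infty}$ as a closed, oriented, codimension-$s$ smooth submanifold of $X_\infty$, and then invoking the Thom isomorphism for its normal bundle, combined with the contractibility of $\Delta_{v,\infty}$.

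First, I would unwind the geometric picture. The space $\Delta_{v,\infty}$ is a smoothly embedded closed submanifold of $X_\infty$: it is the zero locus of the transverse section $s_v$ of the tautological rank $s$ vector bundle $\tau$ described in Section \ref{Archimedean}, and as noted there it is identified with $X_\infty((\R v)^\perp, q|_{(\R v)^\perp})$, the Archimedean symmetric space attached to a quadratic space of signature $(r-1,s)$. In particular, the pointwise normal bundle was identified in the orientation discussion with $\nu \cong \Hom_\R(Z, U)$, a rank-$s$ real vector bundle; and the orientation conventions set up in Section \ref{sec-orientations} make $\nu$ into an \emph{oriented} rank-$s$ bundle.

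Next, I would apply excision together with the tubular neighborhood theorem. Choose an open tubular neighborhood $N \subseteq X_\infty$ of $\Delta_{v,\infty}$, diffeomorphic (via the exponential map, or via the section $s_v$ of the normal bundle) to the total space of $\nu$, with $\Delta_{v,\infty}$ corresponding to the zero section $\Delta_{v,\infty} \hookrightarrow \nu$. Excision of the closed set $X_\infty \setminus N$ gives
\[
H_q(X_\infty, X_\infty \setminus \Delta_{v,\infty}) \;\cong\; H_q(N, N \setminus \Delta_{v,\infty}) \;\cong\; H_q(\nu, \nu \setminus \Delta_{v,\infty}).
\]
The Thom isomorphism for the oriented rank-$s$ bundle $\nu \to \Delta_{v,\infty}$ then yields
\[
H_q(\nu, \nu \setminus \Delta_{v,\infty}) \;\cong\; H_{q-s}(\Delta_{v,\infty}; \Z).
\]

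Finally, I would invoke contractibility. The base $\Delta_{v,\infty} = X_\infty((\R v)^\perp)$ is the symmetric space of maximal negative-definite subspaces of a non-degenerate real quadratic space of signature $(r-1,s)$; as described in \eqref{eqn:Xinfty-G} it is diffeomorphic to $\mathrm{O}(r-1,s)/(\mathrm{O}(r-1)\times \mathrm{O}(s))$, a symmetric space of non-compact type, hence contractible. Therefore $H_{q-s}(\Delta_{v,\infty}) = \Z$ if $q = s$ and $0$ otherwise, giving the claim.

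The only mildly delicate point is ensuring that the normal bundle is coherently oriented so that the Thom isomorphism produces an \emph{integral} $\Z$ rather than a twisted coefficient system; but this is precisely what Section \ref{sec-orientations} set up. No essential obstacle arises.
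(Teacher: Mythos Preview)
Your proof is correct, but it takes a different route from the paper's. The paper argues via the long exact sequence of the pair $(X_\infty, X_\infty \setminus \Delta_{v,\infty})$: since $X_\infty$ is contractible, it suffices to show that $X_\infty \setminus \Delta_{v,\infty}$ is homotopy equivalent to an $(s-1)$-sphere, and this follows because $\Delta_{v,\infty}$ is \emph{totally geodesic}, so the exponential map based at any $Z \in \Delta_{v,\infty}$ gives a global diffeomorphism $T_Z(X_\infty) \setminus T_Z(\Delta_{v,\infty}) \cong X_\infty \setminus \Delta_{v,\infty}$, reducing everything to the complement of a linear subspace. Your argument instead uses excision to a tubular neighborhood and the Thom isomorphism for the oriented normal bundle, followed by the contractibility of $\Delta_{v,\infty}$ itself. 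The paper's approach exploits the specific symmetric-space geometry to linearize globally and avoids invoking the Thom isomorphism; your approach is the standard textbook route for any closed oriented submanifold and uses only local data about the normal bundle. Both are valid, and yours has the minor advantage of not needing the totally geodesic property.
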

\begin{proof}
We may assume that $s\geq 1$, the case $s=0$ being trivial.

The assertion is a consequence of the long exact sequence for relative homology and the following claim:
$X_\infty\Setminus \Delta_{v,\infty}$ is homotopy equivalent to an $(s-1)$-sphere.
Indeed, since $\Delta_{v,\infty}$ is a totally geodesic submanifold of the symmetric space $X_\infty,$ the exponential map induces a diffeomorphism
$$\exp\colon T_Z(X_\infty)\Setminus T_Z(\Delta_{v,\infty})\xlongrightarrow{\cong} X_\infty \Setminus \Delta_{v,\infty}$$
for every $Z\in \Delta_{v,\infty}$.
But for any finite dimensional $\R$-vector space $V$ the complement of a proper subspace $W\subsetneq V$ of codimension $s$ is homotopy equivalent to $(V/W) \Setminus \{0\}$, which in turn is homotopy equivalent to an $(s-1)$-sphere. 
\end{proof}

The chosen orientation of $\Delta_{v,\infty}$ determines a generator of $H_s(X_\infty,X_\infty\Setminus \Delta_{v,\infty})$, giving rise to an identification
\begin{align}\label{orientation}
H_s(X_\infty,X_\infty\Setminus \Delta_{v,\infty})=\Z.
\end{align}

\begin{lemma}
If $\gamma \in G(\R)^{+}$ fixes $v$, then the induced map
$$\gamma_\ast\colon H_q(X_\infty,X_\infty\Setminus \Delta_{v,\infty}) \longrightarrow H_q(X_\infty,X_\infty\Setminus \Delta_{v,\infty}) $$
is the identity.
\end{lemma}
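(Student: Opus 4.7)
The plan is to invoke Proposition \ref{relativehomology} to reduce the statement to the single nontrivial degree $q = s$, and then to show $\gamma$ preserves the distinguished generator of $H_s(X_\infty, X_\infty\setminus \Delta_{v,\infty})\cong\Z$ singled out by \eqref{orientation}. The crucial input is the oriented equality $g\Delta_{v,\infty} = \Delta_{g\cdot v,\infty}$ recorded at the end of \S\ref{sec-orientations}, applied with $g = \gamma$.

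For $q\neq s$ the relative homology group vanishes, so there is nothing to prove. For $q = s$, the identification \eqref{orientation} is pinned down by the chosen orientation on $\Delta_{v,\infty}$, which is in turn built from: (i) the global orientation on $V_\R$; (ii) the $G(\R)^+$-equivariant choice of orientation on the tautological bundle $\tau$ (whence on each maximal negative definite subspace $Z$ and, via (i), on each positive definite complement $Z^\perp$); and (iii) the orientation on $U = \R v$ given by the basis $\{v\}$. Since $\gamma$ fixes $v$ it preserves (iii); since $\gamma \in G(\R)^+ \subseteq \SO_V(\R)$ it preserves (i) and (ii). Putting these together, the observation at the end of \S\ref{sec-orientations} yields $\gamma\Delta_{v,\infty} = \Delta_{v,\infty}$ as oriented cycles, and the same orientation bookkeeping applied to $T_Z X_\infty \cong \Hom(Z, Z^\perp)$ shows that $\gamma$ acts on $X_\infty$ as an orientation-preserving diffeomorphism.

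These two facts together imply that $\gamma$ preserves the co-orientation of $\Delta_{v,\infty}$ in $X_\infty$, and hence $\gamma_\ast$ fixes the generator of $H_s$ specified by \eqref{orientation}. The one point I would take care with is the naturality of \eqref{orientation}: a self-homeomorphism of the pair $(X_\infty, X_\infty\setminus \Delta_{v,\infty})$ that preserves $\Delta_{v,\infty}$ together with its ambient co-orientation must act as $+1$ on $H_s$. This is the standard Thom-isomorphism statement for the oriented normal bundle $\nu(\Delta_{v,\infty}) \cong \Hom(\tau|_{\Delta_{v,\infty}}, U)$, or equivalently the fact that a small oriented transverse disk at any point of $\Delta_{v,\infty}$ represents the generator and is carried by $\gamma$ to a homologous oriented disk. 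I do not anticipate any real obstacle beyond the bookkeeping of orientation conventions, already largely discharged in \S\ref{sec-orientations}.
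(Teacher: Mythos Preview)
Your proof is correct, but the paper takes a different and shorter route. The paper simply observes that the stabiliser of $v$ in $G(\R)^{+}$ is the subgroup of $\SO(v^\perp)(\R)$ with trivial spinor norm, which is connected; since the action of a connected Lie group on a discrete abelian group (here $H_q \cong \Z$ or $0$) is automatically trivial, the conclusion follows immediately without touching orientations.

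Your approach instead verifies directly that $\gamma$ preserves the distinguished generator by checking that it respects every ingredient in the orientation recipe of \S\ref{sec-orientations}. This is more hands-on and arguably more transparent: it shows \emph{why} the generator is fixed rather than inferring it from a topological soft argument, and it does not require knowing the component structure of the stabiliser. The cost is the bookkeeping you flag (naturality of the Thom class under orientation-preserving maps of the pair), which is standard but not entirely free. The paper's connectedness argument trades that bookkeeping for a single structural fact about real orthogonal groups.
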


\begin{proof}
Let $W$ be the orthogonal complement of $v$ in $V_\R$.
Then the stabilizer of $v$ in $G(\R)^{+}$ is the group of elements in $\SO(W)$ that have trivial spinor norm.
Since this group is connected, the claim follows.
\end{proof}

\subsubsection{Kudla-Millson cycles intersecting a compact region in $X_{\infty}$}

 \begin{lemma}
 \label{Archimedeanlemma}
 Let $C\subseteq {X}_\infty$ be a compact subset and $m>0$ a fixed real number.
 Then the set
 \[
K:=  \left\{v\in V_\R\ \middle\vert  \  q(v)=m,\ \Delta_{v,\infty} \cap C \neq \emptyset  \ \right\}
 \]
 is compact.
 \end{lemma}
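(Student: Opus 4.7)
The plan is to show that $K$ is closed and bounded inside the finite-dimensional real vector space $V_\R$, which will yield compactness.

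For boundedness, I would exploit the fact that for any $Z\in X_\infty$, the form $q$ is negative definite on $Z$ and positive definite on the orthogonal complement $Z^\perp$. Explicitly, define the \emph{majorant} associated to $Z$ by $q_Z(v) := -q(v_Z)+q(v_{Z^\perp})$, where $v=v_Z+v_{Z^\perp}$ is the decomposition along $V_\R = Z\oplus Z^\perp$. Each $q_Z$ is a positive definite quadratic form on $V_\R$, and the assignment $Z\mapsto q_Z$ is continuous. If $v\in K$ with witness $Z\in\Delta_{v,\infty}\cap C$, then $v\perp Z$ means $v\in Z^\perp$, so $q_Z(v)=q(v)=m$. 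Since $C$ is compact, the family $\{q_Z\}_{Z\in C}$ of positive definite forms is uniformly equivalent to any chosen Euclidean norm $\|\cdot\|$ on $V_\R$; hence the condition $q_Z(v) = m$ forces $\|v\|$ to be bounded uniformly over $v\in K$.

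For closedness, consider a sequence $v_n\to v$ with $v_n\in K$. Continuity of $q$ gives $q(v)=m>0$, so in particular $v\neq 0$. Pick witnesses $Z_n\in\Delta_{v_n,\infty}\cap C$; by compactness of $C$, a subsequence converges to some $Z\in C$ in the Grassmannian topology. The orthogonality relation $v_n\perp Z_n$ passes to the limit by continuity of $\langle\cdot,\cdot\rangle$ (any convergent sequence of bases for the $Z_n$ has a limit that is a basis for $Z$), yielding $v\perp Z$, so $Z\in\Delta_{v,\infty}\cap C$ and $v\in K$.

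The only non-routine step is the boundedness argument, whose essential input is the continuous variation and uniform equivalence of the majorant forms $q_Z$ over the compact region $C$. Once that observation is in hand, the closedness and the conclusion via Heine--Borel in $V_\R$ follow by standard continuity considerations.
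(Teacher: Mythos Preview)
Your proof is correct. The paper takes a slightly different, more compressed route: it introduces the incidence set
\[
\Omega = \{(Z,v)\in C\times V_\R : v\in Z^\perp,\ q(v)=m\}
\]
with its two projections $p_1\colon\Omega\to C$ and $p_2\colon\Omega\to V_\R$, observes that the fibres of $p_1$ are spheres of radius $m$ in the positive-definite space $Z^\perp$ so that $p_1$ is proper, concludes that $\Omega$ is compact, and then notes $K=p_2(\Omega)$.

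Both arguments rest on the same underlying fact, namely that $q$ is positive definite on each $Z^\perp$. You unpack this via the majorant construction and then run a separate closed-plus-bounded argument; the paper instead packages everything into the compactness of a sphere bundle over $C$ and a single continuous image. Your approach has the virtue of making the uniform norm bound completely explicit through the continuous family $Z\mapsto q_Z$, while the paper's approach is shorter and avoids having to treat closedness separately. Either way the content is the same.
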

  \begin{proof} 
Consider the set 
$$ \Omega := \left\{ (Z,v) \in  C\times V_\R   \  \vert \ Z^\perp \ni v, \ \langle v,v\rangle = m  \right\},$$
and let $p_1\colon \Omega \rightarrow C$ and $p_2\colon \Omega\rightarrow V_{\R}$ be
the two coordinate projections. The map $p_1$, whose fibres  are spheres of radius $m$ in $r$-dimensional Euclidean space, is proper.  Hence, $\Omega$ is compact, and  the same follows for $K = p_2(\Omega)$. 
  \end{proof}

\subsection{Quadratic divisors on $X_p$}

\subsubsection{Definition}
A vector $v\in V_{\Q_p}$ is said to be {\em anisotropic} if $q(v)\neq 0$.
The {\em quadratic divisor} on $X_p$ attached 
to such a $v$ is the subset
 $\Delta_{v,p}\subseteq X_p$ given by
   \begin{align*}
\Delta_{v,p}=\left\{\xi\in X_p\ \vert\ v \perp \xi \right\}.
\end{align*}
In case $v\in V_{+}$, the divisor $\Delta_{v,p}$ is called \emph{rational quadratic}.
 By definition, 
\begin{equation}
\label{equivatp}\gamma \cdot \Delta_{v,p}=\Delta_{\gamma v,p}
\end{equation}
for all $\gamma \in G(\Q_p)$ and all anisotropic vectors $v\in V_{\Q_p}$.

\begin{remark}
Suppose $n=3$.
Then $\Delta_{v,p}=\emptyset$ if and only if the orthogonal complement of $v$ in $V_{\Q_p}$ is a hyperbolic plane.
If $V_{\Q_p}$ is the split $3$-dimensional quadratic space, this is equivalent to $q(v)$ being a square in $\Q_p^\times$.
For $n\geq 4$ the divisor $\Delta_{v,p}$ is always non-empty. (See Lemma \ref{padiclemmaconverse} below.)
 \end{remark}

\subsubsection{Quadratic divisors intersecting affinoids}
Let $\Lambda\subseteq V_{\Q_p}$ be a self-dual $\Z_p$-lattice.
Remember that $\Lambda'$ denotes the set of primitive vectors of $\Lambda$.
Given a non-zero vector $v\in V_{\Q_p}$ there exists an integer $k$ and a primitive vector $v_0\in \Lambda'$ such that $v=p^k\cdot v_0$.
The \emph{order} of $v$ with respect to $\Lambda$ is
\[
\ord_{\Lambda}(v)=k \in \Z
\]
and the \emph{isotropy level} of $v$ with respect to $\Lambda$ is given by
\[
\iso_{\Lambda}(v):=\ord_p(q(v_0)) \in \Z_{\geq 0} \cup \{\infty\}.
\]
Since $v$ determines $v_0$ up to multiplication by $\Z_p^\times,$ both $\ord_{\Lambda}(v)$ and $\iso_{\Lambda}(v)$ are well-defined.  

As explained in Section \ref{sec-rigidstructure} the $\Z_p$-lattice $\Lambda\subseteq V_{\Q_p}$ allows us to express $X_p$ as an increasing union of affinoids:  
$$ X_p = \bigcup_{k=0}^\infty X_{p,\Lambda}^{\le k}.$$
The following $p$-adic counterpart of
Lemma \ref{Archimedeanlemma} relates this filtration to isotropy level.
\begin{lemma}
 \label{padiclemma}
Let $\Lambda\subseteq V_{\Q_p}$ be a self-dual $\Z_p$-lattice and $v\in V_{\Q_p}$ an anisotropic vector.
Then:
\[
\iso_\Lambda(v)>k \quad \Rightarrow \quad \Delta_{v,p}\cap X_{p,\Lambda}^{\le k}=\emptyset.
\]
In particular, for every $m\in \Q_p^\times$ and every integer $k\in\Z$ the set
\[
K:=\{ v\in V_{\Q_p}\ \vert\ q(v)=m,\  \Delta_{v,p}\cap X_{p,\Lambda}^{\le k}\neq \emptyset  \ \}
\]
is contained in $p^{-\ell}\Lambda$ for $\ell \leq \frac{k-\ord_p(m)}{2}$.
 \end{lemma}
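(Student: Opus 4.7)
The plan is to reduce the first assertion to a Hensel-type lifting statement whose content is: every primitive anisotropic $v_0\in\Lambda'$ with $\ord_p(q(v_0))\geq k+1$ admits a primitive isotropic $w\in(\Lambda')_0$ with $w\equiv v_0\pmod{p^{k+1}\Lambda}$. Granted this, the contrapositive of the displayed implication is immediate: writing $v=p^{\ord_\Lambda(v)}v_0$ with $v_0\in\Lambda'$ one has $\Delta_{v,p}=\Delta_{v_0,p}$, and any point $\xi$ in this divisor with primitive representative $v_\xi\in\Lambda'_{\cO_{\C_p}}$ satisfies $\langle v_\xi,v_0\rangle=0$, whence
\[\langle v_\xi,w\rangle=\langle v_\xi,w-v_0\rangle\in p^{k+1}\cO_{\C_p}\]
(the pairing extends to $\Lambda_{\cO_{\C_p}}\times\Lambda_{\cO_{\C_p}}\to\cO_{\C_p}$ by self-duality). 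This forces $\ord_p(\langle v_\xi,w\rangle)\geq k+1>k$, precluding $\xi\in X_{p,\Lambda}^{\leq k}$.

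To construct $w$, I would exploit self-duality twice. First, because $v_0$ is primitive it does not lie in $p\Lambda=p\Lambda^{\#}$, so the $\Z_p$-linear form $\langle v_0,\cdot\rangle\colon\Lambda\to\Z_p$ is surjective and there exists $u\in\Lambda$ with $\langle v_0,u\rangle=1$. Second, the one-variable polynomial
\[f(t)=q(v_0+tu)=q(u)\,t^2+t+q(v_0)\]
satisfies $\ord_p(f(0))\geq k+1$ and $\ord_p(f'(0))=0$, so Hensel's lemma produces a unique root $t\in p^{k+1}\Z_p$. Setting $w:=v_0+tu$ gives a vector of $\Lambda$ that is isotropic, congruent to $v_0$ modulo $p^{k+1}\Lambda$, and primitive (because $v_0$ is).

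The ``in particular'' clause then follows from the identity $\ord_p(q(v))=2\,\ord_\Lambda(v)+\iso_\Lambda(v)$, which is immediate from the definitions. For $v\in K$, the contrapositive just established gives $\iso_\Lambda(v)\leq k$, hence $\ord_\Lambda(v)\geq(\ord_p(m)-k)/2$, and so $v\in p^{-\ell}\Lambda$ for $\ell:=-\ord_\Lambda(v)$, bounded above by $(k-\ord_p(m))/2$ uniformly in $v$.

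The only step requiring genuine care is the Hensel lift, which hinges on self-duality of $\Lambda$: primitivity of $v_0$ supplies a direction $u$ in which the gradient of $q$ at $v_0$ is a $p$-adic unit, and the same primitivity then passes to the lift. Beyond this, the argument is a routine bookkeeping of $p$-adic valuations.
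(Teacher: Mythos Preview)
Your proof is correct and follows essentially the same approach as the paper's. Both arguments lift the primitive representative $v_0$ to a nearby isotropic vector via Hensel's lemma and then observe that any $\xi\in\Delta_{v,p}$ is forced to pair with this isotropic vector to high $p$-adic order; the paper invokes smoothness of the integral quadric $Q_\Lambda$ over $\Z_p$ abstractly, whereas you unpack this by exhibiting an explicit direction $u$ (via self-duality) along which the derivative of $q$ is a unit, but this is the same content.
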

 \begin{proof} We may assume that $v\in \Lambda'$ and thus that $d:= \iso_\Lambda(v)=\ord_p(q(v))$.
  We note that $q(v)= 0$ modulo $p^{d}$, and hence the image of the vector $v$ in $\mathbb{P}({\Lambda}/p^{d}\Lambda)$ lies on the quadric $Q_\Lambda(\Z/p^{d}\Z)$. 
  Since this quadric is smooth, Hensel's Lemma
   implies that there is an isotropic
    vector $\widetilde v\in \Lambda'$ satisfying 
  $$\widetilde v\equiv v \bmod{p^{d}\Lambda}.$$
  This implies that if $z\in \Lambda'_{\cO_{\C_p}}$
   is orthogonal to $v$, then 
  $$\ord_p(\langle z, \widetilde v\rangle ) \geq d > k,$$
  and therefore that $[z]\notin X_{p,\Lambda}^{\leq k}$.
 \end{proof}

The following is a partial converse to the previous lemma:
 \begin{lemma}\label{padiclemmaconverse}
Let $\Lambda\subseteq V_{\Q_p}$ be a self-dual $\Z_p$-lattice and $v \in V_{\Q_p}$ with $q(v)\neq 0$ such that the orthogonal complement of $v$ in $V_{\Q_p}$ is not a hyperbolic plane. Then:
\[
\iso_{\Lambda}(v) \leq k \quad \Rightarrow \quad \Delta_{v,p}\cap X_{p,\Lambda}^{\leq \lceil 3k/2\rceil }\neq \emptyset.
\]
 \end{lemma}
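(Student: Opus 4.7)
The plan is to construct an explicit point $\xi = [v_\xi] \in \Delta_{v,p} \cap X_{p,\Lambda}^{\leq \lceil 3k/2\rceil}$ via a two-step Hensel-type procedure: first Hensel-lift $v$ to a nearby isotropic vector in $\Lambda$; second project orthogonally onto $v^\perp$ and lift over $\C_p$ to a genuine isotropic vector in $v^\perp \otimes \C_p$, analysing the arising $p$-adic valuations.

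After rescaling one may assume $v \in \Lambda'$ is primitive, so that $d := \iso_\Lambda(v) = \ord_p(q(v)) \leq k$ and $q(v) = p^d u$ with $u \in \Z_p^\times$. When $d \geq 1$, the self-duality of $\Lambda$ ensures that $Q_\Lambda$ is smooth, and Newton's method initialised in a direction $\eta_0 \in \Lambda$ with $\langle v, \eta_0\rangle = 1$ (which exists because $v$ is primitive in a self-dual lattice) produces a primitive isotropic vector $\tilde v \in (\Lambda')_0$ with $\tilde v \equiv v \pmod{p^d \Lambda}$. A careful choice of $\eta_0$ arranges that the first-order correction $\eta := (\tilde v - v)/p^d \in \Lambda$ satisfies $\langle v, \eta\rangle \in \Z_p^\times$. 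Setting $\alpha := \langle v, \tilde v\rangle/\langle v, v\rangle \in \Z_p^\times$, the orthogonal projection
\[
w_0 := \tilde v - \alpha v \in \Lambda \cap v^\perp
\]
is still primitive in $\Lambda$ (its reduction modulo $p$ is a nonzero multiple of $\bar v$), satisfies $\ord_p(q(w_0)) = d$ (a direct computation gives $q(w_0) = -p^d c^2/(4u)$ with $c := 2u + \langle v, \eta\rangle \in \Z_p^\times$), and is congruent to $(1-\alpha) v$ modulo $p^d \Lambda$ with $1 - \alpha \in \Z_p^\times$. (The case $d = 0$ is handled separately: one directly picks a primitive isotropic $w_0 \in \Lambda \cap v^\perp$, which exists because $\dim v^\perp \geq 3$ in this case, using that $n = 3$ and $d = 0$ would force $v^\perp$ to be a hyperbolic plane, contradicting the hypothesis.)

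The hypothesis that $W := v^\perp$ is not a hyperbolic plane guarantees that the quadric $\{y \in W_{\C_p} : q(y) = 0\}$ is non-empty: if $\dim W \geq 3$ this follows from Hasse--Minkowski, while if $\dim W = 2$ the space $W \otimes \C_p$ is hyperbolic. Solving the quadratic equation $q(w_0 + t y) = 0$ for a suitably chosen auxiliary direction $y \in W$ yields an isotropic vector $v_\xi := w_0 + t y \in W \otimes \C_p$ with $\ord_p(t) \geq d/2$. The direction $y$ is selected---if necessary via a transcendence argument in the spirit of the low-dimensional examples of \S\ref{sec:def-Xp}---so that $[v_\xi] \in X_p$, i.e., so that $v_\xi$ is not orthogonal to any $\Q_p$-isotropic vector. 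For any primitive isotropic $w \in (\Lambda')_0$, expanding
\[
\langle v_\xi, w\rangle = \langle w_0, w\rangle + t \langle y, w\rangle
\]
and using $w_0 \equiv (1 - \alpha) v \pmod{p^d \Lambda}$ together with $\ord_p(t) \geq d/2$, one obtains the final estimate via a case analysis on $\ord_p(\langle v, w\rangle)$: if this valuation is less than $d/2$, ultrametricity gives $\ord_p(\langle v_\xi, w\rangle) = \ord_p(\langle v, w\rangle) < d/2$; otherwise, both summands combine to give a bound of at most $\lceil 3d/2\rceil \leq \lceil 3k/2\rceil$, where the isotropy of $w$ and the description of $\Lambda \cap W$ (whose discriminant index in $\pi_W(\Lambda)$ is $p^d$ by Lemma \ref{lem-orthdecomp}) constrain how close $w$ can simultaneously be to $v^\perp$ and to being highly divisible by $p$.

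The main obstacle is the final case analysis when $w$ lies close to $v^\perp$---so that the leading contribution $(1-\alpha) \langle v, w\rangle$ is highly $p$-divisible---where one must leverage the isotropy and primitivity of $w$, the structural properties of $\Lambda \cap W$, and a sufficiently generic choice of the auxiliary $y \in W$ to ensure that $\langle y, w\rangle$ does not become too $p$-divisible. The bound $\lceil 3d/2\rceil$ arises heuristically from the sum $d + d/2$, where the first $d$ measures the proximity $\tilde v \equiv v \pmod{p^d \Lambda}$ and the additional $d/2$ reflects the square-root loss in solving the quadratic equation $q(w_0 + ty) = 0$ over $\C_p$.
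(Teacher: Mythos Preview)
Your proposal contains a genuine gap precisely at the point you yourself flag as ``the main obstacle.'' The case analysis in the final paragraph is not carried out: when $\ord_p(\langle v,w\rangle) \geq d/2$, you need $\ord_p(\langle v_\xi,w\rangle) \leq \lceil 3d/2\rceil$ \emph{uniformly} for every primitive isotropic $w \in (\Lambda')_0$, and you assert this follows from ``a sufficiently generic choice of the auxiliary $y$.'' But $(\Lambda')_0$ is an infinite (compact) set, and for any fixed $y \in W$ there will be isotropic $w$ making $\langle y,w\rangle$ arbitrarily $p$-divisible; so your second summand $t\langle y,w\rangle$ has no uniform valuation bound. Likewise the first summand $\langle w_0,w\rangle$ can have arbitrarily large valuation as $w$ approaches $w_0^\perp$. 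A transcendence argument of the type used in the low-dimensional examples only shows that $\langle v_\xi,w\rangle \neq 0$ for all $w$, i.e.\ that $[v_\xi] \in X_p$; it says nothing about membership in the specific affinoid $X_{p,\Lambda}^{\leq \lceil 3d/2\rceil}$.

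The paper's proof resolves this by a different mechanism. It introduces an auxiliary \emph{self-dual} $\cO_{\C_p}$-lattice $\widetilde{\Lambda}_1$ sandwiched between $(\Lambda \cap v^\perp)_{\cO_{\C_p}}$ and its dual $\pi(\Lambda)_{\cO_{\C_p}}$ (each inclusion annihilated by $p^{\lceil d/2\rceil}$). The crucial observation is that, after writing $\pi(w) = p^{m_w}\widetilde w$ with $\widetilde w$ primitive in $\widetilde{\Lambda}_1$, compactness of $(\Lambda')_0$ forces the residues $\widetilde w \bmod \mathfrak{m}\widetilde{\Lambda}_1$ to range over only \emph{finitely many} classes. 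Since a smooth quadric over $\overline{\F}_p$ is not contained in any finite union of hyperplanes, one can Hensel-lift to a single isotropic $\widetilde z \in \widetilde{\Lambda}_1$ with $\ord_p(\langle \widetilde z,\widetilde w\rangle) = 0$ for all of these finitely many residues simultaneously; the bound $3d/2$ then falls out from $m_w \leq d$ and the $p^{d/2}$ index between $\widetilde{\Lambda}_1$ and $\Lambda_{\cO_{\C_p}}$. This compactness-plus-finite-avoidance step is the missing idea in your sketch.
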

  \begin{proof}
As before, we assume that $v\in \Lambda'$ and hence   $d:= \iso_\Lambda(v)=\ord_p(q(v))$.
Let $V_1$ be the orthogonal complement of $v$ in $V_{\Q_p}$ and write	$\pi\colon V_{\Q_p} \rightarrow V_1$	for the orthogonal projection onto $V_1$.
By Lemma \ref{lem-orthdecomp} the projection $\pi(\Lambda)$ is the dual lattice of the intersection $\Lambda_1=\Lambda\cap V_1$ and the quotient $\pi(\Lambda)/\Lambda_1$ is cyclic of order $p^d$.
By assumption $v$ is not isotropic modulo $p^{d+1}$ and, therefore, $v\not\equiv w \bmod p^{d+1}$ for every isotropic vector $w\in (\Lambda')_0$.
The reduction modulo $p^{d+1}$ of $\langle\cdot,\cdot\rangle$ defines a perfect bilinear form on $\Lambda/p^{d+1}\Lambda$.
It follows that for every $w\in (\Lambda')_0$ there exists an element $\bar{u}\in \Lambda/p^{d+1}\Lambda$ with
\[
\langle u, v\rangle= 0 \bmod{p^{d+1}\Z_p}\quad\mbox{and}\quad\langle u, w\rangle\neq 0 \bmod{p^{d+1}\Z_p}.
\]
By Hensel's Lemma one can lift $\bar{u}$ to an element $u\in\Lambda_1$.
In particular, we see that
\begin{align}\label{bound}
\pi(w)\notin p^{d+1}\pi(\Lambda).
\end{align}
	We may find a self-dual $\cO_{\C_p}$-lattice $\widetilde{\Lambda}_1\subseteq (V_1)_{\C_p}$ such that
	$$(\Lambda_1)_{\cO_{\C_p}}\subseteq \widetilde{\Lambda}_1 \subseteq \pi(\Lambda)_{\cO_{\C_p}}.$$
	Moreover, each quotient of subsequent lattices in the chain above is annihilated by $p^{\lceil d/2 \rceil}.$
	By \eqref{bound} we may write $$\pi(w)=p^{m_w} \widetilde{w}$$ with $\widetilde{w}$  a primitive vector of $\widetilde{\Lambda}_1$ and $m_w\leq d$.	Since $\Lambda$ is compact, it follows that the set	$\{\widetilde{w} \bmod{\mathfrak{m}_{\cO_{\C_p}} \widetilde{\Lambda}_1}\ \vert\ w\in (\Lambda')_0\}	\subseteq \widetilde{\Lambda}_1 /\mathfrak{m}_{\cO_{\C_p}} \widetilde{\Lambda}_1$	is finite.
	Let $Q_{\widetilde{\Lambda}_1}\subseteq \mathbb{P}(\widetilde{\Lambda}_1)$ be the smooth quadric cut out by the equation $q=0$.
	The $\overline{\F}_p$-valued points of $Q$ are not contained in any finite union of hyperplanes.
	Thus, by Hensel's Lemma there exists a primitive vector $\widetilde{z}\in \widetilde{\Lambda}_1$ such that	$$q(\widetilde{z})=0\qquad\mbox{and}\qquad \ord_p(\langle \widetilde{z},\widetilde{w}\rangle)=0.$$	for every $w\in (\Lambda')_0.$	There exists a rational number $m_z\leq d/2$ such that $z=p^{m_z}\widetilde{z}$ is primitive in $\Lambda_{\cO_{\C_p}}$.
	Thus,  	$$\ord_p(\langle z,w\rangle) = m_z+ m_w \leq 3d/2$$	for all $w\in (\Lambda')_0$,	i.e., the class $[z]$ lies in the intersection $\Delta_{v,p}\cap X_{p,\Lambda}^{\leq \lceil 3d/2\rceil }.$
	\end{proof}
	
\begin{remark}\label{divisorsequality}
Similar arguments as in the proof of Lemma \ref{padiclemmaconverse} show the following:
Let $v_1,v_2\in V_{\Q_p}$ with $q(v_i)\neq 0$  and $\Delta_{v_i,p}\neq \emptyset$ for $i=1,2$.
Then $\Delta_{v_1,p}=\Delta_{v_2,p}$ if and only if there exists $a\in\Q_p^\times$ with $v_{1}=a\cdot v_{2}$.
\end{remark}

\subsubsection{Locally finite, rational quadratic divisors}

\begin{definition}
A  formal sum
$$ \Delta := \sum_{v\in V_{+}} a_v \cdot \Delta_{v,p}$$
of rational quadratic divisors on $X_p$ is said to be  {\em locally finite} if it satisfies the following equivalent conditions:
\begin{enumerate}
\item\label{lfrq1} For each affinoid  subset $\cA\subseteq X_p$, the formal sum
$$ \Delta \cap \cA := \sum_{\Delta_{v,p}\cap \cA\ne \emptyset} 
a_v \cdot \Delta_v$$
is a divisor, that is a finite linear combination.
\item\label{lfrq2}
For one (and thus for each) self-dual lattice $\Lambda\subseteq V_{\Q_p}$ and each $k\geq 0$, the formal sum $\Delta\cap X_{p,\Lambda}^{\leq k}$ is a divisor.
\end{enumerate}
\end{definition}
Write $\Div(X_p)$ for the module of locally finite, rational quadratic divisors.
The group $G(\Q)$ naturally acts on $\Div(X_p)$.
The equivalence of \eqref{lfrq1} and \eqref{lfrq2} follows from Lemma \ref{affinoidlemma}.

Let $\latt\subseteq V$ be the $\Z[1/p]$-lattice fixed in the introduction.
The most common construction of  locally finite divisors 
 rests on
the following lemma:
\begin{lemma}
\label{lemma:arch-padic}
Let $C$ be a compact subset of $X_\infty$, $m$ a positive rational number, and $\cA$ an affinoid subset of $X_p$.
Then the set
$$ X \  = \   \{ v\in \latt\ \vert\ q(v)=m, \ \Delta_{v,\infty}\cap C \ne \emptyset,\ \mbox{and} \
\Delta_{v,p} \cap  \cA \ne \emptyset \} $$ 
is finite.
\end{lemma}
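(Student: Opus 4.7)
The plan is to combine the Archimedean finiteness lemma (Lemma \ref{Archimedeanlemma}), the $p$-adic containment lemma (Lemma \ref{padiclemma}), and the fact that a $\Z[1/p]$-lattice, when intersected with a $p$-adically bounded subset, becomes a $\Z$-lattice, hence discrete in $V_\R$.

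First, since $\cA$ is an affinoid subset of $X_p$, Lemma \ref{affinoidlemma} (together with the compactness of $\cA$) allows us to fix a self-dual $\Z_p$-lattice $\Lambda\subseteq V_{\Q_p}$ and an integer $k\geq 0$ such that $\cA\subseteq X_{p,\Lambda}^{\leq k}$. Applying Lemma \ref{padiclemma} with $m=q(v)$, we obtain an integer $\ell$ (depending only on $k$ and $\ord_p(m)$) such that every $v\in V_{\Q_p}$ with $q(v)=m$ and $\Delta_{v,p}\cap\cA\neq\emptyset$ lies in $p^{-\ell}\Lambda$. In particular,
\[
X\ \subseteq\ \latt\cap p^{-\ell}\Lambda.
\]
On the Archimedean side, Lemma \ref{Archimedeanlemma} provides a compact subset $K_\infty\subseteq V_\R$ containing every $v\in V_\R$ with $q(v)=m$ and $\Delta_{v,\infty}\cap C\neq\emptyset$; hence $X\subseteq K_\infty\cap\latt\cap p^{-\ell}\Lambda$.

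It remains to verify that $\latt\cap p^{-\ell}\Lambda$ is a discrete subgroup of $V_\R$: picking a $\Z[1/p]$-basis of $\latt$ and writing elements in coordinates, the condition of belonging to $p^{-\ell}\Lambda$ bounds the $p$-adic denominators of the coordinates uniformly, so $\latt\cap p^{-\ell}\Lambda$ is a finitely generated free $\Z$-module of rank $n=\dim V$, sitting discretely inside $V_\R$. Equivalently, one uses that $\latt$ embeds diagonally as a discrete subgroup of $V_\R\times V_{\Q_p}$, and the compact open neighbourhood $K_\infty\times p^{-\ell}\Lambda$ meets it in a finite set. Either formulation shows $X$ is finite.

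The only mildly delicate point is the last paragraph, namely the passage from \emph{$p$-adically bounded} to \emph{discrete in $V_\R$}; this is the strong-approximation-style observation that makes the finiteness work, and is what forces $\latt$ to be a $\Z[1/p]$-lattice rather than merely a $\Z[1/p]$-submodule.
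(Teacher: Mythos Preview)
Your proof is correct and follows essentially the same approach as the paper: reduce to $\cA=X_{p,\Lambda}^{\le k}$, use Lemma~\ref{Archimedeanlemma} to confine $X$ to a compact subset of $V_\R$, use Lemma~\ref{padiclemma} to confine it to $p^{-\ell}\Lambda\cap\latt$, and conclude from discreteness. You supply more justification for the discreteness of $\latt\cap p^{-\ell}\Lambda$ in $V_\R$ than the paper does (which simply asserts it), but the structure is identical.
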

\begin{proof}
Let $\Lambda\subseteq V_{\Q_p}$ a self-dual $\Z_p$-lattice.
Since any affinoid subset is contained in one of the form $X_{p,\Lambda}^{\le k}$,  we may assume that $\cA = X_{p,\Lambda}^{\le k}$  without loss of generality.
By Lemma \ref{Archimedeanlemma},  $X$ is contained in a compact subset of $V_\R$, and by Lemma \ref{padiclemma}, it is contained in  
$p^{-l} \Lambda \cap \latt$ for some $l\in \Z$, a discrete subset of $V_\R$. 
The finiteness of $X$ follows.
\end{proof}

\subsubsection{Compactly supported products}
Let $\cO\subseteq V_{+}$ be a finite union of $\Gamma$-orbits, such as 
the set of all $v\in \latt$ satisfying $q(v)=m$  for a given $m$.
Lemma \ref{lemma:arch-padic}
motivates the following definition:
\begin{definition}
Let $M = (M_v)_{v\in \cO}$ be a $\Z$-linear representation of $\cO$.
The \emph{compactly supported product} of the abelian groups $M_v$ is the $\Gamma$-submodule of
$ \prod_{v\in \cO} M_{v}$
given by
\begin{equation*}
  \sideset{}{'}\prod_{v\in \cO} M_{v} =
      \left\{
			(m_v)\in \prod_{v\in \cO} M_{v}\ \middle\vert
				\begin{array}{l}\mbox{there exists}\ C\subseteq X_\infty \text{ compact} \\
       \mbox{such that}\ \Delta_{v,\infty}\cap C =\emptyset \Rightarrow m_v=0 \end{array}
    \right\}.
\end{equation*}
\end{definition}

Let $(\Z)_{v\in \cO}$ be the representation of $\cO$, whose objects are all equal to $\Z$ and whose arrows attached to $\gamma\in \Gamma$ are the identity maps.
After multiplying $\cO$ with a positive rational number we may assume that $\cO$ is contained in $\latt$.
Lemma \ref{lemma:arch-padic} implies that the natural $\Gamma$-equivariant
homomorphism
$$\cZ\colon \sideset{}{'}\prod_{v\in \cO} \Z \longrightarrow \Div(X_p),\quad (a_v)_{v\in\cO}\longmapsto \sum_{v\in \cO} a_v  \cdot\Delta_{v,p}$$
is well-defined.
 
For later purposes let us discuss an important feature of compactly supported products.
The \emph{compactly supported product} of a chain complex $(M_{\sbullet,v})_{v\in\cO}$ of $\cO$-re\-pre\-sen\-tations is the subcomplex
$$\sideset{}{'}\prod_{v\in \cO} M_{{\sbullet},v}\subseteq \prod_{v\in \cO} M_{{\sbullet},v}$$
defined by taking the compactly supported product in each degree.
One easily deduces that taking compactly supported products commutes with taking homology:
\begin{lemma}\label{homologyofrestricted}
For all $q\geq 0$,
$$H_q \left(\sideset{}{'}\prod_{v\in \cO} M_{{\sbullet},v} \right)=\sideset{}{'}\prod_{v\in \cO} H_q(M_{{\sbullet},v}).$$
\end{lemma}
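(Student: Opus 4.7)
The plan is to construct a natural componentwise map from left to right and verify that it is an isomorphism by means of explicit ``cut off outside the support'' choices for bounding chains and representative cycles. First I will note that $\sideset{}{'}\prod_{v\in\cO} M_{\sbullet,v}$ is a genuine subcomplex of the ordinary product $\prod_{v\in\cO} M_{\sbullet,v}$: the differentials act componentwise and preserve supports (they send zero to zero in each component), so a compactly supported chain has compactly supported boundary. In particular, the left-hand side is the homology of a well-defined complex.

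Next, I would define the candidate map
$$\phi\colon H_q\Bigl(\sideset{}{'}\prod_{v\in\cO} M_{\sbullet,v}\Bigr) \longrightarrow \sideset{}{'}\prod_{v\in\cO} H_q(M_{\sbullet,v}), \qquad [(m_v)_v] \longmapsto ([m_v])_v,$$
and check it is well-defined. Each $m_v$ is automatically a cycle in $M_{\sbullet,v}$ since the differential is componentwise, the support of $([m_v])_v$ is contained in that of $(m_v)_v$ (so the same compact $C\subseteq X_\infty$ witnesses the support condition on the right), and a componentwise boundary becomes zero in each component.

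For injectivity, I take a cycle $(m_v)_v$ supported inside $S_C:=\{v\in\cO : \Delta_{v,\infty}\cap C\neq\emptyset\}$ whose componentwise class vanishes. For each $v$ choose $n_v\in M_{q+1,v}$ with $d(n_v)=m_v$, then redefine $\tilde n_v:=n_v$ for $v\in S_C$ and $\tilde n_v:=0$ for $v\notin S_C$. The tuple $(\tilde n_v)_v$ lies in the compactly supported product (same witness $C$), and $d(\tilde n_v)=m_v$ holds componentwise: it equals $d(n_v)=m_v$ on $S_C$, and equals $0=m_v$ outside since $(m_v)_v$ is supported in $S_C$. For surjectivity, given a compactly supported tuple $(x_v)_v$ of classes with support witnessed by some $C$, choose $m_v:=0$ whenever $x_v=0$ and otherwise pick any cycle representative of $x_v$; then $(m_v)_v$ has support inside that of $(x_v)_v$, lies in the compactly supported product, is a cycle, and satisfies $\phi([(m_v)_v])=(x_v)_v$.

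The argument is essentially bookkeeping and I do not anticipate a real obstacle; the only conceptual point requiring care is that every time one has to make choices — of a bounding chain $n_v$ or of a representative cycle $m_v$ — those choices must be compatible with the compact support condition. This is handled in both directions by the freedom to take the zero element as representative whenever the corresponding homology class (or chain) is zero, which ensures that supports on the chain level never exceed supports on the homology level.
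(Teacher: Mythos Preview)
Your argument is correct and is exactly the natural elementary verification one would supply; the paper itself does not give a proof at all, merely remarking that the statement is easily deduced. Your care in forcing the bounding chains and representative cycles to be zero outside the relevant support set $S_C$ is precisely the point that makes the argument work.
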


\subsection{Kudla--Millson divisors}
 \label{kudlamillsondivisors}
Suppose for the moment that $s=0$ and  that $\Gamma$ is torsion-free.
Then $\Gamma$ acts discretely on the $p$-adic locally symmetric space $X_p$ and the quotient $\Gamma\backslash X_p$ is a smooth rigid analytic variety.
We may identify $\Div (X_p)^{\Gamma}$ with the space of those divisors on $\Gamma\backslash X_p$ whose pullback to $X_p$ is a (locally finite) rational quadratic divisor.

For general $s$ the ``Archimedean components'' of rational quadratic divisors define  real codimension $s$ cycles on the Archimedean locally symmetric space $X_\infty$.
This suggests studying classes in $H^s(\Gamma, \Div(X_p))$.
We will attach to every subset $\cO\subseteq V_+$ that is a finite union of $\Gamma$-orbits a class $\sD_\cO\in H^s(\Gamma, \Div(X_p))$. 
Roughly speaking, $\sD_\cO$ should arise from the assignment
\begin{align}\label{hybridformula}
C_s(X_\infty)\longrightarrow \Div(X_p),\qquad c\longmapsto \sum_{v\in\cO}(c\cap \Delta_{v,\infty}) \cdot \Delta_{v,p}
\end{align}
where $c$ is a $s$-chain on $X_\infty$ and $(\Delta_{v,\infty}\cap c)$ is the signed count of intersection points of $c$ and $\Delta_{v,\infty}$.
The main obstacle to making this rigorous is arises from 
the possibility that   $c$ and $\Delta_{v,\infty}$ might
 not intersect transversally. In what follows, this difficulty is overcome  by constructing a $G(\Q)$-invariant subcomplex $\mathfrak{C}_{\sbullet}\subseteq C_{{\sbullet}}(X_\infty)$ for which
\begin{itemize}
\item the inclusion $\mathfrak{C}_{\sbullet}\hookrightarrow C_{{\sbullet}}(X_\infty)$ is a quasi-isomorphism and
\item every $c\in \mathfrak{C}_s$ intersects every Archimedean cycle $\Delta_{v,\infty}$ nicely.
\end{itemize}

\subsubsection{A subcomplex of the singular chain complex}\label{transversal}
Every $\gamma\in G(\Q)$ induces homomorphisms 
\begin{align*}
\gamma\colon C_{\sbullet}(X_\infty) &\longrightarrow C_{\sbullet}(X_\infty),\\
\intertext{as well as homomorphisms} 
\gamma\colon C_{\sbullet}(X_\infty, X_\infty\Setminus\Delta_{v,\infty}) &\longrightarrow C_{\sbullet}(X_\infty, X_\infty\Setminus\Delta_{\gamma v,\infty})
\end{align*}
 for each
$v\in V$ of positive length,
which fulfill relation \eqref{compatibility}. 
Hence $(C_{\sbullet}(X_\infty, X_\infty\Setminus\Delta_{\gamma v,\infty}))_{v\in V_+}$ is a representation of  the $\Gamma$-module $V_+$.
The product of the quotient maps
$$C_{\sbullet}(X_\infty)\longrightarrow C_{\sbullet}(X_\infty,X_\infty\Setminus \Delta_{v,\infty} )$$
defines a $G(\Q)$-equivariant homomorphism
$$r_{\sbullet}\colon C_{\sbullet}(X_\infty)\longrightarrow \prod_{v\in V_{+}} C_{\sbullet}(X_\infty,X_\infty\Setminus \Delta_{v,\infty} )$$
of chain complexes.

By Proposition \ref{relativehomology} the homology of the chain complex $$C_{\sbullet}(X_\infty,X_\infty\Setminus \Delta_{v,\infty} )$$
is concentrated in degree $s$.
Therefore, the inclusion
$$\tau_{\geq s} C_{\sbullet}(X_\infty,X_\infty\Setminus \Delta_{v,\infty} )\longrightarrow C_{\sbullet}(X_\infty,X_\infty\Setminus \Delta_{v,\infty} )$$
of the canonical $s$-truncation is a quasi-isomorphism.
Since taking homology commutes with products, the inclusion
$$\iota_{\sbullet}\colon \prod_{v\in V_{+}}\tau_{\geq s}C_{\sbullet}(X_\infty,X_\infty\Setminus \Delta_{v,\infty} )\longrightarrow \prod_{v\in V_{+}}C_{\sbullet}(X_\infty,X_\infty\Setminus \Delta_{v,\infty} )$$
is a quasi-isomorphism as well.

Define
$$\mathfrak{C}_{\sbullet} \subseteq C_{\sbullet}(X_\infty)$$
to be the pullback of $\prod_{v\in V_{+}}\tau_{\geq s}C_{\sbullet}(X_\infty,X_\infty\Setminus \Delta_{v,\infty} )$ along $r_{\sbullet}$.
More concretely,  
\begin{align*}
\mathfrak{C}_q=
\begin{cases}C_{q}(X_\infty) & \mbox{ for } q>s,\\
C_{q}(X_\infty \Setminus \bigcup_{v\in V_{+}}\Delta_{v,\infty})& \mbox{ for } q< s,
\end{cases}
\end{align*}
and
\[
\mathfrak{C}_s=\left\{c\in C_{s}(X_\infty)\ \middle\vert\  d_s(c)\in C_{s-1}(X_\infty \Setminus \textstyle\bigcup_{v\in V_{+}}\Delta_{v,\infty})\right\}.
\]

\begin{proposition}\label{qisom}
The inclusion
$$\mathfrak{C}_{\sbullet}\longrightarrow C_{\sbullet}(X_\infty)$$
is a quasi-isomorphism.
In particular, $\mathfrak{C}_{\sbullet}$ is a resolution of the constant $\Z[\Gamma]$-module $\Z$.
\end{proposition}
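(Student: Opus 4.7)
The plan is to prove the proposition via the long exact sequence in homology associated to the short exact sequence of chain complexes
$$0\longrightarrow \mathfrak{C}_{\sbullet}\longrightarrow C_{\sbullet}(X_\infty)\longrightarrow Q_{\sbullet}\longrightarrow 0,$$
where $Q_{\sbullet}:=C_{\sbullet}(X_\infty)/\mathfrak{C}_{\sbullet}$; it then suffices to show that $Q_{\sbullet}$ is acyclic. Writing $Y:=\bigcup_{v\in V_{+}}\Delta_{v,\infty}$ and $R_{\sbullet}:=C_{\sbullet}(X_\infty,X_\infty\Setminus Y)$, the explicit formulas for $\mathfrak{C}_{q}$ yield $Q_q=0$ for $q>s$, $Q_q=R_q$ for $q<s$, and one checks that $\mathfrak{C}_s/C_s(X_\infty\Setminus Y)=\ker(\bar d_s\colon R_s\to R_{s-1})$, which identifies $Q_s$ with the image of $\bar d_s$, embedded into $Q_{s-1}=R_{s-1}$ as a subgroup via the differential. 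Since the induced differential out of $Q_s$ is simply this inclusion and is therefore injective, $H_q(Q_{\sbullet})=0$ for every $q\geq s$, while for $q<s$ the remaining differentials agree with those of $R_{\sbullet}$, so $H_q(Q_{\sbullet})=H_q(R_{\sbullet})=H_q(X_\infty,X_\infty\Setminus Y)$.

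The proposition is thus reduced to the vanishing $H_q(X_\infty,X_\infty\Setminus Y)=0$ for every $q<s$, i.e., to the $(s-1)$-connectedness of the pair $(X_\infty,X_\infty\Setminus Y)$ in singular homology. My plan for this is a transversality argument: each $\Delta_{v,\infty}$ is a closed, totally geodesic submanifold of $X_\infty$ of codimension $s$, so for $q<s$ a generic smooth $q$-simplex is transverse to---and hence, by dimension, disjoint from---$\Delta_{v,\infty}$. Concretely, given any singular $q$-simplex $\sigma\colon\Delta^q\to X_\infty$ whose boundary already maps into $X_\infty\Setminus Y$, one may pass to a smooth approximation and work in the Fr\'echet space of smooth maps $\Delta^q\to X_\infty$ agreeing with $\sigma$ on $\partial\Delta^q$. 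In this Baire space, for each individual $v\in V_+$ the subset of maps whose image avoids $\Delta_{v,\infty}$ is open (by closedness of $\Delta_{v,\infty}$ and compactness of $\sigma(\Delta^q)$) and dense (by the standard transversality theorem, using $q<s$). Since $V_+$ is countable, the Baire category theorem guarantees that the intersection of these conditions over all $v\in V_+$ is still dense, so one may approximate $\sigma$ by a $\sigma'$ with image in $X_\infty\Setminus Y$; the resulting homotopy supplies a $(q+1)$-chain realising $\sigma$ as a boundary in $R_{\sbullet}$. A standard subdivision procedure extends this from individual simplices to arbitrary relative $q$-cycles.

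The main obstacle is precisely this transversality step: while general position against a single closed submanifold of positive codimension is textbook, handling the countable---typically dense---family $Y$ requires the careful setup of a Baire function space in which each avoidance condition is simultaneously open and dense, together with a Baire-theoretic diagonal argument; I expect this to be the most delicate part of the proof to phrase rigorously. Once $Q_{\sbullet}$ is known to be acyclic, the final assertion that $\mathfrak{C}_{\sbullet}$ is a resolution of the constant $\Z[\Gamma]$-module $\Z$ is immediate, since the resulting quasi-isomorphism composes with the augmentation of $C_{\sbullet}(X_\infty)$, which is itself a resolution of $\Z$ by the contractibility of $X_\infty$.
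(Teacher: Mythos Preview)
Your approach is correct in spirit but differs substantially from the paper's proof, which is considerably shorter and more algebraic. The paper exploits the very definition of $\mathfrak{C}_{\sbullet}$ as the pullback of the quasi-isomorphism
$\iota_{\sbullet}\colon \prod_{v}\tau_{\geq s}C_{\sbullet}(X_\infty,X_\infty\Setminus\Delta_{v,\infty})\to \prod_{v}C_{\sbullet}(X_\infty,X_\infty\Setminus\Delta_{v,\infty})$
along $r_{\sbullet}$, together with a two-line abstract lemma: pulling back a quasi-isomorphism of chain complexes along any chain map yields a map that is injective on homology. This instantly gives injectivity of $H_\ast(\mathfrak{C}_{\sbullet})\to H_\ast(C_{\sbullet}(X_\infty))$. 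Surjectivity is then trivial, since $X_\infty$ is contractible and a single point in the (nonempty, by measure considerations) complement $X_\infty\Setminus Y$ represents the generator of $H_0$. No transversality, no Baire argument, and no connectivity analysis of $X_\infty\Setminus Y$ is required. Your route has the advantage of yielding the geometric statement that the pair $(X_\infty,X_\infty\Setminus Y)$ is $(s-1)$-connected, which is interesting in its own right; the paper's argument buys brevity and avoids all analytic input beyond the existence of one point off $Y$.

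Your reduction to $H_q(X_\infty,X_\infty\Setminus Y)=0$ for $q<s$ is carried out correctly, and the transversality-plus-Baire argument for perturbing a single map $(\Delta^q,\partial\Delta^q)\to(X_\infty,X_\infty\Setminus Y)$ into $X_\infty\Setminus Y$ is sound. However, the sentence ``a standard subdivision procedure extends this from individual simplices to arbitrary relative $q$-cycles'' papers over a real difficulty: a general relative $q$-cycle $c=\sum a_i\sigma_i$ need not decompose into simplices each of whose boundary already lies in $X_\infty\Setminus Y$, and no iterated barycentric subdivision repairs this, since the condition $\partial c\in C_{q-1}(X_\infty\Setminus Y)$ can hold purely by virtue of cancellations among faces landing in $Y$. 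The clean fix is to note that what your Baire argument actually establishes is the vanishing of the relative \emph{homotopy} sets $\pi_q(X_\infty,X_\infty\Setminus Y)$ for $q<s$; the desired homology vanishing then follows from the general fact that an $(s-1)$-connected pair has trivial relative singular homology in degrees $\le s-1$.
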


\begin{proof}
Since countable unions of proper closed submanifolds have measure zero, it follows that there exists a point
$$Z\in X_\infty \Setminus \bigcup_{v\in V, q(v)>0} \Delta_{v,\infty}.$$
Thus, surjectivity on homology follows from the fact that $X_\infty$ is contractible.
Injectivity on homology follows from the next general lemma on chain complexes.
\end{proof}

\begin{lemma}
Let $f_{\sbullet}\colon A_{\sbullet} \rightarrow B_{\sbullet}$ be a quasi-isomorphism of chain complexes and $g_{\sbullet}\colon C_{\sbullet} \rightarrow B_{\sbullet}$ a homomorphism of chain complexes.
Then the pullback
$$\widetilde{f}\colon A_{\sbullet}\times_{B_{\sbullet}} C_{\sbullet} \longrightarrow C_{\sbullet}$$
of $f$ along $g$ induces injective maps on homology.
\end{lemma}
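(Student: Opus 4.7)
The plan is to carry out a direct chain-level argument, reducing the question of injectivity of $\widetilde{f}_*$ to a lifting problem controlled by the quasi-isomorphism hypothesis on $f$. Given a cycle $(a,c) \in (A_\bullet \times_{B_\bullet} C_\bullet)_n$ whose image in $H_n(C_\bullet)$ vanishes, the goal is to exhibit $(a,c)$ as a boundary of some element of the pullback.

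First, I would exploit that $c = d c_1$ for some $c_1 \in C_{n+1}$, so that $f(a) = g(c) = d\,g(c_1)$ is a boundary in $B_\bullet$. Since $f$ is a quasi-isomorphism, $f_*$ is injective, which forces $[a] = 0 \in H_n(A_\bullet)$, giving $a = d a_1$ for some $a_1 \in A_{n+1}$. The element
$$\eta := f(a_1) - g(c_1) \in B_{n+1}$$
is then a cycle, as $d\eta = f(a) - g(c) = 0$ follows from $(a,c)$ lying in the pullback. This $\eta$ measures precisely the failure of $(a_1, c_1)$ to belong to $A_\bullet \times_{B_\bullet} C_\bullet$.

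Next, since $f_*$ is surjective on homology, I would find a cycle $\alpha \in A_{n+1}$ and $\beta \in B_{n+2}$ with $\eta = f(\alpha) + d\beta$. Replacing $a_1$ by $a_1 - \alpha$ preserves $da_1 = a$ and reduces the obstruction to $\eta = d\beta$. So far everything is routine.

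The hardest step is the elimination of the residual $d\beta$: one must produce $\sigma_A \in A_{n+2}$ and $\sigma_C \in C_{n+2}$ with $f(\sigma_A) - g(\sigma_C) = \beta$, whereupon replacing $(a_1, c_1)$ by $(a_1 - d\sigma_A,\ c_1 - d\sigma_C)$ yields an element of the pullback whose boundary is $(a,c)$. I expect to obtain these $\sigma_A, \sigma_C$ by combining two ingredients: the surjectivity of $(f,-g)_* : H_{n+2}(A_\bullet) \oplus H_{n+2}(C_\bullet) \to H_{n+2}(B_\bullet)$ (which holds since $f_*$ alone is surjective), and the acyclicity of the mapping cone of $f$. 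Equivalently, this can be packaged via the long exact sequence attached to
$$0 \longrightarrow A_\bullet \times_{B_\bullet} C_\bullet \longrightarrow A_\bullet \oplus C_\bullet \xrightarrow{\ \Phi\ } f(A_\bullet) + g(C_\bullet) \longrightarrow 0,$$
where $\Phi(a,c) = f(a) - g(c)$: the preliminary chain-level reductions show that the kernel of $\widetilde{f}_*$ is contained in the image of the connecting homomorphism $\partial \colon H_{n+1}(f(A_\bullet)+g(C_\bullet)) \to H_n(A_\bullet \times_{B_\bullet} C_\bullet)$, and the lifting step above amounts to exhibiting this $\partial$ as zero. This last vanishing is the technical heart of the proof; once it is in place, the desired injectivity on homology follows.
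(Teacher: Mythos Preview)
Your analysis up through the reduction to $\eta = d\beta$ is correct and in fact more careful than the paper's own argument: the paper simply finds $u,z$ with $du=x$, $dz=y$, writes $d(u,z)=(x,y)$, and declares this a boundary in the pullback without ever checking that $(u,z)$ lies in $A_{n+1}\times_{B_{n+1}} C_{n+1}$, i.e.\ that $f(u)=g(z)$. You correctly recognise that this is the crux and isolate the residual obstruction.

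The gap is in your final step. Producing $\sigma_A\in A_{n+2}$, $\sigma_C\in C_{n+2}$ with $f(\sigma_A)-g(\sigma_C)=\beta$ requires $\beta$ to lie in $f(A_{n+2})+g(C_{n+2})$ as a \emph{chain}, not merely in homology, and neither surjectivity of $(f,-g)_*$ on $H_{n+2}$ nor acyclicity of the mapping cone of $f$ controls arbitrary chains. In fact the lemma is false as stated. Take $A_\bullet=0$; take $B_\bullet$ to be $\Z\xrightarrow{\,1\,}\Z$ in degrees $2,1$; take $C_\bullet$ to be $\Z\xrightarrow{\,1\,}\Z$ in degrees $1,0$; let $f=0$ (a quasi-isomorphism, both complexes being acyclic) and let $g$ be the identity in degree $1$ and zero elsewhere. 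The pullback is $\Z$ concentrated in degree $0$, so $H_0(A_\bullet\times_{B_\bullet}C_\bullet)=\Z$ while $H_0(C_\bullet)=0$, and $\widetilde f_*$ is not injective. Tracing your argument here, one finds $\beta$ is a nonzero element of $B_2=\Z$ while $A_2=C_2=0$, so no $\sigma_A,\sigma_C$ exist; equivalently, the connecting map $\partial$ in your short exact sequence is the identity $\Z\to\Z$, not zero. The paper's intended application (inclusion of a canonical truncation, where $f_q$ is an isomorphism for all $q$ above a fixed bound) may well go through under that extra hypothesis, but the general lemma and both proofs as written do not stand.
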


\begin{proof}
Let $(x,y)\in A_n \times_{B_n} C_n$ be a cycle, 
i.e., assume that 
$$dx=0, \qquad dy=0\qquad f(x)=g(y).$$
Suppose that the image of $(x,y)$ in the homology of $C_n$ is equal to zero.
That means that there exists $z\in C_{n+1}$ such that $\widetilde{f}(x,y)=y=dz$ for some $z\in C_{n+1}$. 
Since $f(x)=g(y)=g(dz)=d(g(z))$ is a boundary and $f$ is a quasi-isomorphism, we see that $x$ is a boundary as well.
Thus there exists an element $u\in A_{n+1}$ such that $x=du$.
It follows that $d(u,z)=(x,y)$ is a boundary, which proves the claim.
\end{proof}

\subsubsection{Divisor valued cohomology classes attached to $\Gamma$-orbits}\label{KMdivisorsfromorbits}
Let $\cO\subseteq V_{+}$ be a finite union of $\Gamma$-orbits.
By definition the image of the $\Gamma$-equivariant map
$$r_{\sbullet}^{\cO}\colon C_{\sbullet}(X_\infty)\longrightarrow \prod_{v\in\cO} C_{\sbullet}(X_\infty,X_\infty\Setminus \Delta_{v,\infty})$$
is contained in the compactly supported product.
Thus, restricting this map to $\mathfrak{C}_{\sbullet}$ gives
 a homomorphism
$$r_{\sbullet}^{\cO}\colon \mathfrak{C}_{\sbullet} \longrightarrow \sideset{}{'}\prod_{v\in\cO} \tau_{\geq s} C_{\sbullet}(X_\infty,X_\infty\Setminus \Delta_{v,\infty}).$$
The natural quotient map to homology induces a $\Gamma$-equivariant quasi-isomorphism
$$h_{\sbullet}\colon \sideset{}{'}\prod_{v\in \cO}\tau_{\geq s}C_{\sbullet}(X_\infty,X_\infty\Setminus \Delta_{v,\infty}) \longrightarrow \sideset{}{'}\prod_{v\in \cO}H_s(X_\infty,X_\infty\Setminus \Delta_{v,\infty})[-s].$$
Moreover, the identification \eqref{orientation} induces a $\Gamma$-equivariant isomorphism
$$t\colon \sideset{}{'}\prod_{v\in \cO}H_s(X_\infty,X_\infty\Setminus \Delta_{v,\infty})\xlongrightarrow{\cong}\sideset{}{'}\prod_{v\in \cO}\Z.$$
The composition
$$\cZ\circ t \circ h_{\sbullet} \circ r_{\sbullet}^{\cO}\colon \mathfrak{C}_{\sbullet} \longrightarrow \Div(X_p)[-s]$$
defines a morphism from $\Z$ to $\Div(X_p)[-s]$ in the derived category of $\Z[\Gamma]$-modules
or, in simpler terms, a class
$$\sD_\cO^{\Gamma}\in H^s(\Gamma, \Div(X_p)).$$

Let us unravel this construction. 
It is convenient to introduce the following signed intersection number for $c\in \mathfrak{C}_s$ and $\Delta_v$, $v\in V_{\R,+}$:
by definition  
$$d_s(c)\in \Ker(d_{s-1}\colon C_{s-1}(X_\infty\Setminus \Delta_{v,\infty})\rightarrow C_{s-2}(X_\infty\Setminus \Delta_{v,\infty})).$$
The intersection number $c \cap \Delta_{w,\infty}$ is the image of $d_s(c)$ in the reduced homology group $\tilde{H}_{s-1}(X_\infty\Setminus \Delta_{v,\infty})$ that we identify with $\Z$ by \eqref{orientation}.
If $c$ is a smooth and does intersect $\Delta_{v,\infty}$ transversely, this is simply the signed count of intersection points of $c$ with $\Delta_{v,\infty}$.
Now the class $\sD_\cO^{\Gamma}$ is in fact given by the $s$-cocycle
\[
\mathfrak{C}_s\longrightarrow \Div(X_p),\qquad c\longmapsto \sum_{v\in\cO}(c \cap \Delta_{v,\infty}) \cdot \Delta_{v,p}.
\]

\subsubsection{Restriction to a subgroup}
Let $\Gamma'\subseteq \Gamma$ be a finite index subgroup and
\[\res\colon H^s(\Gamma, \Div(X_p))\longrightarrow H^s(\Gamma', \Div(X_p))\]
the restriction map on cohomology.
The group $\Gamma'$ fulfills all the conditions that were
 imposed on $\Gamma$.
Moreover, the set $\cO$ is also a finite union of $\Gamma'$-orbits.
Thus, we can define the class
$$\sD_\cO^{\Gamma'}\in H^s(\Gamma', \Div(X_p)).$$
The following proposition follows directly from the construction.
\begin{proposition}
Let $\Gamma'\subseteq \Gamma$ be a finite index subgroup.
Then  
\[
\res(\sD_\cO^{\Gamma})=\sD_\cO^{\Gamma'}.
\]
\end{proposition}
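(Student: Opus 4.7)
The plan is to observe that both classes $\sD_\cO^\Gamma$ and $\sD_\cO^{\Gamma'}$ arise from one and the same chain-level construction, which is $G(\Q)$-equivariant and therefore simultaneously $\Gamma$- and $\Gamma'$-equivariant; the equality $\res(\sD_\cO^\Gamma)=\sD_\cO^{\Gamma'}$ should then follow from the general compatibility of the derived-category (equivalently, projective-resolution) formalism for group cohomology with restriction of scalars.

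First I would note that the subcomplex $\mathfrak{C}_\sbullet \subseteq C_\sbullet(X_\infty)$ of \S\ref{transversal} is constructed using only the action of $G(\Q)$ and the subset $V_+$; neither the complex itself nor Proposition \ref{qisom} (identifying it as a resolution of $\Z$) involves any choice of subgroup. Consequently, $\mathfrak{C}_\sbullet$ is simultaneously a resolution of $\Z$ as a $\Z[\Gamma]$-module and as a $\Z[\Gamma']$-module. Since $\cO$ is a finite union of $\Gamma$-orbits, it is \emph{a fortiori} a finite union of $\Gamma'$-orbits, and the compactly-supported product $\sideset{}{'}\prod_{v\in\cO}$ together with the four maps $r_\sbullet^\cO$, $h_\sbullet$, $t$, $\cZ$ entering the construction of \S\ref{KMdivisorsfromorbits} are all $G(\Q)$-equivariant. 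Viewing the composition $\cZ \circ t \circ h_\sbullet \circ r_\sbullet^\cO$ as a $\Z[\Gamma']$-equivariant chain map $\mathfrak{C}_\sbullet \to \Div(X_p)[-s]$ therefore recovers verbatim the chain map used to define $\sD_\cO^{\Gamma'}$.

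It remains to verify that restriction in group cohomology is compatible with this construction. The cleanest route is to use the explicit cocycle representative unravelled at the end of \S\ref{KMdivisorsfromorbits}: the class $\sD_\cO^\Gamma$ is represented by the $s$-cocycle
\[
\mathfrak{C}_s \longrightarrow \Div(X_p), \qquad c \longmapsto \sum_{v\in \cO} (c \cap \Delta_{v,\infty}) \cdot \Delta_{v,p},
\]
and the same formula represents $\sD_\cO^{\Gamma'}$ when $\mathfrak{C}_\sbullet$ is regarded as a $\Z[\Gamma']$-resolution. Because restricting an equivariant cocycle from $\Gamma$-equivariance to $\Gamma'$-equivariance is precisely what implements the restriction map in cohomology, the identity $\res(\sD_\cO^\Gamma)=\sD_\cO^{\Gamma'}$ follows. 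A more categorical alternative is to fix any $\Z[\Gamma]$-free resolution $P_\sbullet$ of $\Z$ (automatically $\Z[\Gamma']$-free, as $\Z[\Gamma]$ is free over $\Z[\Gamma']$) together with a lift $P_\sbullet \to \mathfrak{C}_\sbullet$ of $\id_\Z$, and invoke the naturality of $\Ext^s_{\Z[\Gamma]}(\Z,-)\to \Ext^s_{\Z[\Gamma']}(\Z,-)$ under restriction of scalars. There is no genuine obstacle here: once one recognises that both $\mathfrak{C}_\sbullet$ and the chain map to $\Div(X_p)[-s]$ are intrinsic objects indifferent to the distinction between $\Gamma$ and $\Gamma'$, the result is a formal consequence of standard homological algebra, the only care needed being bookkeeping of the comparison maps between resolutions.
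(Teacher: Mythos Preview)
Your proposal is correct and is exactly the elaboration of what the paper means by ``follows directly from the construction'': the complex $\mathfrak{C}_\sbullet$, the maps $r_\sbullet^\cO$, $h_\sbullet$, $t$, $\cZ$, and the resulting cocycle $c\mapsto \sum_{v\in\cO}(c\cap\Delta_{v,\infty})\Delta_{v,p}$ are all intrinsic and $G(\Q)$-equivariant, so restricting to $\Gamma'$ recovers $\sD_\cO^{\Gamma'}$ on the nose.
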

Since the class $\sD_\cO^{\Gamma}$ does not depend on the group $\Gamma$ preserving $\cO$, we adopt the abbreviation
$$\sD_\cO:=\sD_\cO^{\Gamma}.$$

\begin{remark}
If $\cO$ decomposes into the union of $\Gamma$-orbits $\cO_1,\ldots,\cO_h$, then  
\[
\sD_\cO =\sum_{i=1}^{h}\sD_{\cO_i}.
\]
Moreover,   $\sD_{a\cO}=\sign(a)^{s}\cdot \sD_{\cO}$ for every $a\in \Q^\times$. 
If $V$ is three-dimensional and $\cO$ consists of a single $\Gamma$-orbit, the class $\sD_\cO$ is obviously zero if $\Delta_v=\emptyset$ for one (and thus any) $v\in\cO$. 
It is natural to wonder whether the latter relations generate all relations between the divisors attached to $\Gamma$-orbits in $V_{+}$..
This is known in signature $(3,0)$ and $(2,1)$. 
The case that $V$ is the split three-dimensional quadratic space and $\Gamma=\SL_2(\Z[1/p])$ is implicitly treated in \cite{DV1}.
For the general case see \cite{Ge-quaternionic}.
\end{remark}

\subsubsection{Kudla--Millson divisors attached to Schwartz functions}
The class $\sD_\cO$ attached to a $\Gamma$-orbit $\cO\subseteq V_+$ should be viewed as an analogue of the connected cycles on orthogonal Shimura varieties introduced by Kudla in \cite[Section 3]{kudla-cycles}.
In the following we introduce the analogue of weighted cycles in the sense of \cite[Section 5]{kudla-cycles}.
Let $\A^{p}_{f}$ denote the ring of finite adeles away from $p$, that is, the restricted product over all completions $\Q_\ell$ for all rational primes $\ell\neq p$.
Put $V_{\A^{p}_{f}}:=V\otimes_\Q \A^{p}_{f}$ and write
\[
S(V_{\A^{p}_{f}}):=\{\Phi\colon V_{\A^{p}_{f}}\rightarrow \Z\ \vert\ f\ \mbox{locally constant with compact support}\}
\]
for the space of $\Z$-valued Schwartz functions on $V_{\A^{p}_{f}}$ with its natural $G(\A^{p}_{f})$-action.
The group $G(\Q)$ acts on $S(V_{\A^{p}_{f}})$ via the diagonal embedding $G(\Q)\hookrightarrow G(\A^{p}_{f})$.
Let $\Phi\in S(V_{\A^{p}_{f}})$ be a $\Gamma$-invariant Schwartz function and $m$ a positive rational number.
For every non-zero integer $r$ the set $\cO(m,\Phi,r):=\{v\in V\ \vert\ q(v)=m,\ \Phi(v)=r\}$ is a finite union of $\Gamma$-orbits. Moreover, it is empty for all but finitely many $r$.
Thus, the cohomology class
\[
\sD_{m,\Phi}:= \sum_{r\in\Z\Smallsetminus\{0\}} r\cdot \sD_{\cO(m,\Phi,r)}
\]
is well-defined.
Its definition is inspired by the formula for the restriction of a weighted cycle to a connected component of an orthogonal Shimura variety (cf.~\cite[Proposition 5.4]{kudla-cycles}).
\begin{definition}
The space of Kudla--Millson divisors of level $\Gamma$ is the subspace
\[\mathcal{KM}(\Gamma)\subseteq H^s(\Gamma, \Div(X_p))\]
generated by the classes $\sD_{m,\Phi}$ with $m\in \Q_{>0}$ and $\Phi\in S(V_{\A^{p}_{f}})^\Gamma$.
\end{definition}

\subsubsection{Kudla--Millson divisors attached to cosets}
The following class of Kudla--Millson divisors features prominently in the construction of $p$-adic Borcherds products in Section \ref{maintheorems}.
Assume for the moment that $\Gamma$ acts trivially on the discriminant module $\D_{\latt}$. 
Let $\widehat{L}$ be the completion of the lattice $L$ inside $V_{\A^{p}_{f}}$.
Then $\beta+\widehat{L}$ is a $\Gamma$-invariant compact open subset of $V_{\A^{p}_{f}}$ and, thus, the characteristic function $\mathbf{1}_{\beta+\widehat{L}}$ is a $\Gamma$-invariant Schwartz function.
\begin{definition}
The Kudla--Millson divisor 
$\sD_{m,\beta}\in \mathcal{KM}(\Gamma)$
is defined as
\[
\sD_{m,\beta}:= \sD_{m,\mathbf{1}_{\beta+\widehat{L}}}.
\]
\end{definition}
Denote by $\cO_\latt(m,\beta)$ the set of vectors $v\in \beta+\latt$ such that $q(v)=m$.
The equality 
\[
\sD_{m,\beta}=\sD_{\cO_L(m,\beta)}.
\]
follows directly from the definition.
If $v\in \beta$, then clearly $-v\in -\beta$.
Since $q(v)=q(-v)$, the equality
\[
\sD_{m,\beta}=(-1)^{s}\sD_{m,-\beta}
\]
follows from the discussion of orientations in Section \ref{sec-orientations}.
Moreover, a simple calculation shows that Kudla--Millson divisors are $p$-ordinary in the following sense:
\begin{proposition}\label{ordinarityprop}
For all $\beta\in {\mathbb D}_{\latt}$ and all positive rational numbers $m$ :
\[
\sD_{p^2m,p\beta}=\sD_{m,\beta}.
\]
\end{proposition}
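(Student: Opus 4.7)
The plan is to unwind both sides to the underlying sets of vectors and apply the scaling identity $\sD_{a\cO} = \sign(a)^s\cdot\sD_\cO$ recorded in the remark preceding the proposition.

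First I would observe that since $\latt$ is a $\Z[1/p]$-lattice and $p\in\Z[1/p]^\times$, multiplication by $p$ is an automorphism of $\latt$ and of $\latt^\#$, and hence induces an automorphism of the discriminant module $\D_\latt$. In particular, $p\latt = \latt$, so the coset of $p\beta$ admits the description
\[
p\beta + \latt \;=\; p\beta + p\latt \;=\; p(\beta + \latt).
\]

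Next I would identify the two orbit sets that index $\sD_{p^2m,p\beta}$ and $\sD_{m,\beta}$. By definition
\[
\cO_\latt(p^2m,p\beta) = \{v\in p(\beta+\latt)\ \vert\ q(v)=p^2m\}.
\]
Writing $v = pw$ with $w\in \beta+\latt$, the condition $q(v)=p^2 m$ becomes $p^2 q(w)=p^2 m$, i.e. $q(w)=m$. The map $v\mapsto v/p$ thus yields a bijection $\cO_\latt(p^2m,p\beta)\xrightarrow{\sim}\cO_\latt(m,\beta)$, and equivalently
\[
\cO_\latt(p^2m,p\beta) \;=\; p\cdot \cO_\latt(m,\beta).
\]

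Finally I would invoke the scaling relation from the remark in \S\ref{KMdivisorsfromorbits}: for $a\in\Q^\times$ and any finite union of $\Gamma$-orbits $\cO\subseteq V_+$,
\[
\sD_{a\cO} \;=\; \sign(a)^s\,\sD_\cO.
\]
Applied with $a=p>0$, this gives $\sD_{p\cdot\cO_\latt(m,\beta)} = \sD_{\cO_\latt(m,\beta)}$, and combining with the equality of orbit sets above,
\[
\sD_{p^2m,p\beta} \;=\; \sD_{\cO_\latt(p^2m,p\beta)} \;=\; \sD_{p\cdot\cO_\latt(m,\beta)} \;=\; \sD_{\cO_\latt(m,\beta)} \;=\; \sD_{m,\beta}.
\]
There is essentially no obstacle here; the only point that might be worth double-checking is that the scaling identity used at the last step genuinely follows from the definitions, which amounts to the observations that $\Delta_{pv,p}=\Delta_{v,p}$ and $\Delta_{pv,\infty}=\Delta_{v,\infty}$ as oriented cycles (the latter because $p>0$, so orientations are preserved).
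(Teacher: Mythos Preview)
Your proof is correct and is exactly the ``simple calculation'' the paper alludes to without spelling out. The paper gives no argument beyond that phrase, and your reduction via $p\latt=\latt$ to the identity $\cO_\latt(p^2m,p\beta)=p\cdot\cO_\latt(m,\beta)$ together with the scaling relation $\sD_{a\cO}=\sign(a)^s\,\sD_\cO$ from the preceding remark is precisely what is intended.
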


\subsection{Explicit cocycles for small values of $s$}\label{explicitcocycles}
By unravelling the definition we see that in case $s=0$ the class $\sD_{\cO}$ is simply the $\Gamma$-invariant divisor
$$\sD_{\cO}=\sum_{v\in\cO} \Delta_{v,p}.$$
In particular, it is non-zero whenever $\Delta_{v,p}\neq\emptyset$ for one (and thus for all) $v\in\cO$. 

\subsubsection{Transversal base points}
When $s\geq 1$, the complex $\mathfrak{C}_{\sbullet}$ is rather big.
For both computational and theoretical purposes it is desirable to replace it by something more manageable.
\begin{definition}
A {\em transversal base point}
 is a pair $(P_{\sbullet},p_{\sbullet})$ consisting of a projective resolution $P_{\sbullet}\xrightarrow{\epsilon} \Z$ of the trivial $\Gamma$-module $\Z$ and a $\Gamma$-equivariant quasi-isomorphism
$$p_{\sbullet} \colon P_{\sbullet} \longrightarrow \mathfrak{C}_{\sbullet}$$
that induces the identity on $\Z$.
\end{definition}
Let $P_{\sbullet}$ be a projective resolution of $\Z$.
By Proposition \ref{qisom} the chain complex $\mathfrak{C}_{\sbullet}$ is a (non-projective) resolution of $\Z$.
Thus, by \cite[Theorem 2.2.6]{Weibel},  there exists a $\Gamma$-equivariant quasi-isomorphism
$$p_{\sbullet} \colon P_{\sbullet} \longrightarrow \mathfrak{C}_{\sbullet}$$
inducing the identity on $\Z$.
Moreover, any two such maps are homotopic.

Let $(P_{\sbullet},p_{\sbullet})$ be a transversal base point.
For $c\in P_s$ and $v\in V_{\R,+}$, define the intersection product
\[
c\cap_{p_\sbullet}  \Delta_{v,\infty}= p_s(c)\cap \Delta_{v,\infty}.
\]
Then for $s\geq 1$ a cocycle representing the class $\mathcal{D}_{\cO}$ is given by the map
$$P_s\longrightarrow \Div(X_p),\quad c \longmapsto \sum_{v\in\cO} (c\cap_{p_\sbullet} \Delta_{v,\infty}) \cdot \Delta_{v,p}.$$

\subsubsection{Barycentric simplices}
A natural candidate for $P_{\sbullet}$ is the bar resolution $B(\Gamma)_{\sbullet}$ of $\Z$ given by
$$B(\Gamma)_q=\Z[\Gamma^{q+1}]$$
and
$$d_q([\gamma_0,\ldots,\gamma_q])=\sum_{i=0}^{q}(-1)^{i}[\gamma_0,\ldots,\hat{\gamma_i},\ldots, \gamma_q].$$

Every point $x\in X_\infty$ gives rise to a map of chain complexes of $\Z[\Gamma]$-modules
$$b^x_{\sbullet}\colon B(\Gamma)_{\sbullet} \longrightarrow C_{\sbullet}(X_\infty)$$
sending $(\gamma_0, \ldots , \gamma_q)\in B(\Gamma)_q$ to the {\em barycentric simplex} $[\gamma_0 x,\ldots, \gamma_q x]$ with corners $\gamma_0 x, ... , \gamma_q x$.
Let us briefly recall  the construction of barycentric simplices:
  let 
  $$ \Delta_q:= \{ (t_0,t_1,\ldots, t_q)  \  \mbox{ s.t. }  \ 
  0\leq t_j \leq 1,  \ \  \ t_0+t_1 + \cdots + t_q =1 \} \subseteq \R^{q+1}$$
  denote the standard $q$-dimensional simplex in $\R^{q+1}$. The choice of a $(q+1)$-tuple of points 
  $(x_0, x_1,\ldots, x_q)$ in $X_\infty$ determines a map
  $$ \Phi_{x_0,\ldots,x_q}\colon \Delta_q\longrightarrow X_\infty$$
  sending $(t_0,t_1,\ldots, t_q)$ to the unique minimum\footnote{The function $g$ is proper and strictly convex on $X_\infty$.  Thus, it achieves its minimum value at exactly one point.} of the real-valued function $g\colon X_\infty \rightarrow \R$ defined by 
  $$ 
  g(x) := t_0 d(x,x_0)^2 + t_1 d(x,x_1)^2 + \cdots + t_q d(x,x_q)^2.$$
 The   map $\Phi_{x_0,\ldots,x_s}$  maps the vertices of
 $\Delta_q$  to the points $x_0,\ldots, x_q$, and we set
 $$ [x_0,\ldots, x_q] := \Phi_{x_0,\ldots, x_q}\in C_q(X_\infty).$$
 The boundary of this  $q$-dimensional simplex is given by
 $$ \partial([x_0,\ldots,x_q]) = \sum_{i=0}^q (-1)^i [x_0,\ldots, \hat{x_i},\ldots x_q],$$
 where $[x_0,\ldots, \hat{x_i},\ldots x_q]$ denotes the $(q-1)$-dimensional simplex in $X_\infty$ obtained by removing the $i$-th vertex from $[x_0,\ldots, x_q]$.

\subsubsection{Transversal base points in signature $(r,1)$}
It is natural to ask whether there exists a point $x\in X_\infty$ 
for which the map $b^x_{\sbullet}$ is a transversal base point, i.e., factors over $\mathfrak{C}_{\sbullet}$.
This question can be answered affirmatively if $s=1$.
Indeed, in that case the only condition required
 of $x$ is that it not be
  an element of the union of all $\Delta_{v,\infty}$, $v\in\cO$, and it was already observed in the proof of Proposition \ref{qisom} that such points exist.
So in case $s=1$ we may chose $x$ as above and get the following description of the class attached to the orbit of $v$:
the space $X_\infty\Setminus \Delta_{v,\infty}$ decomposes into two connected components $H_v^+$ and $H_v^-$ for all $v\in \cO$,
where the choice of $H_v^+$ is determined by the chosen orientations.
Then the intersection product is given by
\begin{align}
\label{hyperbolicclasses}
[\gamma_0,\gamma_1]\cap_{b^x_{\sbullet}} \Delta_{v,\infty}= 
\begin{cases}
\phantom{-}0 & \mbox{if }  \  \gamma_0 x \mbox{ and }  \gamma_1 x \mbox{ both belong to }  H_v^+  \mbox{ or }  H_v^-;\\
\phantom{-}1 & \mbox{if }\ \gamma_0 x\in H_v^+ \mbox{ and }  \gamma_1 x \in H_v^-;\\
-1 & \mbox{if }\ \gamma_0 x\in H_v^- \mbox{ and } \gamma_1 x \in H_v^+.
\end{cases}
\end{align}

\subsection{Modular symbols} \label{modularsymbols}
Assume that the signature of $V$ is $(r,1)$.
The locally symmetric space $\Gammao\backslash X_\infty$ is not compact for one (and therefore any) arithmetic subgroup $\Gammao\subseteq G(\Q)$ if and only if $Q(\Q)\neq \emptyset$, i.e., $V$ has an isotropic vector.
In this case all maximal isotropic subspaces of $V$ are one-dimensional, i.e., given two distinct elements $[w_1], [w_2]\in Q(\Q)$ we  have
$$\langle w_1, w_2 \rangle \neq 0.$$
For the remainder of this section the assumption $Q(\Q)\neq \emptyset$ is made.
Note that by Meyer's theorem this is automatic in case $r \geq 4$.

The group $\Gamma$ acts on the set $Q(\Q)$ via the inclusion $\Gamma\subseteq G(\Q)$.
Therefore, it acts naturally on the free abelian group $\Z[Q(\Q)]$ generated by $Q(\Q)$ and its subgroup
$$\Z[Q(\Q)]_0:=\left\{ \sum n_{\ell}\cdot \ell \in \Z[Q(\Q)]\ \middle\vert\ \sum n_{\ell}=0 \right\}.$$

\begin{definition}
Let $M$ be any $\Gamma$-module.
The space of \emph{$M$-valued modular symbols} is the $\Gamma$-module
$$\mathrm{MS}(M):=\Hom_{\Z}(\Z[Q(\Q)]_{0}, M).$$
\end{definition}
Let
$$\delta\colon \mathrm{MS}(\Div (X_p))^{\Gamma} \longrightarrow H^1(\Gamma, \Div (X_p))$$
be the boundary map of the long exact sequence obtained by 
applying the functor $\Hom(-, \Div (X_p))$ to the short exact sequence
\begin{align}\label{symbolexact}
0 \longrightarrow \Z[Q(\Q)]_0\longrightarrow \Z[Q(\Q)]\longrightarrow \Z\longrightarrow 0
\end{align}
and  taking $\Gamma$-invariants.

We now discuss how  certain Kudla--Millson divisors can be lifted to $\Gamma$-invariant modular symbols.
This bridges the gap to the formulation of the theory in terms of modular symbols as formulated in \cite{DV1}.
It also gives a criterion for showing that Kudla--Millson divisors are non-zero.
The following lemma shows that the associated modular 
symbol,  if it exists, is  unique.
 \begin{lemma}
 \label{modsymbinjectivity}
 The map
 $$\delta\colon \mathrm{MS}(\Div (X_p))^{\Gamma} \longrightarrow H^1(\Gamma, \Div (X_p))$$
 is injective.
 \end{lemma}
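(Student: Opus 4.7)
The plan is to extract injectivity from the long exact sequence in $\Gamma$-cohomology and then verify a vanishing of invariants using the action of horocyclic stabilizers.

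\smallskip

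\noindent\emph{Long exact sequence.} Because $\Z[Q(\Q)]$ is $\Z$-free, the contravariant functor $\Hom(-,\Div(X_p))$ preserves exactness of \eqref{symbolexact}. The associated long exact sequence in $\Gamma$-cohomology begins
\[
0 \to \Div(X_p)^\Gamma \to \Hom(\Z[Q(\Q)],\Div(X_p))^\Gamma \xrightarrow{\alpha} \mathrm{MS}(\Div(X_p))^\Gamma \xrightarrow{\delta} H^1(\Gamma,\Div(X_p)),
\]
where $\alpha$ takes a $\Gamma$-equivariant function $f\colon Q(\Q)\to \Div(X_p)$ to the modular symbol $\ell_1 - \ell_2 \mapsto f(\ell_1) - f(\ell_2)$. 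Consequently $\ker(\delta)=\mathrm{im}(\alpha)$, and injectivity of $\delta$ reduces to the claim that every $\Gamma$-equivariant function $f$ is constant.

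\smallskip

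\noindent\emph{Reduction to horocyclic invariants.} Fix $\ell_0 = [w_0] \in Q(\Q)$ and let $U$ be the unipotent radical of the parabolic subgroup of $G_\Q$ stabilizing $\ell_0$. In signature $(r,1)$, $U$ is an abelian vector group of dimension $r-1$, naturally identified with the positive-definite subspace $V_0 = \langle w_0, w_0' \rangle^\perp$ for any isotropic companion $w_0'$ with $\langle w_0, w_0'\rangle = 1$. Since $\Gamma$ is a $p$-arithmetic congruence subgroup, $U \cap \Gamma$ contains a $\Z[1/p]$-lattice of full rank and is therefore $p$-adically dense in $U(\Q_p)$. Because $\Gamma_{\ell_0}$ contains $U \cap \Gamma$, it suffices to prove
\[
\Div(X_p)^{U\cap\Gamma} = 0,
\]
applied at each $\ell_0 \in Q(\Q)$: this forces $f(\ell_0)=0$ for all $\ell_0$, hence $f\equiv 0$.

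\smallskip

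\noindent\emph{Horocyclic vanishing.} Assuming $D = \sum a_v \Delta_{v,p}$ is a non-zero locally finite element of $\Div(X_p)^{U\cap\Gamma}$, choose $v_0 \in V_+$ in its support with $\Delta_{v_0,p} \ne \emptyset$, a point $\xi_0 \in \Delta_{v_0,p}$, and an affinoid $\cA$ in the exhaustion of $X_p$ containing $\xi_0$. By $p$-adic density, infinitely many $u \in U \cap \Gamma$ lie in an arbitrarily small neighborhood of the identity; for any such $u$ one has $u\xi_0 \in \cA$ and hence $\Delta_{uv_0,p} \cap \cA \ne \emptyset$. By Remark \ref{divisorsequality}, $\Delta_{uv_0,p} = \Delta_{v_0,p}$ forces $uv_0 = cv_0$ for some $c \in \Q^\times$, and since $u$ is unipotent the only possibility is $c=1$, so the stabilizer in $U$ of the divisor $\Delta_{v_0,p}$ coincides with $\mathrm{Stab}_U(v_0)$. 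A direct computation of the $U$-action on $V$ shows this stabilizer is trivial if $\langle v_0, w_0\rangle \ne 0$, and a hyperplane in $U$ if $v_0 \perp w_0$; in either case, infinitely many distinct cosets in $(U\cap\Gamma)/\mathrm{Stab}_U(v_0)$ lie in any $p$-adic neighborhood of the identity (in the hyperplane case, one applies the density of $\Z[1/p]$ in $\Q_p$ to a non-trivial linear functional on $V_0$). Hence infinitely many distinct rational quadratic divisors in the support of $D$ meet $\cA$, contradicting local finiteness. The main obstacle is this last counting in the degenerate case $v_0 \perp w_0$, where one must verify that restricting to a $p$-adic neighborhood of the identity still leaves infinitely many divisors in play.
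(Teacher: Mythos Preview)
Your argument is correct and follows the same route as the paper: the long exact sequence reduces the claim to $\Div(X_p)^{\Gamma \cap U_w} = 0$, which both you and the paper deduce from the $p$-adic density of the $p$-arithmetic unipotent subgroup in $U_w(\Q_p)$. The only cosmetic difference is that the paper phrases the vanishing via the structural fact that $Q \setminus w^\perp$ is a principal homogeneous $U_w$-space (together with strong approximation for unipotent groups), whereas you argue more explicitly by moving a single $\Delta_{v_0,p}$ within a fixed affinoid and invoking local finiteness directly.
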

  \begin{proof}
	Analyzing the long exact sequence induced by the short exact sequence \eqref{symbolexact}
	we see that it is enough to show that
	$$\Hom_\Z(\Z[Q(\Q)],\Div (X_p))^{\Gamma}=0.$$
	The stabilizer of each element $w\in Q(\Q)$ defines a parabolic subgroup $P_w\subseteq G$.
	Let $U_w\subseteq P_w$ be its unipotent radical.
	We put $\Gamma_{w}= P_w(\Q)\cap \Gamma$.
	By \cite[Theorem 7.3]{BoCN},  the group $\Gamma$ acts on $Q(\Q)$ with finitely many orbits $\Gamma w_1,\ldots,
	\Gamma w_h$.
	Thus it is enough to show that
	$$\Hom_\Z(\Z[\Gamma w_i],\Div (X_p))^{\Gamma}=0$$
	for every $i=1,\ldots, h$.
	
	For each $w\in Q(\Q),$ the $\Z[\Gamma]$-module $\Hom_\Z(\Z[\Gamma w],\Div (X_p))$ can be identified with
	the coinduction of $\Div (X_p)$ from $\Gamma_w$ to $\Gamma$.
	Thus, by Shapiro's Lemma, we see that it is enough to prove that
	$$\Div (X_p)^{\Gamma_w}=0.$$
	
	The parabolic $P_w$ naturally acts on the $\Q_p$-variety
	\[Q_{w}:=Q_{\Q_p}\Setminus w^{\perp}.\]
	We claim that $Q_w$ is a principal homogeneous $U_w$-space.
	This would immediately imply that
	\[\Div(X_p)^{U_w(\Q_p)}=0\]
	and, as strong approximation holds for unipotent groups, that
  \[\Div(X_p)^{\Gamma_w\cap U_w(\Q_p)}=0.\]
	The claim can be deduced from the Bruhat decomposition.
	In the following we sketch an elementary proof of the claim.
	
	The parabolic group $P_w$ acts on the space $V'=w^\perp/\Span \{ w \}$.
	It is not hard to see that the unipotent radical $U_w$ consists of all elements in the kernel
	$\ker(P_w\rightarrow \GL(V'))$ that act trivially on $w$.
	Let us fix an isotropic vector $w^{\prime}$ with
	\[
	\langle w,w'\rangle=1
	\]
	and put $W=\Span \{ w,w' \}^{\perp}$.
	Let $K$ be any field extension of $\Q_p$.
	We may write any vector $v\in V_{K}$ uniquely as $v=a\cdot w+ b\cdot w' + u$ with $a,b\in K$ and $u$ in $W_K$.
	The condition that $v$ is not in the orthogonal complement of $w$ is then equivalent to $b$ being non-zero.
	So every $\xi \in Q_{w}(K)$ is represented by a unique element of the form $v_\xi= a\cdot w + w' + u$.
	Moreover, since the line $\xi$ is isotropic we see that $a=-q(u)$.
	Conversely, the line spanned by $-q(u)w + w' + u$ is an element of $Q_w(K)$ for every $u\in W_K$.
	Thus, we may identify $Q_w$ with the affine space associated to $W$.
	What remains is to construct a unique transformation $f_u\in U_w(K)$ such that
	\[f_u(w')=-q(u)w + w' + u.\]
	Remember that such a transformation necessarily has to fulfil $f(w)=w$.
	Therefore, it remains to define the restriction of $f_u$ to $W_K$.
	There are two cases:
	first assume that $q(u)\neq 0$.
	Then we put $f_u(u)=u+q(u)w$ and $f_u(x)=x$ for all $x\in W_R$ perpendicular to $u$.
	Secondly, if $u$ is isotropic, we choose an isotropic vector $u'\in W_R$ such that \[
	\langle u,u'\rangle=1
	\]
	We put $f_u(u)=u$, $f_u(u')=u'+w$ and $f_u(x)=x$ for every $x$ perpendicular to $\Span(w,w',u,u').$
	In both cases $f_u$ is an orthogonal transformation that fixes $w$ and acts trivially on $V'_K$.
	\end{proof}

\subsubsection{Modular symbols and intersections with geodesics}
Given distinct isotropic lines $\ell_{-},\ell_{+}\in Q(\mathbb{Q})$ we let $\Pi(\ell_{-},\ell_{+})_\R\subseteq V_\R$ denote the $\mathbb{R}$-plane spanned by $\ell_{-}$ and $\ell_{+}$.
The set
$$[\ell_{-},\ell_{+}]=\{Z\in X_\infty\ \vert\ Z\subseteq \Pi(\ell_{-},\ell_{+})\}$$
defines a one-dimensional flat subspace of $X_\infty$.
Note that $X_\infty$ consists of the negative lines of $V_\R$.
In particular, there is a natural embedding of $X_\infty$ into $\mathbb{P}(V_\R)$.
We may compactify $X_\infty$ by adding $Q(\R)$ as the boundary.
Let $\overline{X}_\infty$ be the resulting closed subspace of $\mathbb{P}(V_\R)$.
Then the closure of $[\ell_{-},\ell_{+}]$ in $\overline{X}_\infty$ is just the geodesic from $\ell_{-}$ to $\ell_{+}$ and we equip it with the induced orientation.

Let $v\in V$ be a vector of positive length such that its orthogonal complement in $V$ is anisotropic.
This implies that the cycles $\Delta_{v,\infty}$ and $[\ell_{-},\ell_{+}]$ intersect transversally and we write $\left( \Delta_{v,\infty} \cap [\ell_{-},\ell_{+}] \right) $ for its signed intersection number.
By the assumption that $v^\perp$ is anisotropic over $\Q$, the closure $\overline{\Delta}_{v,\infty}$ of $\Delta_{v,\infty}$ in $\overline{X}_\infty$ does not intersect $Q(\Q)$.
The complement of  $\overline{\Delta}_{v,\infty}$ in $\overline{X}_\infty$ decomposes into two connected components $\overline{H}_v^{+}$ and $\overline{H}_v^{-}$.
The intersection number is zero if and only if $\ell_{-}$ and $\ell_{+}$ are on the same connected component.
Choosing generators $w_{-}$ and $w_{+}$ of $\ell_{-}$ and $\ell_{+}$ with $\langle w_{-}, w_{+}\rangle < 0$ this happens if and only if
$\langle v, w_{-} \rangle$ and $\langle v, w_{+} \rangle$ have opposite sign.
 Concretely the intersection number can be computed by
\begin{align}\label{compacthyperbolicclasses}
 \Delta_{v,\infty} \cap [\ell_{-},\ell_{+}]
=\begin{cases}
0 & \mbox{if}\ \ell_{-}, \ell_{+} \in \overline{H}_v^{+}\ \mbox{or}\ \ell_{-}, \ell_{+}\in \overline{H}_v^{-}\\
1 & \mbox{if}\ \ell_{-}\in \overline{H}_v^{+}, \ell_{+} \in \overline{H}_v^{-}\\
-1 & \mbox{if}\ \ell_{-}\in \overline{H}_v^{-}, \ell_{+} \in \overline{H}_v^{+}
\end{cases}
\end{align}

\subsubsection{Finiteness of intersections}

 \begin{lemma}\label{modularsymbolintersection}
 Fix a pair of distinct elements $[w_{-}],[w_{+}]\in Q(\Q)$.
 We may assume that $\langle w_{-}, w_{+}\rangle = -1.$
 Fix a rational number $d > 0$ and a $\Z$-lattice $L_\circ$ in $V$. 
 There are only finitely many vectors $v \in L$ satisfying:
  \begin{itemize}
  \item $q(v)= d,$
  \item $\langle v, w_{-} \rangle, \langle v, w_{+} \rangle$ are both non-zero and have opposite sign.
  \end{itemize}   
 \end{lemma}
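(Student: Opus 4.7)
The plan is to decompose $V$ orthogonally with respect to the hyperbolic plane $\Pi := \Span\{w_-, w_+\}$. Because $w_\pm$ are isotropic and $\langle w_-, w_+\rangle = -1$, the plane $\Pi$ is non-degenerate of signature $(1,1)$, so its orthogonal complement $W := \Pi^\perp$ has signature $(r-1, 0)$, i.e.\ is positive definite---this is where the standing hypothesis that $V$ has signature $(r,1)$ really enters. Writing each $v \in V_\Q$ uniquely as $v = a w_- + b w_+ + u$ with $a, b \in \Q$ and $u \in W_\Q$, a direct bilinear calculation gives
\[
q(v) = -ab + q(u), \qquad \langle v, w_-\rangle = -b, \qquad \langle v, w_+\rangle = -a.
\]

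The conditions of the lemma translate to $a, b \neq 0$, $ab < 0$, and $-ab + q(u) = d$. Combined with $q(u) \geq 0$ (positive definiteness of $W$), these force
\[
0 \leq q(u) < d \qquad \text{and} \qquad 0 < -ab = |ab| \leq d.
\]

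I would then exploit these two bounds in turn. For the component $u$: since $W_\R$ is positive definite, the closed ball $\{u \in W_\R : q(u) \leq d\}$ is compact, while $\pi_W(L) \subseteq W_\Q$ is a $\Z$-lattice; hence only finitely many values of $u = \pi_W(v)$ can arise. For the coefficients $(a,b)$: the linear functionals $v \mapsto a$ and $v \mapsto b$ take values on $L$ inside finitely generated $\Z$-submodules of $\Q$, hence inside $\tfrac{1}{N}\Z$ for some positive integer $N$ depending only on $L$ and $w_\pm$. Together with $a, b \neq 0$ this gives $|a|, |b| \geq 1/N$, and then $|ab| \leq d$ forces $|a|, |b| \leq Nd$, leaving only finitely many rational values for $a$ and $b$. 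Since $v$ is recovered from the triple $(a, b, u)$, the set of $v$ satisfying the hypotheses is finite.

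I do not anticipate any serious obstacle: the argument reduces to an elementary lattice-point count in a positive definite ball, together with the observation that rational coordinates of vectors of a $\Z$-lattice have uniformly bounded denominators. The one conceptual point worth flagging is that the analogous statement genuinely needs the signature $(r,1)$ hypothesis: were $W$ of indefinite signature, Pell-type phenomena could produce infinitely many solutions and the compactness step would break down.
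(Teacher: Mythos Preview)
Your proof is correct and follows essentially the same approach as the paper: both arguments use the orthogonal decomposition $V = \Pi \oplus \Pi^\perp$ into the hyperbolic plane $\Pi$ spanned by $w_\pm$ and its positive definite complement, observe that under the sign hypothesis the equation $d = q(v)$ splits as a sum of two non-negative terms (one bounding $q(u)$, the other bounding the product $|\langle v,w_-\rangle|\cdot|\langle v,w_+\rangle|$), and then conclude finiteness via a compactness-plus-discreteness argument on each piece. The only cosmetic difference is that you write $v = a w_- + b w_+ + u$ in explicit coordinates while the paper works with the projections $v^{\|}, v^{\perp}$ and the pairings $\langle v, w_\pm\rangle$ directly; the bounded-denominator step is likewise handled the same way in substance.
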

  \begin{proof}
  Let $\Pi$ be the subspace generated by $w_{-}$ and $w_{+}$.
	The plane $\Pi$ is hyperbolic and the restriction of $\langle \cdot,\cdot\rangle$ to $\Pi$ is non-degnerate.
	So there is an orthogonal decomposition $V = \Pi \oplus \Pi^{\perp}$ and $\Pi^{\perp}$ is positive definite.
	We may write $v = v^{||} + v^{\perp}$ with respect this decomposition.
	The projection $v^{||}$ equals  
  \[
	v^{||} = - \left(  \langle v,w_{+} \rangle w_{-} + \langle v, w_{-} \rangle w_{+} \right).
	\]

	We calculate:
  \begin{align*}
  d &= q(v) \nonumber \\
  &= q(v^{\perp}) + q \left(  - \left(  \langle v,w_{+} \rangle w_{-} + \langle v, w_{-} \rangle w_{+} \right) \right) \nonumber \\
  &= q(v^{\perp}) +   \left( \langle v,w_{+} \rangle \cdot \langle v, w_{-} \rangle \cdot \langle w_{-},w_{+} \rangle \right) \nonumber \\
  &= q(v^{\perp}) - \langle v,w_{+} \rangle \cdot \langle v, w_{-} \rangle \nonumber \\
  &= q(v^{\perp}) +   \left| \langle v,w_{+} \rangle \right| \cdot \left| \langle v, w_{-} \rangle \right|,
  \end{align*}
  where the last equality follows because $\langle v,w_{-} \rangle$ and $\langle v,w_{+} \rangle$ have opposite signs.
	There is some integer $N$ such that $N w_{-}, N w_{+} \in L.$  Then 
  \[
	-N^2 \cdot v^{\perp} = \langle v, N w_{+} \rangle N w_{-} + \langle v, N w_{-} \rangle N w_{+} \in L.
	\]

  So 
  \begin{align*}
  d N^4 &= q(-N^2 \cdot v^{\perp}) +  N^2  \cdot \left| \langle v,Nw_{+} \rangle \right| \cdot \left| \langle v, Nw_{-} \rangle \right| \\
  \implies d  N^4 &\geq q(-N^2 \cdot v^{\perp}) \text{ and } N^2  \cdot \left| \langle v,Nw_{+} \rangle \right| \cdot \left  \langle v, Nw_{-} \rangle \right|,
  \end{align*}
  since both summands are positive.
	There are only finitely many possibilities for the vector $v^\perp$ since $rN^2 \cdot v^{\perp}$ is a vector in the lattice $L_\circ \cap \Pi^\perp$ equipped with the quadratic form $Q$ which is positive definite on $\Pi^\perp.$  

  Also, $ \left| \langle v,Nw_{+} \rangle \right|$ and $ \left| \langle v, Nw_{-} \rangle \right|$ are positive integers (they are non-zero by assumption), and their product has bounded size.
  Thus there are only finitely many possibilities for both of $\langle v,w_{-} \rangle$ and $\langle v,w_{+}\rangle$ and hence only finitely many possibilities for $v^{||}.$
	Hence, there are only finitely many possibilities for $v = v^{||} + v^{\perp},$ proving the Lemma.
  \end{proof}

\subsubsection{Lifting Kudla--Millson divisors to modular symbols}

 \begin{proposition}\label{modsymbprop}
 Let $\cO$ be a $\Gamma$-orbit of vectors of positive length in $V$ such that the orthogonal complement in $V$ of one 
 (and thus every) $v\in\cO$ is anisotropic.
 For every pair of rational isotropic lines $\ell_{-}, \ell_{+} \in Q(\mathbb{Q})$ the formal sum
 \[
\sum_{v\in\widetilde{\sD}_{\cO}} \left( \Delta_{v,\infty} \cap [\ell_{-},\ell_{+}] \right) \cdot \Delta_{v,p} \in \Div(X_p)
\]
 is locally finite.
 Furthermore, the assignment
 \begin{align*}
 \widetilde{\sD}_{\cO}\colon Q(\mathbb{Q}) \times Q(\mathbb{Q}) &\rightarrow \Div(X_p) \\
 (\ell_{-}, \ell_{+}) &\mapsto \sum_{v \in \cO} \left( \Delta_{v,\infty} \cap [\ell_{-},\ell_{+}] \right) \cdot \Delta_{v,p}
 \end{align*}
 defines a $\Gamma$-invariant $\Div(X_p)$-valued modular symbol that lifts the divisor $\sD_{\cO}$, i.e.,  
 $$\delta(\widetilde{\sD}_{\cO})=\sD_{\cO}.$$
If in addition $\cO\neq -\cO $ and the orthogonal complement in $V$ of one (and thus every) $v\in \cO$ is not a hyperbolic plane, then the divisor valued cohomology class $\sD_{\cO}$ is non-zero.
 \end{proposition}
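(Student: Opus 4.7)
The proof decomposes into three tasks: (a) well-definedness of $\widetilde{\sD}_\cO(\ell_-,\ell_+)$ as a locally finite divisor, together with the $\Gamma$-equivariance and modular-symbol identity; (b) the identification $\delta(\widetilde{\sD}_\cO)=\sD_\cO$; and (c) the non-vanishing under the additional hypotheses.

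For (a), local finiteness will follow by combining the two available finiteness results. Fix an affinoid $X_{p,\Lambda}^{\le k}\subseteq X_p$. Lemma \ref{padiclemma} forces any $v\in\cO$ with $\Delta_{v,p}\cap X_{p,\Lambda}^{\le k}\ne\emptyset$ to lie in the $\Z$-lattice $L\cap p^{-\ell}\Lambda$, where $\ell$ is bounded in terms of $k$ and the common value $m=q(v)$ on $\cO$. Applied to this $\Z$-lattice, Lemma \ref{modularsymbolintersection} then bounds the number of such vectors which additionally satisfy $[\ell_-,\ell_+]\cap\Delta_{v,\infty}\ne 0$. The modular-symbol cocycle relation $\widetilde{\sD}_\cO(\ell_0,\ell_1)+\widetilde{\sD}_\cO(\ell_1,\ell_2)=\widetilde{\sD}_\cO(\ell_0,\ell_2)$ reduces to the vanishing of the intersection of $\Delta_{v,\infty}$ with the geodesic triangle boundary $[\ell_0,\ell_1]+[\ell_1,\ell_2]+[\ell_2,\ell_0]$: this 1-chain bounds a filled geodesic 2-simplex in the Borel--Serre compactification $\overline{X}_\infty^{BS}$, and the anisotropy hypothesis on $v^\perp$ ensures $\overline{\Delta}_{v,\infty}$ is disjoint from the cuspidal locus $Q(\Q)\subseteq\partial\overline{X}_\infty^{BS}$, so the relative homology class of the boundary in $H_1(\overline{X}_\infty^{BS},\overline{X}_\infty^{BS}\setminus\overline{\Delta}_{v,\infty})$ vanishes. $\Gamma$-equivariance is a direct re-indexing $v\mapsto\gamma v$ using equivariance of the pairing.

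For (b), the plan is to exploit the standard explicit description of the connecting homomorphism: fixing $\ell_0\in Q(\Q)$, $\delta(\widetilde{\sD}_\cO)$ is represented by the $1$-cocycle $\gamma\mapsto\widetilde{\sD}_\cO(\ell_0,\gamma\ell_0)$. By \S\ref{explicitcocycles}, $\sD_\cO$ itself is represented, for any transversal base point $x\in X_\infty\setminus\bigcup_{v\in\cO}\Delta_{v,\infty}$, by the $1$-cocycle $\phi(\gamma):=\sum_v([x,\gamma x]\cap\Delta_{v,\infty})\Delta_{v,p}$. Concatenating the geodesic edges of the quadrilateral with vertices $\ell_0,x,\gamma x,\gamma\ell_0$ and using that its enclosed 2-cell bounds the difference between the two diagonal 1-chains, one obtains $[\ell_0,\gamma\ell_0]\cap\Delta_{v,\infty}=[\ell_0,x]\cap\Delta_{v,\infty}+[x,\gamma x]\cap\Delta_{v,\infty}+[\gamma x,\gamma\ell_0]\cap\Delta_{v,\infty}$; summing over $v$ and applying equivariance to the last term, this yields the identity of cocycles $\delta(\widetilde{\sD}_\cO)(\gamma)=\phi(\gamma)+T-\gamma\cdot T$, where $T:=\sum_v([\ell_0,x]\cap\Delta_{v,\infty})\Delta_{v,p}$. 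The crux of (b), which I expect to be the main technical obstacle, is to show that $T$ defines a locally finite element of $\Div(X_p)$, since the half-infinite geodesic $[\ell_0,x]$ may a priori cross infinitely many $\Delta_{v,\infty}$. The plan is again to work inside $\overline{X}_\infty^{BS}$: a geodesic ray to the cusp $\ell_0$ is proper there, and once the relevant $v$ are constrained to the fixed $\Z$-lattice from (a), the horoball structure of a cusp neighbourhood of $\ell_0$ only allows finitely many effective intersections to contribute to any given affinoid.

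For (c), by injectivity of $\delta$ (Lemma \ref{modsymbinjectivity}), non-vanishing of $\sD_\cO$ reduces to non-vanishing of $\widetilde{\sD}_\cO$. Under the non-hyperbolicity hypothesis, Lemma \ref{padiclemmaconverse} guarantees $\Delta_{v,p}\ne\emptyset$ for every $v\in\cO$; by Remark \ref{divisorsequality} combined with $\cO\ne-\cO$, the divisors $\{\Delta_{v,p}\}_{v\in\cO}$ are pairwise distinct, precluding cancellation in the formal sum defining $\widetilde{\sD}_\cO(\ell_-,\ell_+)$. It then suffices to produce $\ell_-,\ell_+\in Q(\Q)$ with representatives $w_-,w_+$ and a single $v\in\cO$ for which $\langle v,w_-\rangle$ and $\langle v,w_+\rangle$ have opposite signs: formula \eqref{compacthyperbolicclasses} then forces the coefficient of $\Delta_{v,p}$ in $\widetilde{\sD}_\cO(\ell_-,\ell_+)$ to equal $\pm 1$. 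The existence of such $w_\pm$ follows from density of $Q(\Q)$ in $Q(\R)$ under the standing assumption $Q(\Q)\ne\emptyset$, established by a Hensel-type approximation argument on the smooth projective quadric $Q$.
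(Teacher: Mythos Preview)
Your proposal is correct and follows the same overall strategy as the paper. Part (a) and part (c) match the paper's argument closely (the paper also reduces non-vanishing to finding $v\in\cO$ and $\ell_\pm$ with nonzero intersection, noting this is obvious from \eqref{compacthyperbolicclasses}; your density argument works, though a quicker route is to take any isotropic $w$ with $\langle v,w\rangle\neq 0$ and its reflection through $v^\perp$).

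Your treatment of part (b) is more explicit than the paper's, which simply says ``comparing the two intersection products \eqref{hyperbolicclasses} and \eqref{compacthyperbolicclasses} yields $\delta(\widetilde{\sD}_\cO)=\sD_\cO$''. Your quadrilateral argument is the natural way to make this precise, and you correctly flag the one genuine subtlety: the local finiteness of the coboundary term $T=\sum_v([\ell_0,x]\cap\Delta_{v,\infty})\Delta_{v,p}$, which the paper does not address. Your horoball sketch is the right idea: once $v$ is constrained to a fixed $\Z$-lattice $L_\circ$ with $q(v)=m$, the anisotropy hypothesis forces $|\langle v,w_0\rangle|$ to be bounded below by a positive constant, which (in an upper half-space model with $\ell_0=\infty$) uniformly bounds the height of each $\Delta_{v,\infty}$; hence all intersections with $[\ell_0,x]$ occur along a compact sub-segment, and Lemma \ref{Archimedeanlemma} finishes. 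You do not need Borel--Serre here; the naive compactification $\overline{X}_\infty\subset\mathbb{P}(V_\R)$ introduced just before \eqref{compacthyperbolicclasses} is enough, and keeps the argument lighter.
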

  \begin{proof}
	Replacing Lemma \ref{Archimedeanlemma} by Lemma \ref{modularsymbolintersection},  local  finiteness of the divisor 
	follows as before.
  That $\widetilde{\sD}_{\cO}$ defines a $\Gamma$-invariant modular symbol is a formal calculation reducing to equivariance of intersection numbers and of Kudla-Millson cycles.
	Comparing the two different intersection products \eqref{hyperbolicclasses}
	and \eqref{compacthyperbolicclasses} yields the equality $\delta(\widetilde{\sD}_{\cO})=\sD_{\cO}$.
	In order to show that $\sD_{\cO}$ is non-zero, it is enough to show that $\widetilde{\sD}_{\cO}$ is non-zero by Lemma \ref{modsymbinjectivity}.
	Remark \ref{divisorsequality} implies that $\Delta_{v_1,p}\neq \Delta_{v_2,p}$ for all vectors $v_1,v_2\in \cO$ with $v_1\neq v_2$.
	Thus, it is enough to prove the existence of $v\in\cO$ and $\ell_{-},\ell_{+}\in Q(\Q)$ such that $\Delta_{v,\infty}\cap[\ell_{-},\ell_{+}]\neq 0$.
	But this is obvious from the description \eqref{compacthyperbolicclasses} of the intersection product.
  \end{proof}

Whether the orthogonal complement of $v$ is anisotropic or not only depends on $q(v)$ by Witt's cancellation theorem.
This leads to the following definition
\begin{definition}\label{def-compactm}
A positive rational number $m\in \Q_{>0}$ is called {\em compact }with respect to $(V,q)$ if the orthogonal complement of one (and thus  all) $v\in V$ with $q(v)=m$ is anisotropic.
\end{definition}

Proposition \ref{modsymbprop} immediately implies that Kudla--Millson divisors attached to compact $m\in \Q_{>0}$ can be lifted to modular symbols:
\begin{corollary}\label{modsymbcor}
Assume that $\Gamma$ acts trivially on $\D_\latt$.
Let $m\in \Q_{>0}$ be compact with respect to $(V,q)$ and $\beta\in\D_\latt$.
Then there exists a $\Gamma$-invariant modular symbol $\widetilde{\sD}_{m,\beta}\in\mathrm{MS}(\Div(X_p))^{\Gamma}$ such that
\[
\delta(\widetilde{\sD}_{m,\beta})=\sD_{m,\beta}.
\]
\end{corollary}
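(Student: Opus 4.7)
My plan is to reduce the corollary directly to Proposition \ref{modsymbprop}, which already handles a single $\Gamma$-orbit satisfying the anisotropic orthogonal-complement hypothesis. The statement then follows by decomposing $\cO_\latt(m,\beta)$ into finitely many $\Gamma$-orbits and summing.

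First, I would observe that the hypothesis that $\Gamma$ acts trivially on $\D_\latt$ makes the coset $\beta+\latt$ a $\Gamma$-stable subset of $V$, and consequently so is the set $\cO_\latt(m,\beta) = \{v\in\beta+\latt\ \vert\ q(v)=m\}$. The key input from outside the already-developed framework is the \emph{finiteness of class number} for the $p$-arithmetic group $\Gamma$ acting on vectors of a fixed norm inside a translate of a $\Z[1/p]$-lattice: namely, that
\[
\cO_\latt(m,\beta) = \cO_1 \sqcup \cdots \sqcup \cO_h
\]
is a finite disjoint union of $\Gamma$-orbits. This is a standard consequence of reduction theory (Borel--Harish-Chandra), and it is implicitly invoked already in the definition of the Kudla--Millson divisors $\sD_{\cO_\latt(m,\beta)}$.

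Next, I would verify that Proposition \ref{modsymbprop} applies to each orbit $\cO_i$ individually. Every $v\in\cO_i$ satisfies $q(v)=m$, and since $m$ is compact with respect to $(V,q)$ by hypothesis, Definition \ref{def-compactm} guarantees that $v^\perp$ is anisotropic. In particular, $v^\perp$ is not a hyperbolic plane (a hyperbolic plane is isotropic), so all the hypotheses of Proposition \ref{modsymbprop} are in force. I therefore obtain, for each $i = 1,\ldots,h$, a $\Gamma$-invariant modular symbol $\widetilde{\sD}_{\cO_i} \in \mathrm{MS}(\Div(X_p))^{\Gamma}$ satisfying $\delta(\widetilde{\sD}_{\cO_i}) = \sD_{\cO_i}$.

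Finally, I would define
\[
\widetilde{\sD}_{m,\beta} \;:=\; \sum_{i=1}^{h} \widetilde{\sD}_{\cO_i},
\]
which is a finite sum of $\Gamma$-invariant modular symbols, hence an element of $\mathrm{MS}(\Div(X_p))^{\Gamma}$. The additivity property $\sD_{\cO_1 \sqcup \cdots \sqcup \cO_h} = \sum_i \sD_{\cO_i}$ noted in the remark following the construction of $\sD_{\cO}$, together with the identity $\sD_{m,\beta} = \sD_{\cO_\latt(m,\beta)}$ recorded just after the definition of $\sD_{m,\beta}$ and the $\Z$-linearity of the boundary map $\delta$, then yields
\[
\delta(\widetilde{\sD}_{m,\beta}) \;=\; \sum_{i=1}^{h} \delta(\widetilde{\sD}_{\cO_i}) \;=\; \sum_{i=1}^{h} \sD_{\cO_i} \;=\; \sD_{m,\beta},
\]
as required. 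The main (modest) obstacle is the invocation of orbit finiteness for $\cO_\latt(m,\beta)$; once this is granted, the rest is a bookkeeping reduction to the single-orbit Proposition \ref{modsymbprop}.
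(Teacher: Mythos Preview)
Your proof is correct and is precisely the natural expansion of what the paper means by ``Proposition \ref{modsymbprop} immediately implies'' the corollary: decompose $\cO_\latt(m,\beta)$ into its finitely many $\Gamma$-orbits, apply the proposition to each (the compactness hypothesis on $m$ supplies the anisotropy condition via Definition \ref{def-compactm}), and sum using linearity of $\delta$ and additivity of $\sD_\cO$. The finiteness of $\Gamma$-orbits in $\cO_\latt(m,\beta)$ is, as you note, already used in the paper when defining $\sD_{m,\Phi}$, so there is no hidden obstacle.
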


  \begin{remark}
The discussion above is only applicable in small dimensions.
Indeed, by Meyer's theorem there are no vectors $v\in V$ of positive length whose orthogonal complement in $V$ is anisotropic if $r\geq 5$.
On the other hand in dimension $3$ the Kudla--Millson divisor $\sD_{\cO}$ of every $\Gamma$-orbit $\cO$ can be lifted to a modular symbol since, if the orthogonal complement of $v$ in $V$ contains an isotropic vector, then $\Delta_{v,p}$ is empty anyway.
\end{remark}

\section{Rigid meromorphic cocycles and $p$-adic Borcherds products}
\label{sec:rmc}
This chapter introduces the notion of \emph{rigid meromorphic cocycles} and constructs an analogue of Borcherds' singular theta lift under certain restrictive assumptions on the signature of $V$.

\subsection{Rigid meromorphic cocycles}
\subsubsection{Definitions}
Let $\rM^\times$ be the multiplicative group of rigid meromorphic functions on $X_p$ that are defined over $\Q_p$ and whose divisor belongs to $\Div(X_p)$.
The $G(\Q_p)$-equivariant homomorphism
\[
\divmap \colon \rM^\times \longrightarrow\Div(X_p)
\]
induces the divisor map
\[
\divmap_\ast \colon H^s(\Gamma,\rM^\times) \longrightarrow H^s(\Gamma,\Div(X_p)).
\]

\begin{definition}
A {\em rigid meromorphic cocycle of level $\Gamma$} is a  class in 
$J\in H^s(\Gamma, \rM^\times)$ whose image under the divisor map is a Kudla--Millson divisor.
Let
\[
\mathcal{RMC}(\Gamma)=\divmap_\ast^{-1}(\mathcal{KM}(\Gamma))
\]
denote the space of rigid meromorphic cocycles.
\end{definition}

\subsubsection{Lifting obstructions}
By definition, there is an exact sequence of $G(\Q_p)$-modules
\[
0 \longrightarrow \cA^\times \longrightarrow \rM^\times \xlongrightarrow{\divmap} \Div(X_p),
\]
with $\cA$ being the ring of rigid analytic functions on $X_p$ that are defined over $\Q_p$.
\begin{proposition}
\label{prop:divexact} 
The map $\divmap$ is surjective, i.e., the sequence
\begin{align}
0 \longrightarrow \cA^\times \longrightarrow \rM^\times \xlongrightarrow{\divmap} \Div(X_p) \longrightarrow 0 
\end{align}
is exact.
\end{proposition}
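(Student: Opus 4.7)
The plan is to exploit the Stein property of $X_p$ recorded in the remark following Lemma \ref{affinoidlemma}, combined with the fact that every rational quadratic divisor is locally cut out by an explicit linear form $\langle v,\cdot\rangle$, and finish with a Galois-descent step to ensure $\Q_p$-rationality.

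First, I would work on the affinoid exhaustion $X_p = \bigcup_k X_{p,\Lambda}^{\leq k}$ supplied by Lemma \ref{affinoidlemma}. Given a locally finite divisor $D = \sum a_v \Delta_{v,p} \in \Div(X_p)$, its restriction $D|_{U_k}$ to each affinoid $U_k := X_{p,\Lambda}^{\leq k}$ is by definition a finite sum. Each $\Delta_{v,p}$ is the zero locus of the section $\xi \mapsto \langle v, v_\xi \rangle$ of the line bundle $\cO(1)|_{X_p}$ pulled back from $\mathbb{P}_V$. After fixing a trivialisation of $\cO(1)|_{U_k}$ (for instance by dividing by a section that is invertible on $U_k$, which exists because the $\C_p$-points of $X_p \cap U_k$ miss at least one hyperplane section of the quadric $Q$), one obtains an explicit meromorphic function $f_k$ on $U_k$ with $\divmap(f_k) = D|_{U_k}$.

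Next comes the patching. On each overlap $U_k \subseteq U_{k+1}$, the ratio $f_{k+1}/f_k$ lies in $\cO(U_k)^\times$, so realising $D$ globally amounts to solving the multiplicative Cousin II problem for the cover $(U_k)$. Since $X_p$ is a rigid Stein space in Kiehl's sense (by the remark cited above), Kiehl's theorem \cite{Kiehl} gives the vanishing $H^1(X_p,\cF)=0$ for every coherent sheaf $\cF$. Applied to $\cO_{X_p}$, and via the standard exponential/truncation sequence used to pass from additive to multiplicative Cousin data on a quasi-Stein space, this yields $H^1(X_p,\cO_{X_p}^\times)=0$, so the local data $(f_k)$ may be modified by units on the $U_k$ to patch into a single rigid meromorphic function $f$ on $X_p$ with $\divmap(f)=D$.

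Finally, I would address rationality. Each vector $v \in V_+ \subseteq V$ is $\Q$-rational, so the divisor $D$ is fixed by the absolute Galois group $\Gal(\overline{\Q}_p/\Q_p)$. Any two meromorphic functions with the same divisor differ by an element of $\cA^\times$; combined with the Galois-equivariance of the construction of the $f_k$ (which can be built from the $\Q_p$-rational linear forms $\langle v,\cdot\rangle$), a Galois-averaging argument produces a $\Q_p$-rational $f \in \rM^\times$ with $\divmap(f)=D$. The main obstacle I anticipate is the cohomological step: verifying cleanly that Kiehl's vanishing theorem for coherent sheaves upgrades to vanishing of $H^1(X_p,\cO_{X_p}^\times)$ on the specific non-affinoid space $X_p$, so that the multiplicative patching actually goes through; everything else reduces to bookkeeping with the linear forms $\langle v,\cdot\rangle$ and the known affinoid exhaustion.
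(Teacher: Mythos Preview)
Your proposal has a genuine gap at precisely the point you flag as the ``main obstacle'': the passage from Kiehl's vanishing $H^1(X_p,\cO)=0$ to $H^1(X_p,\cO_{X_p}^\times)=0$ is \emph{not} standard in rigid geometry. There is no global exponential, and the truncated $p$-adic exponential only identifies $1+\mathfrak m\,\cO$ with a small additive ball, leaving the quotient $\cO^\times/(1+\mathfrak m\,\cO)$---which carries the Picard information of the reduction---untouched. Even granting $\mathrm{Pic}(U_k)=0$ for each affinoid (itself not obvious for these particular affinoids inside the quadric), you would still need a Mittag--Leffler argument controlling $\varprojlim^1 \cO(U_k)^\times$. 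None of this is bookkeeping; it is the entire content of the proposition, and you have not supplied it. The Galois-descent step is also shakier than you suggest: multiplicative averaging over $\Gal(\overline{\Q}_p/\Q_p)$ does not make sense, so you would need the patching itself to be done $\Q_p$-equivariantly, which your cohomological argument does not deliver.

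The paper sidesteps all of this with a direct, elementary construction that you are close to but do not quite reach. The key idea is this: for a primitive vector $v\in\Lambda'$ with $\iso_\Lambda(v)=d$, Hensel's lemma on the smooth quadric $Q_\Lambda$ produces an \emph{isotropic} $\tilde v\in\Lambda'$ with $\tilde v\equiv v\pmod{p^d\Lambda}$. The ratio $f_v(\xi):=\langle v,v_\xi\rangle/\langle\tilde v,v_\xi\rangle$ is then a genuine rigid meromorphic function on all of $X_p$ (the denominator never vanishes, since $\tilde v$ is isotropic and $\Q_p$-rational), has divisor exactly $\Delta_{v,p}$, is visibly defined over $\Q_p$, and satisfies $|f_v(\xi)-1|\le p^{k-d}$ on $X_{p,\Lambda}^{\leq k}$ for $k<d$. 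Grouping the divisor $\sD$ by isotropy level $d$ (each level contributing finitely many $v$ by Lemma~\ref{padiclemmaconverse}), the infinite product $\prod_d\prod_v f_v^{a_v}$ converges uniformly on every affinoid to a function with divisor $\sD$. No cohomology, no patching, and $\Q_p$-rationality is automatic because every ingredient lives in $V_{\Q_p}$.
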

\begin{proof}
Let 
$$\sD\  := \ \sum_{v\in V} a_v \cdot \Delta_{v,p} \ \in  \ \Div(X_p)$$
be a locally finite, rational quadratic divisor.
Fix a self-dual $\Z_p$-lattice $\Lambda\subseteq V_{\Q_p}$.
By Lemma \ref{padiclemmaconverse} the locally finite divisor $\sD$ can be written as  a sum  
$$ \sD = \sum_{d=0}^\infty \sD_d, \qquad
\sD_{d}   := \sum_{\iso_{\Lambda}(v)=d} a_v \Delta_{v,p} $$
  of finite divisors $\sD_m$.
    Let $v$ be a vector that contributes to $\sD_{d}$.
		One may suppose that $v$ is primitive, 
and choose an isotropic vector
  $\tilde v\in \Lambda'$ satisfying 
  \begin{equation} 
  \label{congruencemodlevel}
\tilde v \equiv v \bmod{p^d}.
\end{equation}
 Fix a primitive representative $v_\xi\in \Lambda_{\cO_{\C_p}}'$ for every $\xi\in X_p$. 
  By definition of $X_p$, the expression $\langle \tilde v, v_\xi\rangle$ is
  non-zero for all $\xi\in X_p$.
	The function
 \begin{equation}
 \label{eqn:def-fv}
  f_v(\xi) := \frac{\langle v,v_\xi\rangle}{\langle \tilde v, v_\xi\rangle} 
  \end{equation} 
	is independent of the choice of $v_\xi$ and defines a rigid meromorphic function 
   of $\xi\in X_p$ with divisor  $\Delta_v$.
 If $k< d$, then  the restriction of $f_v$ to
   $X_{p,L}^{\leq k}$ is analytic and \eqref{congruencemodlevel} implies that
  $$ | f_v(\xi) -1| \leq p^{k-d}  \quad \mbox{ for all }   \xi\in X_{p,L}^{\leq k}.$$
  It follows that the rational functions
  $$ f_{\sD_d}(\xi) := 
  \prod_{\iso_\Lambda(v)=d} f_v(\xi)^{a_v} $$
  with divisor $\sD_d$ converge uniformly  to $1$  on any affinoid. 
  The
  infinite product
  $$ f_{\sD}(\xi) := \prod_{d = -\infty}^\infty f_{\sD_d}(\xi) $$
therefore   converges to a rigid meromorphic function on $X_p$ with divisor $\sD$. The lemma follows.
 \end{proof}

 \begin{remark}
 Note that  the rigid meromorphic function $f_\sD$ constructed in the proof
 is far from unique since it depends on
 the choice of a system of isotropic vectors $\{\tilde v\}_{v\in V}$.
 A different choice would multiply $f_\sD$ by an element
 of $\cA^\times$.
 \end{remark}

The short exact sequence of Proposition \ref{prop:divexact} induces a long exact sequence in cohomology. Let
\[
\left[\quad \right]\colon H^s(\Gamma,\Div(X_p))\rightarrow H^{s+1}(\Gamma, \cA^\times)
\]
be the induced boundary homomorphism.
It measures the obstruction of lifting a divisor-valued cohomology class to a cohomology class with values in meromorphic functions.
Thus, its image should be viewed as an analogue of the divisor class group.

The Gross--Kohnen--Zagier theorem of Borcherds (cf.~\cite{BorcherdsGKZ}) states that generating series of Heegner divisors in the divisor class group on an orthogonal Shimura variety is a modular form.
This leads naturally to the following question:
Assume that $\Gamma$ acts trivially on $\D_\latt$.
Let $\mathbf{e}_\beta$, $\beta\in \D_\latt$, denote the standard basis of $\Z[\D_\latt]$ corresponding to $\beta$.
What modularity properties does the formal Fourier series
\begin{align}\label{generatingseries}
Z_\latt(\tau):=\sum_{\beta\in \D_\latt}\sum_{m \in \Z_{(p)}^{>0}} \left[\sD_{m,\beta}\right] \cdot e^{2\pi i \cdot m \tau} \cdot \mathbf{e}_\beta
\end{align}
have?
Note that Proposition \ref{ordinarityprop} suggests that it is enough to consider only those $m\in \Q^{>0}$ that lie in $\Z_{(p)}$, that is, those that satisfy $\ord_p(m)\geq 0$.
We will partially answer this question in Section \ref{maintheorems}.

\subsubsection{The cycle class map}
For higher dimensional orthogonal Shimura varieties the kernel of the cycle class map from the divisor class group to the second singular cohomology group is frequently torsion.
This is used in the proof of \cite[Theorem 1.3]{YZZ}  to deduce the modularity of the generating series of Heegner cycles in the Chow group from Kudla and Millson's modularity theorem for topological cycles.
The analogue of the cycle class map in the current setup is the homomorphism
\[
\mathrm{cyc} \colon H^{s+1}(\Gamma,\cA^\times)\rightarrow H^{s+1}(\Gamma,\cA^\times/\Q_p^\times).
\]
\begin{remark}
If $n=3$ and $s=0$ one can identify $H^1(\Gamma,\cA^\times)$ with the Picard group of the Mumford curve $\Gamma\backslash X_p$, while $H^{1}(\Gamma,\cA^\times/\Q_p^\times)$ is canonically isomorphic to $\Z$, and the homomorphism $\mathrm{cyc}$ agrees with the usual degree map.
See
 \cite[Appendix A]{DVBorcherds} for a discussion of this case and its parallel with rigid meromorphic cocycles in signature $(2,1)$.
\end{remark}
Similarly, as in the setting of orthogonal Shimura varieties the kernel of $\mathrm{cyc}$ tends to be torsion frequently:
\begin{proposition}\label{cyclezero}
Let $n\geq 4$ and $r\geq s$.
If $n=4$, assume that $V_{\Q_p}$ is split and that $G$ is almost $\Q$-simple.
Then the kernel of $\mathrm{cyc}$ is finite in the following cases:
\begin{itemize}
\item $s$ is even
\item $s=1$
\item $s\equiv 1\bmod 4$ and $r$ is odd
\end{itemize}
\end{proposition}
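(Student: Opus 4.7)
The plan is to show that $H^{s+1}(\Gamma, \Q_p^\times)$ is finite, which forces $\ker(\mathrm{cyc})$ to be finite via the long exact sequence induced by $1 \to \Q_p^\times \to \cA^\times \to \cA^\times/\Q_p^\times \to 1$: the kernel of $\mathrm{cyc}$ is precisely the image of $H^{s+1}(\Gamma,\Q_p^\times)$. Since $\Gamma$ is a $p$-arithmetic subgroup of a semisimple group, it is of type $\mathrm{FP}_\infty$, so $H_\ast(\Gamma,\Z)$ is finitely generated in every degree. The universal coefficient theorem then expresses
\[
H^{s+1}(\Gamma,\Q_p^\times) \;\cong\; \Hom(H_{s+1}(\Gamma,\Z),\Q_p^\times)\oplus \Ext^1(H_s(\Gamma,\Z),\Q_p^\times),
\]
and the $\Ext^1$-part is automatically finite (since it only sees the finite torsion of $H_s(\Gamma,\Z)$ and $\Q_p^\times/(\Q_p^\times)^n$ is a finite local-norm group). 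Consequently, the whole thing is finite as soon as the rational Betti number $b_{s+1}(\Gamma):=\dim_\Q H^{s+1}(\Gamma,\Q)$ vanishes, so the problem reduces to showing $H^{s+1}(\Gamma,\Q)=0$ in each listed case.

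To compute $H^{s+1}(\Gamma, \Q)$, I would invoke the standard double-complex description of the rational cohomology of a $p$-arithmetic group, combining the $\Gamma$-action on the Borel--Serre compactification of the real symmetric space $X_\infty$ with its action on the Bruhat--Tits building of $G_{\Q_p}$. The self-dual lattice assumption (with splitness imposed for $n=4$) guarantees maximal rank of the building, while almost $\Q$-simplicity and the assumption $r\geq s$ together rule out spurious Eisenstein contributions in the low degree $s+1$. A Matsushima-type argument then identifies the relevant $(\mathfrak{g},K)$-part of $H^{s+1}(\Gamma,\R)$ with the singular cohomology of the compact dual
\[
X_u \;=\; \SO(r+s)/\bigl(\SO(r)\times\SO(s)\bigr),
\]
the real Grassmannian.

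It then remains to verify $H^{s+1}(X_u,\Q)=0$ in the three cases. When $s$ is even, the rational cohomology of $X_u$ is generated by Pontryagin classes of the tautological rank-$s$ bundle and its orthogonal complement, all of which live in degrees divisible by four; in particular the odd-degree rational cohomology vanishes, handling this case. When $s=1$, the compact dual is $\R\mathrm{P}^r$ with $r\geq 3$, whose rational cohomology in degree $2$ vanishes. In the last case, $s\equiv 1\pmod 4$ and $r$ odd, the only potential class in degree $s+1$ enters via the Euler class of the tautological bundle, pulled back from the oriented double cover of $X_u$; a Schubert-calculus computation (or equivalently, a Hirsch-formula argument) using both parity constraints forces the candidate class to vanish.

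The main obstacle I anticipate is this third case: the parity bookkeeping for the rational cohomology of the real Grassmannian via its oriented double cover is delicate, and one must rule out any additional generator surviving in degree $s+1$ once the transfer is taken into account. A secondary technical point is to rigorously bound the Eisenstein contributions to $H^{s+1}(\Gamma,\Q)$ in the non-cocompact setting; the hypothesis $r\geq s$ should place $s+1$ below the threshold where Eisenstein classes appear, but justifying this cleanly would require invoking Borel's stable-cohomology theorems or the more refined Franke-style analysis for $S$-arithmetic cohomology.
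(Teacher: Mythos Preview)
Your overall strategy matches the paper's: reduce to showing $H^{s+1}(\Gamma,\C)=0$ and then split this into an automorphic piece and a ``constant'' piece governed by the compact dual. But two genuine gaps remain.

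\medskip
\textbf{The non-constant part.} Your Matsushima-type step only accounts for the contribution of the trivial automorphic representation. For an $S$-arithmetic group like $\Gamma$ the rest of $H^{s+1}(\Gamma,\C)$ does not reduce to Eisenstein bookkeeping or Borel stability; what is actually needed is the decomposition of Blasius--Franke--Grunewald for $p$-arithmetic groups, which writes $H^i(\Gamma,\C)$ as a direct sum of a constant part and a part coming from cuspidal representations whose $p$-component is a Steinberg twist. The latter contributes in degree $i$ only if there is cuspidal $(\mathfrak g,K)$-cohomology in degree $i-r_p$, where $r_p$ is the $\Q_p$-rank of $G$. The hypotheses ($n\geq 4$, a self-dual $\Z_p$-lattice, and for $n=4$ that $V_{\Q_p}$ is split) force $r_p\geq 2$, and Vogan--Zuckerman gives no cuspidal cohomology below degree $s$; hence the Steinberg part vanishes in degree $s+1$. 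This is the step you are missing, and your proposed substitute (bounding Eisenstein classes via $r\geq s$) does not address it.

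\medskip
\textbf{The compact dual.} Your formula $X_u=\SO(r+s)/(\SO(r)\times\SO(s))$ is correct, but this is the \emph{oriented} Grassmannian, not the unoriented one: for $s=1$ it is $S^r$, not $\R\mathrm P^r$, and there is no ``oriented double cover'' to pass to. More seriously, your analysis of the third case is off. The Euler class of the tautological bundle sits in degree $s$, not $s+1$, so it cannot be the obstruction. The correct statement (when $r$ and $s$ are both odd) is that the rational cohomology of $X_u$ is generated by Pontryagin classes, which live in degrees divisible by $4$, together with one extra class in degree $r+s-1$. Since $s\equiv 1\pmod 4$ gives $s+1\equiv 2\pmod 4$, and $r\geq s$ with $r$ odd forces $r+s-1\neq s+1$, one gets $H^{s+1}(X_u,\Q)=0$. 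In the $s$ even case your conclusion is right, but the reason is that the oriented Grassmannian has rational cohomology concentrated in even degrees, not that every generator has degree divisible by four (Euler classes can appear in degree $s$ or $r$).
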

Moreover, in the case $s=0$ the exponent of $\ker(\mathrm{cyc})$ divides $p-1$.
\begin{proof}
It is enough to show that the group $H^{s+1}(\Gamma,\Q_p^\times)$ is finite.
Since $\Gamma$ is a $p$-arithmetic group, it is of type $\textit{(VFL)}$ by \cite[no 2.4, Theorem 4]{serre-cohomologie}.
The remark on page 101 of \textit{loc.cit.~}implies that for every finitely generated abelian group $A$ the cohomology groups $H^{i}(\Gamma,A)$, $i\geq 0$, are finitely generated and that
\[
H^{i}(\Gamma,A)=H^{i}(\Gamma,\Z) \otimes_\Z A\quad
\]
for every flat $\Z$-module $A$ and every $i\geq 0$.
Thus, it is enough to prove that
$H^{s+1}(\Gamma,\C)=0$.
For $n\neq 4$ the group $G$ is almost $\Q$-simple.
Thus, in all cases, one may apply the main theorem of \cite{BFG} that describes $H^{i}(\Gamma,\C)$ in terms of automorphic forms.
To be more precise let $r_p$ denote the $\Q_p$-rank of $G_{\Q_p}$.
One may decompose
\begin{equation}\label{BFGdecomp}
H^{i}(\Gamma,\C)\cong H^{i}_{\mathrm{Stbg}}(\Gamma,\C) \oplus H^{i}_{\mathrm{const}}(\Gamma,\C).
\end{equation}
The first summand is generated by cuspidal automorphic representations, whose local factor at $p$ is a twist of the Steinberg representation of $G(\Q_p)$, and which contribute to the cohomology in degree $i-r_p$ of some locally symmetric space attached to $G$.
By a theorem of Vogan--Zuckerman (see \cite[Theorem 8.1]{VoganZuckerman}), there is no cuspidal cohomology in degrees below $s$.
The existence of a self-dual $\Z_p$-lattice in $V_{\Q_p}$ forces the Weil index of $V_{\Q_p}$ to be greater or equal to $2$ if $n\geq 5$.
Hence  $r_p\geq 2$ in all cases, which implies that
\[
H^{s+1}_{\mathrm{Stbg}}(\Gamma,\C)=0.
\]
The second summand in \eqref{BFGdecomp} can be described via $G(\R)$-invariant differential $i$-forms on $X_\infty$.
These are in one-to-one correspondence with classes in $H^{i}(X_\infty^{\vee},\C)$, where $X_\infty^{\vee}$ denotes the compact dual of $X_\infty$.
More precisely, $X_\infty^{\vee}$ is the Grassmannian of oriented $s$-planes in $\R^{r+s}$.
If $s$ is even, the rational cohomology of $X_\infty^{\vee}$ is concentrated in even degrees (see \cite{Borelcohomology}).
In case $s=1$, the oriented Grassmannian $X_\infty^{\vee}$ is just the $r$-sphere and, thus, its cohomology is concentrated in degree $0$ and $r$.
Finally, if $r$ and $s$ are both odd, then the cohomology of $X_\infty^{\vee}$ is generated by Pontryagin classes of the tautological bundles over $X_\infty^{\vee}$, whose degrees are by definition divisible by $4$, and a class in degree $r+s-1$ (cf.~\cite{Takeuchi}).
Thus, we deduce that $H^{s+1}_{\mathrm{const}}(\Gamma,\C)=0$ in all cases.
\end{proof}

\begin{remark}
In the definite case, that is, $s=0$, the vanishing of the first cohomology group of $\Gamma$ also follows from Margulis' normal subgroup theorem (see \cite[Chapter VIII, Theorem 2.6]{Margulis}).
\end{remark}

\begin{remark}
In case of signature $(3,1)$ the group $G_\R$ is almost $\R$-simple and, therefore, $G$ is almost $\Q$-simple.
\end{remark}

\subsection{Vector-valued modular forms}\label{kudlamillson}
We give a reminder on the Weil representation attached to finite quadratic modules and vector-valued modular forms and introduce a $U_{p^2}$-operator on vector-valued modular forms.
We state the modularity of theta series of definite quadratic forms as well as Funke and Millson's theorem on the modularity of intersection numbers of special cycles with modular symbols.

\subsubsection{The Weil representation attached to a finite quadratic module}\label{finitequadraticmodules}
Let $\cH\subseteq \C$ denote the complex upper half plane and $\GL_2(\R)^+\subseteq \GL_2(\R)$ the subgroup of matrices with positive determinant.
The group $\GL_2(\R)^+$ acts on $\cH$ via Möbius transformation, that is,
\[
\begin{pmatrix}a & b \\ c & d\end{pmatrix}.\tau = \frac{a\tau+b}{c\tau+d}.
\]
For $g=\smallmat{a}{b}{c}{d}\in\GL_2(\R)^+$ we let
\[
j(g,\cdot)\colon \cH \longrightarrow \C,\quad \tau\longmapsto c\tau+d,
\]
be the usual automorphy factor.
The metaplectic two-fold cover $\widetilde{\GL}_2(\R)^+$ of $\GL_2(\R)^+$ is the group of pairs $(g,\phi(\tau))$ with $g\in\GL_2(\R)^+$ and $\phi\colon \cH\rightarrow \C$ a holomorphic function satisfying $\phi(\tau)^2=j(g,\tau)$.
The multiplication is given by the law
\[(g,\phi(\tau))(g',\phi'(\tau))=(gg', \phi(g'.\tau)\phi'(\tau)).\]
Let $\Mp_2(\Z)$ be the preimage of $\SL_2(\Z)$ in $\widetilde{\GL}_2(\R)^+$.
It is generated by the two elements
\[
T=\left(\mat{1}{1}{0}{1},1\right)\quad \mbox{and}\quad S=\left(\mat{0}{-1}{1}{0}, \sqrt{\tau}\right).
\]

A \emph{finite quadratic module} $(D,q_D)$ is  a finite group $D$ together with a function
\[
q_D\colon D \longrightarrow \Q/\Z
\]
such that
\begin{itemize}
\item $q_D(n x)= n^2 q_D(x)$ for all $x\in D$, $n\in \Z$ and
\item the function $B_D(x,y)=q_D(x+y)-q_D(x)-q_D(y)$ is a perfect symmetric pairing on $D$.
\end{itemize}
The \emph{level} $N_D$ of $D$ is the smallest positive integer such that $N_D\cdot q_D(x)=0$ for all $x\in D$.
It is clearly bounded by the exponent of $D$.
Its \emph{signature} $\sign(D)\in \Z/8\Z$ is defined by the formula
\[
e^{2\pi i \cdot \sign(D)/8}=\frac{1}{\sqrt{|D|}} \sum_{y\in D} e^{2\pi i \cdot q_D(x)}. 
\]

Let us recall the \emph{Weil representation}
\[\rho_D\colon \Mp_2(\Z)\rightarrow \End_\C(\C[D])\]
attached to $D$.
We denote by $\mathbf{e}_x$, $x\in D$, the standard basis of $\C[D]$.
The Weil representation is determined by the action of the two generators:
\begin{align*}
\rho_D(T)(\mathbf{e}_x) &= e^{2\pi i \cdot q_D(x)}\cdot \mathbf{e}_x,\\
\rho_D(S)(\mathbf{e}_x) &= \frac{\sigma_w(D)}{\sqrt{|D|}}\cdot \sum_{y\in D} e(-B_D(x,y))\cdot \mathbf{e}_y,
\end{align*}
where $\sigma_w(D)$ is the Witt invariant of $D$, that is,
\[\sigma_w(D)=\frac{1}{\sqrt{|D|}}\cdot \sum_{y\in D} e^{-2\pi i \cdot q_D(x)}=e^{-2\pi i \cdot \sign(D)/8}.\]

\subsubsection{Explicit actions}
In case $\sign(D)\equiv 0 \bmod 2$, the action of $\Mp_2(\Z)$ on $\rho_D$ factors through $\SL_2(\Z)$.
Moreover, it is trivial on the principal congruence subgroup $\Gamma(N_D)$.
If $\sign(D)\equiv 1 \bmod 2$, then $4$ divides $N_D$.
In particular, $\sign(D)$ is even if $2\nmid |D|$.
The explicit formula for the action of $T$ 
together with \cite[Theorem 5.4]{BorcherdsReflection}, 
 implies the following well-known description of the action of $\Gamma_0(N_D)$ on $\rho_D$.
\begin{lemma}\label{explicitWeil}
Suppose that $2\nmid |D|$ and let $x$ be an element of $D$.
Then  
\[
\rho_{D}(\gamma)(\mathbf{e}_x) =e^{2\pi i \cdot bd\cdot q_D(x)}\legendre{d}{|D|}\cdot \mathbf{e}_{dx}\quad\forall\gamma=\begin{pmatrix}a& b\\ c & d\end{pmatrix} \in\Gamma_0(N_D).
\]
\end{lemma}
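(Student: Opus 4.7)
The plan is to deduce the formula from \cite[Theorem 5.4]{BorcherdsReflection}, which provides a closed-form expression for $\rho_D(\gamma)$ when $\gamma\in\Gamma_0(N_D)$. Borcherds' theorem asserts an identity of the shape
\[
\rho_D(\gamma)(\mathbf{e}_x) \;=\; \chi_D(\gamma)\cdot e^{2\pi i \cdot bd\cdot q_D(x)}\cdot \mathbf{e}_{dx},
\]
where $\chi_D\colon\Gamma_0(N_D)\to\{\pm 1\}$ is a quadratic character depending only on $d\bmod N_D$, given by a normalized Gauss sum on $D$. This immediately produces the exponential twist $e^{2\pi i\cdot bd\cdot q_D(x)}$ and the permutation $x\mapsto dx$; the remaining content of the lemma is the identification $\chi_D(\gamma)=\legendre{d}{|D|}$ in the case $2\nmid|D|$.

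To carry out this identification, I would use the explicit formula
\[
\chi_D(\gamma) \;=\; \frac{\sigma_w(D)}{\sqrt{|D|}}\sum_{y\in D} e^{2\pi i\cdot d\cdot q_D(y)},
\]
which falls out of the proof of Borcherds' theorem after reducing the computation of $\rho_D(\gamma)$ to that of $\rho_D(S)$ and $\rho_D(T^d)$, using that the $T$-action is diagonal. Since $|D|$ is odd, the structure theorem for finite quadratic modules decomposes $(D,q_D)$ orthogonally into one-variable pieces $(\Z/p^{k}\Z,\,q)$ with $p$ an odd prime, and the Gauss sum factors correspondingly. Applying the classical Gauss-sum identity
\[
\sum_{y\in\Z/p^k\Z} e^{2\pi i\cdot dy^2/p^k} \;=\; \legendre{d}{p^k}\sum_{y\in\Z/p^k\Z} e^{2\pi i\cdot y^2/p^k}
\]
to each factor and reassembling by multiplicativity of the Jacobi symbol produces $\chi_D(\gamma)=\legendre{d}{|D|}$.

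As a consistency check, both sides of the asserted formula ought to be group homomorphisms $\Gamma_0(N_D)\to\GL(\C[D])$; for the right-hand side this reduces to the congruence $b_{12}d_{12}\equiv b_1d_1d_2^2+b_2d_2\pmod{N_D}$ on a product $\gamma_1\gamma_2$, which follows from $N_D\mid c_1$ together with $a_1d_1-1=b_1c_1$ and the multiplicativity of $\legendre{\cdot}{|D|}$. The main obstacle is tracking all the normalization factors---the Witt sign $\sigma_w(D)$, the square root $\sqrt{|D|}$, and the automorphy factor in $\Mp_2(\Z)$---when passing between Borcherds' formulation and the statement here; once the Gauss-sum evaluation above is in hand, this last part is routine bookkeeping and requires no new ideas.
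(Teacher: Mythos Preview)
Your proposal is correct and follows the same route as the paper: the paper's argument is literally the one-line remark preceding the lemma, namely that the formula follows from the explicit diagonal action of $T$ together with \cite[Theorem 5.4]{BorcherdsReflection}. You have simply unpacked what that citation entails, spelling out the Gauss-sum identification of the scalar character with the Jacobi symbol $\legendre{d}{|D|}$ when $|D|$ is odd; the paper leaves this implicit in the reference to Borcherds.
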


\subsubsection{Decomposition of Weil representation}
The finite quadratic module $D$ decomposes canonically as the direct sum
\[
D=\bigoplus_{\ell} D_\ell,
\]
over all primes $\ell$, where $D_\ell:=D[\ell^\infty]$ denotes the submodule of $\ell^\infty$-torsion elements.
Each $D_\ell$ is also a finite quadratic module.
The Weil representation of $D$ decomposes accordingly:
\begin{proposition}[\cite{Zemel}, Proposition 3.2]
Let $\{\ell_{1},\ldots,\ell_k\}$ be a finite set of distinct primes containing all prime divisors of $|D|$.
The canonical map
\[
\bigotimes_{i=1}^{k} \C[D_{\ell_i}]\longrightarrow \C[D],\quad (e_{x_{i}})_{1\leq i \leq k}\longmapsto e_{x_1+\cdots+x_k}
\] 
is an isomorphism of $\Mp_2(\Z)$-representations.
\end{proposition}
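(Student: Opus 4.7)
The map $\phi\colon \bigotimes_{i=1}^{k} \C[D_{\ell_i}]\to \C[D]$ is manifestly a $\C$-linear isomorphism, since the addition map $\bigoplus_i D_{\ell_i}\to D$ is already an isomorphism of abelian groups (this is the primary decomposition of a finite abelian group, with $D_{\ell_i}=0$ allowed for primes $\ell_i\nmid|D|$). The content of the proposition is therefore purely $\Mp_2(\Z)$-equivariance, and since $\Mp_2(\Z)$ is generated by $T$ and $S$, it is enough to verify that $\phi$ intertwines the actions of these two elements. The tensor-product representation on the left-hand side is defined diagonally as $g\cdot(\bigotimes_i v_i)=\bigotimes_i \rho_{D_{\ell_i}}(g)v_i$; this is a genuine (not merely projective) representation because each $\rho_{D_{\ell_i}}$ is.

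The technical core is the following compatibility between the quadratic form on $D$ and the quadratic forms on the primary components: if $i\neq j$, $x\in D_{\ell_i}$ and $y\in D_{\ell_j}$, then $B_D(x,y)=0$ in $\Q/\Z$. Indeed, $B_D(x,y)$ is killed by a power of $\ell_i$ (since $x$ is) and by a power of $\ell_j$ (since $y$ is), hence is trivial in $\Q/\Z$. Summing over cross terms, this yields
\[
q_D(x_1+\cdots+x_k)=\sum_{i=1}^{k} q_{D_{\ell_i}}(x_i), \qquad B_D(x_1+\cdots+x_k,\,y_1+\cdots+y_k)=\sum_{i=1}^{k} B_{D_{\ell_i}}(x_i,y_i),
\]
both equalities understood in $\Q/\Z$. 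The first identity, combined with the formula $\rho_D(T)\mathbf{e}_x=e^{2\pi i q_D(x)}\mathbf{e}_x$, gives $T$-equivariance immediately: both $\rho_D(T)\phi(\bigotimes_i\mathbf{e}_{x_i})$ and $\phi(\bigotimes_i\rho_{D_{\ell_i}}(T)\mathbf{e}_{x_i})$ equal $e^{2\pi i\sum_i q_{D_{\ell_i}}(x_i)}\mathbf{e}_{x_1+\cdots+x_k}$.

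For $S$-equivariance, I would first observe that the Gauss sum defining $\sigma_w$ factorises across orthogonal direct sums: the bijection $\prod_i D_{\ell_i}\simeq D$ and the additivity of $q_D$ above give
\[
\sum_{y\in D} e^{-2\pi i q_D(y)} \;=\; \prod_{i=1}^{k}\sum_{y_i\in D_{\ell_i}} e^{-2\pi i q_{D_{\ell_i}}(y_i)},
\]
and since $|D|=\prod_i|D_{\ell_i}|$, this yields $\sigma_w(D)=\prod_i\sigma_w(D_{\ell_i})$ and $\sqrt{|D|}^{\,-1}=\prod_i\sqrt{|D_{\ell_i}|}^{\,-1}$. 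Now expanding $\rho_D(S)\phi(\bigotimes_i\mathbf{e}_{x_i})$ using the explicit formula for $\rho_D(S)$, indexing the summation variable $y\in D$ by its primary components $y=y_1+\cdots+y_k$, and applying the orthogonality identity for $B_D$ to split the exponential as $\prod_i e^{-2\pi i B_{D_{\ell_i}}(x_i,y_i)}$, I obtain exactly
\[
\phi\!\left(\bigotimes_{i=1}^{k}\rho_{D_{\ell_i}}(S)\mathbf{e}_{x_i}\right).
\]
The main (mild) obstacle is the bookkeeping involved in this last calculation, in particular verifying that the scalar factors $\sigma_w(D)/\sqrt{|D|}$ match the product $\prod_i\sigma_w(D_{\ell_i})/\sqrt{|D_{\ell_i}|}$; this is precisely what the Gauss-sum factorisation above provides. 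Since equivariance under the two generators $T$ and $S$ forces equivariance under all of $\Mp_2(\Z)$, the proof is complete.
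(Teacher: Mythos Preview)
Your argument is correct. The paper does not actually supply its own proof of this proposition: it simply quotes the result from \cite{Zemel}, Proposition~3.2, and moves on. What you have written is the standard direct verification---reduce to the generators $T$ and $S$, exploit the orthogonality $B_D(x,y)=0$ for $x,y$ in distinct primary components to obtain additivity of $q_D$ and $B_D$, and factor the Gauss sum defining $\sigma_w(D)$ accordingly---and every step is sound. In particular your handling of the scalar $\sigma_w(D)/\sqrt{|D|}$ is exactly the point that needs care, and you have it right.
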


In particular, one may decompose $D=D_p\oplus D^{(p)}$, where $D^{(p)}=\oplus_{\ell\neq p} D_\ell$ and get the following decomposition of finite Weil representations:
\begin{equation}\label{Weildecomp}
\rho_D =\rho_{D_p}\otimes_\C \rho_{D^{(p)}}.
\end{equation}

\subsubsection{Finite quadratic modules from lattices}\label{examplefromlattice}
Let $L_\circ\subseteq V$ be an even $\Z$-lattice, that is, $q(w)\in \Z$ for all $w\in L_\circ$. (Typically it will be assumed that
$L_\circ \otimes \Z[1/p]= \latt$ but this is not essential.)
The function
\[\tilde{q}\colon \D_{L_\circ}\longrightarrow \Q/\Z,\quad \beta \longmapsto q(\beta) \bmod \Z\]
makes the discriminant module $\D_{L_\circ}$ into a finite quadratic module.
Moreover, the equality
\[\sign(\D_{L_\circ})=r-s\bmod 8\]
holds by Milgram's formula.
We are mostly interested in lattices of the form $L_\circ=\latt\cap p^t\Lambda$ with $\Lambda\subseteq V_{\Q_p}$ a self-dual $\Z_p$lattice and $t\geq 0$.
By identifying $\Q/\Z[1/p]$ with the prime to $p$-part of $\Q/\Z$
the quadratic form $q$ induces a map
\[
\D_\latt \rightarrow \Q/\Z[1/p] \hookrightarrow \Q/\Z,
\]
which makes $\D_\latt$ into a finite quadratic module.
Moreover, there is a canonical isomorphism
\[
\D_{\latt\cap p^t\Lambda}^{(p)}\cong \D_{\latt}
\]
of finite quadratic modules.
Similarly, the canonical homomorphism $\Q/\Z\rightarrow \Q_p/\Z_p$ induces an isomorphism between the $p^\infty$-torsion submodule of $\Q/\Z$ and $\Q_p/\Z_p$.
Thus, the quadratic form $q$ makes $\D_{p^t\Lambda}=p^{-t}\Lambda/p^t\Lambda$ into a finite quadratic module and
there is a canonical isomorphism
\[
\D_{\latt\cap p^t\Lambda, p}\cong \D_{p^t\Lambda}
\]
of finite quadratic modules.
Thus, \eqref{Weildecomp} gives the decomposition
\[
\rho_{\D_{\latt\cap p^t\Lambda}}=\rho_{\D_{\latt}}\otimes_\C \rho_{\D_{p^t\Lambda}}.
\]
Moreover, self-duality of $\Lambda$ implies that 
\begin{align*}
\ord_p(N_{\D_{L\cap p^t\Lambda}}) &=\ord_p(N_{\D_{p^t\Lambda}})= 2t\\
\intertext{and}
\ord_p(|\D_{\latt\cap p^t\Lambda}|)&=\ord_p(|\D_{p^t\Lambda}|)=2nt.
\end{align*}

\subsubsection{Extending the Weil representation}
Fix a finite quadratic module $D$.
Following Bruinier and Stein (cf.\cite{Bruinierstein}) we extend the action of $\Mp_2(\Z)$ on $\rho_D$ to a larger group in order to define Hecke operators on $\rho_D$-valued modular forms.
To that end let $\Z_{(N_D)}$ be the localization of $\Z$ away from $N_D$, that is, we invert all primes not dividing $N_D$.
We define
\begin{align*}
\mathcal{H}(N_D)&:=\{g\in \GL_2(\Z_{(N_D)}) \vert\ \det(g)>0,\ \det(g)\ \mbox{is a square} \bmod{N_d} \}.
\intertext{and}
\mathcal{Q}(N_D)&:=\{(g,r)\in \mathcal{H}(N_D))\times (\Z/N_D\Z)^\times\ \vert\ \det(g)\equiv r^2 \bmod{N_d} \}.
\end{align*}
In \cite[\S 4,5]{Bruinierstein},  Bruinier and Stein construct a projective, unitary representation of $\mathcal{Q}(N_D)$ on $\C[D]$.
In case that $\sign(D)$ is even, this is a honest representation.
In general, there exists a central extension $\mathcal{Q}_1(N_D)$ of $\mathcal{Q}(N_D)$ by $\{\pm 1\}$ and a unitary representation of $\mathcal{Q}_1(N_D)$ on $\C[D]$ that will also denote by $\rho_D$.
The following explicit description of the action will be an important ingredient in the study of the $U_{p^2}$-operator.
\begin{lemma}[\cite{Bruinierstein}, Lemma 3.6]
For every integer $m$ with $(m,N_D)=1$ the formula
\[
\rho_D\left(\begin{pmatrix}1 & 0 \\ 0 & m^2\end{pmatrix}, m, \pm 1\right) (\mathbf{e}_x)=\pm\mathbf{e}_{mx}
\]
holds.
\end{lemma}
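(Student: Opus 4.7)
The plan is to verify the identity by direct computation in the extended Weil representation, working from Bruinier--Stein's explicit construction rather than from abstract properties.

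First, I would reduce the claim to the case where $D$ is $\ell$-primary. This uses the canonical decomposition $D = \bigoplus_\ell D_\ell$ together with the tensor factorisation $\rho_D = \bigotimes_\ell \rho_{D_\ell}$ stated in the cited Proposition of Zemel. Both sides of the desired identity behave well under this decomposition: on the right because the endomorphism $x \mapsto mx$ is defined component-wise, and on the left because the triple $(\operatorname{diag}(1,m^2), m, \pm 1)\in \mathcal{Q}_1(N_D)$ restricts to a compatible triple at each prime $\ell \mid N_D$. Since the overall sign $\pm 1$ lives in the common central $\{\pm 1\}$, once the identity is known $\ell$-locally with the correct sign in at least one factor, it follows globally.

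Second, on each $\ell$-primary factor, I would decompose
\[
\begin{pmatrix} 1 & 0 \\ 0 & m^2 \end{pmatrix} \;=\; \begin{pmatrix} m & 0 \\ 0 & m \end{pmatrix} \cdot \begin{pmatrix} m^{-1} & 0 \\ 0 & m \end{pmatrix},
\]
and treat the two factors separately. The scalar factor $mI_2$, equipped with its chosen lift $(mI_2, m, \epsilon)$, acts on $\C[D]$ by the scalar $\epsilon\in\{\pm 1\}$: this is built into Bruinier--Stein's normalisation (the projective representation is uniquely determined by its restriction to $\SL_2$, and the scalar $mI_2$ is sent to a central element of $\operatorname{GL}(\C[D])$, which under the lift to $\mathcal{Q}_1(N_D)$ is forced to equal $\epsilon$). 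This reduces the statement to computing $\rho_D$ on $\operatorname{diag}(m^{-1}, m) \in \SL_2(\Z_{(N_D)})$.

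Third, I would evaluate $\rho_D(\operatorname{diag}(m^{-1}, m))$ using the Bruhat decomposition in $\SL_2$ and the explicit formulas for $\rho_D(T)$ and $\rho_D(S)$ given at the beginning of the subsection. Writing $\operatorname{diag}(m^{-1}, m)$ as a word in $S$ and $T$ (for example via $\operatorname{diag}(m^{-1},m) = S \cdot \smallmat{1}{0}{-m}{1} \cdot \smallmat{1}{m^{-1}}{0}{1} \cdot \smallmat{1}{0}{-m}{1} \cdot S^{-1}$, after suitable adjustment), applying the formulas for $T$ and $S$, and collecting the resulting Gauss-sum factors, one obtains $\rho_D(\operatorname{diag}(m^{-1}, m))(\mathbf{e}_x) = \mathbf{e}_{mx}$ after cancellation. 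The Gauss sums telescope because $m$ is a unit modulo $N_D$ and the character $x\mapsto mx$ rescales $q_D$ by the square $m^2$, so the Weil-index contributions at the inner and outer $S$'s cancel.

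The main obstacle is the careful bookkeeping of signs across the double cover $\mathcal{Q}_1(N_D) \to \mathcal{Q}(N_D)$, which is precisely why the $\pm 1$ is part of the data in the triple. The verification must show that the cocycle factor picked up in the Bruhat-style computation of step three is exactly $+1$ in the normalised lift, so that the external sign $\epsilon$ from the scalar factor in step two passes through unchanged. This is essentially a matching check between the Weil index of the quadratic form on $D_\ell$ and the Kronecker symbol $\bigl(\tfrac{\det g}{|D|}\bigr)$ that appears in \cite[Lemma~\ref{explicitWeil}]{}, and it is consistent with the normalisation Bruinier--Stein use to extend the representation from $\operatorname{Mp}_2(\Z)$ to $\mathcal{Q}_1(N_D)$.
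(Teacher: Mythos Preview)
The paper does not prove this lemma: it is stated with a direct citation to Bruinier--Stein, Lemma 3.6, and no argument is given. So there is nothing in the paper to compare your proof against; you are supplying a proof where the paper supplies only a reference.

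Your outline is reasonable in spirit, but step three has a concrete gap. The Bruhat-style factorisation you write down involves the upper-triangular unipotent $\smallmat{1}{m^{-1}}{0}{1}$ with $m^{-1}$ not an integer, so it does not lie in $\SL_2(\Z)$ and the explicit formulas for $\rho_D(T)$ and $\rho_D(S)$ do not apply to it as stated. The standard fix is to use that $\rho_D$ factors through a finite-level quotient (through $\SL_2(\Z/N_D\Z)$ when $\operatorname{sign}(D)$ is even, or its metaplectic cover otherwise), replace $m^{-1}$ by an integer inverse modulo $N_D$, and run the computation there; but you need to say this explicitly rather than appeal to a decomposition with non-integer entries and the phrase ``after suitable adjustment.'' Similarly, your step-two claim that $(mI_2, m, \epsilon)$ acts by the scalar $\epsilon$ is plausible and true under Bruinier--Stein's normalisation, but ``forced to equal $\epsilon$'' is not an argument---it is exactly the kind of sign compatibility that the construction of $\mathcal{Q}_1(N_D)$ is designed to encode, and verifying it requires going back to how Bruinier--Stein build the extension. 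Since the entire content of the lemma is this sign-tracking, the sketch as written stops short of the actual work.
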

Let $\widetilde{\mathcal{H}}(N_D)$ be the preimage of $\mathcal{H}(N_D)$ in $\widetilde{\GL}_2(\R)^+$.
The group $\mathcal{Q}_2(N_D)$ is defined to be the group of all tuples $(g,\phi,r,t)$ such that $(g,\phi)\in \widetilde{\mathcal{H}}(N_D)$ and $(g,r,t)\in\mathcal{Q}_1(N_D)$ with multiplication induced from the canonical embedding
\[
\mathcal{Q}_2(N_D)\lhook\joinrel\longrightarrow \widetilde{\mathcal{H}}(N_D) \times\mathcal{Q}_1(N_D).
\]
It acts on $\C[D]$ via its projection onto $\mathcal{Q}_1(N_D)$.

Let $\sqrt{\cdot}$ denote the principal branch of the holomorphic square root.
The injection
\[
L_D\colon \Mp_2(\Z)\longrightarrow \mathcal{Q}_2(N_D),\quad (g,\pm\sqrt{j(g,\cdot)})\longmapsto (g,\pm\sqrt{j(g,\cdot)},1,\pm 1),
\]
defines a group homomorphism that satisfies
\[
\rho_D(L_D(\gamma))=\rho_D(\gamma)\quad \forall \gamma\in \Mp_2(\Z).
\]
We often view $\Mp_2(\Z)$ as a subgroup of $\mathcal{Q}_2(N_D)$ via the embedding $L_D$.

\subsubsection{Slash operator and Fourier expansions}
Let $D$ be a finite quadratic module and $k\in\frac{1}{2}\Z$ a half-integer.
The metaplectic group $\widetilde{\GL}_2(\R)^+$ acts on functions $f\colon \cH\rightarrow \C$ from the right via
\[
f\vert_{k,(g,\phi)}(\tau):=\det(g)^{k/2}\phi(\tau)^{-2k}f(g.\tau)\quad \forall (g,\phi)\in \widetilde{\GL}_2(\R)^+.
\]
This induces an action of the group $\mathcal{Q}_2(N_D)$ via the projection $\mathcal{Q}_2(N_D) \rightarrow \widetilde{\mathcal{H}}(N_D)$.
Any function $f\colon \cH\rightarrow \C[D]$ can be uniquely written as a sum $f=\sum_{x\in D} f_x\cdot \mathbf{e}_x$ for some functions $f_x\colon \cH \rightarrow \C$.
Given such a function and $\gamma\in\mathcal{Q}_2(N_D)$, we define
\[
f\vert_{k,\gamma}=\sum_{x\in D} f_x\vert_{k,\gamma}\cdot \rho_D(\gamma)^{-1}(\mathbf{e}_x).
\]
Let $w\geq 1$ be an integer and suppose that $f$ satisfies $f\vert_{k,T^{w}}=f$ or, in other words, $f_x(\tau+w) \cdot e^{-2 \pi i\cdot w q(x)}=f_x(\tau)$ for all $x\in D$.
Then the function $\tau\mapsto f_x(\tau) e^{-2 \pi i \cdot w q(x)\tau}$ is periodic with period $w$ and, thus,
$f$ has a Fourier expansion of the form
\[
f(\tau)=\sum_{x\in D} \sum_{m\in \Q} a_f(m,x) \cdot e^{2\pi i \cdot m\tau}\cdot \mathbf{e}_x.
\]
Moreover, in case $w=1$ the Fourier coefficients $a_f(m,x)$ vanish unless $m\in q(x)+\Z$.

\subsubsection{Vector-valued modular forms}
Let $\cgroup\subseteq \mathcal{Q}_2(N_D)$ be a subgroup that is commensurable with $L(\Mp_2(\Z))$.
A \emph{vector-valued modular form of weight $k$, type $\rho_D$ and level $\cgroup$} is a function
\[f\colon \cH\longrightarrow \C[D]\]
such that
\begin{enumerate}[(i)]
\item $f\vert_{k,\gamma}= f$ for all $\gamma\in \cgroup$, $\tau\in\cH$,
\item $f$ is holomorphic and
\item $f$ is holomorphic at the cusps, that is, for every $\gamma\in \Mp_2(\Z)$ the Fourier coefficients of $f\vert_{k,\gamma}$ satisfy:
\[
a_{f\vert_{k,\gamma}}(m,x)=0\quad \forall m< 0.
\]
\end{enumerate}
A modular form $f$ is uniquely determined by its Fourier expansion.
We denote by $M_{k,D}(\cgroup)$ the vector space of $\C[D]$-valued modular form of weight $k$ and level $\cgroup$.
Let $N$ be any positive integer.
We denote by $\cgroup_0(N)\subseteq \Mp_2(\Z)$ the preimage of the congruence subgroup $\Gamma_0(N)$ under the quotient map $\Mp_2(\Z)\rightarrow \SL_2(\Z)$.
Note that if $\sign(D)\neq 2k \bmod 2$, then $M_{k,D}(\cgroup_0(N))=\{0\}$.

More generally, let $A$ be an abelian group.
A formal power series
\[
f=\sum_{x\in D} \sum_{m\in \Q}  a(m,x) \cdot e^{2\pi i \cdot m\tau}\cdot \mathbf{e}_x,\quad a(m,x)\in A,
\]
is a \emph{modular form of weight $k$, type $\rho_D$ and level $\cgroup$} if for every homomorphism $\psi\colon A\to\C$ the formal power series 
\[
\psi(f)(\tau)=\sum_{x\in D} \sum_{m\in \Q}  \psi(a(m,x))\cdot e^{2\pi i \cdot m\tau}\cdot \mathbf{e}_x
\] is the Fourier expansion of a modular form of weight $k$, type $\rho_D$ and level $\cgroup$.

\subsubsection{Evaluation maps}
Let $f\colon \cH\rightarrow \C[D]$ be a function and $\mu=\sum_{x_p\in D_p} a_{x_p} \cdot \mathbf{e}_{x_p}$ an  element of $\C[D_p]$.
The function $\Ev_\mu(f)\colon \cH\rightarrow \C[D^{(p)}]$ is defined via
\[
\Ev_\mu(f)_{x^{(p)}}:=\sum_{x\in D_p} a_x\cdot  f_{(x^{(p)},x_p)}\quad  \forall x^{(p)}\in D^{(p)}.
\]
The following lemma is an immediate consequence of \eqref{Weildecomp}.
\begin{lemma}\label{trivializing}
Let $\cgroup\subseteq \Mp_2(\Z)$ be a finite index subgroup, $f\in M_{k,D}(\cgroup)$ and $\mu\in \C[D_p]$.
Then $\Ev_\mu(f)\in M_{k,D^{(p)}}(\cgroup_\mu)$, where $\cgroup_\mu$ denotes the stabilizer of $\mu$ in $\cgroup$.
\end{lemma}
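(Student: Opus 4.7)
The plan is to reduce the lemma to a formal consequence of the tensor decomposition \eqref{Weildecomp}. Under the canonical basis isomorphism $\C[D] \cong \C[D^{(p)}] \otimes_\C \C[D_p]$ sending $\mathbf{e}_{(x^{(p)}, x_p)}$ to $\mathbf{e}_{x^{(p)}} \otimes \mathbf{e}_{x_p}$, a function $f\colon\cH \to \C[D]$ becomes a function with values in the tensor product, and the evaluation map $\Ev_\mu$ is nothing but $\id_{\C[D^{(p)}]} \otimes \ell_\mu$, where $\ell_\mu\colon \C[D_p]\to\C$ is the linear form determined by $\ell_\mu(\mathbf{e}_{x_p}) = a_{x_p}$. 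By \eqref{Weildecomp}, for every $\gamma\in \cgroup$ one has $\rho_D(\gamma) = \rho_{D^{(p)}}(\gamma) \otimes \rho_{D_p}(\gamma)$ under the same identification.

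The core step is the intertwining identity
\[
\Ev_\mu \circ \rho_D(\gamma) \;=\; \rho_{D^{(p)}}(\gamma) \circ \Ev_\mu, \qquad \gamma \in \cgroup_\mu,
\]
which, after stripping off the $\rho_{D^{(p)}}(\gamma)$-factor, is equivalent to the invariance $\ell_\mu \circ \rho_{D_p}(\gamma) = \ell_\mu$. This is precisely the characterization of the stabilizer of $\mu$ inside $\cgroup$ (under the basis-induced identification of $\C[D_p]$ with its own dual). Granting this, I would apply $\Ev_\mu$ to the automorphy relation $f|_{k,\gamma}=f$ and commute $\Ev_\mu$ past the factor $\rho_D(\gamma)^{-1}$ appearing in the definition of the slash operator; since the scalar factor $\det(g)^{k/2}\phi(\tau)^{-2k}$ is independent of the vector-valued structure and thus passes freely through $\Ev_\mu$, the automorphy relation $\Ev_\mu(f)|_{k,\gamma}=\Ev_\mu(f)$ drops out directly.

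Holomorphy of $\Ev_\mu(f)$ on $\cH$ is immediate because $\Ev_\mu$ is a continuous $\C$-linear map, and holomorphy at the cusps follows from that of $f$ since, for every $\gamma\in \Mp_2(\Z)$, each Fourier coefficient of $\Ev_\mu(f)|_{k,\gamma}$ is a finite $\C$-linear combination of Fourier coefficients of $f|_{k,\gamma}$, which vanish for $m<0$ by assumption on $f$. I expect no substantive obstacle here; the only care needed is in aligning the slash-operator conventions for $\mathcal{Q}_2(N_D)$ with the tensor factorization of $\rho_D$, which is why the statement is advertised as an immediate consequence of \eqref{Weildecomp}.
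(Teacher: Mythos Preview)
Your proof is correct and is precisely the unpacking the paper intends: there the lemma is recorded as ``an immediate consequence of \eqref{Weildecomp}'', and your intertwining identity $\Ev_\mu\circ\rho_D(\gamma)=\rho_{D^{(p)}}(\gamma)\circ\Ev_\mu$ is exactly what that tensor decomposition yields once one writes $\Ev_\mu=\id\otimes\ell_\mu$. One small point of care, which you already flag with your parenthetical: the condition $\ell_\mu\circ\rho_{D_p}(\gamma)=\ell_\mu$ is literally the stabilizer condition for $\mu$ under the contragredient (equivalently for $\bar\mu$ under $\rho_{D_p}$, by unitarity), not for $\mu$ under $\rho_{D_p}$ itself; since the paper only ever applies the lemma with $\mu=\mu(\Lambda,\ell)$ having integer coefficients (Corollary~\ref{vertextrivializing}), the distinction is immaterial in practice.
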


Let $\Lambda\subseteq V_{\Q_p}$ be a self-dual $\Z_p$-lattice, $t\geq 1$ an integer, and $\ell\in Q_\Lambda(\Z/p^t\Z)$ an isotropic line, that is, $\ell$ is a free $\Z/p^t\Z$-module on which $q(v)\equiv 0 \bmod p^t$.
Consider the set
\begin{align*}
P(\Lambda,\ell)&:=\left\{x_p\in \D_{p^t\Lambda}\ \vert\ x_p\ \mbox{generates}\ \ell\right\}\\
\intertext{and put}
\mu(\Lambda,\ell)&:=\sum_{x_p\in P(\Lambda,\ell)} \mathbf{e}_{x_p} \in \C[\D_{p^t\Lambda}]
\end{align*}
\begin{corollary}\label{vertextrivializing}
Let $\Lambda\subseteq V_{\Q_p}$ be a self-dual and $\Z_p$-lattice and $\ell\in Q_\Lambda(\Z/p^t\Z)$ an isotropic line.
Then:
\[
\Ev_{\mu(\Lambda,\ell)}(f) \in M_{k,\D_{\latt}}(\cgroup_0(p^{2t}))\quad \forall f \in M_{k,\D_{\latt\cap p^{t}\Lambda}}.
\]
\end{corollary}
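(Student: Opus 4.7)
The plan is to invoke Lemma \ref{trivializing} with $D = \D_{\latt \cap p^t \Lambda}$ and $\mu = \mu(\Lambda,\ell)$. Using the canonical orthogonal decomposition $D = D_p \oplus D^{(p)}$ with $D_p = \D_{p^t\Lambda}$ and $D^{(p)} = \D_\latt$ from Section \ref{examplefromlattice}, together with the induced factorization $\rho_D = \rho_{D_p} \otimes \rho_{D^{(p)}}$, everything reduces to verifying that $\cgroup_0(p^{2t})$ lies in the stabilizer $\cgroup_\mu$ via its action on the factor $\rho_{D_p}$. Once this is established, Lemma \ref{trivializing} gives $\Ev_\mu(f) \in M_{k,\D_\latt}(\cgroup_\mu) \subseteq M_{k,\D_\latt}(\cgroup_0(p^{2t}))$, which is the claim.

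For the required stabilization I would appeal to Lemma \ref{explicitWeil}. Since $p$ is odd, $|D_p| = p^{2nt}$ is odd and $N_{D_p} = p^{2t}$ (both from Section \ref{examplefromlattice}), so the lemma applies and yields
\[
\rho_{D_p}(\gamma)(\mathbf{e}_{x_p}) = e^{2\pi i \cdot bd \cdot q_{D_p}(x_p)} \legendre{d}{p^{2nt}}\, \mathbf{e}_{d x_p}
\]
for every $\gamma = \begin{pmatrix} a & b \\ c & d \end{pmatrix} \in \Gamma_0(p^{2t})$. Three elementary observations then finish the argument: the Legendre symbol is trivial since $\legendre{d}{p^{2nt}} = \legendre{d}{p}^{2nt} = 1$ (even exponent); multiplication by $d$, a unit in $(\Z/p^{2t}\Z)^\times$, permutes the generators of the cyclic line $\ell$ and hence permutes $P(\Lambda,\ell)$, so that summing $\mathbf{e}_{d x_p}$ over $P(\Lambda,\ell)$ reproduces $\mu(\Lambda,\ell)$; and finally $q_{D_p}(x_p) = 0$ in $\Q_p/\Z_p$ for each $x_p \in P(\Lambda,\ell)$, killing the phase factor. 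Combining these gives $\rho_{D_p}(\gamma) \mu(\Lambda,\ell) = \mu(\Lambda,\ell)$.

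The only step that requires a little care is the vanishing $q_{D_p}(x_p) = 0$. I would realize $P(\Lambda,\ell)$ inside the $p^t$-torsion subgroup of $\D_{p^t\Lambda} = p^{-t}\Lambda/p^t\Lambda$, which coincides canonically with $\Lambda/p^t\Lambda$; then any $x_p \in P(\Lambda,\ell)$ lifts to some $v \in \Lambda$ whose reduction mod $p^t$ generates $\ell$, and isotropy of $\ell$ mod $p^t$ gives $q(v) \in p^t\Z_p \subseteq \Z_p$, whence $q_{D_p}(x_p) \equiv q(v) \equiv 0$ in $\Q_p/\Z_p$. The metaplectic subtleties are inert in this situation: because $|D_p|$ is odd, $\sign(D_p)$ is even and so $\rho_{D_p}$ is an honest (not projective) representation, meaning the sign tuple carried by the lift $L_{D_p}(\gamma) \in \mathcal{Q}_2(N_{D_p})$ contributes no obstruction.

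No step constitutes a serious obstacle; the whole argument is an unwinding of the definitions of $\mu(\Lambda,\ell)$ and $\D_{p^t\Lambda}$ followed by a direct application of Lemma \ref{explicitWeil}. The only mildly subtle point is choosing the correct embedding $\Lambda/p^t\Lambda \hookrightarrow \D_{p^t\Lambda}$ via which to realize $P(\Lambda,\ell)$, so that $q_{D_p}$ manifestly vanishes on it.
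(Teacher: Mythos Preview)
Your proof is correct and follows essentially the same route as the paper's: reduce to Lemma~\ref{trivializing}, use the decomposition $\D_{\latt\cap p^t\Lambda}=\D_{p^t\Lambda}\oplus\D_\latt$ from Section~\ref{examplefromlattice}, and invoke Lemma~\ref{explicitWeil} to see that $\Gamma_0(p^{2t})$ acts on $\mathbf{e}_{x_p}$ by $\mathbf{e}_{x_p}\mapsto\mathbf{e}_{d x_p}$, which permutes $P(\Lambda,\ell)$. You are more explicit than the paper in verifying that the Legendre symbol and the exponential phase both trivialize; the paper absorbs these into the phrase ``Lemma~\ref{explicitWeil} together with the discussion in Section~\ref{examplefromlattice}''. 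One minor remark: for the vanishing of $q_{D_p}(x_p)$ you do not even need the isotropy of $\ell$---any $x_p$ represented by a vector in $\Lambda$ already has $q(v)\in\Z_p$ since $\Lambda$ is self-dual and $p$ is odd, so $q_{D_p}(x_p)=0$ in $\Q_p/\Z_p$.
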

\begin{proof}
The action of $\Mp_2(\Z)$ on $\D_{p^t\Lambda}$ factors through $\SL_2(\Z)$.
Let ${x_p}\in \D_\Lambda$ be any class that is represented by a vector in $\Lambda$.
Then, Lemma \ref{explicitWeil} together with the discussion in Section \ref{examplefromlattice} implies that
\[
\rho_{\D_\Lambda}(\gamma) (\mathbf{e}_{x_p}) = \mathbf{e}_{d\cdot x_p}\quad  \forall \gamma=\begin{pmatrix} a& b \\ c & d \end{pmatrix}\in \Gamma_0(p^{2t}).
\]
Thus, the operator $\rho_{\D_\Lambda}(\gamma)$ permutes the elements in $\{\mathbf{e}_{x_p}\ \vert\ x_p\in P(\Lambda,\ell)\}$, which implies that it stabilizes the element $\mu(\Lambda,\ell)$.
The assertion follows from Lemma \ref{trivializing}.
\end{proof}

\subsubsection{Hecke operators}
For any subgroup $\cgroup\subseteq \mathcal{Q}_2(N_D)$ as above and any $\alpha=(g,\phi,r,t) \in \mathcal{Q}_2(N_D)$
the double coset $\cgroup \alpha\cgroup$ decomposes into a finite union of left cosets:
\[
\cgroup \alpha\cgroup =\bigcup_i \cgroup \xi_i
\]
The Hecke operator
\[
T_{\cgroup\alpha\cgroup}\colon M_{k,D}(\cgroup)\longrightarrow M_{k,D}(\cgroup),\quad f\longmapsto \det(g)^{k/2-1}\sum_i f\vert_{k,\alpha_i}
\]
does not depend on the choice of coset representatives $\xi_i$.
We are only interested in the case $p\nmid N_D$ and the special element
\[
\alpha_p:=\left(\begin{pmatrix}1 & 0 \\ 0 & p^2\end{pmatrix}, p , p , 1\right)\in \mathcal{Q}_2(N_D).
\]

\noindent Let $A$ be an abelian group.
Given a formal Fourier series
\begin{align*}
f(\tau)=&\sum_{x\in D} \sum_{m\in \Q_{\geq 0}}  a_f(m,x)\cdot e^{2\pi i \cdot m\tau}\cdot \mathbf{e}_x,\quad a_f(m,x)\in A,
\intertext{we define}
U_{p^{2}}(f)(\tau):=&\sum_{x\in D} \sum_{m\in \Q_{\geq 0}}  a_f(p^{2}m,p x)\cdot e^{2\pi i \cdot m\tau}\cdot \mathbf{e}_x.
\end{align*}
Proposition \ref{ordinarityprop} implies that the formal Fourier series defined in \eqref{generatingseries} satisfies
\begin{equation}\label{ordinarity}
U_{p^{2}}(Z_\latt)(\tau)=Z_\latt(\tau).
\end{equation}

This $U_{p^2}$-operator satisfies the following analogue of the classical level lowering property of the $U_p$-operator on scalar-valued modular forms of integral weight (cf.~\cite[Lemmas 6,7]{AtkinLehner}, and \cite[Lemma 1]{Li}):
\begin{proposition}\label{levellowering}
Let $D$ be a finite quadratic module with $p\nmid N_D$ and $t\geq 1$ an integer.
Then:
\[
U_{p^2}(f)=T_{\cgroup_0(p^t)\alpha_p\cgroup_0(p^t)}(f)\quad \forall f\in M_{k,D}(\cgroup_0(p^t))
\]
In particular, one has $U_{p^2}(f)\in M_{k,D}(\cgroup_0(p^t))$ for all $f\in M_{k,D}(\cgroup_0(p^t))$.
Moreover, if $t\geq 3$, then
\[
U_{p^2}(f)\in M_{k,D}(\cgroup_0(p^{t-2})).
\]
\end{proposition}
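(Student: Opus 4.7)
The plan is to mimic the classical Atkin--Lehner argument for $U_p$ in the vector-valued Bruinier--Stein setting (\emph{cf.}~\cite{AtkinLehner}, \cite{Li}). First I would write down the explicit coset decomposition
\[\cgroup_0(p^t)\alpha_p\cgroup_0(p^t)=\bigsqcup_{b=0}^{p^2-1}\cgroup_0(p^t)\,\widetilde{\alpha}_{p,b},\]
where $\widetilde{\alpha}_{p,b}:=\alpha_p\cdot L_D(T^b)$ in $\mathcal{Q}_2(N_D)$ covers the matrix $\bigl(\begin{smallmatrix}1 & b \\ 0 & p^2\end{smallmatrix}\bigr)$. Because $p\nmid N_D$ and $L_D(T^b)\in L_D(\Mp_2(\Z))$, the metaplectic and $(r,t)$-components lift unambiguously, and the underlying matrix decomposition itself is the standard elementary-divisor calculation for $\SL_2(\Z)$-level groups.

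Next I would compute $T_{\cgroup_0(p^t)\alpha_p\cgroup_0(p^t)}(f)$ on Fourier expansions. For $f=\sum_{x,m}a_f(m,x)\,e^{2\pi i m\tau}\,\mathbf{e}_x\in M_{k,D}(\cgroup_0(p^t))$, the slash action, together with the formulae $\rho_D(\alpha_p)(\mathbf{e}_y)=\mathbf{e}_{py}$ (sensible because $p\nmid N_D$ makes $p$ act invertibly on $D$) and $\rho_D(T^b)(\mathbf{e}_x)=e^{2\pi i b q_D(x)}\mathbf{e}_x$, yields
\[f\vert_{k,\widetilde{\alpha}_{p,b}}(\tau)=p^{-k}\sum_{x,m}a_f(m,x)\,e^{2\pi i m(\tau+b)/p^2}\,e^{-2\pi i b\, q_D(p^{-1}x)}\,\mathbf{e}_{p^{-1}x}.\]
Multiplying by $\det(\alpha_p)^{k/2-1}=p^{k-2}$ and summing $b=0,\ldots,p^2-1$ produces the orthogonality sum $\sum_{b}e^{2\pi i b(m/p^2-q_D(p^{-1}x))}$, which equals $p^2$ when $m/p^2-q_D(p^{-1}x)\in\Z$ and vanishes otherwise. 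Substituting $y=p^{-1}x$ and $m=p^2 m'$ gives precisely the Fourier expansion of $U_{p^2}(f)$, proving the first identity. Invariance of $U_{p^2}(f)$ under $\cgroup_0(p^t)$ is then automatic from the general Hecke-operator formalism.

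For the level-lowering statement when $t\geq 3$, I would check directly that $(U_{p^2}f)\vert_{k,\gamma}=U_{p^2}(f)$ for every $\gamma=\bigl(\begin{smallmatrix}a & B\\ c & d\end{smallmatrix}\bigr)\in\cgroup_0(p^{t-2})$. A short matrix computation gives the factorization
\[\widetilde{\alpha}_{p,b}\cdot\gamma=\gamma'_b\cdot\widetilde{\alpha}_{p,b'(b)},\qquad\gamma'_b=\begin{pmatrix}a+bc & \ast\\ p^2 c & d-c\,b'(b)\end{pmatrix},\]
with $b'(b)\in\{0,\ldots,p^2-1\}$ uniquely determined by $(a+bc)\,b'(b)\equiv B+bd\bmod p^2$ (solvable because $p\mid c$ forces $\gcd(a+bc,p)=1$). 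The decisive observation is that the lower-left entry $p^2 c$ is divisible by $p^t$ whenever $p^{t-2}\mid c$, so $\gamma'_b\in\Gamma_0(p^t)$ and lifts to $\cgroup_0(p^t)$; consequently $f\vert_{k,\gamma'_b}=f$. Since $b\mapsto b'(b)$ is a bijection of $\Z/p^2\Z$ (an injectivity check using $\det(\gamma)=1$ giving $(b_1-b_2)\det(\gamma)\equiv 0\bmod p^2$), one concludes
\[(U_{p^2}f)\vert_{k,\gamma}=p^{k-2}\sum_{b}(f\vert_{k,\gamma'_b})\vert_{k,\widetilde{\alpha}_{p,b'(b)}}=p^{k-2}\sum_{b'}f\vert_{k,\widetilde{\alpha}_{p,b'}}=U_{p^2}(f).\]

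The main obstacle is the careful bookkeeping of the metaplectic and $(r,t)$-components of $\mathcal{Q}_2(N_D)$ when translating the matrix identity $\widetilde{\alpha}_{p,b}\gamma=\gamma'_b\widetilde{\alpha}_{p,b'(b)}$ into a statement about slash actions: one must verify that the $\rho_D$-factors cancel exactly, with no residual central characters. The hypothesis $p\nmid N_D$ makes $p$ invertible on $D$ and trivializes most of the potential metaplectic sign ambiguities, so in the end the argument reduces to the bookkeeping of its scalar-valued Atkin--Lehner prototype.
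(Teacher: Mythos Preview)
Your argument is correct and, for the first assertion, essentially parallels the paper's proof: both establish the coset decomposition (the paper by invoking Lemmas~4.7--4.8 of \cite{Bruinierstein}, you by writing down $\widetilde{\alpha}_{p,b}=\alpha_p L_D(T^b)$ directly) and then identify the Hecke operator with $U_{p^2}$ via the Fourier computation, which the paper delegates to the proof of \cite[Theorem~4.10]{Bruinierstein}.

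For the level-lowering step when $t\geq 3$, the paper takes a slightly different and more conceptual route: rather than verifying $\cgroup_0(p^{t-2})$-invariance of $U_{p^2}(f)$ by a coset-permutation argument, it first records (again via \cite[Lemma~4.8]{Bruinierstein}) that $f\vert_{k,\alpha_p}\in M_{k,D}(\cgroup_0(p^{t-2},p^2))$, and then observes that averaging this over the $p^2$ translates collapses the congruence condition on the upper-right entry, landing in $M_{k,D}(\cgroup_0(p^{t-2}))$. Your direct verification $\widetilde{\alpha}_{p,b}\gamma=\gamma'_b\,\widetilde{\alpha}_{p,b'(b)}$ is the classical Atkin--Lehner manoeuvre and is entirely equivalent; it has the virtue of being self-contained, while the paper's route cleanly isolates where the ``gain'' of two powers of $p$ comes from. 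One small correction: subtracting the two congruences for $b_1,b_2$ actually gives $(b_1-b_2)(d-cb')\equiv 0\pmod{p^2}$ rather than $(b_1-b_2)\det(\gamma)$, but since $p\mid c$ forces $d-cb'\equiv d$ to be a unit mod $p$, the bijectivity conclusion is unaffected.
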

\begin{proof}
Consider the element
\[
\beta_p:=\left(\begin{pmatrix}1 & 0 \\ 0 & p^2\end{pmatrix}, p \right) \in \widetilde{\mathcal{H}}(N_D).
\]
Lemma 4.7 and Lemma 4.8 of \cite{Bruinierstein} imply that if
\[
\cgroup_0(p^t)\beta_p\cgroup_0(p^t)= \bigcup_i \cgroup_0(p^t)\beta_p \gamma_i \subseteq \widetilde{\mathcal{H}}(N_D)
\]
with $\gamma_i \in \cgroup_0(p^t)$ is a decomposition into disjoint left cosets, then
\[
\cgroup_0(p^t)\alpha_p\cgroup_0(p^t) =\bigcup_i \cgroup_0(p^t) \alpha_p \gamma_i \subseteq \mathcal{Q}_2(N_D)
\]
is also a disjoint decomposition into left cosets.
In particular, we may choose
\[
\gamma_i=\left(\begin{pmatrix}1 & 0 \\ i & 1\end{pmatrix}, p \right),\quad 0\leq i < p^2.
\]
The desired equality of operators then directly follows from the last computation in the proof of \cite[Theorem 4.10]{Bruinierstein}.

Now let $t\geq 3$.
Given integers $N,M\geq 1$ we let $\cgroup_0(N,M)$ be the preimage in $\Mp_2(\Z)$ of the congruence subgroup
\[
\Gamma_0(N,M)=\left\{\begin{pmatrix}a & b \\ c & d\end{pmatrix}\in \SL_2(\Z)\ \middle\vert\ c\equiv 0 \bmod N,\ b\equiv 0 \bmod M\right\}
\]
Lemma 4.8 of \textit{loc.cit} implies that
\[
f\vert_{k,\alpha_p}\in M_{k,D}(\Gamma_0(p^{t-2},p^2)) \quad \forall f\in M_{k,D}(\Gamma_0(p^{t})).
\]
Averaging a modular form in $M_{k,D}(\Gamma_0(p^{t-2},p^2))$ over the elements $\gamma_i$ gives a form of level $\Gamma_0(p^{t-2})$.
\end{proof}

\begin{remark}
Proposition \ref{levellowering} combined with equation \eqref{ordinarity} suggests that the formal power series $Z_\latt$ is a modular form of level $\cgroup_0(p)$.
\end{remark}

\begin{corollary}\label{maincombinatoriallemma}
Let $\Lambda\subseteq V_{\Q_p}$ be a self-dual $\Z_p$-lattice, $\ell\in Q(\Z/p^t\Z)$ an isotropic line, and $f\in M_{k,\D_{\latt\cap p^t\Lambda}}$.
There exists a modular form $g\in M_{k,\D_\latt}(\cgroup_0(p))$ such that
\[
a_g(m,\beta)=\sum_{x_p\in P(\Lambda,\ell)} a_f(p^{2t} m, (p^t \beta, x_p)) \quad \forall m> 0,\ \beta\in \D_{\latt}.
\]
\end{corollary}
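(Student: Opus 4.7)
The plan is to construct $g$ explicitly as $g := U_{p^2}^t\bigl(\Ev_{\mu(\Lambda,\ell)}(f)\bigr)$ and then verify both the Fourier coefficient formula and the claimed level in turn.

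First I would apply Corollary \ref{vertextrivializing} to the modular form $f\in M_{k,\D_{\latt\cap p^t\Lambda}}$, yielding that $h := \Ev_{\mu(\Lambda,\ell)}(f)$ lies in $M_{k,\D_\latt}(\cgroup_0(p^{2t}))$. Using the canonical decomposition $\D_{\latt\cap p^t\Lambda} = \D_\latt \oplus \D_{p^t\Lambda}$ recalled in Section \ref{examplefromlattice}, an element of $\D_{\latt\cap p^t\Lambda}$ is written as $(\beta, x_p)$ with $\beta \in \D_\latt$ and $x_p \in \D_{p^t\Lambda}$. Unwinding the definition of the evaluation map $\Ev_\mu$ coefficient by coefficient gives the Fourier expansion
\[
a_h(m,\beta) \ = \ \sum_{x_p \in P(\Lambda,\ell)} a_f\bigl(m, (\beta, x_p)\bigr)
\]
for every $m \in \Q_{\geq 0}$ and $\beta \in \D_\latt$.

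Next I would iterate the $U_{p^2}$ operator $t$ times. Since by definition $U_{p^2}$ sends a Fourier series with $(m,x)$-th coefficient $a(m,x)$ to the one with $(m,x)$-th coefficient $a(p^2 m, px)$, applying it $t$ times gives
\[
a_g(m,\beta) \ = \ a_h\bigl(p^{2t}m,\, p^t\beta\bigr) \ = \ \sum_{x_p \in P(\Lambda,\ell)} a_f\bigl(p^{2t}m, (p^t\beta, x_p)\bigr),
\]
matching the formula demanded by the statement.

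Finally, for the level I would iterate Proposition \ref{levellowering}: each application of $U_{p^2}$ preserves modular form status at level $\cgroup_0(p^s)$ for every $s \geq 1$, and drops the level from $\cgroup_0(p^s)$ to $\cgroup_0(p^{s-2})$ whenever $s \geq 3$. Starting from $h \in M_{k,\D_\latt}(\cgroup_0(p^{2t}))$ and iterating $t$ times therefore takes the level down the chain $p^{2t} \to p^{2t-2} \to \cdots \to p$ and lands in $\cgroup_0(p)$. The main obstacle is tracking this level descent precisely; the Fourier-coefficient computations themselves are mechanical, but verifying that the terminal step of the tower reaches $\cgroup_0(p)$ (rather than, say, $\cgroup_0(p^2)$) rests on the Bruinier--Stein double coset identity from the proof of Proposition \ref{levellowering} applied carefully at the bottom of the tower. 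Once the level is established, the two Fourier-coefficient calculations above combine to give exactly the modular form $g$ demanded by the corollary.
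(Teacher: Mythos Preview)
Your proposal is correct and follows exactly the same approach as the paper: define $g=(U_{p^2}^{t}\circ\Ev_{\mu(\Lambda,\ell)})(f)$, invoke Corollary~\ref{vertextrivializing} for the initial level $\cgroup_0(p^{2t})$, and then iterate Proposition~\ref{levellowering} to descend to $\cgroup_0(p)$. You are in fact more explicit than the paper's two-line proof, and you correctly flag the one genuine subtlety---that the final step $p^2\to p$ is not literally covered by Proposition~\ref{levellowering} as stated (which requires exponent $\geq 3$) and needs the underlying Bruinier--Stein/Atkin--Lehner argument applied directly; the paper glosses over this point.
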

\begin{proof}
Corollary \ref{vertextrivializing} together with Proposition \ref{levellowering} implies that the modular form $g=(U_{p^2}^{t}\circ\Ev_{\mu(\Lambda,\ell)})(f)$ has level $\cgroup_0(p)$.
By construction it has the desired Fourier coefficients.
\end{proof}

\subsubsection{Modularity of theta series}
Let us suppose for the moment that $s=0$, that is, the quadratic form $q$ is positive definite.
Let $L_\circ\subseteq V$ be any $\Z$-lattice such that $q(L_\circ)\subseteq \Z$.
Since $q$ is positive definite the set $\cO_{L_\circ}(m,x):=\{v\in x\ \vert\ q(v)=m\}$ is finite for any $x\in \D_{L_\circ}$ and any $m\in \Q_{\geq 0}$.
We put $r_{L_\circ}(m,x):=|\cO_{L_\circ}(m,x)|$.
It is well known that the theta series
\[
\vartheta_{L_\circ}(\tau):=\sum_{x\in \D_{L_\circ}} \sum_{m\in \Q_{\geq 0}} r_{L_\circ}(m,x) \cdot e^{2\pi i \cdot m \tau} \cdot \mathbf{e}_x
\]
is an element of $M_{n/2,\D_{L_\circ}}$.
The following hyperbolic analogue when $s=1$
 is due to Funke and Millson (see \cite[Theorem 1.7]{FunkeMillson})
\begin{theorem}[Funke--Millson]
\label{FMhyperbolic}
Assume that $s=1$ and that $Q(\Q)\neq \emptyset$.
For every $\Z$-lattice $L_\circ\subseteq V$ such 
that $q(L_\circ)\subseteq \Z$ and every pair of 
rational isotropic lines $\ell_{-},\ell_{+}\in Q(\Q)$ 
there exists a modular form
\[
\vartheta_{L_\circ,(\ell_{-},\ell_{+})}\in M_{n/2,\D_{L_\circ}}
\]
such that
\[
a_{\vartheta_{L_\circ,(\ell_{-0},\ell_{+})}}(m,\beta)=\sum_{v\in \cO_{L_0}(m,\beta)} [\ell_{-},\ell_{+}]\cap \Delta_{v,\infty}
\]
holds for all $(m,\beta)$ with $m$ compact with respect to $(V,q)$ (see Definition \ref{def-compactm}).
\end{theorem}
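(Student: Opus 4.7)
The plan is to establish Theorem \ref{FMhyperbolic} via the Kudla--Millson theta correspondence, specialized to signature $(r,1)$ and paired with a geodesic one-chain on $X_\infty$. The key input is the construction of a Schwartz form $\varphi_{KM}\in [\mathcal{S}(V_\R)\otimes \Omega^1(X_\infty)]^{\mathrm{O}(V_\R)}$ whose associated theta kernel produces a vector-valued modular form of weight $n/2$ with values in closed one-forms on $X_\infty$. Concretely, define
\[
\Theta_{L_\circ}(\tau,Z) = y^{-n/4}\sum_{\beta\in \D_{L_\circ}}\sum_{v\in \beta+L_\circ} \varphi_{KM}(\sqrt{y}\,v,Z)\, e^{2\pi i \tau q(v)}\,\mathbf{e}_\beta,\qquad \tau=x+iy\in\cH.
\]
The Weil representation identities for $\varphi_{KM}$, combined with Poisson summation, imply that $\Theta_{L_\circ}(\tau,Z)$ transforms under $\Mp_2(\Z)$ with type $\rho_{\D_{L_\circ}}$ and weight $n/2$ in $\tau$, uniformly in $Z\in X_\infty$.

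Next, I would pair $\Theta_{L_\circ}$ against the geodesic $[\ell_-,\ell_+]\subseteq \overline{X}_\infty$ by defining
\[
\vartheta_{L_\circ,(\ell_-,\ell_+)}(\tau) := \int_{[\ell_-,\ell_+]} \Theta_{L_\circ}(\tau,\cdot).
\]
Since integration in $Z$ and the slash action in $\tau$ commute, modularity of $\Theta_{L_\circ}$ passes to $\vartheta_{L_\circ,(\ell_-,\ell_+)}$, once integrability is justified. The central identity, which is the geometric content of Funke--Millson's work, is that for each $v\in V$ of positive length whose orthogonal complement is anisotropic, the fibrewise integral
\[
\int_{[\ell_-,\ell_+]}\varphi_{KM}(v,\cdot) = \left([\ell_-,\ell_+]\cap \Delta_{v,\infty}\right) \cdot h_v(y)\cdot e^{-2\pi y q(v)}
\]
separates into the intersection multiplicity times a universal Gaussian factor $h_v(y)$. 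The Funke--Millson Schwartz form is designed precisely so that $h_v(y)$ equals $1$, making $\Theta_{L_\circ}$ already holomorphic in $\tau$ (as opposed to the original Kudla--Millson theta form, which produces only a real-analytic form in signature $(r,1)$). Summing over $v\in \beta + L_\circ$ with $q(v)=m$ then reads off the desired Fourier coefficient.

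The main obstacle is convergence and the identification of the geometric integral. The geodesic $[\ell_-,\ell_+]$ is non-compact, limiting to the two rational boundary points, and one must show that the sum-integral
\[
\sum_{v}\int_{[\ell_-,\ell_+]} \varphi_{KM}(\sqrt{y}\,v,\cdot)\,e^{2\pi i \tau q(v)}
\]
converges absolutely and that Fubini applies. The compactness hypothesis on $m$ (i.e.~$v^\perp$ anisotropic) ensures that $\Delta_{v,\infty}$ stays away from the cusps of $[\ell_-,\ell_+]$ and that intersections are transverse and finite (Lemma \ref{modularsymbolintersection}), so that the non-compact boundary contributes nothing. For $v$ with isotropic $v^\perp$, the intersection number is ill-defined and the corresponding Fourier coefficients arise from a boundary contribution in a Stokes-type computation; this is why the theorem only asserts the coefficient formula for compact $m$.

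Finally, one verifies that $\vartheta_{L_\circ,(\ell_-,\ell_+)}$ is holomorphic at the cusps by checking boundedness of the sum as $y\to\infty$ (which uses positive definiteness of $q$ on the positive cone of $v$), and that the weight and type match by inspection of the Weil representation action on $\varphi_{KM}$. The explicit formula in Section \ref{sec-orientations} for the signed intersection number \eqref{compacthyperbolicclasses} then shows that the Fourier coefficients are exactly as claimed.
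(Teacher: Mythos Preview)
The paper does not prove this theorem at all: it is stated with attribution to Funke and Millson and a citation to \cite[Theorem~1.7]{FunkeMillson}, with no argument given. So there is nothing in the paper to compare your sketch against.

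That said, your outline is broadly the Funke--Millson strategy (Kudla--Millson theta kernel paired against the infinite geodesic, Thom-form property yielding intersection numbers), but one point is garbled. There is no separate ``Funke--Millson Schwartz form'' distinct from the Kudla--Millson form that makes the theta integral holomorphic by design; Funke and Millson use the same $\varphi_{KM}$, and holomorphicity of the resulting period $\int_{[\ell_-,\ell_+]}\Theta_{L_\circ}$ is a theorem, not a consequence of a choice of Schwartz form. The genuine work lies in analyzing the contributions from vectors $v$ whose orthogonal complement is isotropic (and from isotropic $v$): these produce a priori non-holomorphic terms near the cusps of the geodesic, and one must show they assemble into something holomorphic (or vanish). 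Your sketch waves this away by invoking the compactness hypothesis on $m$, but that hypothesis only controls which Fourier coefficients are \emph{identified} with intersection numbers; it does not by itself guarantee that the full theta integral converges to a holomorphic modular form. The convergence of the integral near $\ell_\pm$ and the structure of the non-compact coefficients is exactly the content of \cite{FunkeMillson} that your outline is missing.
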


\subsection{Modularity theorems}\label{maintheorems}
For the remainder of this chapter we assume that $\Gamma$ acts trivially on the discriminant module $\D_\latt$.
We state the main theorems on the existence of lifts of linear combinations of Kudla--Millson divisors to rigid meromorphic cocycles.
To this end, for any ring $R$ consider the $R$-module
\[
\mathcal{C}_{\D_\latt}\hspace{-0.2em}(R):=\bigoplus_{m\in \Z_{(p)}^{>0}} \bigoplus_{\beta\in \D_\latt} R.
\]
We attach to an element $\underline{c}=(c_{m,\beta})\in \mathcal{C}_{\D_\latt}\hspace{-0.2em}(\Z)$ the Kudla--Millson divisor
\[
\sD_{\underline{c}}:=\sum_{m\in \Z_{(p)}^{>0}} \sum _{\beta \in \D_\latt} c_{m,\beta}\cdot \sD_{m,\beta}.
\]
Moreover, we define $\mathcal{F}_{\D_\latt}\hspace{-0.2em}(R)$ to be the module of formal $q$-series of the form
\[
f(q)=\sum_{\beta\in\D_\latt}\sum_{m\in \Z_{(p)}^{>0}} a_f(m,\beta)\cdot e^{2\pi i \cdot m\tau}\cdot \mathbf{e}_\beta,\quad a_f(m,\beta)\in R.
\]
For every ring $R$ the canonical pairing
\[
\mathcal{F}_{\D_\latt}\hspace{-0.2em}(R) \times \mathcal{C}_{\D_\latt}\hspace{-0.2em}(R) \longrightarrow R,
\quad (f(q),\underline{c}) \longmapsto \underline{c}(f(q)):=\sum_{\beta\in\D_\latt}\sum_{ m\in \Z_{p}^{>0}} c_{m,\beta}\cdot a_f(m,\beta)
\]
is non-degnerate.
The homomorphism
\[
M_{k,\D_\latt}(\cgroup_0(N))\longrightarrow \mathcal{F}_{\D_\latt}\hspace{-0.2em}(\C)
\]
that maps a modular form to the positive part of its Fourier expansion is injective.
We consider $M_{k,\D_\latt}(\cgroup_0(N))$ as a submodule of $\mathcal{F}_{\D_\latt}\hspace{-0.2em}(\C)$ via this embedding.
Since $M_{k,\D_\latt}(\cgroup_0(N))$ is finite-dimensional it is equal to its double orthogonal complement with respect to the pairing above.
By \cite{McGraw} $M_{k,\D_\latt}$ has a basis of modular forms whose Fourier expansion have rational coefficients.
The same arguments show that $M_{k,\D_\latt}(\cgroup_0(N))$ has such a basis for every integer $N\geq 1$, which implies that
\begin{align}\label{doubleperp}
M_{k,\D_\latt}(\cgroup_0(N))=(M_{k,\D_\latt}(\cgroup_0(N))^\perp\cap \mathcal{C}_{\D_\latt}(\Z))^\perp.
\end{align}

\subsubsection{Main theorems in the definite case}
\begin{theorem}\label{maindefinite1}
Assume that $s=0$. 
Let $\underline{c}\in\mathcal{C}_{\D_\latt}\hspace{-0.2em}(\Z)$ such that
\[
\underline{c}(f)=0
\]
for every $f\in M_{n/2,\D_\latt}(\cgroup_0(p))$.
Then there exists $\hat{J}\in H^{0}(\Gamma,\rM^\times/\Z_p^\times)$ such that
\[
\divmap_\ast(\hat{J})=\sD_{\underline{c}}.
\]
\end{theorem}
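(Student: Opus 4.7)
The plan is to convert the modularity hypothesis on $\underline c$ into vanishing of the obstruction to lifting $\sD_{\underline c}$ from $H^0(\Gamma,\Div(X_p))$ to $H^0(\Gamma,\rM^\times/\Z_p^\times)$. Quotienting by $\Z_p^\times$ in the exact sequence of Proposition \ref{prop:divexact} gives
\[
1 \to \cA^\times/\Z_p^\times \to \rM^\times/\Z_p^\times \to \Div(X_p) \to 0,
\]
and the long exact sequence in $\Gamma$-cohomology yields a boundary map
\[
\partial \colon H^0(\Gamma, \Div(X_p)) \to H^1(\Gamma, \cA^\times/\Z_p^\times),
\]
so the task is to show $\partial(\sD_{\underline c})=0$.

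The implementation will be an explicit $p$-adic Borcherds-type product. First, via \eqref{doubleperp} together with the standard Serre-duality pairing between holomorphic modular forms of weight $n/2$ in $\rho_{\D_\latt}$ and principal parts of weakly holomorphic forms of dual weight $1-n/2$ and level $\cgroup_0(p)$, the hypothesis produces a weakly holomorphic $F$ whose principal-part Fourier coefficients are $a_F(-m,\beta)=c_{m,\beta}$ for $m\in\Z_{(p)}^{>0}$, $\beta\in\D_\latt$. Second, fix a self-dual $\Z_p$-lattice $\Lambda\subseteq V_{\Q_p}$ and, for each primitive vector $v\in V_+$, an isotropic lift $\tilde v\in\Lambda'$ satisfying the congruence of \eqref{congruencemodlevel}. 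Using the elementary factors $f_v$ from \eqref{eqn:def-fv}, form
\[
\Theta_F(\xi) \ := \ \prod_{\beta\in\D_\latt}\ \prod_{\substack{v\in\beta+\latt\\ q(v)>0}} f_v(\xi)^{a_F(-q(v),\beta)}.
\]
Convergence on each affinoid $X_{p,\Lambda}^{\leq k}$ follows from the bound $|f_v(\xi)-1|\leq p^{k-\iso_\Lambda(v)}$ used in the proof of Proposition \ref{prop:divexact}; the $p$-adic decay in $\iso_\Lambda(v)$ dominates the bounded $p$-adic size of $a_F(-q(v),\beta)$.

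Third, by construction, $\divmap(\Theta_F)=\sD_{\underline c}$, since only those $v$ for which $a_F(-q(v),\beta_v)\neq 0$ contribute to the divisor and, by the choice of $F$, they assemble precisely into $\sD_{\underline c}$ after grouping by $\Gamma$-orbit. For $\gamma\in\Gamma$, the $\Gamma$-invariance of $\sD_{\underline c}$ forces $\gamma\Theta_F/\Theta_F\in\cA^\times$. To see that its class in $\cA^\times/\Z_p^\times$ is trivial, the modular transformation law of $F$ under $\cgroup_0(p)$, translated through the product construction, forces this ratio to be constant; the $p$-ordinariness of the Kudla--Millson series (Proposition \ref{ordinarityprop}) together with the $U_{p^2}$-level-lowering (Proposition \ref{levellowering}) then forces this constant to lie in $\Z_p^\times$ rather than in a general $\Q_p^\times$.

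The main obstacle is the final step. Classically, Borcherds obtains automorphy via a regularized theta integral whose modular invariance is built in; the $p$-adic analogue lacks such an integral presentation, so the equivariance $\gamma\Theta_F\equiv\Theta_F\bmod\Z_p^\times$ must be deduced directly by bookkeeping the scalars produced when $\gamma$ permutes the auxiliary isotropic lifts $\{\tilde v\}$. Showing that these scalars actually land in $\Z_p^\times$ is what requires both the restriction of $m$ to $\Z_{(p)}^{>0}$ and the level structure $\cgroup_0(p)$; this is the linchpin tying the modularity hypothesis on $\underline c$ to the conclusion.
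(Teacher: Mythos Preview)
Your proposal has a genuine gap at exactly the point you flag as the ``linchpin''. The appeal to the ``modular transformation law of $F$'' to force $\gamma\Theta_F/\Theta_F\in\Z_p^\times$ does not go through: there is no $p$-adic regularized theta integral here, and the weakly holomorphic form $F$ enters only through its principal-part coefficients $c_{m,\beta}$. Once you have written down the product $\Theta_F$, the form $F$ itself has left the stage; there is no mechanism by which ``the modular transformation law of $F$ under $\cgroup_0(p)$'' can constrain the ratio $\gamma\Theta_F/\Theta_F$. Concretely, that ratio is an infinite product of factors $\langle\xi,\tilde w\rangle/\langle\xi,\gamma\widetilde{\gamma^{-1}w}\rangle$ with both vectors isotropic but living in \emph{different} self-dual lattices ($\Lambda$ versus $\gamma\Lambda$), and nothing in your argument controls this. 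The invocation of Propositions~\ref{ordinarityprop} and~\ref{levellowering} at this step is a non sequitur.

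The paper's proof (Section~\ref{Borcherdsdefinite}) organizes the product differently and uses the modularity hypothesis in a much more concrete way. It works with the raw factors $\langle\xi,v\rangle$ (no auxiliary isotropic lifts $\tilde v$), grouped first by the isotropy level $t$ and then by the isotropic line $\ell\subset\Lambda/p^t\Lambda$ generated by $p^tv$. For each such $\ell$, the total exponent in the sub-product $\hat J_{\Lambda,t,\ell}$ equals $\underline c$ evaluated on a modular form in $M_{n/2,\D_\latt}(\cgroup_0(p))$: namely the form obtained from the theta series of the definite lattice $\latt\cap p^t\Lambda$ via Corollary~\ref{maincombinatoriallemma}. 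The hypothesis $\underline c(f)=0$ thus makes each $\hat J_{\Lambda,t,\ell}$ a weight-zero product, which is what gives both convergence and well-definedness modulo $\Z_p^\times$ (Lemma~\ref{convergencelemma}). Equivariance then falls out formally as $\gamma.\hat J_{\underline c,\Lambda}=\hat J_{\underline c,\gamma\Lambda}$, and the remaining work is to prove independence of the self-dual lattice $\Lambda$. This is done by connecting any two such lattices through a chain of $p$-neighbours (Lemmas~\ref{pneighbourdescr}, \ref{pneighbourchain}) and showing the ratio across a single $p$-neighbour step is again a product of the $\hat J_{\Lambda,t,\ell}$ type, hence trivial by the same modularity argument. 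In short: the modularity hypothesis is used \emph{twice}---once for convergence, once for lattice-independence---and both uses go through the theta series of auxiliary sublattices, not through any transformation property of a weakly holomorphic $F$.
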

\begin{proof}
The proof of this theorem will be given in Section \ref{Borcherdsdefinite}.
\end{proof}

\begin{theorem}
\label{maindefinite2}
Let $n\geq 4$ and $s=0$.
If $n=4$, assume that $V_{\Q_p}$ is split and that $G$ is almost $\Q$-simple.
Let $\underline{c}\in\mathcal{C}_{\D_\latt}\hspace{-0.2em}(\Z)$ such that
\[
\underline{c}(f)=0
\]
for every $f\in M_{n/2,\D_\latt}(\cgroup_0(p))$.
There exists a rigid meromorphic cocycle $J\in \mathcal{RMC}(\Gamma)$ such that
\[
\divmap_\ast(J)=(p-1)\cdot \sD_{\underline{c}}.
\]
\end{theorem}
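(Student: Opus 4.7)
The strategy is to combine Theorem \ref{maindefinite1} with Proposition \ref{cyclezero} to upgrade the $\Z_p^\times$-ambiguous lift into an honest element of $\rM^\times$ after multiplying the divisor by $p-1$. Since $s=0$, a rigid meromorphic cocycle is simply a $\Gamma$-invariant element of $\rM^\times$, and the obstruction to existence of such a lift of a divisor lies in $H^1(\Gamma,\cA^\times)$ via the boundary map attached to the short exact sequence $0\to\cA^\times\to \rM^\times\to \Div(X_p)\to 0$ of Proposition \ref{prop:divexact}.

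First I would apply Theorem \ref{maindefinite1} to the given $\underline{c}$ to obtain a class $\hat{J}\in H^0(\Gamma,\rM^\times/\Z_p^\times)$ with $\divmap_\ast(\hat{J})=\sD_{\underline{c}}$. Composing with the projection $\rM^\times/\Z_p^\times\twoheadrightarrow \rM^\times/\Q_p^\times$ yields a class $\bar{J}\in H^0(\Gamma,\rM^\times/\Q_p^\times)$ that still lifts $\sD_{\underline{c}}$. Inserting this into the commutative diagram induced by the short exact sequences
\[
0\to\cA^\times\to\rM^\times\to\Div(X_p)\to 0 \quad\text{and}\quad 0\to\cA^\times/\Q_p^\times\to\rM^\times/\Q_p^\times\to\Div(X_p)\to 0,
\]
together with the inclusion $\cA^\times\hookrightarrow\cA^\times/\Q_p^\times$, one sees that the divisor class $[\sD_{\underline{c}}]\in H^1(\Gamma,\cA^\times)$ has vanishing image in $H^1(\Gamma,\cA^\times/\Q_p^\times)$. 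In other words, $[\sD_{\underline{c}}]$ lies in $\ker(\mathrm{cyc})$.

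Next I invoke Proposition \ref{cyclezero} under the stated hypotheses ($n\geq 4$, $s=0$, and the extra assumptions in the case $n=4$): since $s=0$ is even, $\ker(\mathrm{cyc})$ is finite, and the final sentence of that proposition gives the sharper bound that its exponent divides $p-1$. Consequently
\[
(p-1)\cdot [\sD_{\underline{c}}] \;=\; [(p-1)\cdot \sD_{\underline{c}}] \;=\; 0 \quad\text{in } H^1(\Gamma,\cA^\times).
\]
By exactness of the long exact sequence, the Kudla--Millson divisor $(p-1)\cdot \sD_{\underline{c}}$ therefore admits a lift $J\in H^0(\Gamma,\rM^\times)=\mathcal{RMC}(\Gamma)$ with $\divmap_\ast(J)=(p-1)\cdot \sD_{\underline{c}}$, which is the desired conclusion.

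The step that really does the work is Proposition \ref{cyclezero}, since everything else amounts to diagram chasing in standard cohomological exact sequences. The only point requiring some care is verifying that the $\Z_p^\times$-ambiguity produced by Theorem \ref{maindefinite1} indeed suffices to certify vanishing of $\mathrm{cyc}([\sD_{\underline{c}}])$: this is automatic because $\Z_p^\times\subseteq \Q_p^\times$, so a lift modulo the smaller group produces a lift modulo the larger one. No new analysis of Borcherds-type lifts is required here, in contrast with the proof of Theorem \ref{maindefinite1} itself; the present theorem is essentially the statement that the finite obstruction controlled by Proposition \ref{cyclezero} can be cleared at the cost of a uniform factor of $p-1$.
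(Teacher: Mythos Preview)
Your proof is correct and follows essentially the same approach as the paper's: use Theorem \ref{maindefinite1} to see that $\mathrm{cyc}([\sD_{\underline{c}}])=0$, then invoke the sharp bound on the exponent of $\ker(\mathrm{cyc})$ from Proposition \ref{cyclezero} to kill the obstruction after multiplying by $p-1$. Two cosmetic slips: the map $\cA^\times\to\cA^\times/\Q_p^\times$ is a quotient, not an inclusion, and $\mathcal{RMC}(\Gamma)$ is a proper subset of $H^0(\Gamma,\rM^\times)$ rather than equal to it (your $J$ lands there because $(p-1)\sD_{\underline{c}}$ is a Kudla--Millson divisor).
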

\begin{proof}
Theorem \ref{maindefinite1} implies that $\mathrm{cyc}([\sD_{\underline{c}}])=0$.
As the kernel of the cycle class map is finite of exponent dividing $p-1$ by Proposition \ref{cyclezero}, it follows that $[(p-1)\cdot \sD_{\underline{c}}]=0$.
Thus, the claim follows.
\end{proof}

\begin{theorem}
\label{modularitydefinite}
Let $n\geq 4$ and $s=0$.
If $n=4$, assume that $V_{\Q_p}$ is split and that $G$ is almost $\Q$-simple.
There exist $a_{0,\beta}\in H^{1}(\Gamma,\cA^\times)\otimes \Q$, $\beta\in \D_{\latt}$, such that the formal power series
\[
\sum_{\beta\in \D_\latt} a_{0,\beta} \cdot \mathbf{e}_\beta + Z_\latt(\tau)=\sum_{\beta\in \D_\latt} \big(a_{0,\beta}  + \sum_{m\in \Z_{(p)}^{>0}} \left[\sD_{m,\beta}\right] \cdot e^{2\pi i \cdot m \tau} \big) \cdot \mathbf{e}_\beta
\]
is a modular form of weight $n/2$, type $\rho_{\D_\latt}$ and level $\cgroup_0(p)$.
\end{theorem}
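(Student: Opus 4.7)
The plan is to deduce Theorem \ref{modularitydefinite} from Theorem \ref{maindefinite2} by means of the duality \eqref{doubleperp} together with McGraw's rational basis theorem for vector-valued modular forms of Weil representation type. Set $V := H^1(\Gamma, \cA^\times) \otimes \Q$. Since every element of $\mathcal{C}_{\D_\latt}(\Z)$ has finite support, the assignment
\[
\Theta \colon \mathcal{C}_{\D_\latt}(\Z) \longrightarrow V, \qquad \underline{c} \longmapsto [\sD_{\underline{c}}] = \sum_{m > 0,\, \beta} c_{m,\beta} \cdot [\sD_{m,\beta}]
\]
defines a $\Z$-linear map; it encodes the pairing of $Z_\latt(\tau)$ against $\mathcal{C}_{\D_\latt}(\Z)$.

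The crux of the argument is that $\Theta$ annihilates $M_{n/2, \D_\latt}(\cgroup_0(p))^\perp \cap \mathcal{C}_{\D_\latt}(\Z)$. This is precisely the content of Theorem \ref{maindefinite2}: for any $\underline{c}$ with $\underline{c}(f) = 0$ for all $f \in M_{n/2, \D_\latt}(\cgroup_0(p))$, the divisor $(p-1) \sD_{\underline{c}}$ lies in the image of $\divmap_\ast$, so $(p-1)[\sD_{\underline{c}}] = 0$ in $H^1(\Gamma, \cA^\times)$, whence $\Theta(\underline{c}) = 0$ in $V$ after tensoring with $\Q$.

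Next I would fix a $\Q$-basis $\{f_1, \ldots, f_d\}$ of $M := M_{n/2, \D_\latt}(\cgroup_0(p))$ consisting of forms with rational Fourier expansions, as supplied by McGraw. Identity \eqref{doubleperp}, after passage to $\Q$-coefficients, identifies the $\Q$-span of $M^\perp \cap \mathcal{C}_{\D_\latt}(\Z)$ with the kernel of the restriction map $\mathcal{C}_{\D_\latt}(\Q) \twoheadrightarrow \Hom_\Q(M, \Q)$. Consequently, the $\Q$-linear extension of $\Theta$ factors through this restriction and yields elements $v_1, \ldots, v_d \in V$ characterized by
\[
[\sD_{m,\beta}] = \sum_{i=1}^d a_{f_i}(m, \beta) \cdot v_i \qquad \text{for all } m \in \Z_{(p)}^{>0}, \ \beta \in \D_\latt.
\]
Setting $a_{0,\beta} := \sum_{i=1}^d a_{f_i}(0, \beta) \cdot v_i \in V$ then yields
\[
\sum_\beta a_{0,\beta} \mathbf{e}_\beta + Z_\latt(\tau) = \sum_{i=1}^d v_i \cdot f_i(\tau),
\]
which is manifestly a $V$-valued modular form of weight $n/2$, type $\rho_{\D_\latt}$ and level $\cgroup_0(p)$.

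The one technical subtlety in the plan is the transition from the $\C$-valued duality in \eqref{doubleperp} to $V$-valued coefficients, but this is harmless: because $M$ is finite-dimensional with a $\Q$-rational basis, any $\Q$-linear functional $\mathcal{C}_{\D_\latt}(\Q) \to V$ vanishing on the rational subspace $M^\perp \cap \mathcal{C}_{\D_\latt}(\Q)$ factors through the $d$-dimensional quotient $\mathcal{C}_{\D_\latt}(\Q)/(M^\perp \cap \mathcal{C}_{\D_\latt}(\Q)) \simeq M^*$, and hence corresponds to a unique element of $M \otimes_\Q V$. The substantive work is all absorbed into Theorem \ref{maindefinite2}; the present result is essentially a reformulation of it via duality.
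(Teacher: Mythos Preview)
Your proposal is correct and follows essentially the same approach as the paper: both deduce the result from Theorem \ref{maindefinite2} via the duality \eqref{doubleperp}. The paper's version is terser---it simply notes that for every homomorphism $\psi\colon H^1(\Gamma,\cA^\times)\to\C$ one has $\psi(Z_\latt)\in (M_{n/2,\D_\latt}(\cgroup_0(p))^\perp\cap\mathcal{C}_{\D_\latt}(\Z))^\perp$ and invokes \eqref{doubleperp}---whereas you make explicit the factorization through a rational basis of $M$ to produce the constant terms $a_{0,\beta}$ in $H^1(\Gamma,\cA^\times)\otimes\Q$; this extra care is exactly what is needed to justify that the $a_{0,\beta}$ can be chosen uniformly rather than $\psi$-by-$\psi$.
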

\begin{proof}
Let $\psi\colon H^{1}(\Gamma,\cA^\times) \to \C$ a homomorphism of abelian groups.
Theorem \ref{maindefinite2} implies that
\[\psi(Z_\latt)(\tau)\in (M_{k,\D_\latt}(\cgroup_0(N))^\perp\cap \mathcal{C}_{\D_\latt}(\Z))^\perp.
\]
Thus, the claim follows from \eqref{doubleperp}.
\end{proof}

\begin{remark}
Crucial in this argument  is the finiteness of the kernel of the cycle class map.
This fails in dimension three.
Indeed, the kernel of the cycle class map is the Jacobian of the Mumford curve $\Gamma\backslash X_p$.
In the forthcoming work \cite{BDGR} Theorem \ref{modularitydefinite} is proven in signature $(3,0)$ via a study of $p$-adic deformations of theta series attached to the definite quadratic space $V$.
\end{remark}
\begin{remark}
If the signature is $(4,0)$ and $V_{\Q_p}$ is split, one can identify the quotient $\Gamma\backslash X_p$ with a quaternionic Shimura surface. Under this identification Kudla--Millson divisors and Heegner divisors match up.
Thus, Theorem \ref{modularitydefinite} gives a $p$-adic analytic proof of the Gross--Kohnen--Zagier theorem for these Shimura surfaces. (For more details see the upcoming article \cite{GeBa}).
\end{remark}

\subsubsection{Main theorems when $s=1$}
There are analogous results in some hyperbolic cases where $s=1$.
Assume  that $Q(\Q)\neq\emptyset$.
We call an element $\underline{c}=(c_{m,\beta})\in \mathcal{C}_{\D_\latt}\hspace{-0.2em}(R)$ {\em compact} with respect to $(V,q)$ if $c_{m,\beta}=0$ for all $m$ that are not compact with respect to $(V,q)$.
Let $\underline{c}=(c_{m,\beta})\in \mathcal{C}_{\D_\latt}\hspace{-0.2em}(\Z)$ be compact with respect to $(V,q)$.
By Corollary \ref{modsymbcor} we may lift the Kudla--Millson divisor $\sD_{\underline{c}}$ to a $\Gamma$-invariant $\Div(X_p)$-valued modular symbol $\widetilde{\sD}_{\underline{c}}$.

\begin{theorem}
\label{mainhyperbolic1}
Let $s=1$, $Q(\Q)\neq \emptyset$ and $\underline{c}\in \mathcal{C}_{\D_\latt}\hspace{-0.2em}(\Z)$ compact with respect to $(V,q)$.
Under the condition that
\[
\underline{c}(f)=0
\]
for every $f\in M_{n/2,\D_\latt}(\cgroup_0(p))$
there exists $\hat{J}\in \mathrm{MS}(\rM^\times/\Z_p^\times)^\Gamma$ such that
\[
\divmap_\ast(\hat{J})=\widetilde{\sD}_{\underline{c}}.
\]
\end{theorem}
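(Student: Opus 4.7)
The plan is to construct $\hat{J}$ as a $p$-adic Borcherds product in direct analogy with (and using the same convergence techniques as) the definite case, Theorem \ref{maindefinite1}, exploiting the fact that in signature $(r,1)$ the cohomology class $\sD_{\underline{c}}$ admits by Corollary \ref{modsymbcor} a canonical cochain-level lift to a $\Gamma$-invariant modular symbol $\widetilde{\sD}_{\underline{c}}$.  This allows one to bypass a direct cohomological lifting and instead seek $\hat{J}$ as the multiplicative enhancement of $\widetilde{\sD}_{\underline{c}}$: a $\Gamma$-invariant homomorphism $\Z[Q(\Q)]_0 \to \rM^\times / \Z_p^\times$ whose composition with $\divmap$ returns $\widetilde{\sD}_{\underline{c}}$.

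Concretely, for each ordered pair $(\ell_-, \ell_+)$ of distinct rational isotropic lines in $Q(\Q)$, one would set
\[
\hat{J}(\ell_-, \ell_+) \ := \ \prod_{m,\beta} \prod_{v \in \cO_\latt(m,\beta)} f_v^{\,c_{m,\beta}\,(\Delta_{v,\infty} \cap [\ell_-, \ell_+])} \pmod{\Z_p^\times},
\]
with $f_v \in \rM^\times$ the basic meromorphic function of \eqref{eqn:def-fv} having divisor $\Delta_{v,p}$.  The exponents are finitely supported on each affinoid $X_{p,\Lambda}^{\leq k}$ by Lemma \ref{modularsymbolintersection} (Archimedean control) combined with Lemma \ref{padiclemma} ($p$-adic control), and the residual tail, after a coherent choice of the auxiliary isotropic vectors $\tilde v$, converges uniformly to a rigid meromorphic function on $X_p$ by the isotropy-level stratification argument used in the proof of Proposition \ref{prop:divexact}.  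Additivity of the Archimedean intersection pairing in $[\ell_-, \ell_+]$ immediately gives the modular symbol cocycle identity in $\ell_\pm$, and the divisor relation $\divmap_\ast(\hat{J}) = \widetilde{\sD}_{\underline{c}}$ is built into the construction.

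The remaining task is to arrange $\Gamma$-equivariance modulo $\Z_p^\times$.  For each $\gamma \in \Gamma$, the discrepancy $\gamma \cdot \hat{J}(\ell_-, \ell_+) \cdot \hat{J}(\gamma \ell_-, \gamma \ell_+)^{-1}$ has trivial divisor by the $\Gamma$-equivariance of $\widetilde{\sD}_{\underline{c}}$ (Proposition \ref{modsymbprop}), and so assembles into a $1$-cocycle $c \colon \Gamma \to \mathrm{MS}(\cA^\times / \Z_p^\times)$ whose cohomology class is precisely the obstruction to solving our lifting problem.  To show this obstruction vanishes, one pairs $c$ with the Funke--Millson vector-valued theta series of Theorem \ref{FMhyperbolic} and applies the level-lowering/trivialization machinery of Corollary \ref{maincombinatoriallemma} to obtain a modular form in $M_{n/2, \D_\latt}(\cgroup_0(p))$ whose Fourier coefficients record the pairing of $c$ against $\underline{c}$.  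The hypothesis $\underline{c}(f) = 0$ for every $f \in M_{n/2, \D_\latt}(\cgroup_0(p))$, combined with the double-orthogonality \eqref{doubleperp}, then forces $c$ to be a coboundary; correcting $\hat{J}$ by the corresponding cochain yields a genuinely $\Gamma$-invariant modular symbol.

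The main obstacle is this last step: faithfully translating the cohomological obstruction into a statement about vector-valued Fourier coefficients at weight $n/2$ and level $\cgroup_0(p)$.  This is the hyperbolic analogue of the ``Borcherds duality'' underlying Theorem \ref{maindefinite1}, but now requires the extra layers provided by Funke--Millson (for the hyperbolic intersection generating series) and by Corollary \ref{maincombinatoriallemma} (to descend from the level $\cgroup_0(p^t)$ attached to a given affinoid stratum down to $\cgroup_0(p)$).  Two recurrent technical subtleties arise along the way: the ambiguity in the choice of the auxiliary isotropic vectors $\tilde v$, which is precisely what forces the target to be $\rM^\times / \Z_p^\times$ rather than $\rM^\times$; and the possible non-transverse intersections of $[\ell_-, \ell_+]$ with the $\Delta_{v,\infty}$, which are handled, just as in Proposition \ref{modsymbprop}, by the compactness assumption on $\underline{c}$.
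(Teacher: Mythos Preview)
Your overall plan---construct $\hat J$ as an explicit $p$-adic Borcherds product, weighted by the Archimedean intersection numbers $\Delta_{v,\infty}\cap[\ell_-,\ell_+]$---is exactly what the paper does.  However, your execution diverges from the paper's at two linked points, and the second of these contains a genuine gap.

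First, the paper does \emph{not} build the product out of the functions $f_v$ of \eqref{eqn:def-fv} and invoke Proposition~\ref{prop:divexact} for convergence.  Instead it uses the raw factors $\langle\xi,v\rangle$ and makes the product depend on a choice of self-dual $\Z_p$-lattice $\Lambda\subseteq V_{\Q_p}$, setting
\[
\hat J_{\underline c,\Lambda}(\ell_-,\ell_+)(\xi)\;=\;\lim_{k\to\infty}\prod_i\prod_{\substack{v\in\cO_\latt(m_i,\beta_i)\\ \ord_\Lambda(v)\ge -k}}\langle\xi,v\rangle^{\,c_{m_i,\beta_i}\,(\Delta_{v,\infty}\cap[\ell_-,\ell_+])}.
\]
Convergence of this product in $\rM^\times/\Z_p^\times$ is \emph{where} the hypothesis $\underline c(f)=0$ enters: after stratifying by isotropy level and isotropic line $\ell\subseteq\Lambda/p^t\Lambda$ exactly as in Proposition~\ref{convergencedefinite}, the sum of exponents in each sub-product is, by Corollary~\ref{maincombinatoriallemma} applied to the Funke--Millson form $\vartheta_{L_\circ,(\ell_-,\ell_+)}$ of Theorem~\ref{FMhyperbolic}, the value of $\underline c$ on a modular form in $M_{n/2,\D_\latt}(\cgroup_0(p))$, hence zero.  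Lemma~\ref{convergencelemma} then gives convergence.

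Second, and more seriously, your proposed route to $\Gamma$-equivariance does not work as stated.  You package the failure of equivariance as a class $[c]\in H^1(\Gamma,\mathrm{MS}(\cA^\times/\Z_p^\times))$ and assert that ``pairing $c$ with the Funke--Millson theta series'' produces a modular form whose vanishing under $\underline c$ forces $[c]=0$.  But there is no such pairing: $c(\gamma)$ takes values in rigid analytic units modulo $\Z_p^\times$, an enormous group with no evident map to the integers that Funke--Millson theta coefficients live in, and the double-orthogonality \eqref{doubleperp} says nothing about cocycles valued in $\cA^\times/\Z_p^\times$.  The paper sidesteps this entirely.  Because its product is indexed by the lattice $\Lambda$ rather than by ad hoc choices of $\tilde v$, one has the exact transformation law $\gamma\cdot\hat J_{\underline c,\Lambda}(\ell_-,\ell_+)=\hat J_{\underline c,\gamma\Lambda}(\gamma\ell_-,\gamma\ell_+)$.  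The hypothesis $\underline c(f)=0$ is then used a \emph{second} time, via the $p$-neighbour argument of Lemma~\ref{independencedefinite} (with Funke--Millson replacing the definite theta series), to show $\hat J_{\underline c,\Lambda_1}=\hat J_{\underline c,\Lambda_2}$ for any two self-dual lattices.  Lattice-independence plus the transformation law immediately gives $\Gamma$-invariance, with no cohomological obstruction to analyse.
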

\begin{proof}
The proof of this theorem is the content of Section \ref{Borcherdshyperbolic}.
\end{proof}

\begin{theorem}
\label{mainhyperbolic2}
Let $n\geq 4$, $s=1$, $Q(\Q)\neq \emptyset$ and and $\underline{c}\in \mathcal{C}_{\D_\latt}\hspace{-0.2em}(Z)$ compact with respect to $(V,q)$.
If $n=4$, assume that $V_{\Q_p}$ is split.
Suppose that
\[
\underline{c}(f)=0
\]
for every $f\in M_{n/2,\D_\latt}(\cgroup_0(p))$.
There exists a non-zero integer $c\in \Z$ and a rigid meromorphic cocycle $J\in \mathcal{RMC}(\Gamma)$ such that
\[
\divmap_\ast(J)=c\cdot \sD_{\underline{c}}.
\]
\end{theorem}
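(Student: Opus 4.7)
The plan is to mirror the deduction of Theorem \ref{maindefinite2} from Theorem \ref{maindefinite1}, but with modular symbols inserted in place of zeroth cohomology, and the connecting map $\delta$ of Section \ref{modularsymbols} used to return to $H^{1}$.

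First, I would invoke Theorem \ref{mainhyperbolic1} to produce a $\Gamma$-invariant modular symbol $\hat{J} \in \mathrm{MS}(\rM^\times/\Z_p^\times)^{\Gamma}$ with $\divmap_{\ast}(\hat{J}) = \widetilde{\sD}_{\underline{c}}$. Since the boundary map $\delta$ is functorial in the coefficient module, applying it gives $\delta(\hat{J}) \in H^{1}(\Gamma, \rM^\times/\Z_p^\times)$ satisfying
\[
\divmap_{\ast}(\delta(\hat{J})) = \delta(\divmap_{\ast}(\hat{J})) = \delta(\widetilde{\sD}_{\underline{c}}) = \sD_{\underline{c}},
\]
where the final equality comes from Corollary \ref{modsymbcor} combined with linearity. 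In other words, $\sD_{\underline{c}}$ admits a lift to $H^{1}(\Gamma,\rM^\times/\Z_p^\times)$. Quotienting the short exact sequence of Proposition \ref{prop:divexact} by the (trivial) $\Gamma$-module $\Z_p^\times$ and examining the resulting long exact sequence in cohomology, the existence of such a lift is equivalent to the vanishing $\mathrm{cyc}([\sD_{\underline{c}}])=0$ in $H^{2}(\Gamma,\cA^\times/\Z_p^\times)$.

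To conclude, I would appeal to Proposition \ref{cyclezero}: since $s=1$ and $n \geq 4$, with signature $(3,1)$ in the case $n=4$ (so that $G$ is automatically almost $\Q$-simple by the remark following that Proposition), the kernel of $\mathrm{cyc}$ is finite. Taking $c$ to be the exponent of $\ker(\mathrm{cyc})$ (or any non-zero multiple thereof) then forces $c \cdot [\sD_{\underline{c}}] = 0$, so that $c \cdot \sD_{\underline{c}}$ lies in the image of $\divmap_{\ast}\colon H^{1}(\Gamma,\rM^\times)\to H^{1}(\Gamma,\Div(X_p))$; any preimage is the required rigid meromorphic cocycle $J$. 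The substantive content of the theorem is entirely concentrated in Theorem \ref{mainhyperbolic1}; the deduction above is purely formal, combining the naturality of $\delta$ with the finiteness of $\ker(\mathrm{cyc})$.
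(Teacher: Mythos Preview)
Your proof is correct and follows essentially the same route as the paper's: invoke Theorem \ref{mainhyperbolic1}, push through $\delta$ to obtain a lift of $\sD_{\underline{c}}$ in $H^{1}(\Gamma,\rM^\times/\Z_p^\times)$, conclude $\mathrm{cyc}([\sD_{\underline{c}}])=0$, and then kill the obstruction using the finiteness of $\ker(\mathrm{cyc})$ from Proposition \ref{cyclezero}. One small imprecision: the map $\mathrm{cyc}$ as defined in the paper lands in $H^{2}(\Gamma,\cA^\times/\Q_p^\times)$, not $H^{2}(\Gamma,\cA^\times/\Z_p^\times)$, so the existence of your lift is not literally \emph{equivalent} to $\mathrm{cyc}([\sD_{\underline{c}}])=0$ but rather implies it (via the factorisation $\cA^\times \to \cA^\times/\Z_p^\times \to \cA^\times/\Q_p^\times$); this does not affect the argument.
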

\begin{proof}
Since $\delta(\hat{J})\in H^{1}(\Gamma,\rM^\times/\Z_p^\times)$ is a lift of $\sD_{\underline{c}}$ it follows that
$\mathrm{cyc}([\sD_{\underline{c}}])=0$.
As in the proof of Theorem \ref{maindefinite2} we conclude by using Proposition \ref{cyclezero}.
\end{proof}

\begin{remark}
As mentioned before, the existence of positive rational numbers that are compact with respect to $(V,q)$ forces the dimension of $V$ to be less or equal than $5$.
In particular, Theorem \ref{mainhyperbolic2} is only applicable in signature $(3,1)$ and $(4,1)$.
Theorem \ref{mainhyperbolic2} for the split three-dimensional quadratic space is proven in \cite{DVBorcherds}. 
\end{remark}

\subsection{Proof of modularity in the definite case}\label{Borcherdsdefinite}
The aim of this section is to prove Theorem \ref{maindefinite1} and Theorem \ref{mainhyperbolic1} by constructing explicit infinite $p$-adic products that are reminiscent of Borcherds products.
\subsubsection{Convergence of infinite $p$-adic products}
Let $\Lambda\subseteq V_{\Q_p}$ be a self-dual lattice.
The $p$-adic Borcherds products will be given by products of functions of the form
\[
\xi \mapsto \frac{\langle \xi, v_1 \rangle}{\langle \xi, v_2 \rangle}:=\frac{\langle v_\xi, v_1 \rangle}{\langle v_\xi, v_2 \rangle},\]
where $v_\xi\in V_{\C_p}$ is any representative of $\xi$.
The next lemma gives a criterion for the convergence of such products.
Its proof is a slight modification of the proof of Proposition \ref{prop:divexact}.

\begin{lemma}\label{convergencelemma}
Let $\Lambda\subseteq V_{\Q_p}$ be a self-dual $\Z_p$-lattice.
Suppose $v_1,v_2 \in \Lambda' $ satisfy
\begin{itemize}
\item $v_1 \equiv \alpha \cdot v_2 \bmod{p^k \Lambda}$ for some $\alpha \in \mathbb{\Z}_p^\times$ and
\item $q(v_i)=0 \bmod{p^k \Z_p}$.
\end{itemize}
Then for every $l\leq k$ and every $\xi$ lying in the standard affinoid $X_{p,\Lambda}^{\leq l}$,  
\[
\left|  \frac{1}{\alpha} \cdot \frac{\langle \xi,v_1 \rangle}{\langle \xi, v_2 \rangle} - 1 \right|_p \leq p^{l-k}.
\]
In particular, if a sequence of pairs $v_1^{i}, v_2^{i}\in \Lambda'$ satisfies
\begin{itemize}
\item $v_1^{i} \equiv \alpha_i \cdot v_2^{i} \bmod{p^{k_i} \Lambda}$ for some $\alpha_i \in \mathbb{Z}_p^\times$,
\item $q(v_i)=0 \bmod{p^{k_i} \Z_p}$.
\item $k_i \to +\infty$,
\end{itemize}
then the product
$$\prod_{i=1}^\infty \frac{\langle \xi, v_1^{i} \rangle}{\langle \xi, v_2^{i}  \rangle}$$
converges to an element of $\rM^\times/\Z_p^\times$. 
\end{lemma}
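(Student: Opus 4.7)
My plan is to first establish the pointwise estimate on a standard affinoid, and then deduce the convergence of the infinite product using the exhaustion $X_p = \bigcup_l X_{p,\Lambda}^{\leq l}$ together with Lemma \ref{padiclemma}. The argument closely parallels the construction of $f_\sD$ from the proof of Proposition \ref{prop:divexact}—indeed, it is essentially a quantitative refinement of that construction applied to pairs of vectors that are close to being proportional.

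For the main estimate, I would apply Hensel's Lemma to the smooth quadric $Q_\Lambda$ to produce an isotropic primitive vector $\tilde v_2 \in (\Lambda')_0$ with $\tilde v_2 \equiv v_2 \bmod{p^k \Lambda}$; this uses precisely the hypothesis $q(v_2) \equiv 0 \bmod{p^k}$ together with primitivity of $v_2$. Writing $v_1 - \alpha v_2 = p^k w$ with $w \in \Lambda$ and choosing a primitive lift $v_\xi \in \Lambda'_{\cO_{\C_p}}$ of $\xi$, one then has the identity
\[
\frac{1}{\alpha} \cdot \frac{\langle v_\xi, v_1\rangle}{\langle v_\xi, v_2\rangle} - 1 \;=\; \frac{p^k \, \langle v_\xi, w\rangle}{\alpha \, \langle v_\xi, v_2\rangle},
\]
whose numerator has $p$-adic norm at most $p^{-k}$, since $|\alpha|_p = 1$ and $\langle v_\xi, w\rangle \in \cO_{\C_p}$ by self-duality of $\Lambda$.

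The heart of the matter is bounding $|\langle v_\xi, v_2\rangle|_p$ from below. Writing $v_2 = \tilde v_2 - p^k u$ with $u \in \Lambda$, the affinoid condition $\xi \in X_{p,\Lambda}^{\leq l}$ applied to the isotropic primitive $\tilde v_2$ yields $|\langle v_\xi, \tilde v_2\rangle|_p \geq p^{-l}$, while $|p^k\langle v_\xi, u\rangle|_p \leq p^{-k}$. When $l < k$—the only case that is non-vacuous and that arises in the convergence argument—the ultrametric inequality forces $|\langle v_\xi, v_2\rangle|_p = |\langle v_\xi, \tilde v_2\rangle|_p \geq p^{-l}$, and combining gives $|\cdot - 1|_p \leq p^{l-k}$ as claimed; the boundary case $l = k$ corresponds to the trivial bound $p^{l-k} = 1$.

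For the infinite product, I would fix an affinoid $X_{p,\Lambda}^{\leq l}$ and note that since $k_i \to \infty$, only finitely many indices $i$ satisfy $k_i \leq l$; those factors contribute a rational function on the affinoid. For the remaining indices the inequality $\iso_\Lambda(v_j^i) \geq k_i > l$ combined with Lemma \ref{padiclemma} shows that $\Delta_{v_j^i, p}$ misses the affinoid, so each factor is a non-vanishing rigid analytic function whose distance from $1$ is bounded by $p^{l-k_i}$ once divided by $\alpha_i$. Uniform decay of these bounds makes the tail product converge to a non-vanishing rigid analytic function, and together with the leftover finitely many factors one obtains a rigid meromorphic function on the affinoid. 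These glue over the exhaustion of $X_p$ to a global element of $\rM^\times$, and the accumulated $\Z_p^\times$-scalars disappear upon passing to $\rM^\times / \Z_p^\times$. The main obstacle, such as it is, lies purely in the bookkeeping: one must carefully separate on each affinoid the finitely many factors contributing to the divisor from the convergent analytic tail, which is exactly what the isotropy level bound of Lemma \ref{padiclemma} is designed to do.
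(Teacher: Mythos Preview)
Your proof is correct and follows the same approach as the paper, only with more detail: the paper's proof asserts the lower bound $|\langle v_\xi, v_2\rangle|_p \geq p^{-l}$ directly ``since $k \geq l$'' and dispatches the infinite product as ``a direct consequence,'' whereas you spell out the Hensel lift $\tilde v_2$ and the ultrametric comparison explicitly (this is precisely the mechanism behind Lemma~\ref{padiclemma}) and unpack the affinoid-by-affinoid convergence argument.
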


\begin{proof}
Suppose $\xi$ is represented by a primitive vector $v_\xi\in \Lambda'_{O_{\C_p}}.$
The inequality
\[
\left| \langle v_{\xi}, v_2 \rangle \right|_p\geq p^{-l}
\]
holds for all $\xi\in X_{p,\Lambda}^{\leq n}$ since $k\geq l$. 
Thus, we can compute
\begin{align*}
\left|  \frac{1}{\alpha} \cdot \frac{\langle \xi,v_1 \rangle}{\langle \xi, v_2 \rangle} - 1 \right|_p
= \left| \frac{\langle v_\xi,v_1 - \alpha \cdot v_2 \rangle}{\langle v_\xi, v_2 \rangle} \right|_p 
\leq p^{l-k}
\end{align*}
for all such $\xi$, which proves the first claim.
The second claim is a direct consequence of this.
\end{proof}

\subsubsection{Convergence}
Assume for the remainder of this section that $s=0$.
Fix elements $\beta_1,\ldots \beta_l \in \D_{\latt}$, positive rational numbers $m_1,\ldots,m_l\in \Z_{(p)}^{>0}$, and integers $c_{m_1,\beta_1},\ldots,c_{m_l,\beta_l}\in \Z$
such that
$$\sum_{i = 1}^l c_{m_i,\beta_i}\cdot a_f(m_i,\beta_i) = 0$$ 
for all modular forms $f \in M_{n/2,\D_{\latt}}(\cgroup_0(p))$.
Consider the Kudla--Millson divisor
\[
\sD_{\underline{c}}=\sum _{i=1}^{l}c_{m_i,\beta_i}\cdot \sD_{m_i,\beta_i}.
\]

Note that since the quadratic form $q$ is definite the set $\cO_\latt(m_i,\beta_i)\cap \Lambda$ is finite for every $\Z_p$-lattice $\Lambda\subseteq V_{\Q_p}$.
The following lemma states the convergence of $p$-adic Borcherds products.
\begin{proposition}\label{convergencedefinite}
For every self-dual $\Z_p$-lattice $\Lambda\subseteq V_{\Q_p}$
the infinite product
\begin{equation}\label{productexpansion}
\hat{J}_{\underline{c},\Lambda}(\xi):=
\lim_{k\to\infty} \prod_{i=1}^{l} \prod_{\substack{v\in \cO_\latt(m_i,\beta_i)\\ \ord_{\Lambda}(v)\geq -k}} \langle \xi, v \rangle^{c_{m_i,\beta_i} }
\end{equation}
converges in $\rM^\times / \Z_p^\times$.
Moreover, the following equality holds:
\[
\divmap(\hat{J}_{\underline{c},\Lambda})=\sD_{\underline{c}}.
\]
\end{proposition}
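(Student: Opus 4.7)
The plan is to adapt the construction used in the proof of Proposition \ref{prop:divexact} by pairing each linear factor $\langle \xi, v \rangle$ with an isotropic approximation $\langle \xi, \tilde v \rangle$, and then invoking the modularity hypothesis on $\underline{c}$ to kill the residual factors indexed by the isotropic approximations. The proof breaks into three main steps.

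\textbf{Step 1 (Reduction to primitive vectors and isotropic pairing).} For each $v \in \cO_\latt(m_i,\beta_i)$ contributing at level $\ord_\Lambda(v) = -k \leq 0$, write $v = p^{-k} v_0$ with $v_0 \in \Lambda'$ primitive; then $\langle \xi, v \rangle$ and $\langle \xi, v_0 \rangle$ agree modulo $\Z_p^\times$. By construction $\iso_\Lambda(v_0) = \ord_p(q(v_0)) = 2k + \ord_p(m_i) =: d$, which tends to $\infty$ with $k$. By Hensel's Lemma (exactly as in the proof of Proposition \ref{prop:divexact}) choose an isotropic primitive vector $\tilde v_0 = \tilde v(v_0) \in \Lambda'$ with $\tilde v_0 \equiv v_0 \pmod{p^d \Lambda}$; fix these choices once and for all, depending only on the class $v_0 \bmod p^d \Lambda$. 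Lemma \ref{convergencelemma} applied with $\alpha = 1$ then ensures that $\langle \xi, v_0 \rangle / \langle \xi, \tilde v_0 \rangle$ tends to $1$ uniformly on each affinoid $X_{p,\Lambda}^{\leq \ell}$.

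\textbf{Step 2 (Splitting the product).} Factor the finite partial product as
\[
\prod_{\substack{v \in \cO_\latt(m_i,\beta_i) \\ \ord_\Lambda(v) \geq -k}} \langle \xi, v \rangle^{c_{m_i,\beta_i}}
\;=\; \prod_{v} \left( \frac{\langle \xi, v_0 \rangle}{\langle \xi, \tilde v(v_0) \rangle} \right)^{c_{m_i,\beta_i}}
\cdot \prod_{\tilde v} \langle \xi, \tilde v \rangle^{N_k(\tilde v)},
\]
where $N_k(\tilde v) := \sum_{i} c_{m_i,\beta_i} \cdot \#\{ v_0 \text{ primitive} : \tilde v(v_0) = \tilde v,\ q(p^{-k}v_0) = m_i,\ p^{-k}v_0 \in \beta_i + \latt\}$. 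The first product converges in $\rM^\times/\Z_p^\times$ by Step 1 and Lemma \ref{convergencelemma}. To complete the proof, the second product must be trivial modulo $\Z_p^\times$ in the limit.

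\textbf{Step 3 (Modularity kills the residual product).} The count $N_k(\tilde v)$ is, by unwinding the definitions, a finite linear combination (with weights $c_{m_i,\beta_i}$) of numbers of vectors of prescribed norm lying in a prescribed coset modulo $\latt \cap p^t \Lambda$, where $t$ depends on $k$ and the reduction class of $\tilde v$. These counts are the Fourier coefficients of the theta series $\vartheta_{\latt \cap p^t \Lambda} \in M_{n/2, \D_{\latt \cap p^t \Lambda}}$ of this even auxiliary lattice. Applying Corollary \ref{maincombinatoriallemma} (evaluation at $\mu(\Lambda,\ell)$ followed by $t$ iterations of $U_{p^2}$), one obtains a modular form $g \in M_{n/2, \D_\latt}(\cgroup_0(p))$ whose Fourier coefficients encode exactly the relevant weighted counts. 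The hypothesis $\underline{c}(f) = 0$ for all $f \in M_{n/2, \D_\latt}(\cgroup_0(p))$ then forces $N_k(\tilde v) = 0$ for every $\tilde v$ and every sufficiently large $k$, so the second product is empty (after some uniform truncation) and the overall product converges in $\rM^\times / \Z_p^\times$. The divisor computation follows because each factor $\langle \xi, v \rangle$ contributes $\Delta_{v,p}$ and the restriction to each affinoid is finite by Lemma \ref{lemma:arch-padic}.

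The main obstacle is Step 3: one must precisely match the combinatorial weights $N_k(\tilde v)$ with Fourier coefficients of a vector-valued modular form of level $\cgroup_0(p)$, so that the modularity hypothesis on $\underline{c}$ applies. The bookkeeping requires carefully tracking how the isotropic class of $\tilde v$ in $Q_\Lambda(\Z/p^t\Z)$ determines the relevant vertex $\mu(\Lambda,\ell)$, and checking that the $U_{p^2}$-equivariance in Proposition \ref{levellowering} together with the ordinarity Proposition \ref{ordinarityprop} really does lower the level all the way to $\cgroup_0(p)$; this is precisely what Corollary \ref{maincombinatoriallemma} has been set up to provide.
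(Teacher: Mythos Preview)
Your approach shares the key idea with the paper's proof---use the modularity hypothesis to show that certain exponent sums vanish, then invoke Lemma~\ref{convergencelemma}---but the detour through isotropic lifts $\tilde v(v_0)$ is an unnecessary complication, and it creates a gap in Step~3. You assert that $N_k(\tilde v) = 0$ for each \emph{individual} isotropic vector $\tilde v$, but Corollary~\ref{maincombinatoriallemma} only delivers the vanishing of
\[
\sum_{i} c_{m_i,\beta_i} \cdot \sum_{x_p \in P(\Lambda,\ell)} a_{\vartheta}(p^{2t}m_i,(p^t\beta_i,x_p)),
\]
that is, the sum over \emph{all} generators $x_p$ of a fixed isotropic line $\ell \subset \Lambda/p^t\Lambda$. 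Since you have chosen $\tilde v(v_0)$ to depend on the class of $v_0$ modulo $p^d\Lambda$ with $d = 2t + \ord_p(m_i) > t$, vectors $v_0$ lying on the same line $\ell$ but in distinct cosets mod~$p^d$ are assigned different $\tilde v$'s, and there is no reason the individual $N_k(\tilde v)$ should vanish.

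The paper's argument is more direct and avoids isotropic lifts entirely. One rewrites the product (up to $\Z_p^\times$) as $\prod_{t\ge 0}\hat J_{\Lambda,t}$, where $\hat J_{\Lambda,t}$ collects the primitive $v$ with $q(v) = m_i p^{2t}$, and then further decomposes $\hat J_{\Lambda,t} = \prod_{\ell}\hat J_{\Lambda,t,\ell}$ over isotropic lines $\ell \subset \Lambda/p^t\Lambda$. Within each $\hat J_{\Lambda,t,\ell}$, all contributing vectors are primitive, satisfy $q(v) \equiv 0 \pmod{p^t}$, and are mutually congruent modulo $p^t\Lambda$ up to a unit in $\Z_p^\times$. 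The total exponent in $\hat J_{\Lambda,t,\ell}$ is precisely the weighted sum that Corollary~\ref{maincombinatoriallemma} shows to vanish, and then Lemma~\ref{convergencelemma} applies directly to each line-factor. Your proof can be repaired by making $\tilde v$ depend only on the isotropic line of $v_0$ in $\Lambda/p^t\Lambda$ (one lift per line, congruence only mod $p^t$); then $N_k(\tilde v_\ell)$ is exactly the line-sum above and does vanish. With that fix the two arguments coincide.
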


\begin{proof}
After scaling by appropriate powers of $p$ we get the equality
\[
\hat{J}_{\underline{c},\Lambda}(\xi)= \prod_{t=0}^\infty \hat{J}_{\underline{c},\Lambda,t}(\xi)
\]
in $\rM^\times / \Z_p^\times$,
where the factors are given by
\[
\hat{J}_{\Lambda,0}(\xi):=\prod_{i = 1}^l \prod_{\substack{v\in \cO_\latt(m_i,\beta_i)\\ \ord_{\Lambda}(v)\geq 0}} \langle \xi,v \rangle^{c_{m_i,\beta_i}}\\
\]
and
\[
\hat{J}_{\Lambda,t}(\xi):= \prod_{i = 1}^l \prod_{\substack{\cO_\latt(p^{2t}m_i,p^t \beta_i)\\ \ord_{\Lambda}(v)= 0}} (p^{-t}\cdot \langle \xi,v \rangle)^{c_{m_i,\beta_i}}
\]
for $t\geq 1$.
The conditions $q(v) = m_i p^{2t}$ and $v$ is primitive in $\Lambda$  
imply in particular that $v$ generates an isotropic line in $\Lambda / p^t \Lambda.$ 
It is then natural to decompose $J_{t}$ further as a product over isotropic lines:
$$\hat{J}_{\Lambda,t} = \prod_{\ell \subseteq \Lambda / p^n \Lambda \text{ isotropic}}\hat{J}_{\Lambda,t,\ell}$$
where $\hat{J}_{t,\ell}$ is the sub-product of $\hat{J}_t$ determined by insisting that the image of $v \bmod p^t\Lambda$ generates $\ell$.

We claim that the total sum of exponents in the product expression for $\hat{J}_{\Lambda,t,\ell}$ equals 0.
The desired sum of exponents equals
\[
\sum_{i = 1}^l c_{m_i,\beta_i} \cdot a_{m_i,\beta_i},\quad \mbox{where}\ a_{m_i,\beta_i} := \left|\left\{v\in (p^t\beta_i)\cap P(\Lambda,\ell) \ \vert\ q(v)=p^{2t}m_i \right\}\right|.
\]
Corollary \ref{maincombinatoriallemma} applied to the theta series of the definite quadratic lattice $\latt\cap p^t \Lambda$ implies that $a_{m_i,\beta_i}$ is the $(m_i,\beta_i)$-Fourier coefficient of a modular form of weight $n/2$, type $\rho_{\D_\latt}$ and level $\cgroup_0(p)$.
Therefore, the claim follows by the hypothesis made on the integers $c_{m_i,\beta_i}$.

By Lemma \ref{convergencelemma}, it follows that the product $\xi \mapsto \prod_{t = 0}^\infty \hat{J}_{\Lambda,t}(\xi)$ converges.
The last assertion is an immediate consequence of the construction of $\hat{J}_{\underline{c},\Lambda}$.
\end{proof}

\subsubsection{Action of the $p$-arithmetic group}
One easily deduces the following transformation law of $\hat{J}_{\underline{c},\Lambda}\in \rM^\times/\Z_p^{\times}$ under the action of the $p$-arithmetic group $\Gamma$ from the construction.

\begin{lemma}\label{equivariancedefinite}
Let $\Lambda\subseteq V_{\Q_p}$ be a self-dual $\Z_p$-lattice and $\gamma \in \Gamma$.
The equality
\[
\gamma.\hat{J}_{\underline{c},\Lambda}=\hat{J}_{\underline{c},\gamma\Lambda}
\]
holds.
\end{lemma}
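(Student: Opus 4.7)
The plan is to unwind the product expansion
\[
\hat{J}_{\underline{c},\Lambda}(\xi) = \lim_{k\to\infty} \prod_{i=1}^{l} \prod_{\substack{v\in \cO_\latt(m_i,\beta_i)\\ \ord_{\Lambda}(v)\geq -k}} \langle \xi, v \rangle^{c_{m_i,\beta_i}}
\]
and verify that $\gamma$ acts compatibly on every ingredient. By the definition of the $\Gamma$-action on $\rM^\times$ by left translation and the fact that $\gamma \in G(\Q_p)$ preserves $\langle \cdot , \cdot \rangle$, we have $\langle \gamma^{-1}\xi, v\rangle = \langle \xi, \gamma v\rangle$. Substituting $w = \gamma v$ in each truncated product at level $k$ will therefore yield
\[
\prod_{\substack{v\in \cO_\latt(m_i,\beta_i)\\ \ord_{\Lambda}(v)\geq -k}} \langle \gamma^{-1}\xi, v \rangle^{c_{m_i,\beta_i}} = \prod_{\substack{w\in \gamma\cO_\latt(m_i,\beta_i)\\ \ord_{\Lambda}(\gamma^{-1}w)\geq -k}} \langle \xi, w \rangle^{c_{m_i,\beta_i}}.
\]

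Next, I would check the two invariances needed to recognise the right-hand side as the corresponding truncation defining $\hat{J}_{\underline{c},\gamma\Lambda}$. First, $\gamma \cdot \cO_\latt(m_i,\beta_i) = \cO_\latt(m_i,\beta_i)$: indeed $\Gamma \subseteq \SO(\latt)$ preserves $\latt$ and $q$, hence also $\latt^{\#}$, and the standing hypothesis that $\Gamma$ acts trivially on $\D_\latt$ guarantees that each coset $\beta_i + \latt$ is $\gamma$-stable. Second, $\ord_{\Lambda}(\gamma^{-1} w) = \ord_{\gamma\Lambda}(w)$: the map $v \mapsto \gamma v$ sends primitive vectors of $\Lambda$ to primitive vectors of $\gamma\Lambda$, so the decomposition $w = p^k w_0$ with $w_0 \in (\gamma\Lambda)'$ transports via $\gamma^{-1}$ to $\gamma^{-1} w = p^k \gamma^{-1} w_0$ with $\gamma^{-1}w_0 \in \Lambda'$.

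Combining these two observations, the truncated product on the right becomes exactly the $k$-th truncation in the product expansion of $\hat{J}_{\underline{c},\gamma\Lambda}(\xi)$, so equality holds on the nose (not merely modulo $\Z_p^\times$) at each finite level. Passing to the limit, which exists in $\rM^\times/\Z_p^\times$ for both sides by Proposition \ref{convergencedefinite} applied once to $\Lambda$ and once to $\gamma\Lambda$, yields $\gamma \cdot \hat{J}_{\underline{c},\Lambda} = \hat{J}_{\underline{c},\gamma\Lambda}$.

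This proof is essentially bookkeeping; there is no analytic obstacle because each finite truncation gives a literal equality of rational functions. The only delicate point is the first invariance, where the hypothesis that $\Gamma$ acts trivially on $\D_\latt$ is precisely what is needed to preserve the cosets $\beta_i + \latt$ indexing the Kudla--Millson data.
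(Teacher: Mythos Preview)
Your proof is correct and is precisely the routine verification the paper has in mind: the paper itself offers no argument beyond remarking that the transformation law ``easily [follows] from the construction.'' Your explicit bookkeeping—matching the finite truncations via the substitution $w=\gamma v$, using that $\Gamma$ preserves each $\cO_\latt(m_i,\beta_i)$ (thanks to the standing triviality assumption on $\D_\latt$) and that $\ord_\Lambda(\gamma^{-1}w)=\ord_{\gamma\Lambda}(w)$—is exactly the intended unwinding.
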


\subsubsection{Independence of lattice at $p$}
We show that the function $\hat{J}_{\underline{c},\Lambda}$ is independent of the choice of lattice at $p$.
The proof relies on the properties of $p$-neighbouring lattices, which we are going to recall first.
Remember that two $\Z_p$-lattices in $V_{\Q_p}$ are called \emph{$p$-neighbours} if their intersection is of index $p$ in each of them. 
All $p$-neighbours of a self-dual lattice are also self-dual.
The following alternative description of $p$-neighbours of self-dual lattices is well-known (see for example the discussion after \cite[Lemma 4.3]{Chenevier}):
\begin{lemma}\label{pneighbourdescr}
Let $\Lambda_1,\Lambda_2\subseteq V_{\Q_p}$ be self-dual $\Z_p$-lattices, which are $p$-neighbours.
Then there exists a primitive isotropic vector  $w\in\Lambda'_1$ such that
\[
\Lambda_2 = \frac{1}{p}\Z_p w+ \left\{ u \in \Lambda_1\ \vert\ \langle u,w \rangle \equiv 0 \bmod p \right\}.
\]
\end{lemma}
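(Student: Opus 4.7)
The plan is to construct $w$ as $w = p\cdot x$ for a suitably chosen primitive isotropic $x\in\Lambda_2\setminus\Lambda_1$, and then verify the neighbour formula by a direct self-duality calculation. I would treat the intersection $M := \Lambda_1\cap\Lambda_2$ as the main workspace: by hypothesis $[\Lambda_1:M]=[\Lambda_2:M]=p$, so $p\Lambda_2\subseteq M\subseteq\Lambda_1$ and the image $\bar M := M/p\Lambda_2$ is a hyperplane inside the non-degenerate $\F_p$-quadratic space $\Lambda_2/p\Lambda_2$.

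The central step is the construction of $w$ itself. First I would produce a primitive isotropic vector $x\in\Lambda_2'$ with $x\notin M$. Because $n\geq 3$ and $V_{\Q_p}$ admits a self-dual $\Z_p$-lattice, the reduction $\Lambda_2/p\Lambda_2$ is an isotropic non-degenerate $\F_p$-quadratic space, and its isotropic quadric is the special fibre of the smooth $\Z_p$-scheme $Q_{\Lambda_2}$ from \S\ref{sec-rigidstructure}; in particular it is a smooth quadric of positive dimension, and hence not contained in any hyperplane of $\mathbb{P}(\Lambda_2/p\Lambda_2)$. Choosing a primitive isotropic $\bar x \notin \bar M$ and lifting by Hensel's lemma applied to $Q_{\Lambda_2}$ yields the desired $x\in\Lambda_2'$ (with $x\notin M$, since $M+p\Lambda_2=M$). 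Setting $w:=p\cdot x$, the vector $w$ lies in $p\Lambda_2\subseteq M\subseteq\Lambda_1$, is isotropic because $q(w)=p^2 q(x)=0$, and is primitive in $\Lambda_1$: if $w=p\cdot y$ for some $y\in\Lambda_1$, then $x=y\in\Lambda_1\cap\Lambda_2=M$, contrary to our choice.

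Finally I would identify $\Lambda_2$ with $\Lambda_2^{(w)} := \tfrac{1}{p}\Z_p w + \{u\in\Lambda_1 : \langle u,w\rangle\in p\Z_p\}$ by a direct duality check. Since $\tfrac{1}{p}w = x\in\Lambda_2$ and $\Lambda_2 = M + \Z_p\cdot x$, for any $u\in\Lambda_1$ with $\langle u,w\rangle\in p\Z_p$ one has $\langle u,x\rangle = \tfrac{1}{p}\langle u,w\rangle\in\Z_p$, while $\langle u,M\rangle\subseteq\Z_p$ by self-duality of $\Lambda_1$; hence $\langle u,\Lambda_2\rangle\subseteq\Z_p$, so $u\in\Lambda_2^{\#}=\Lambda_2$. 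This gives $\Lambda_2^{(w)}\subseteq\Lambda_2$, and since both are self-dual $\Z_p$-lattices of full rank — the right-hand side by the classical verification that the $p$-neighbour recipe applied to a primitive isotropic vector of a self-dual lattice produces a self-dual lattice — the inclusion must be an equality.

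The principal obstacle is the geometric input in the second paragraph, namely the existence of a primitive isotropic vector of $\Lambda_2$ outside the hyperplane $\bar M$; this rests on the fact that the smooth positive-dimensional quadric in $\mathbb{P}(\Lambda_2/p\Lambda_2)$ is not contained in any hyperplane, which uses the running assumptions $n\geq 3$ and the existence of a self-dual lattice in $V_{\Q_p}$. Once this is granted and $x$ is in hand, all subsequent steps reduce to routine bookkeeping with dual lattices.
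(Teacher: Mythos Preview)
The paper does not supply its own proof of this lemma; it simply records it as ``well-known'' with a pointer to the discussion after \cite[Lemma~4.3]{Chenevier}. Your argument is correct and is essentially the standard one: pick any $x\in\Lambda_2\setminus M$ that is isotropic (which is where the hypothesis $n\geq 3$ and the smoothness of $Q_{\Lambda_2}$ enter), set $w=px$, and verify the formula.

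One small simplification of your final step avoids invoking self-duality of $\Lambda_2^{(w)}$ as a separate ``classical'' fact. You have already shown that $N:=\{u\in\Lambda_1:\langle u,w\rangle\in p\Z_p\}$ is contained in $\Lambda_2$; since trivially $N\subseteq\Lambda_1$, this gives $N\subseteq M$. But $[\Lambda_1:N]=p$ (because $w$ is primitive in $\Lambda_1$) and $[\Lambda_1:M]=p$, so $N=M$. Hence
\[
\Lambda_2^{(w)}=\tfrac{1}{p}\Z_p w + N = \Z_p\cdot x + M = \Lambda_2,
\]
the last equality because $x\in\Lambda_2\setminus M$ and $[\Lambda_2:M]=p$.
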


The following important property of $p$-neighbours goes back to the seminal work of Kneser (cf.~\cite{Kneserneighbours}):
\begin{lemma}\label{pneighbourchain}
Any two self-dual $\Z_p$-lattices in $V_{\Q_p}$ can be connected by a finite chain of self-dual $\Z_p$-lattices in which any two consecutive lattices
 are $p$-neighbours.
\end{lemma}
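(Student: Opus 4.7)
The plan is to induct on the invariant
\[
d(\Lambda_1,\Lambda_2) := \log_p [\Lambda_1 : \Lambda_1 \cap \Lambda_2],
\]
which, because both lattices are self-dual, equals $\log_p[\Lambda_2 : \Lambda_1 \cap \Lambda_2]$ and is therefore symmetric in its arguments (using $(\Lambda_1\cap\Lambda_2)^{\#} = \Lambda_1 + \Lambda_2$). The base cases are immediate: when $d = 0$ the lattices coincide, and when $d = 1$ they are $p$-neighbours by definition.

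For the inductive step, given $\Lambda_1,\Lambda_2$ with $d(\Lambda_1,\Lambda_2) \geq 2$, the goal is to construct a $p$-neighbour $\Lambda'$ of $\Lambda_1$ such that $d(\Lambda',\Lambda_2) < d(\Lambda_1,\Lambda_2)$; iteratively doing this yields the required chain. By Lemma \ref{pneighbourdescr}, such a $p$-neighbour is determined by a primitive isotropic vector $w \in \Lambda_1'$, and the task is thus to choose $w$ cleverly enough that the new lattice $\Lambda'= \tfrac{1}{p}\Z_p w + \{u \in \Lambda_1 \mid \langle u,w\rangle \in p\Z_p\}$ moves strictly closer to $\Lambda_2$.

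The key input is a simultaneous normal form for a pair of self-dual lattices: using that the difference module $(\Lambda_1+\Lambda_2)/(\Lambda_1\cap\Lambda_2)$ carries a non-degenerate $\Q_p/\Z_p$-valued form forced to be symmetric under interchanging $\Lambda_1$ and $\Lambda_2$, one obtains an orthogonal decomposition
\[
V_{\Q_p} \;=\; H_1 \perp \cdots \perp H_k \perp V_0
\]
with $H_i = \Q_p w_i \oplus \Q_p w_i^\ast$ a hyperbolic plane (with $w_i, w_i^\ast$ isotropic and $\langle w_i,w_i^\ast\rangle = 1$) in which
\[
\Lambda_1 \cap H_i \;=\; \Z_p w_i \oplus \Z_p w_i^\ast, \qquad \Lambda_2 \cap H_i \;=\; p^{a_i}\Z_p w_i \oplus p^{-a_i}\Z_p w_i^\ast
\]
for integers $a_i \geq 1$, while $\Lambda_1$ and $\Lambda_2$ agree on $V_0$. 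Under this normal form $d(\Lambda_1,\Lambda_2) = \sum a_i$. Choosing $w := w_i$ for any $i$, the $p$-neighbour $\Lambda'$ prescribed by Lemma \ref{pneighbourdescr} replaces the local picture on $H_i$ by $p^{-1}\Z_p w_i \oplus p\Z_p w_i^\ast$, dropping $a_i$ to $a_i - 1$ and leaving the other blocks unchanged, which strictly lowers $d$.

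The main obstacle is setting up the simultaneous diagonalization: that a pair of self-dual $\Z_p$-lattices decomposes orthogonally into ``matched'' hyperbolic blocks and a common part. This is essentially the local elementary divisor theorem adapted to respect self-duality; it can be proved directly by choosing $w_1$ to be a lift of a vector on which the pairing on $(\Lambda_1+\Lambda_2)/(\Lambda_1\cap\Lambda_2)$ attains its largest denominator, pairing it with a suitable partner $w_1^\ast$, splitting off the hyperbolic plane $H_1$, and iterating (using $p\neq 2$ to ensure that orthogonal splittings off of hyperbolic planes preserve self-duality on the complement). Once this structural lemma is in hand, the inductive reduction above completes the proof.
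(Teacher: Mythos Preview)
The paper does not actually prove this lemma; it merely attributes the result to Kneser with a citation. Your approach---induct on the index $d(\Lambda_1,\Lambda_2)$ and at each step pass to a well-chosen $p$-neighbour of $\Lambda_1$---is the standard one and is correct in outline.

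There is, however, a direction error in your inductive step. With $\Lambda_1\cap H_i=\Z_p w_i\oplus\Z_p w_i^\ast$ and $\Lambda_2 \cap H_i = p^{a_i}\Z_p w_i \oplus p^{-a_i}\Z_p w_i^\ast$ (and $a_i \geq 1$), choosing $w = w_i$ gives $\Lambda' \cap H_i = p^{-1}\Z_p w_i \oplus p\,\Z_p w_i^\ast$, and then
\[
\Lambda' \cap \Lambda_2 \cap H_i = p^{a_i}\Z_p w_i \oplus p\,\Z_p w_i^\ast,\qquad [\Lambda'\cap H_i : \Lambda'\cap\Lambda_2\cap H_i] = p^{a_i+1},
\]
so the local contribution to $d$ goes \emph{up} by one, not down. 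The fix is to take $w = w_i^\ast$ instead---the isotropic direction in which $\Lambda_2$ is genuinely larger than $\Lambda_1$. Then $\Lambda'\cap H_i = p\,\Z_p w_i \oplus p^{-1}\Z_p w_i^\ast$, the intersection with $\Lambda_2$ on $H_i$ becomes $p^{a_i}\Z_p w_i\oplus p^{-1}\Z_p w_i^\ast$, and the contribution drops to $a_i-1$ as you want.

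Beyond this slip, the real content of your argument is the simultaneous normal form for a pair of self-dual lattices. Your sketch (peel off hyperbolic planes governed by extremal elements of the discriminant form on $(\Lambda_1+\Lambda_2)/(\Lambda_1\cap\Lambda_2)$) is plausible, but this is where all the work lies; it is essentially the Cartan decomposition for $O(V)(\Q_p)$ relative to the hyperspecial stabiliser of $\Lambda_1$, and it deserves more than a sentence. Alternatively, one can bypass the full normal form: it suffices to produce, whenever $\Lambda_1\neq\Lambda_2$, a single primitive isotropic $w\in\Lambda_1$ with $p^{-1}w\in\Lambda_2$, which is closer in spirit to Kneser's original argument and avoids setting up the entire block decomposition.
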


\begin{lemma}\label{independencedefinite}
Let $\Lambda_1,\Lambda_2 \subseteq V_{\Q_p}$ be self-dual $\Z_p$-lattices.
The equality
\[
\hat{J}_{\underline{c},\Lambda_1}=\hat{J}_{\underline{c},\Lambda_2}
\]
holds in $\rM^\times/\Z_p^\times$.
\end{lemma}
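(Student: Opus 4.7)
The natural first move is to use Lemma \ref{pneighbourchain} to reduce to the case where $\Lambda_1$ and $\Lambda_2$ are $p$-neighbours, since the relation of equality in $\rM^\times/\Z_p^\times$ is transitive. With this reduction, fix by Lemma \ref{pneighbourdescr} a primitive isotropic vector $w\in\Lambda_1'$ so that $\Lambda_2 = \frac{1}{p}\Z_p w + \{u\in\Lambda_1 : \langle u,w\rangle \equiv 0\bmod p\}$. The inclusions $p\Lambda_1\subseteq\Lambda_2$ and $p\Lambda_2\subseteq\Lambda_1$ imply $|\ord_{\Lambda_1}(v)-\ord_{\Lambda_2}(v)|\leq 1$ for every non-zero $v\in V_{\Q_p}$, and the discrepancy arises precisely for vectors whose reduction lies in the $w$-direction.

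The strategy is then to compare the truncated partial products
\[
\hat J_{\underline{c},\Lambda_j,k}(\xi) = \prod_{i=1}^l \prod_{\substack{v\in\cO_\latt(m_i,\beta_i)\\ \ord_{\Lambda_j}(v)\geq -k}} \langle\xi,v\rangle^{c_{m_i,\beta_i}} \qquad (j=1,2)
\]
and show their ratio converges to $1$ in $\rM^\times/\Z_p^\times$. By the order comparison, the symmetric difference of the two truncation sets at level $k$ consists exactly of those $v$ with $\{\ord_{\Lambda_1}(v),\ord_{\Lambda_2}(v)\}=\{-k,-k-1\}$; concretely, up to scaling, these are the primitive $v_1\in\Lambda_1'$ with $\langle v_1,w\rangle\not\equiv 0\bmod p$ (paired with $v_2=pv_1\in\Lambda_2'$), along with their counterparts going the other way.

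The core step is to rewrite this symmetric-difference correction in factors that Lemma \ref{convergencelemma} can handle. The plan is to group the vectors in the symmetric difference according to the isotropic line they generate modulo $p^k$, just as in Proposition \ref{convergencedefinite}, and then to invoke Corollary \ref{maincombinatoriallemma} applied to the even $\Z$-lattices $\latt\cap p^k\Lambda_j$: the sum of the exponents $c_{m_i,\beta_i}$ appearing in the sub-product for a given isotropic line is the relevant Fourier coefficient of a form $g\in M_{n/2,\D_\latt}(\cgroup_0(p))$, which vanishes by the hypothesis $\underline{c}(f)=0$. Pairing vectors congruent modulo $p^k$ within each sub-product and applying Lemma \ref{convergencelemma} then gives uniform convergence to $1$ on each affinoid $X_{p,\Lambda_1}^{\leq l}$ as $k\to\infty$. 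Passing to the limit yields $\hat J_{\underline{c},\Lambda_1} = \hat J_{\underline{c},\Lambda_2}$ in $\rM^\times/\Z_p^\times$.

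The main obstacle I anticipate is bookkeeping: reconciling the two distinct affinoid coverings $\{X_{p,\Lambda_j}^{\leq k}\}$ (both exhaust $X_p$ but a given affinoid for $\Lambda_1$ only sits in $X_{p,\Lambda_2}^{\leq k+O(1)}$), and organizing the line-by-line grouping so that each sub-product simultaneously has total exponent zero on \emph{both} sides of the symmetric difference. The concrete structure of $\Lambda_2$ from Lemma \ref{pneighbourdescr} should make the matching of isotropic lines in $\Lambda_1/p^k\Lambda_1$ and $\Lambda_2/p^k\Lambda_2$ explicit enough to carry this out, but verifying that the combinatorics of these groupings is compatible with the modular-form identities coming from Corollary \ref{maincombinatoriallemma} is where the technical effort lies.
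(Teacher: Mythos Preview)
Your proposal is correct and follows the same strategy as the paper: reduce to $p$-neighbours via Lemma \ref{pneighbourchain}, use the explicit description of $\Lambda_2$ from Lemma \ref{pneighbourdescr}, write the ratio of the two truncated products as a product over the symmetric difference of the truncation sets, and control this by grouping the contributing vectors according to the isotropic line they generate modulo $p^k$, invoking the total-exponent-zero argument from Proposition \ref{convergencedefinite} (i.e.\ Corollary \ref{maincombinatoriallemma} together with the hypothesis $\underline{c}(f)=0$).

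The one streamlining in the paper that you have not articulated: rather than grouping over several isotropic lines, the paper observes that the set $\{v:\ord_{\Lambda_2}(v)=-k,\ \ord_{\Lambda_1}(v)=-k-1\}$, after scaling by $p^k$, consists exactly of the primitive vectors in $\Lambda_2$ whose image generates the specific line $\ell_{w/p,k}$ in $\Lambda_2/p^k\Lambda_2$. This identifies that half of the symmetric-difference correction with the single factor $\hat J_{\Lambda_2,k,\ell_{w/p,k}}^{-1}$ already appearing in the decomposition of Proposition \ref{convergencedefinite}, so its convergence to $1$ in $\rM^\times/\Z_p^\times$ is immediate without repeating the Corollary \ref{maincombinatoriallemma} argument. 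The other half is handled by symmetry of the $p$-neighbour relation. Your more general grouping also works and sidesteps the need to verify the single-line identification; the paper's version trades that verification for a shorter endgame.
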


\begin{proof}
By Lemma \ref{pneighbourchain} we may assume that $\Lambda_1$ and $\Lambda_2$ are $p$-neighbours.
Thus, by Lemma \ref{pneighbourdescr} there exists a primitive isotropic vectors $w\in\Lambda'_1$ such that
\[
\Lambda_2 = \frac{1}{p}\Z_p w+ \left\{ u \in \Lambda_1\ \vert\ \langle u,w \rangle \equiv 0 \bmod p \right\}.
\]
From the definition one immediately gets that
\begin{equation}\label{ratioequation}
\frac{\hat{J}_{\underline{c},\Lambda_1}}{\hat{J}_{\underline{c},\Lambda_1}}
=\lim_{k\rightarrow \infty}
\prod_{i=1}^{l}
\prod_{\substack{v\in \cO_\latt(m_i,\beta_i)\\ \ord_{\Lambda_1}(v)\geq -k\\ \ord_{\Lambda_2}(v)< -k}} \langle \xi, v \rangle^{c_{m_i,\beta_i} } \times
\prod_{i=1}^{l}
\prod_{\substack{v\in \cO_\latt(m_i,\beta_i)\\  \ord_{\Lambda_2}(v)\geq -k\\ \ord_{\Lambda_1}(v)< -k}} \langle \xi, v \rangle^{-c_{m_i,\beta_i}}
\end{equation}

Suppose that $\ord_{\Lambda_2}(v)\geq -k$.
Then we can write:
\[
v = \frac{1}{p^k}\left( \frac{a}{p} w + u \right)
\]
where $a\in \Z_p$ and $u \in \Lambda_1$ satisfies $\langle u,w \rangle \equiv 0 \bmod p.$
Then $\ord_{\Lambda_1}(v) < -k$ if and only if
\begin{enumerate}[(i)]
\item\label{cond1} $\ord_{\Lambda_2}(v) = -k$ and
\item\label{cond2} $p$ does not divide $a$.
\end{enumerate}
Let $\ell_{w/p,k}$ be the isotropic line in $\Lambda_2/p^k\Lambda_2$ generated by $1/p\cdot w$.
Conditions \eqref{cond1} and \eqref{cond2} are equivalent to $p^k v$ being primitive in $\Lambda_2$ and generating $\ell_{w/p,k}$.
In particular, the second term in \eqref{ratioequation} is equal to
$\hat{J}_{\Lambda_2,k,\ell_{w/p,k}}^{-1}$.
As in the proof of Proposition \ref{convergencedefinite}, we deduce that $\hat{J}_{\Lambda_2,k,\ell_{w/p,k}}$ converges to $1$ in $\rM^\times/\Z_p^\times$.
By symmetry of the $p$-neighbour relation, the first term in \eqref{ratioequation} also converges to $1$.
\end{proof}

\subsubsection{Proof of Theorem \ref{maindefinite1}}
Let $\Lambda\subseteq V_{\Q_p}$ be any self-dual $\Z_p$-lattice.
By Lemma \ref{independencedefinite} the function
\[
\hat{J}_{\underline{c}}:=\hat{J}_{\underline{c},\Lambda}\in \rM^\times/\Z_p^\times.
\]
is independent of the choice of $\Lambda\subseteq V_{\Q_p}$.
Moreover, it is $\Gamma$-invariant by Lemma \ref{equivariancedefinite} and, by construction, its divisor is equal to $\sD_{\underline{c}}$.
Hence, Theorem \ref{maindefinite1} follows.

\subsection{Proof of modularity in the hyperbolic case}\label{Borcherdshyperbolic}
The proof of the main theorem in the hyperbolic setting is very close to the one in the definite setting.
We will only indicate the main differences.
Throughout this section we assume that $s=1$ and $Q(\Q)\neq \emptyset$.
we fix elements $\beta_1,\ldots \beta_l \in \D_{\latt}$, positive rational numbers $m_1,\ldots,m_l\in \Z_{(p)}^{>0}$, and integers $c_{m_1,\beta_1},\ldots,c_{m_l,\beta_l}\in \Z$.
Moreover, we assume that each $m_i$ is compact with respect to $(V,q)$.

\subsubsection{Convergence in the hyperbolic case}
Let us fix rational isotropic lines $\ell_{-},\ell_{+}\in Q(\Q)$.
The set $\{v\in \cO_\latt(m_i,\beta_i)\cap \Lambda\ \vert\ \Delta_{v,\infty}\cap [\ell_{-},\ell_{+}]\neq 0\}$ is finite by Lemma \ref{modularsymbolintersection}.
Replacing the modularity of theta series of definite quadratic lattices by Theorem \ref{FMhyperbolic} in the proof of Proposition \ref{convergencedefinite} yields the following:
\begin{proposition}\label{convergenhyperbolic}
For every self-dual $\Z_p$-lattice $\Lambda\subseteq V_{\Q_p}$
the infinite product
\[
\hat{J}_{\underline{c},\Lambda}(\ell_{-},\ell_{+})(\xi):=
\lim_{k\to\infty} \prod_{i=1}^{l} \prod_{\substack{v\in \cO_\latt(m_i,\beta_i)\\ \ord_{\Lambda}(v)\geq -k}} \langle \xi, v \rangle^{c_{m_i,\beta_i}\cdot (\Delta_{v,\infty}\cap [\ell_{-},\ell_{+}]) }
\]
converges in $\rM^\times / \Z_p^\times$.
Moreover, the following equality holds:
\[
\divmap(\hat{J}_{\underline{c},\Lambda}(\ell_{-},\ell_{+}))=\widetilde{\sD}_{\underline{c}}(\ell_{-},\ell_{+}).
\]
\end{proposition}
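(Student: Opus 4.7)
The plan is to mimic the proof of Proposition \ref{convergencedefinite}, with the Funke--Millson theta series replacing the ordinary theta series of a definite lattice. First I would verify that each finite partial product is well-defined: by Lemma \ref{modularsymbolintersection}, for each fixed $m_i$ and fixed pair $(\ell_-,\ell_+)$, the compactness of $m_i$ with respect to $(V,q)$ guarantees that only finitely many vectors $v \in \cO_\latt(m_i,\beta_i)$ with $\ord_\Lambda(v) \geq -k$ satisfy $\Delta_{v,\infty} \cap [\ell_-,\ell_+] \neq 0$. Hence the exponent of $\langle \xi,v\rangle$ in each partial product is nonzero for only finitely many $v$, so $\hat{J}_{\underline{c},\Lambda,0}(\ell_-,\ell_+)(\xi)$ is an honest element of $\rM^\times$.

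Next I would decompose the tail of the product by $p$-adic order, writing for each integer $t \geq 1$
\[
\hat{J}_{\underline{c},\Lambda,t}(\ell_-,\ell_+)(\xi) := \prod_{i=1}^l \prod_{\substack{v\in \cO_\latt(p^{2t}m_i,\,p^t\beta_i) \\ \ord_\Lambda(v) = 0}} \bigl(p^{-t}\langle \xi,v\rangle\bigr)^{c_{m_i,\beta_i}\cdot (\Delta_{v,\infty}\cap [\ell_-,\ell_+])},
\]
so that $\hat{J}_{\underline{c},\Lambda}(\ell_-,\ell_+) = \prod_{t\geq 0} \hat{J}_{\underline{c},\Lambda,t}(\ell_-,\ell_+)$ in $\rM^\times/\Z_p^\times$. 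Each vector $v$ appearing in the $t$-th factor reduces to a generator of an isotropic line $\ell$ in $\Lambda/p^t\Lambda$, yielding a further refinement $\hat{J}_{\underline{c},\Lambda,t} = \prod_\ell \hat{J}_{\underline{c},\Lambda,t,\ell}$ indexed by such isotropic lines.

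The crux of the argument is to show that the total exponent in each sub-product $\hat{J}_{\underline{c},\Lambda,t,\ell}$ vanishes. This exponent equals
\[
\sum_{i=1}^l c_{m_i,\beta_i} \sum_{\substack{v\in (p^t\beta_i)\cap P(\Lambda,\ell) \\ q(v)=p^{2t}m_i}} \bigl(\Delta_{v,\infty}\cap [\ell_-,\ell_+]\bigr).
\]
Applying Theorem \ref{FMhyperbolic} to the lattice $L_\circ := \latt \cap p^t \Lambda$, the inner sum is (up to summing over $x_p \in P(\Lambda,\ell)$) a Fourier coefficient of the Funke--Millson theta series $\vartheta_{L_\circ,(\ell_-,\ell_+)} \in M_{n/2,\D_{L_\circ}}$. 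Feeding this form into Corollary \ref{maincombinatoriallemma} produces a modular form $g \in M_{n/2,\D_\latt}(\cgroup_0(p))$ whose $(m_i,\beta_i)$-Fourier coefficient reproduces the inner sum exactly. The hypothesis $\underline{c}(f)=0$ for every $f \in M_{n/2,\D_\latt}(\cgroup_0(p))$ then forces the exponent sum to vanish, which is exactly what is needed.

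With the vanishing of exponent sums established, Lemma \ref{convergencelemma} applies verbatim: after pairing up the vectors contributing to each $\hat{J}_{\underline{c},\Lambda,t,\ell}$, the resulting ratios $\langle \xi,v_1\rangle/\langle \xi,v_2\rangle$ satisfy $v_1 \equiv \alpha v_2 \bmod p^t\Lambda$ and $q(v_i) \equiv 0 \bmod p^{2t}\Z_p$, so they tend to $1$ uniformly on every affinoid $X_{p,\Lambda}^{\leq l}$ as $t \to \infty$. Thus the infinite product converges in $\rM^\times/\Z_p^\times$. The divisor identity is then immediate from the construction, since each factor $\langle \xi,v\rangle$ contributes $\Delta_{v,p}$ with multiplicity $c_{m_i,\beta_i} \cdot (\Delta_{v,\infty}\cap [\ell_-,\ell_+])$, which is precisely the recipe for $\widetilde{\sD}_{\underline{c}}(\ell_-,\ell_+)$ from Proposition \ref{modsymbprop}. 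The main obstacle is purely organisational, namely ensuring that the hyperbolic Funke--Millson input slots correctly into the evaluation/$U_{p^2}$ machinery of Corollary \ref{maincombinatoriallemma}; once this is done, $p$-adic convergence is guaranteed by the identical estimates used in the definite case.
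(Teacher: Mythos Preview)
Your proposal is correct and follows exactly the approach the paper takes: the paper's proof simply states that one replaces the modularity of theta series of definite quadratic lattices by Theorem~\ref{FMhyperbolic} in the proof of Proposition~\ref{convergencedefinite}, and you have faithfully expanded out those details, including the finiteness via Lemma~\ref{modularsymbolintersection}, the decomposition by isotropic lines, the application of Corollary~\ref{maincombinatoriallemma} to the Funke--Millson form, and the convergence via Lemma~\ref{convergencelemma}.
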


\subsubsection{Properties}
From the construction one immediately deduces the following:
\begin{lemma}
For every self-dual $\Z_p$-lattice the assignment
\[
Q(\Q)\times Q(\Q)\longrightarrow \rM^\times/\Z_p{\times},\quad (\ell_{-},\ell_{+})\longmapsto \hat{J}_{\underline{c},\Lambda}(\ell_{-},\ell_{+})
\]
defines an $\rM^\times/\Z_p{\times}$-valued modular symbol.
Moreover,  
\[
\gamma.\hat{J}_{\underline{c},\Lambda}(\ell_{-},\ell_{+})=\hat{J}_{\underline{c},\gamma\Lambda}(\gamma.\ell_{-},\gamma.\ell_{+})
\]
for every $\gamma\in \Gamma$.
\end{lemma}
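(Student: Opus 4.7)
The plan is to deduce both statements from two essentially formal ingredients: (i) the additivity of the intersection number in its geodesic argument, which is visible from the explicit formula~\eqref{compacthyperbolicclasses}; and (ii) the fact that $\Gamma$ acts by orientation-preserving isometries of $V$ which preserve the lattice $\latt$ and act trivially on $\D_\latt$. The convergence of the infinite products is already in hand via Proposition~\ref{convergenhyperbolic}, so both claims reduce to manipulations at each finite truncation $\ord_\Lambda(v)\geq -k$ that survive the passage to the limit in $\rM^\times/\Z_p^\times$.

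To establish the modular symbol property, I would first verify the additivity
\[
\Delta_{v,\infty}\cap[\ell_0,\ell_1]+\Delta_{v,\infty}\cap[\ell_1,\ell_2]=\Delta_{v,\infty}\cap[\ell_0,\ell_2]
\]
for any three lines $\ell_0,\ell_1,\ell_2\in Q(\Q)$ and any $v\in V_+$ with $v^\perp$ anisotropic. This follows from \eqref{compacthyperbolicclasses}: writing $\chi_v(\ell)\in\{+1,-1\}$ for the component of $\overline{X}_\infty\Setminus\overline{\Delta}_{v,\infty}$ containing $\ell$, the intersection number equals $(\chi_v(\ell_+)-\chi_v(\ell_-))/2$, which is manifestly additive. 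Raising $\langle\xi,v\rangle^{c_{m_i,\beta_i}}$ to this additive exponent and taking the finite product over $\ord_\Lambda(v)\geq-k$ turns the additivity into the multiplicative relation $\hat{J}_{\underline{c},\Lambda}^{(k)}(\ell_0,\ell_1)\cdot\hat{J}_{\underline{c},\Lambda}^{(k)}(\ell_1,\ell_2)=\hat{J}_{\underline{c},\Lambda}^{(k)}(\ell_0,\ell_2)$ on truncated products, which survives the limit.

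For the equivariance, I unwind the $\Gamma$-action on $\rM^\times$: $(\gamma\cdot\hat J_{\underline c,\Lambda}(\ell_-,\ell_+))(\xi)=\hat J_{\underline c,\Lambda}(\ell_-,\ell_+)(\gamma^{-1}\xi)$. Since $\gamma$ is an isometry of $V$, $\langle\gamma^{-1}\xi,v\rangle=\langle\xi,\gamma v\rangle$, and the substitution $v'=\gamma v$ is a bijection on the index set. Under this substitution: $q(v')=q(v)=m_i$; $v'\in\beta_i+\latt$ because $\gamma$ preserves $\latt$ and acts trivially on $\D_\latt$; $\ord_{\gamma\Lambda}(v')=\ord_\Lambda(v)$ since $\gamma$ maps primitive vectors of $\Lambda$ to primitive vectors of $\gamma\Lambda$; and $\Delta_{v,\infty}\cap[\ell_-,\ell_+]=\gamma\Delta_{v,\infty}\cap\gamma[\ell_-,\ell_+]=\Delta_{\gamma v,\infty}\cap[\gamma.\ell_-,\gamma.\ell_+]$ by the orientation-preserving isometric action of $G(\R)^+$ established in \S\ref{sec-orientations}. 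Collecting these identities expresses the substituted product as precisely the defining product for $\hat J_{\underline c,\gamma\Lambda}(\gamma.\ell_-,\gamma.\ell_+)$.

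The main subtlety — and the only step that is not a purely formal rewrite — is ensuring that the manipulations are compatible with the ordering of the partial products used in the convergence argument: one must check that the reindexings by $v\mapsto\gamma v$ and the bookkeeping in the modular symbol relation respect the filtration by $\ord_\Lambda(v)\geq -k$, so that each identity holds on a common truncation before passing to the limit in $\rM^\times/\Z_p^\times$. Since the condition $\ord_\Lambda(v)\geq -k$ transforms cleanly under $\gamma$ into $\ord_{\gamma\Lambda}(v')\geq -k$, and is symmetric in the $\ell_i$, this compatibility is automatic, and the lemma follows.
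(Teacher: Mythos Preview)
Your proof is correct and is precisely the unpacking of what the paper asserts in one line (``From the construction one immediately deduces the following''): additivity of the intersection number $\Delta_{v,\infty}\cap[\ell_-,\ell_+]$ in the geodesic argument (visible from \eqref{compacthyperbolicclasses}) and $\Gamma$-equivariance of all ingredients, both checked on the finite truncations $\ord_\Lambda(v)\geq -k$ and then passed to the limit. The only quibble is a harmless sign: with your convention $\chi_v(\ell)=\pm1$ according to $\ell\in\overline{H}_v^{\pm}$, formula~\eqref{compacthyperbolicclasses} gives $(\chi_v(\ell_-)-\chi_v(\ell_+))/2$, not the expression you wrote, but this does not affect additivity.
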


Again, by replacing the modularity of theta series of definite lattices by Theorem \ref{FMhyperbolic} the same arguments as in the proof of Lemma \ref{independencedefinite} give the following:
\begin{lemma} \label{independenhyperbolic}
Let $\ell_{-},\ell_{+}\in Q(\Q)$ be rational isotropic vectors and $\Lambda_1,\Lambda_2 \subseteq V_{\Q_p}$ self-dual $\Z_p$-lattices.
The equality
\[
\hat{J}_{\underline{c},\Lambda_1}(\ell_{-},\ell_{+})=\hat{J}_{\underline{c},\Lambda_2}(\ell_{-},\ell_{+})
\]
holds in $\rM^\times/\Z_p^\times$.
\end{lemma}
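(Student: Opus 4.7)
The strategy is to mirror the argument for Lemma \ref{independencedefinite} line for line, replacing the use of modularity of theta series of definite lattices by the Funke--Millson modularity Theorem \ref{FMhyperbolic}. First, by Lemma \ref{pneighbourchain} it suffices to treat the case where $\Lambda_1$ and $\Lambda_2$ are $p$-neighbours. By Lemma \ref{pneighbourdescr} we may then fix a primitive isotropic vector $w\in \Lambda_1'$ with
\[
\Lambda_2 = \tfrac{1}{p}\Z_p w + \{u\in \Lambda_1 \ \vert\ \langle u,w\rangle \equiv 0 \bmod p\}.
\]

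The plan is to write the ratio $\hat J_{\underline c,\Lambda_1}(\ell_-,\ell_+)/\hat J_{\underline c,\Lambda_2}(\ell_-,\ell_+)$ as a limit, exactly as in \eqref{ratioequation}, of finite products indexed by vectors $v\in \cO_\latt(m_i,\beta_i)$ for which the $\Lambda_1$- and $\Lambda_2$-orders disagree at level $k$, each weighted by $c_{m_i,\beta_i}\cdot(\Delta_{v,\infty}\cap[\ell_-,\ell_+])$. As in the proof of Lemma \ref{independencedefinite}, those vectors $v$ with $\ord_{\Lambda_2}(v)\geq -k$ but $\ord_{\Lambda_1}(v)<-k$ are exactly the ones whose rescaling $p^k v$ is primitive in $\Lambda_2$ and generates the isotropic line $\ell_{w/p,k}\subseteq \Lambda_2/p^k\Lambda_2$ spanned by $\tfrac{1}{p}w$; symmetric statements hold after exchanging $\Lambda_1$ and $\Lambda_2$. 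This reduces convergence of the ratio to $1$ in $\rM^\times/\Z_p^\times$ to showing that the total exponent of each of these two finite sub-products vanishes, after which Lemma \ref{convergencelemma} takes care of the limit.

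The core combinatorial identity to establish is therefore that, for each $t\geq 1$ and each isotropic line $\ell\in Q_\Lambda(\Z/p^t\Z)$,
\[
\sum_{i=1}^{l} c_{m_i,\beta_i} \cdot \left(\sum_{\substack{v\in (p^t\beta_i+\latt)\cap P(\Lambda,\ell) \\ q(v)=p^{2t}m_i}} \Delta_{v,\infty}\cap [\ell_-,\ell_+]\right) \ = \ 0.
\]
For this I invoke Theorem \ref{FMhyperbolic} applied to the $\Z$-lattice $L_\circ = \latt\cap p^t\Lambda$ and the pair $(\ell_-,\ell_+)$: it produces a vector-valued modular form $\vartheta_{L_\circ,(\ell_-,\ell_+)}\in M_{n/2,\D_{L_\circ}}$ whose $(m,\beta)$-Fourier coefficient, for $m$ compact with respect to $(V,q)$, is precisely the signed count of intersections $\sum_{v\in \cO_{L_\circ}(m,\beta)}\Delta_{v,\infty}\cap[\ell_-,\ell_+]$. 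Applying the evaluation map $\Ev_{\mu(\Lambda,\ell)}$ and the $U_{p^2}$-operator $t$ times, as in Corollary \ref{maincombinatoriallemma}, yields a modular form $g\in M_{n/2,\D_\latt}(\cgroup_0(p))$ whose $(m_i,\beta_i)$-Fourier coefficient is the inner sum above. The compactness of each $m_i$ with respect to $(V,q)$ is exactly what makes the Funke--Millson identity on Fourier coefficients applicable in the required range, and the hypothesis $\underline c(f)=0$ for every $f\in M_{n/2,\D_\latt}(\cgroup_0(p))$ then forces the weighted sum of exponents to vanish.

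The main obstacle I anticipate is bookkeeping: one must verify that the Funke--Millson formula for the Fourier coefficients of $\vartheta_{L_\circ,(\ell_-,\ell_+)}$ really does apply to all $(p^{2t}m_i, p^t\beta_i)$ arising, i.e.~that $p^{2t}m_i$ remains compact with respect to $(V,q)$ (which is clear since compactness depends only on the anisotropy of the orthogonal complement, a condition invariant under rescaling by a nonzero rational square) and that the pairing against $\mu(\Lambda,\ell)$ selects exactly the sum indexed by $v\in P(\Lambda,\ell)$ generating $\ell$. Once that is confirmed, the remainder of the proof is a verbatim translation of the definite case, with $\hat J_{\Lambda_2,k,\ell_{w/p,k}}$ replaced by its hyperbolic analogue carrying the intersection-number weights.
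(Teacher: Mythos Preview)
Your proposal is correct and follows exactly the paper's approach: the paper's own proof of this lemma simply says that the arguments of Lemma \ref{independencedefinite} carry over verbatim once one replaces the modularity of definite theta series by the Funke--Millson result (Theorem \ref{FMhyperbolic}). You have spelled out these details accurately, including the reduction to $p$-neighbours, the identification of the discrepancy terms with sub-products over the isotropic line $\ell_{w/p,k}$, the use of Corollary \ref{maincombinatoriallemma} to produce a form in $M_{n/2,\D_\latt}(\cgroup_0(p))$, and the observation that compactness of $m_i$ is preserved under rescaling by $p^{2t}$.
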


To summarize, for every every self-dual $\Z_p$-lattice $\Lambda\subseteq V_{\Q_p}$ the assignment
\[
(\ell_{-},\ell_{+})\longmapsto\hat{J}_{\underline{c}}(\ell_{-},\ell_{+}) 
\]
defines a $\Gamma$-invariant $\rM^\times/\Z_p^\times$-valued modular symbol, whose divisor is given by $\widetilde{\sD_{\underline{c}}}$, which proves Theorem \ref{mainhyperbolic1}.

%%%%%%%%%%%%%%%%%%%%%%%%%%%%%%%%%%%%%%%%%%

\section{Special points and singular moduli}
\label{sec:special-points}
This chapter introduces the notion of {\em special points} 
on $X_p$ and defines the {\em value} of a rigid meromorphic cocycle at such a  point.
Since a special point is an element of 
$X_p$ whose stabiliser in $G$ contains a suitable
 kind of maximal torus, a general discussion in Section \ref{sec:maximal-tori} 
of maximal  tori in orthogonal groups    precedes
 the definition of special points, which is given  in Section \ref{sec:spdef}.
Section \ref{sec:galoisaction} describes a Galois action on special points akin to the Galois action on CM points of Shimura varieties, which is the basis  of a Shimura reciprocity law for special values of rigid meromorphic cocycles.

As in the previous chapters, $V$ is a quadratic space over $\Q$ of signature $(r,s)$ and 
dimension $n = r+s$. 
It is assumed throughout this chapter that $r\geq s$, and the algebraic closure  $\overline{\Q}$  is considered as a subfield of
$\C_p$ via a fixed chosen embedding.
Given a field $K$ with a fixed algebraic closure $\overline{K}$, the absolute Galois group of $K$  is denoted by $\mathcal{G}_{K}=\Gal(\overline{K}/K)$.

\subsection{Maximal tori in orthogonal groups}
\label{sec:maximal-tori}
A $\Q$-rational  torus $T\subseteq G$ is {\em maximal} in $G$ if 
$$
\dim(T)=\begin{cases} \frac{n}{2}& \mbox{if}\ n\ \mbox{is even,}\\ \frac{n-1}{2} &\mbox{if}\ n\ \mbox{is odd.}\end{cases}
$$
The Lie algebra $ {\rm Lie}(T) \subseteq \End_{\Q}(V)$ of $T$ is a $\dim(T)$-dimensional vector space
 of commuting, skew-adjoint endomorphisms of $V$.
It follows from \cite[Proposition 3.3]{BCKM}  that the subring generated by ${\rm Lie}(T)$ is an \'etale $\Q$-algebra  of dimension $2 \dim(T)$, denoted $E_T$. This algebra is endowed  with an involution $\sigma$ obtained by restricting the involution on ${\rm End}(V)$
 sending $T$ to its adjoint $T^*$. 
The  subalgebra $F_T:= E_T^\sigma$ of $\sigma$-fixed points is  
an \'etale algebra  over $\Q$ of dimension $\dim(T)$.

\subsubsection{Description of $V$ in terms of $T$}
  When $\dim(V)$ is odd,
  $E_T$ acts trivially on a non-degenerate one-di\-men\-sional subspace of 
  $V$ and acts faithfully on its orthogonal complement,
  denoted $V_T$.
	When $n$ is even, $E_T$ acts faithfully on $V_T :=V $.
In both cases, $V_T$ is a free $E_T$-module of rank one  and a free $F_T$-module of rank two.
 \begin{lemma}
 There are unique hermitian
 bilinear pairings (relative to $\sigma$)
  $$ \langle \ , \ \rangle_{E_T}\colon V_T\times V_T\rightarrow E_T,
  \qquad 
    \langle \ , \ \rangle_{F_T}\colon V_T\times V_T\rightarrow F_T,$$
 satisfying
 $$ \langle \lambda v_1, v_2\rangle = {\rm Tr}^{E_T}_\Q(\lambda \langle v_1,v_2\rangle_{E_T}), \quad \mbox{ for all } \lambda\in E_T,$$
 and likewise with $E_T$ replaced by $F_T$.
 \end{lemma}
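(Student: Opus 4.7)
The proof proceeds by exploiting the non-degeneracy of the trace pairing on the étale $\Q$-algebras $E_T$ and $F_T$, which allows each hermitian pairing to be defined as the unique solution to a system of $\Q$-linear equations.

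The first preparatory step is to reduce to a form on $V_T$. When $n$ is even this is tautological since $V_T = V$. When $n$ is odd, the one-dimensional subspace on which $E_T$ acts trivially is non-degenerate (as $V$ itself is non-degenerate and $E_T$ acts by skew-adjoint endomorphisms, making its trivial subspace orthogonal to its faithful subspace). Hence $V_T$ inherits a non-degenerate $\Q$-bilinear symmetric form by restriction, still denoted $\langle\,\cdot\,,\cdot\,\rangle$.

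The key ingredient is the non-degeneracy of the trace form $(x,y)\mapsto\mathrm{Tr}^{E_T}_{\Q}(xy)$ on the \'etale $\Q$-algebra $E_T$. Fix $v_1,v_2\in V_T$. The assignment
\[
\varphi_{v_1,v_2}\colon E_T\longrightarrow \Q,\qquad \lambda\longmapsto \langle\lambda v_1, v_2\rangle
\]
is $\Q$-linear, so by non-degeneracy of the trace pairing there is a unique element of $E_T$, which we declare to be $\langle v_1,v_2\rangle_{E_T}$, satisfying $\varphi_{v_1,v_2}(\lambda)=\mathrm{Tr}^{E_T}_{\Q}(\lambda\langle v_1,v_2\rangle_{E_T})$ for every $\lambda\in E_T$. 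This construction simultaneously establishes existence and uniqueness, and manifest $\Q$-bilinearity of $\varphi$ yields $\Q$-bilinearity of $\langle\,\cdot\,,\cdot\,\rangle_{E_T}$.

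Next, I would verify the two algebraic properties. For $E_T$-linearity in the first argument, given $\mu\in E_T$, both $\lambda\mapsto\mathrm{Tr}(\lambda\langle\mu v_1,v_2\rangle_{E_T})$ and $\lambda\mapsto\mathrm{Tr}(\lambda\mu\langle v_1,v_2\rangle_{E_T})$ equal $\langle\lambda\mu v_1,v_2\rangle$, so non-degeneracy of the trace pairing forces $\langle\mu v_1,v_2\rangle_{E_T}=\mu\langle v_1,v_2\rangle_{E_T}$. For the hermitian symmetry, using that $\sigma$ is the restriction of the adjoint involution (so $\langle\lambda v_1,v_2\rangle=\langle v_1,\sigma(\lambda)v_2\rangle$), symmetry of $\langle\,\cdot\,,\cdot\,\rangle$, and $\sigma$-invariance of the trace, I compute
\[
\mathrm{Tr}(\lambda\langle v_2,v_1\rangle_{E_T})=\langle\lambda v_2,v_1\rangle=\langle\sigma(\lambda)v_1,v_2\rangle=\mathrm{Tr}(\sigma(\lambda)\langle v_1,v_2\rangle_{E_T})=\mathrm{Tr}(\lambda\,\sigma(\langle v_1,v_2\rangle_{E_T})),
\]
which by non-degeneracy gives $\langle v_2,v_1\rangle_{E_T}=\sigma(\langle v_1,v_2\rangle_{E_T})$. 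The same argument, with $E_T$ replaced by $F_T$ throughout and the trivial involution on $F_T$, produces $\langle\,\cdot\,,\cdot\,\rangle_{F_T}$.

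The only mildly delicate point is the odd-dimensional case, where one must check that the restricted form on $V_T$ is non-degenerate and that $\sigma$ on $E_T$ genuinely coincides with the adjoint involution computed on $V_T$ alone; both follow from the orthogonal decomposition $V=V_T\oplus V_T^{\perp}$ and the fact that $E_T$ acts trivially on $V_T^{\perp}$. Beyond that, the proof is a clean application of the duality afforded by the trace pairing on an \'etale algebra.
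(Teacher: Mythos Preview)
Your proof is correct and follows essentially the same approach as the paper: both use the non-degeneracy of the trace form on the \'etale algebra $E_T$ to represent the functional $\lambda\mapsto\langle\lambda v_1,v_2\rangle$ as $\lambda\mapsto\mathrm{Tr}^{E_T}_{\Q}(\lambda x)$ for a unique $x\in E_T$, and then set $\langle v_1,v_2\rangle_{E_T}:=x$. The paper simply asserts that the hermitian bilinearity is ``readily checked,'' whereas you spell out the verifications of $E_T$-linearity and $\sigma$-symmetry explicitly; your additional remarks on the odd-dimensional case are not strictly needed for this lemma (non-degeneracy of the restricted form on $V_T$ only enters later, to show $\nu\in F_T^\times$), but they are correct and do no harm.
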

 \begin{proof}
 The \'etale algebra $E_T$ is canonically identified with its $\Q$-linear dual via the trace form.
 For any  $v_1, v_2\in V_T$, the $\Q$-valued functional $\lambda \mapsto \langle \lambda v_1, v_2\rangle$ on $E_T$  can therefore be represented as
 $\lambda \mapsto {\rm Tr}^{E_T}_{\Q}(\lambda x)$ for a unique 
 $x\in E_T$, depending on $v_1$ and $v_2$. Setting $\langle v_1, v_2\rangle_{E_T} := x$, it is readily checked that the resulting $E_T$-valued function on $V_T \times V_T$
 has the asserted bilinearity properties. The same argument works with $E_T$ replaced by $F_T$.
 \end{proof}
 For any non-zero vector $v$ in the one-dimensional $E_T$-vector space $V_T$, the
    quantity 
    $$\nu:= \langle v,v\rangle_{F_T} = 2\langle v,v\rangle_{E_T} \in F_T^\times$$   is well-defined up to norms from $E_T^\times$, hence its image in $F_T^\times/{\rm N}^{E_T}_{F_T}(E_T^\times)$ is a well-defined invariant of the torus $T$.
 
 \begin{lemma}
 \label{lemma:eva}
 The quadratic space  $V_T$ is isomorphic to $E_T$ equipped with the quadratic form
 $$ q_\nu(\lambda) = {\rm Tr}^{F_T}_{\Q}(\nu\cdot \mathrm{N}^{E_T}_{F_T}(\lambda)).$$
 \end{lemma}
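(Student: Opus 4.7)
The plan is to realize the claimed isomorphism explicitly as $\lambda \mapsto \lambda v$ for a fixed nonzero $v \in V_T$, and then translate the original quadratic form on $V_T$ using the hermitian structure already introduced in the preceding lemma.

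First, I would fix any nonzero $v \in V_T$. Since $V_T$ is a free $E_T$-module of rank one, the map $\phi\colon E_T \to V_T$, $\lambda \mapsto \lambda v$, is an $E_T$-linear isomorphism, so it suffices to check that the pullback $q\circ\phi$ agrees with $q_\nu$ on $E_T$.

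The second (and main) step is to unpack sesquilinearity and hermitian symmetry of $\langle\cdot,\cdot\rangle_{E_T}$ relative to $\sigma$. Linearity in the first argument, namely $\langle \lambda w_1, w_2\rangle_{E_T} = \lambda \langle w_1, w_2\rangle_{E_T}$, follows from the defining property $\langle \mu w_1, w_2\rangle = \mathrm{Tr}^{E_T}_\Q(\mu \langle w_1, w_2\rangle_{E_T})$ by substituting $\lambda w_1$ for $w_1$, letting $\mu$ vary over $E_T$, and invoking the non-degeneracy of the trace pairing on the étale $\Q$-algebra $E_T$. Hermitian symmetry then follows by combining the symmetry of the original pairing $\langle\cdot,\cdot\rangle$ with the identity $\langle \lambda w_1, w_2\rangle = \langle w_1, \sigma(\lambda) w_2\rangle$, valid because $\mathrm{Lie}(T) \subseteq \End_\Q(V)$ is skew-adjoint (as $T\subseteq G$ preserves $q$) and generates $E_T$, so the adjoint involution on $E_T$ coincides with $\sigma$. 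In particular, $\langle v, v\rangle_{E_T} \in F_T$.

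The final step is then a direct calculation: sesquilinearity yields
\[
\langle \lambda v, \lambda v\rangle_{E_T} \;=\; \lambda\,\sigma(\lambda)\,\langle v, v\rangle_{E_T} \;=\; \mathrm{N}^{E_T}_{F_T}(\lambda)\cdot \langle v, v\rangle_{E_T},
\]
and then applying $\langle w, w\rangle = 2\,q(w)$, combined with the transitivity $\mathrm{Tr}^{E_T}_\Q = \mathrm{Tr}^{F_T}_\Q \circ \mathrm{Tr}^{E_T}_{F_T}$ and the fact that $\mathrm{Tr}^{E_T}_{F_T}(x) = 2x$ for $x \in F_T$ (since $E_T$ is an étale degree-two extension of $F_T$ cut out by $\sigma$), yields
\[
q(\lambda v) \;=\; \mathrm{Tr}^{F_T}_\Q\!\left(\mathrm{N}^{E_T}_{F_T}(\lambda)\cdot \langle v, v\rangle_{E_T}\right),
\]
which is $q_\nu(\lambda)$ once one identifies $\langle v, v\rangle_{E_T}$ with (a representative of) the invariant $\nu$ in $F_T^\times/\mathrm{N}^{E_T}_{F_T}(E_T^\times)$. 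The only real subtlety is the bookkeeping of the hermitian structure and of the conventional factor of $2$ relating $\langle v, v\rangle_{F_T}$ to $\langle v, v\rangle_{E_T}$; beyond that the argument is a mechanical identity in étale algebras, and the possibility that $E_T$ is not a field causes no difficulty since trace, norm, and $\sigma$ all respect the product decomposition.
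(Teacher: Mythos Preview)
Your proposal is correct and follows essentially the same approach as the paper: fix an $E_T$-generator $v$ of $V_T$, transport $q$ via $\lambda\mapsto \lambda v$, and use the sesquilinearity of $\langle\cdot,\cdot\rangle_{E_T}$ together with transitivity of trace to reduce to $\mathrm{Tr}^{F_T}_\Q(\nu\,\mathrm{N}^{E_T}_{F_T}(\lambda))$. The only cosmetic difference is that the paper passes through the identity $\mathrm{Tr}^{E_T}_\Q(x)=\mathrm{Tr}^{F_T}_\Q(x+x^\sigma)$ applied to $\lambda_1\lambda_2^\sigma\langle v,v\rangle_{E_T}$, whereas you invoke $\mathrm{Tr}^{E_T}_{F_T}(x)=2x$ for $x\in F_T$; note that in your final line you should identify $\nu$ with $\langle v,v\rangle_{F_T}=2\langle v,v\rangle_{E_T}$ rather than $\langle v,v\rangle_{E_T}$ itself (the paper's own display absorbs this factor in the passage from $2\lambda\lambda^\sigma\langle v,v\rangle_{E_T}$ to $\lambda\lambda^\sigma\langle v,v\rangle_{F_T}$).
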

 \begin{proof}
 The choice of a $E_T$-module generator $v\in V_T$  determines a $\Q$ vector space isomorphism
 $$ \iota_v\colon E_T \rightarrow V_T, \qquad  \lambda \mapsto \lambda v.$$
 Under this identification, the original pairing on $V_T$ is transported to the pairing $\langle\!\langle \ , \ \rangle\!\rangle$ 
 on $E_T$ given by
 $$\langle\!\langle \lambda_1, \lambda_2\rangle\!\rangle  = 
 \langle  \lambda_1 v, \lambda_2 v\rangle = 
 {\rm Tr}^{E_T}_{\Q}(\lambda_1 \lambda_2^\sigma \langle v,v\rangle_{E_T}) 
 = {\rm Tr}^{F_T}_{\Q}((\lambda_1 \lambda_2^\sigma + 
 \lambda_1^\sigma \lambda_2)  \langle v,v\rangle_{E_T}),$$
 so that in particular
  $$\langle\!\langle \lambda, \lambda\rangle\!\rangle  
 = {\rm Tr}^{F_T}_{\Q}(\lambda \lambda^\sigma  \langle v,v\rangle_F)
 = {\rm Tr}^{F_T}_{\Q}(\nu \lambda \lambda^\sigma).$$
The lemma follows.
(cf.~also \cite[Proposition 3.9]{BCKM}).
\end{proof}
Lemma  \ref{lemma:eva} leads to a useful description of  the quadratic space $V$.
 If  $n$ is odd, 
the orthogonal complement of $V_T$ in $V$ is one-dimensional and non-degenerate, with quadratic form   given by $q_\alpha(x)=\alpha x^2$ for some $\alpha\in \Q^\times$ which is uniquely determined up to squares.
As quadratic spaces over $\Q$, 
one has
\begin{equation}
\label{eqn:VqT}
(V,q)\cong \left\{
\begin{array}{cl}
 (E_T, q_\nu), & \mbox{ if $n$ is even, } \\
 (E_T\oplus \Q, q_\nu \perp q_\alpha), & \mbox{ if $n$ is odd,}
 \end{array}\right.
 \end{equation}
The torus $T\subseteq G$ is identified with the unitary group $$ \mathrm{U}(E_T,\sigma) = \{ a\in E_T^\times \mbox{ for which } a a^\sigma = 1 \},$$  acting on $E_T$ via multiplication  under the identification \eqref{eqn:VqT}. (Cf.~\cite[Proposition 3.3]{BCKM}, for instance.)

\subsubsection{Aside: spinor norms}
This section recalls a well-known formula for the spinor norm of elements  of $\mathrm{U}(E_T,\sigma)$.
A typical such element   is of the form $a/a^{\sigma}$ for some $a\in E_T^\times$, by Hilbert's Theorem 90.

Fix a field $k$ of characteristic different from $2$.
\begin{lemma}
Let $l/k$ be a quadratic \'etale algebra and denote by $\sigma$ the non-trivial $k$-automorphism of $l$.
Fix an element $\nu\in k^\times$ and consider the $k$-vector space $l$ with the quadratic form given $q_k:=\nu\cdot \mathrm{N}^{l}_{k}$.
Then for every $a\in l^\times$ the spinor norm of multiplication with $a/a^{\sigma}$ is equal to $\mathrm{N}^{l}_k(a) \bmod (k^\times)^2$.
\end{lemma}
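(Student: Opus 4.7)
My plan is to realise multiplication by $a/a^{\sigma}$ as an explicit product of two reflections on $(l,q_k)$ and then read off the spinor norm directly from the squared lengths of the reflecting vectors. The natural candidates for the reflecting vectors are the simplest available ones: the unit element $1\in l$ and the element $a$ itself. Since both $q(1)=\nu$ and $q(a)=\nu\cdot \mathrm{N}^{l}_k(a)$ are nonzero (the second because $a\in l^\times$ forces $\mathrm{N}^l_k(a)\in k^\times$), the reflections $r_{1}$ and $r_{a}$ are honest elements of $\mathrm{O}(l,q_k)$.

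The first step is to compute these two reflections explicitly. Using the polarisation $\langle x,y\rangle=\nu(xy^{\sigma}+x^{\sigma}y)$ coming from $q_k=\nu\cdot \mathrm{N}^{l}_{k}$, a direct substitution into $r_v(x)=x-\frac{\langle x,v\rangle}{q(v)}v$ gives
\[
r_{1}(x)=-x^{\sigma},\qquad r_{a}(x)=-\frac{a}{a^{\sigma}}\,x^{\sigma}.
\]
Both formulas are formal manipulations: for $r_1$ one has $\langle x,1\rangle=\nu(x+x^{\sigma})$ and $q(1)=\nu$; for $r_a$ one checks that $\langle x,a\rangle a/q(a)=x+(a/a^{\sigma})x^{\sigma}$.

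Composing the two reflections then yields
\[
(r_{a}\circ r_{1})(x)\;=\;r_{a}(-x^{\sigma})\;=\;-\frac{a}{a^{\sigma}}\,(-x^{\sigma})^{\sigma}\;=\;\frac{a}{a^{\sigma}}\,x,
\]
which is exactly the multiplication map $\tau_{a/a^{\sigma}}$. In particular this re-proves that $\tau_{a/a^{\sigma}}\in \mathrm{SO}(l,q_k)$ and, more importantly, it presents it as a product of two reflections whose spinor norm is
\[
q(a)\cdot q(1)\;=\;\nu\cdot \mathrm{N}^{l}_{k}(a)\cdot \nu\;=\;\nu^{2}\cdot \mathrm{N}^{l}_{k}(a)\;\equiv\;\mathrm{N}^{l}_{k}(a)\pmod{(k^{\times})^{2}}.
\]

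There is essentially no obstacle here; the only point requiring a moment's care is checking that $l$ being a (possibly split) quadratic étale algebra causes no trouble, i.e.\ that the two vectors $1$ and $a$ are anisotropic and that the formula for $r_a$ makes sense. In the split case $l\cong k\times k$, the non-trivial automorphism swaps the factors, $\mathrm{N}^{l}_{k}$ is the product of coordinates, and $a\in l^{\times}$ means both coordinates are nonzero, so $q(a)=\nu\cdot \mathrm{N}^{l}_{k}(a)\neq 0$; the field case is identical. Thus the computation above applies uniformly and establishes the lemma.
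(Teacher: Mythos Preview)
Your proof is correct and follows essentially the same approach as the paper: both express multiplication by $a/a^{\sigma}$ as the composite of the reflections in $1$ and in $a$, and then read off the spinor norm as $q(1)q(a)$. The only cosmetic difference is that the paper first reduces to $\nu=1$ (using that scaling the form does not change spinor norms on $\SO$), whereas you keep $\nu$ and observe that the resulting factor $\nu^{2}$ is a square.
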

\begin{proof}
Without loss of generality we may assume that $\nu=1$ since the spinor norm of elements in $\SO(q_k)$ is independent of scaling the quadratic form.
The reflection $\tau_a$ at $a$ is given by
\[
\tau_a(\lambda)= \lambda- \frac{a\lambda^{\sigma}+a^{\sigma}\lambda}{a a^\sigma}\cdot a= - a/a^\sigma\cdot \lambda^{\sigma}
\quad \forall \lambda\in l.
\]
In particular, the equality $\tau_a \circ \tau_1 = a/a^\sigma$ holds.
By definition the spinor norm of $\tau_a$ is equal to $q_l(a)=\mathrm{N}^{l}_{k}(a)$ and, thus, the claim follows. 
\end{proof}

The following lemma describes the behaviour of spinor norms under restriction of scalars.
\begin{lemma}
Let $l/k$ be a finite \'etale algebra.
Let $(W,q_\ell)$ be a non-degenerate quadratic space over $l$.
Then $W$ equipped with $q_k:=\mathrm{Tr}^{l}_k \circ q_l$ defines a non-degnerate quadratic space over $k$.
There is a canonical embedding $\SO(q_l)\subseteq \SO(q_k)$ and the following diagram involving the spinor norms $\mathrm{sn}_{q_l}$ and $\mathrm{sn}_{q_k}$ associated to $q_l$ resp.~$q_k$ is commutative:
\begin{center}
 \begin{tikzpicture}
    \path
		(0,0) node[name=A]{$\SO(q_l)$}
		(3,0) node[name=B]{$\SO(q_k)$}
		(0,-1.5) node[name=C]{$l^\times /(l^\times)^2$}
		(3,-1.5) node[name=D]{$k^\times/(k^\times)^2$};
		\draw[->] (A) -- (B) ;
		\draw[->] (A) -- (C) node[midway, left]{$\mathrm{sn}_{q_l}$};
		\draw[->] (B) -- (D) node[midway, right]{$\mathrm{sn}_{q_k}$};
		\draw[->] (C) -- (D)  node[midway, above]{$\mathrm{N}^{l}_k$};
  \end{tikzpicture} 
\end{center}
\end{lemma}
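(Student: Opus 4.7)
The plan is to verify commutativity of the diagram on a set of generators of $\SO(q_l)$ supplied by Cartan--Dieudonn\'e, by expressing every $l$-reflection explicitly as a product of $k$-reflections. First, since $l/k$ is \'etale, one decomposes $l = \prod_i l_i$ into a product of field extensions; this induces an orthogonal decomposition of $(W,q_l)$ and is compatible with both $\mathrm{Tr}^l_k$ and $\mathrm{N}^l_k$, so without loss of generality I assume $l/k$ is a field extension of some degree $d$. Non-degeneracy of $q_k = \mathrm{Tr}^l_k \circ q_l$ follows from non-degeneracy of the trace pairing combined with non-degeneracy of $b_l$, and the inclusion $\SO(q_l) \hookrightarrow \SO(q_k)$ is immediate since $l$-linear isometries are in particular $k$-linear.

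Next, I would establish the following expression for $l$-reflections. For $w \in W$ with $q_l(w) \neq 0$, the $l$-reflection $\tau_w^{q_l}(v) = v - \tfrac{b_l(v,w)}{q_l(w)} w$, viewed $k$-linearly, acts as $-\mathrm{id}$ on the $d$-dimensional $k$-subspace $l \cdot w$ and as $\mathrm{id}$ on $w^{\perp_l} := \{v \in W : b_l(v,w) = 0\}$. These two subspaces are $q_k$-orthogonal: for $v' \in w^{\perp_l}$ one has $b_l(xw,v') = x \cdot b_l(w,v') = 0$, hence $b_k(xw,v') = 0$. Choosing a $q_k$-orthogonal basis $e_1, \ldots, e_d$ of $(l\cdot w, q_k|_{l\cdot w})$, we then have
\[
\tau_w^{q_l} \;=\; \tau_{e_1}^{q_k} \circ \cdots \circ \tau_{e_d}^{q_k}
\]
as elements of $\mathrm{O}(q_k)$, because each $\tau_{e_i}^{q_k}$ fixes $w^{\perp_l}$ while the composition acts as $-\mathrm{id}$ on $l\cdot w$.

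Now Cartan--Dieudonn\'e says that $\SO(q_l)$ is generated by products $g = \tau_{w_1}^{q_l} \tau_{w_2}^{q_l}$ of two $l$-reflections. For such a $g$, one has $\mathrm{sn}_{q_l}(g) = q_l(w_1) q_l(w_2)$, and hence $\mathrm{N}^l_k(\mathrm{sn}_{q_l}(g)) = \mathrm{N}^l_k(q_l(w_1)) \cdot \mathrm{N}^l_k(q_l(w_2))$ in $k^\times/(k^\times)^2$. On the other hand, combining the factorizations from the previous paragraph, $g$ is a product of $2d$ $k$-reflections in $\mathrm{O}(q_k)$ and hence lies in $\SO(q_k)$, so
\[
\mathrm{sn}_{q_k}(g) \;=\; \prod_{i=1}^d q_k(e_i^{(1)}) \cdot \prod_{i=1}^d q_k(e_i^{(2)}) \;=\; \mathrm{disc}(q_k|_{l\cdot w_1}) \cdot \mathrm{disc}(q_k|_{l\cdot w_2})
\]
modulo squares, where $e_1^{(j)}, \ldots, e_d^{(j)}$ is a $q_k$-orthogonal basis of $l\cdot w_j$. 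Under the $k$-linear isomorphism $l \xrightarrow{\sim} l\cdot w_j$, $x \mapsto xw_j$, the form $q_k|_{l\cdot w_j}$ transports to the twisted trace form $x \mapsto \mathrm{Tr}^l_k(q_l(w_j) x^2)$ on $l$. A direct matrix computation — the matrix of $b_k(x,y) = 2 \mathrm{Tr}^l_k(q_l(w_j) xy)$ is the product of the trace pairing matrix and the matrix of multiplication by $q_l(w_j)$ on $l$ — yields
\[
\mathrm{disc}(q_k|_{l\cdot w_j}) \;=\; \mathrm{disc}(l/k) \cdot \mathrm{N}^l_k(q_l(w_j)) \bmod (k^\times)^2.
\]
Multiplying over $j=1,2$, the factor $\mathrm{disc}(l/k)^2$ becomes a square, giving $\mathrm{sn}_{q_k}(g) = \mathrm{N}^l_k(\mathrm{sn}_{q_l}(g))$. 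Since both $\mathrm{sn}_{q_k}|_{\SO(q_l)}$ and $\mathrm{N}^l_k \circ \mathrm{sn}_{q_l}$ are group homomorphisms and they agree on these generators, the diagram commutes.

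The main technical obstacle is the explicit factorization of the $l$-reflection $\tau_w^{q_l}$ into $d$ $k$-reflections, together with the discriminant computation for the twisted trace form. The decisive structural feature is that $\mathrm{disc}(l/k)$ appears exactly once per $l$-reflection and so becomes a square on every even product — that is, on all of $\SO(q_l)$ — which is why the diagram involves $\mathrm{N}^l_k$ without any further twist.
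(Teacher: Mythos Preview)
Your argument is correct and complete. The paper does not actually give a proof: it simply cites Scharlau's \emph{Quadratic and Hermitian Forms} (Chapter~2, Lemma~5.5 for the non-degeneracy of the transfer, and Chapter~9, Example~3.7 for the spinor norm computation), remarking that the field case carries over verbatim to \'etale algebras. What you have written is essentially the standard argument that Scharlau gives --- factor each $l$-reflection $\tau_w^{q_l}$ as a product of $d = [l:k]$ commuting $k$-reflections along an orthogonal basis of the $q_k$-nondegenerate subspace $l\cdot w$, then identify the resulting product $\prod_i q_k(e_i)$ with the discriminant of the twisted trace form $x\mapsto \mathrm{Tr}^l_k(q_l(w)x^2)$, which equals $\mathrm{disc}(l/k)\cdot \mathrm{N}^l_k(q_l(w))$ modulo squares. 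Your observation that the extraneous factor $\mathrm{disc}(l/k)$ (and likewise any stray power of $2$ from the $b_k$ versus $q_k$ normalisation) squares out on $\SO(q_l)$, because one always multiplies an even number of such contributions, is exactly the mechanism that makes the diagram commute.
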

\begin{proof}
In case $l$ is a field, this is \cite[Chapter 2, Lemma 5.5]{scharlau-book} and \cite[Chapter 9, Example 3.7]{scharlau-book}.
The proof carries over verbatim to the more general case.
\end{proof}

Combining the two lemmas above one deduces:
\begin{corollary}
\label{cor:spinornorm}
Let $k_1/k$ be a finite \'etale algebra, $\nu$ an element of $k_1^\times$ and $l/k_1$ a quadratic \'etale algebra with  involution $\sigma$.
Equip the $k$-vector space $l$ with the non-degnerate quadratic form $q_k(\lambda)=\mathrm{Tr}^{l}_{k_1}(\nu\cdot \mathrm{N}^{k_1}_{k}(\lambda))$.
For every $a\in l^\times$ the spinor norm of multiplication with $a/a^\sigma$ is equal to $\mathrm{N}^{l}_{k}(a) \bmod (k^\times)^2$. 
\end{corollary}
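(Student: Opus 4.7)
The plan is a direct chaining of the two preceding lemmas via transitivity of the norm, no new ideas required. First, view $l$ as a $k_1$-vector space equipped with the twisted $k_1$-valued quadratic form
\[
q_{k_1}(\lambda) := \nu \cdot \mathrm{N}^{l}_{k_1}(\lambda),
\]
so that by construction $q_k = \mathrm{Tr}^{k_1}_k \circ q_{k_1}$. Multiplication by $a/a^{\sigma}$ is $l$-linear, hence in particular $k_1$-linear, and Hilbert 90 identifies $a/a^\sigma$ with an element of the unitary group of $(l, \mathrm{N}^{l}_{k_1})$; thus this map lies in $\SO(q_{k_1})$ and, via the canonical inclusion $\SO(q_{k_1}) \subseteq \SO(q_k)$ of the second lemma, also in $\SO(q_k)$.

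Next, I would apply the first lemma to the quadratic \'etale algebra $l/k_1$ together with the form $q_{k_1}$, obtaining
\[
\mathrm{sn}_{q_{k_1}}(a/a^\sigma) \equiv \mathrm{N}^{l}_{k_1}(a) \pmod{(k_1^\times)^2}.
\]
The commutative diagram of the second lemma, applied to the \'etale extension $k_1/k$, then yields
\[
\mathrm{sn}_{q_k}(a/a^\sigma) \;=\; \mathrm{N}^{k_1}_k\bigl(\mathrm{sn}_{q_{k_1}}(a/a^\sigma)\bigr) \;\equiv\; \mathrm{N}^{k_1}_k\bigl(\mathrm{N}^{l}_{k_1}(a)\bigr) \pmod{(k^\times)^2}.
\]
Transitivity of the norm for the tower of \'etale algebras $l/k_1/k$ identifies $\mathrm{N}^{k_1}_k \circ \mathrm{N}^{l}_{k_1} = \mathrm{N}^{l}_k$, giving the desired formula.

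There is essentially no obstacle: the result is a bookkeeping consolidation of the two lemmas. The only small point worth flagging is internal consistency of the appeal to the first lemma: that lemma is stated for the unscaled norm form $\mathrm{N}^{l}_{k_1}$, but its proof explicitly reduces to this case by noting that the spinor norm of elements of the special orthogonal group is insensitive to scaling the quadratic form by a unit of $k_1$, so applying it to $q_{k_1} = \nu \cdot \mathrm{N}^{l}_{k_1}$ is legitimate and produces the same formula $\mathrm{N}^{l}_{k_1}(a) \bmod (k_1^\times)^2$.
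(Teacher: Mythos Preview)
Your proof is correct and is exactly the combination of the two preceding lemmas that the paper intends; the paper itself gives no explicit argument beyond the phrase ``Combining the two lemmas above one deduces''. Your handling of the scaling by $\nu$ and the transitivity of the norm is precisely what is needed, and you have also implicitly corrected the evident typographical slip in the displayed formula for $q_k$ (the trace should be $\mathrm{Tr}^{k_1}_k$ and the norm $\mathrm{N}^{l}_{k_1}$, as in Lemma~\ref{lemma:eva}).
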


 \subsubsection{Maximally $\R$-split tori}
  Under the identification $T\cong U(E_T,\sigma)$, every arithmetic subgroup of $T(\Q)$ is a finite index subgroup of the group ${\cO}_{E_T/F_T}^\times$ of relative units of $E_T/F_T$, which fits into the exact sequence
 $$ 1 \rightarrow {\cO}_{E_T/F_T}^\times \lra {\cO}_{E_T}^\times \xlongrightarrow{\rm{N}} 
 \cO_{F_T}^\times,
 $$
 in which the last map has finite cokernel.

The  connected component of the commutative real Lie group
 $T(\R)$ is isomorphic to 
 $$ T(\R)_0 \simeq \R^t \times ({\mathbb S}_1)^{\dim(T)-t},$$
where ${\mathbb S}_1 = \C_1^\times$ is the unit circle. 
The integer $t$  is the {\em real rank} of $T$.
The $\Z$-rank of every arithmetic subgroup of $T(\Q)$ is less or equal than $t$ by Dirichlet's unit theorem for algebraic tori. (See for example \cite{Shyr}.)
It is equal to $t$ if and only if $T$ does not have a $\Q$-split subtorus.

  \begin{proposition}
  \label{realrank}
	For every maximal torus $T\subseteq G$ of real rank $t$ one has $t\le s$.
Moreover, equality holds if and only if $F_T$ is totally real and admits exactly $s$ real places that split into   pairs of real places of  $E_T$.
\end{proposition}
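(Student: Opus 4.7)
The strategy is to decompose $T_\R$ according to the infinite places of $F_T$, exploiting the identification $T \cong \mathrm{U}(E_T, \sigma)$ from the discussion above. Writing $F_T \otimes_\Q \R = \prod_v F_{T,v}$ over the infinite places $v$ of $F_T$ and setting $E_{T,v} := E_T \otimes_{F_T} F_{T,v}$, the group $T(\R)$ becomes a product of local unitary groups, and each place falls into one of exactly three types: \emph{real split} (where $F_{T,v} = \R$ and $E_{T,v} = \R \times \R$), \emph{real inert} (where $F_{T,v} = \R$ and $E_{T,v} = \C$), or \emph{complex} (where $F_{T,v} = \C$ and $E_{T,v} = \C \times \C$). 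Denoting by $a$, $b$, $c$ the number of each type, a routine inspection of the corresponding local unitary groups yields
\[
T(\R)_0 \cong \R^{a+c} \times ({\mathbb S}_1)^{b+c},
\]
so $t = a + c$.

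The next step is to use Lemma~\ref{lemma:eva} to read off the signature of $V$ one local factor of $V_T$ at a time. At a real split place, the form $\nu_v \cdot ab$ on $\R \times \R$ has signature $(1,1)$ independent of the sign of $\nu_v$; at a real inert place, the form $\nu_v \cdot |\lambda|^2$ on $\C \cong \R^2$ has signature $(2,0)$ or $(0,2)$ according to the sign of $\nu_v$, so I split $b = b_+ + b_-$ accordingly; at a complex place, the form $2\,\mathrm{Re}(\nu_v \cdot ab)$ on $\C \times \C \cong \R^4$ is hyperbolic of signature $(2,2)$ (the factor $\nu_v$ is absorbed by a linear change of variables). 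In the odd-dimensional case, the orthogonal one-dimensional complement contributes $(1,0)$ or $(0,1)$ according to the sign of $\alpha$. Summing the negative contributions yields
\[
s = a + 2b_- + 2c + \epsilon,
\]
where $\epsilon \in \{0,1\}$ equals $1$ precisely when $n$ is odd and $\alpha < 0$.

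Subtracting the two displays gives $s - t = 2b_- + c + \epsilon \ge 0$, which is the desired inequality. For the equality case, $s = t$ forces $b_- = c = \epsilon = 0$; the vanishing of $c$ says exactly that $F_T$ is totally real, and substituting back into the formula for $s$ yields $a = s$, so precisely $s$ real places of $F_T$ split into pairs of real places of $E_T$. Conversely, if these two conditions hold, the signature formula forces $b_- = 0$ (and $\epsilon = 0$ automatically in the odd case), giving $t = a + c = s$.

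The only non-trivial computational step is the signature bookkeeping at each type of place, especially checking that the complex places contribute the hyperbolic signature $(2,2)$; this reduces to elementary linear algebra using Lemma~\ref{lemma:eva}. I anticipate no genuine conceptual obstacle beyond this local-to-global matching.
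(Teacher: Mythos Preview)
Your proof is correct and follows essentially the same approach as the paper: decompose $T(\R)$ and $(V_T)_\R$ according to the archimedean places of $F_T$, compute the real rank and the signature place by place, and compare. The only differences are notational (your $a,b$ are the paper's $b,a$ respectively) and that you explicitly track the contribution $\epsilon$ from the one-dimensional complement in the odd case, which the paper leaves implicit; this extra care is harmless and arguably makes the equality case cleaner.
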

 \begin{proof}
 Let $a$ be the number of  real places of $F_T$ that lie below a complex place of $E_T$, let $b$ be the number of 
 real places of $F_T$ that lie below two real places of $E_T$, and let $c$ be the number of complex places of $F_T$, so that
 $a + b + 2c = \dim(T)$.
 
 Let $\nu \in F_T$ be as in Lemma \ref{lemma:eva}.
 The signature of the quadratic form $q_\nu$  can be read off directly from the invariants $a,b$, and $c$ above. More precisely, write $a = a_+ + a_-$, where $a_+$ is the number of real places $v$ of $F_T$ that lie below a  complex  place of $E_T$ and for which $v(\nu)>0$, and $a_-$ is the number of those places for which $v(\nu)<0$. A direct calculation shows that
 $$ (r,s) = a_+(2,0) + a_-(0,2) + b(1,1) + c (2,2) = (2a_+ + b + 2c, 2a_- + b + 2c),$$
while the real rank of $T(\R)$ is given by
$$ t = b + c.$$
Comparing the  equations for $s$ and $t$
 shows that $t\le s$, with equality if and only if $a_- = c = 0$.  The proposition follows.
 \end{proof}
  Proposition \ref{realrank}  motivates the following:
\begin{definition} 
  A maximal torus $T\subseteq G$ for which the equality $t=s$ is satisfied is called a 
  {\em maximally $\R$-split   torus} in $G$.
	\end{definition}

\subsubsection{Weight spaces and fixed points}
Let $T\subseteq G$ be a maximal torus and let $X^{\ast}(T)$ be its character group.
It is a free $\Z$-module of rank $\dim(T)$ endowed with a   
canonical action of $\mathcal{G}_{\Q}$.
The $\overline{\Q}$-vector space $V_{\overline{\Q}}$ admits a {\em weight space decomposition}
\[\label{wsd}
V_{\overline{\Q}}=\bigoplus_{\chi\in X^{\ast}(T)} V_{\chi},
\]
in which each $V_{\chi}$ is at most one-dimensional and 
 $V_{\chi}$ is non-zero if and only if 
 $V_{\chi^{-1}}$ is. 
Moreover, $V_{\chi}$ and $V_{\chi'}$ are orthogonal unless $\chi'=\chi^{-1}$.
If $\chi\neq 1$ and $V_\chi\neq \{0\}$,
 the weight $\chi$ is said to be {\em non-trivial}. The set $\mathcal{W}_T\subset X^\ast(T)$ of
   non-trivial weights  is a finite $\mathcal{G}_{\Q}$-set of cardinality $2 \dim(T)$,
 on which the involution $\sigma $ acts by sending $\chi$ to $\chi^{-1}$.  

For all  $\chi\in\mathcal{W}_T$, the weight space
 $x_\chi:=V_{\chi}$ is isotropic and, thus, defines a point in $Q(\overline{\Q})$.
Moreover, the direct sum $H_\chi:= x_{\chi}\oplus x_{\chi^{-1}}$ is a hyperbolic plane, 
 and  
\begin{align}
\label{eqn:hyperbolicdecomp}
 V_{T,\overline{\Q}}= \bigoplus_{\chi\in \mathcal{W}_T/\sigma} H_\chi
\end{align}
expresses $V_{T,\overline{\Q}}$ as an orthogonal direct sum of hyperbolic planes over $\overline{\Q}$.

Conversely, let  $x$ be a point of $Q(\C_p)$ whose stabiliser contains $T(\C_p)$. 
Then $x$ is the weight space of a unique non-trivial weight $\chi_x$ and thus $x$ is algebraic, that is, $x$ lies in the subset $Q(\overline{\Q})\subseteq Q(\C_p)$.
Thus, the map
\[
\mathcal{N}_T\xlongrightarrow{\sim} \mathcal{W}_T,\quad x\longmapsto \chi_x,
\]
sets up a $\mathcal{G}_\Q$-equivariant bijection from the set $\mathcal{N}_T\subseteq Q(\overline{\Q})$ of isotropic eigenlines of $T$ to $\mathcal{W}_T$.
This bijection induces an action of $\sigma$ on $\mathcal{N}_T$ satisfying
\begin{equation}
\label{sigmainversion}
\chi_{\sigma(x)}=\chi_{x}^{-1}.
\end{equation}

\begin{lemma}
\label{galoischi}
Let $T\subseteq G$ be a maximal torus.
Then
\[
\chi_{\tau(x)}=\tau\circ \chi_x \circ \tau^{-1}
\]
holds for every point $x\in \mathcal{N}_T$ and every $\tau\in\mathcal{G}_\Q$.
\end{lemma}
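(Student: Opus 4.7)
The plan is to verify the claim by a direct computation, using only the defining property of $\chi_x$ together with the fact that both the torus $T$ (as a subgroup of $\mathrm{GL}(V)$) and its tautological representation on $V$ are defined over $\Q$, so that the $\mathcal{G}_\Q$-action intertwines them. In particular, no deep structural input is needed beyond $\Q$-rationality; the lemma really just asserts that the bijection $\mathcal{N}_T \xrightarrow{\sim} \mathcal{W}_T$ is $\mathcal{G}_\Q$-equivariant when $X^\ast(T)$ is endowed with its standard Galois action $\chi \mapsto \tau \circ \chi \circ \tau^{-1}$.

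First I would fix a non-zero vector $v \in V_{\overline{\Q}}$ spanning the isotropic line $x \in \mathcal{N}_T$. By definition of the weight space decomposition \eqref{wsd}, $t \cdot v = \chi_x(t) \cdot v$ for every $t \in T(\overline{\Q})$. The line $\tau(x) \subseteq V_{\overline{\Q}}$ is then spanned by $\tau(v)$, so it suffices to compute $t \cdot \tau(v)$ for $t \in T(\overline{\Q})$ and read off the eigenvalue.

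The main step is the following identity, which uses the $\Q$-rationality of the inclusion $T \hookrightarrow \mathrm{GL}(V)$; this ensures that $\tau(s \cdot w) = \tau(s) \cdot \tau(w)$ for all $s \in T(\overline{\Q})$ and $w \in V_{\overline{\Q}}$. Applying this with $s = \tau^{-1}(t)$ and $w = v$ gives
\[
t \cdot \tau(v) \;=\; \tau\bigl(\tau^{-1}(t) \cdot v\bigr) \;=\; \tau\bigl(\chi_x(\tau^{-1}(t)) \cdot v\bigr) \;=\; \tau\bigl(\chi_x(\tau^{-1}(t))\bigr) \cdot \tau(v).
\]
Since $\tau(v)$ spans the line $\tau(x)$, this identifies the eigencharacter as $\chi_{\tau(x)}(t) = \tau(\chi_x(\tau^{-1}(t)))$, i.e.\ $\chi_{\tau(x)} = \tau \circ \chi_x \circ \tau^{-1}$.

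I do not anticipate any genuine obstacle: the statement is essentially tautological once the $\Q$-structures are kept track of. The only care required is to distinguish the Galois action on the \emph{line} $x \in Q(\overline{\Q}) \subseteq \mathbb{P}(V_{\overline{\Q}})$ from the action on a spanning vector, but since $\chi_{\tau(x)}$ depends only on the line $\tau(x)$ and not on the chosen representative, any rescaling of $v$ by an element of $\overline{\Q}^\times$ is absorbed harmlessly.
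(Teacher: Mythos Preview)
Your proof is correct and follows essentially the same direct computation as the paper's own proof: fix a spanning vector $v$ for $x$, use the $\Q$-rationality of $T$ acting on $V$ to write $t\cdot\tau(v)=\tau(\tau^{-1}(t)\cdot v)$, and read off the eigenvalue. Your write-up is in fact slightly more careful than the paper's (which contains a minor typo in the final displayed line), particularly in explaining why $\Q$-rationality yields the intertwining $\tau(s\cdot w)=\tau(s)\cdot\tau(w)$ and in addressing the line-versus-vector distinction.
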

\begin{proof}
Given $v\in \ell_{\chi_x}$ and $t\in T(\overline{\Q})$ we compute
\begin{align*}
\chi_{\tau(v)}(t)\cdot \tau(v)
=t.\tau(v)
=\tau(\tau^{-1}(t).v)
=\tau(\chi_x(\tau^{-1}(t))\cdot v)
=\tau(\chi_x(\tau^{-1}(t)))\cdot v,
\end{align*}
which proves the claim.
\end{proof}

Lemma \ref{galoischi} and equation \eqref{sigmainversion} immediately imply:
\begin{corollary}
\label{galoissigma}
Let $T\subseteq G$ be a maximal torus.
Then
\[
\sigma(\tau(x))=\tau(\sigma(x))
\]
holds for all $\tau\in\mathcal{G}_\Q$ and all $x\in\mathcal{N}_T$.
\end{corollary}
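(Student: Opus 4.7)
The plan is to reduce the claim to an identity of characters via the $\mathcal{G}_\Q$-equivariant bijection $\mathcal{N}_T \xrightarrow{\sim} \mathcal{W}_T$, $x \mapsto \chi_x$. Since this map is a bijection, to verify $\sigma(\tau(x)) = \tau(\sigma(x))$ it suffices to show the associated weights agree:
\[
\chi_{\sigma(\tau(x))} \;=\; \chi_{\tau(\sigma(x))}.
\]

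Next I would compute both sides using the two inputs provided earlier. For the left-hand side, equation \eqref{sigmainversion} gives $\chi_{\sigma(y)} = \chi_y^{-1}$ for any $y \in \mathcal{N}_T$, and applying this with $y = \tau(x)$ followed by Lemma \ref{galoischi} yields
\[
\chi_{\sigma(\tau(x))} \;=\; \chi_{\tau(x)}^{-1} \;=\; \bigl(\tau \circ \chi_x \circ \tau^{-1}\bigr)^{-1} \;=\; \tau \circ \chi_x^{-1} \circ \tau^{-1}.
\]
For the right-hand side, Lemma \ref{galoischi} applied to $\sigma(x)$ and then \eqref{sigmainversion} give
\[
\chi_{\tau(\sigma(x))} \;=\; \tau \circ \chi_{\sigma(x)} \circ \tau^{-1} \;=\; \tau \circ \chi_x^{-1} \circ \tau^{-1}.
\]
The two expressions are manifestly equal, completing the proof.

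There is essentially no obstacle here: the content of the corollary is that the involution $\sigma$ on $\mathcal{W}_T$ (given by character inversion) commutes with the Galois action on $X^\ast(T)$, and character inversion is a group homomorphism so it automatically commutes with the $\mathcal{G}_\Q$-action via conjugation. The only thing to be careful about is bookkeeping with the two compatible descriptions of the $\mathcal{G}_\Q$-action on $X^\ast(T)$ used in Lemma \ref{galoischi}, namely $(\tau \cdot \chi)(t) = \tau\bigl(\chi(\tau^{-1}(t))\bigr)$, under which $(\tau\cdot\chi)^{-1} = \tau\cdot\chi^{-1}$ is immediate.
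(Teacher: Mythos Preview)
Your proof is correct and follows exactly the approach indicated by the paper, which simply asserts that the corollary is an immediate consequence of Lemma~\ref{galoischi} and equation~\eqref{sigmainversion}. Your computation makes this implication explicit by chasing both sides through the bijection $x\mapsto\chi_x$ and observing that character inversion commutes with the Galois action.
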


% \subsubsection{Fixed points and algebra homomorphism}

The weight space decomposition of a maximal torus $T\subseteq G$ can be described
 in terms of the \'etale algebra $E_T\subset {\rm End}(V)$ with involution associated to $T$. 
Let $x\in \mathcal{N_T}$ be a fixed point of $T(\C_p)$.
The action of $\mathrm{Lie}(T)$ on the isotropic line spanned by $x$ 
determines a ring homomorphism
\[
\varphi_x\colon E_T\longrightarrow \overline{\Q}\subseteq \C_p \]
whose restriction to $\mathrm{U}(E_T,\sigma)\subseteq E_T^\times$ agrees with $\chi_x$.
These algebra homomorphisms are distinct and thus they exhaust the set of all $\Q_p$-algebra homomorphisms from $E_T$ to $\overline{\Q}$.
The bijection
\[
\mathcal{N}_T\xlongrightarrow{\sim}\Hom(E_T,\overline{\Q}),\qquad x\longmapsto \varphi_x,
\]
is $\mathcal{G}_{\overline{\Q}}$-equivariant.
In particular, two points $x,x'\in\mathcal{N}_T$ lie in the same $\mathcal{G}_{\Q}$-orbit if and only if $\ker(\varphi_x)=\ker(\varphi_{x'})$.
Moreover, the relation
\[
\varphi_{\sigma(x)}=\varphi_{x}\circ \sigma
\]
holds.

\subsubsection{Fixed points in $X_p$}
\label{sectionnonsplit}

The ring homomorphism $\varphi_x$ induces a $\Q_p$-algebra homomorphism
\[
\varphi_{x,p}\colon E_T\otimes_\Q \Q_p \longrightarrow \C_p.
\]
Two points $x,x'\in\mathcal{N}_T$ lie in the same $\mathcal{G}_{\Q_p}$-orbit if and only if $\ker(\varphi_{x,p})=\ker(\varphi_{x',p})$.
In case $n$ is odd, we extend $\varphi_{x,p}$ by zero to a homomorphism $\varphi_{x,p}(E_T\oplus\Q)\otimes_\Q \Q_p \rightarrow \C_p$.
The following lemma clarifies when a fixed point of $T(\C_p)$ belongs to $X_p$.
\begin{lemma}
Let $T\subseteq G$ a maximal torus and $x\in \mathcal{N}_T$ a fixed point of $T(\C_p)$.
\begin{enumerate}[(a)]
\item Assume that $n$ is even.
Then $x$ belongs to $X_p$ if and only if
the subspace $\ker(\varphi_{\sigma(x),p})\subseteq E_T\otimes \Q_p$ contains no non-zero isotropic vectors (relative to the quadratic form $q_\nu$).
\item Assume that $n$ is odd.
Then $x$ belongs to $X_p$ if and only if
the subspace $\ker(\varphi_{\sigma(x),p})\subseteq (E_T\oplus\Q)\otimes \Q_p$ contains no non-zero isotropic vectors (relative to the quadratic form $q_\nu\perp q_\alpha$.)
\end{enumerate}
 \end{lemma}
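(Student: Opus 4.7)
The plan is to unwind the definition of $X_p$ and turn the orthogonality condition defining it into a condition on kernels of the $\Q_p$-algebra homomorphisms $\varphi_{\sigma(x),p}$. By definition, $x \notin X_p$ iff there exists a non-zero isotropic $w \in V_{\Q_p}$ with $\langle v_x, w\rangle = 0$, for any $\C_p$-lift $v_x$ of $x$. So the key is to compute this pairing explicitly.

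To do so, I would exploit the identification $V_T \cong E_T$ of Lemma \ref{lemma:eva}: after choosing an $E_T$-module generator $v_0$ of $V_T$, the bilinear pairing on $V_T$ reads $\langle \lambda_1 v_0,\lambda_2 v_0\rangle = \mathrm{Tr}^{E_T}_\Q(\tfrac{\nu}{2}\lambda_1\sigma(\lambda_2))$. Extending scalars to $\C_p$ and using $E_T \otimes_\Q \C_p \cong \prod_\varphi \C_p$ (indexed by $\varphi \in \Hom(E_T,\overline{\Q})$), the trace becomes a sum over coordinates and $\sigma$ permutes the factors via $\varphi \mapsto \varphi\circ\sigma$. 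Under this decomposition $v_x$ is supported only at the $\varphi_x$-coordinate, so the pairing against $w$ with coordinates $(\mu_\varphi)_\varphi$ reduces to $\langle v_x, w\rangle = \varphi_x(\nu/2)\cdot c\cdot \mu_{\varphi_{\sigma(x)}}$, where $c \neq 0$ is the $\varphi_x$-coordinate of $v_x$; here I would invoke the identity $\varphi_{\sigma(x)} = \varphi_x\circ \sigma$ recorded just above the statement. Since $\nu \in F_T^\times$ and $c \neq 0$, this vanishes exactly when $\mu_{\varphi_{\sigma(x)}} = 0$, i.e.\ exactly when $w$, viewed as an element of $E_T\otimes\Q_p$, lies in $\ker(\varphi_{\sigma(x),p})$.

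From here the two cases separate cleanly. In case (a), $n$ is even and $V = V_T = E_T$, so $x\notin X_p$ precisely when $\ker(\varphi_{\sigma(x),p})$ contains a non-zero $q_\nu$-isotropic vector. In case (b), $n$ is odd and $V = V_T\oplus \Q$ with the second summand carrying $q_\alpha$; since $v_x$ lies in $V_T\otimes\C_p$ and is orthogonal to the trivial-weight summand, only the $V_T$-component $w_1$ of a putative $w = (w_1,w_2)$ is constrained by orthogonality, while isotropy reads $q_\nu(w_1) + q_\alpha(w_2) = 0$. Extending $\varphi_{\sigma(x),p}$ by zero on the $\Q$-summand, its kernel in $(E_T\oplus\Q)\otimes \Q_p$ is $\ker(\varphi_{\sigma(x),p}|_{E_T\otimes\Q_p}) \oplus \Q_p$, so such pairs $(w_1,w_2)$ correspond exactly to non-zero isotropic vectors in this larger kernel.

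The calculations are essentially bookkeeping. The one step requiring genuine care is matching the abstract weight-eigenspace decomposition of $V_T\otimes \C_p$ with the product decomposition $E_T\otimes\C_p \cong \prod_\varphi\C_p$, together with the compatibility between the action of $\sigma$ on $\mathcal{N}_T$ and its action on $\Hom(E_T,\overline{\Q})$ by right composition. Both are already implicit in the preceding subsection, so the main work is to assemble these pieces, handle the odd-dimensional summand separately in case (b), and verify that the resulting condition is genuinely independent of the auxiliary choice of the generator $v_0$.
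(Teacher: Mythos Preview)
Your argument is correct and follows exactly the paper's approach: the paper's proof is the one-line assertion that under the identification $V\cong E_T$ (resp.\ $V\cong E_T\oplus\Q$) the orthogonal complement of the isotropic line $x$ equals $\ker(\varphi_{\sigma(x)})$, and then appeals to the definition of $X_p$. You have simply unpacked that sentence by writing out the trace pairing in the product decomposition $E_T\otimes\C_p\cong\prod_\varphi\C_p$ and reading off that $\langle v_x,w\rangle$ is a non-zero multiple of $\varphi_{\sigma(x)}(w)$.
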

 \begin{proof}
Under the identification $V\cong E_T$ respectively $V\cong E_T\oplus \Q$ the orthogonal complement of the isotropic line $x$ is given by the kernel of $\varphi_{\sigma(x)}$.
Thus, the lemma follows from the definition of $X_p$.
  \end{proof}

\begin{corollary}
Let $T\subseteq G$ be a maximal torus.
Assume that $E_T\otimes\Q_p$ is a field.
Then every fixed point $x\in\mathcal{N}_T$ belongs to $X_p$.
\end{corollary}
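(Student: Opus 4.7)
The plan is to deduce this corollary directly from the preceding lemma by using that a nonzero algebra homomorphism out of a field is injective. Concretely, I would unpack the two cases of the lemma and show that in either case the kernel of $\varphi_{\sigma(x),p}$ contains no nonzero isotropic vector.

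First, suppose $n$ is even, so that under the identification $V \cong E_T$ the orthogonal complement of $x$ corresponds to $\ker(\varphi_{\sigma(x),p}) \subseteq E_T\otimes\Q_p$. Since $E_T\otimes\Q_p$ is assumed to be a field and $\varphi_{\sigma(x),p}$ is a nonzero $\Q_p$-algebra homomorphism to $\C_p$, it must be injective. Hence its kernel is $0$, which vacuously contains no nonzero isotropic vector, so the lemma yields $x\in X_p$.

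Now suppose $n$ is odd, so $V \cong E_T \oplus \Q$ with quadratic form $q_\nu \perp q_\alpha$, and $\varphi_{\sigma(x),p}$ extends by zero on the second factor. Then an element $(a,b) \in (E_T\otimes\Q_p)\oplus\Q_p$ lies in the kernel iff $\varphi_{\sigma(x),p}(a)=0$. As above, the field hypothesis forces $a=0$, so
\[
\ker(\varphi_{\sigma(x),p}) = \{0\}\oplus\Q_p.
\]
The restriction of $q_\nu \perp q_\alpha$ to this one-dimensional subspace is the form $b\mapsto \alpha b^2$. Since $\alpha \in \Q^\times$, this form is anisotropic, so no nonzero vector in the kernel is isotropic. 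The preceding lemma then again gives $x\in X_p$.

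The argument is essentially immediate once one notices that the hypothesis forces $\varphi_{\sigma(x),p}$ to be injective on $E_T\otimes\Q_p$; the only mild subtlety is checking the odd-dimensional case, where one must remember that the one-dimensional orthogonal summand $(\Q, q_\alpha)$ is anisotropic over $\Q_p$ (indeed even over $\Q$). No genuine obstacle arises.
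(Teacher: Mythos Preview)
Your proof is correct and follows essentially the same approach as the paper: the key point in both is that a nonzero $\Q_p$-algebra homomorphism out of the field $E_T\otimes\Q_p$ is injective, so the preceding lemma applies. Your treatment is in fact more explicit than the paper's one-line proof, since you spell out the odd-dimensional case and verify that the remaining kernel $\{0\}\oplus\Q_p$ is anisotropic under $q_\alpha$.
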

\begin{proof}
The claim follows from the preceding lemma since since every $\Q_p$-algebra homomorphism  $\varphi \colon E_T\otimes\Q_p \rightarrow \C_p$ is injective.
\end{proof}

%\subsubsection{Toric fixed points and associated subspaces}

\begin{definition}
A point $x \in X_p$ is called a  {\em toric fixed point}
if it is stabilized by  $T(\C_p)$ where  
  $T\subseteq G$   is a maximal torus.
In that case $(x,T)$ is called a {\em toric pair}.
\end{definition}

\begin{remark}
\label{uniquetorus}
Given a toric fixed point $x\in X_p$ there are in general several maximal tori $T\subseteq G$ that stabilise $x$.
But under the assumption that the \'etale algebra $E_T$ is a field, $T$ is uniquely determined by $x$.
Indeed, all the fixed points of $T(\C_p)$ in $Q(\C_p)$ are $\mathcal{G}_\Q$-conjugated in this case.
In particular, if $g\in G(\Q)$ stabilizes one fixed point, it stabilizes all of them.
The common stabilizer of all fixed points in $G(\C_p)$ is given by $T(\C_p)$, which proves the claim.
\end{remark}

In general, the algebra $E_T$ associated to a maximal torus $T\subseteq G$ does not have to be a field.
But the existence of a fixed point in $X_p$ has strong implications on the splitting behaviour of $p$-adic places in the extension $E_T/F_T$:
\begin{lemma}
\label{nonsplit}
Let $(x,T)$ be a toric pair.
Then every $p$-adic place of the \'etale $\Q$-algebra $F_T$ is non-split in $E_T$.
Moreover, $y$ and $\sigma(y)$ lie in the same $\mathcal{G}_{\Q_p}$-orbit for all $y\in \mathcal{N}_T$.
\end{lemma}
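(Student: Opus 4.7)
The plan is to reformulate the lemma in terms of the weight space decomposition and then produce, in the split case, a $\Q_p$-rational isotropic line orthogonal to $v_x$, contradicting $x\in X_p$.

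First I would use the identification, established in the preceding discussion, of the $\mathcal{G}_{\Q_p}$-orbits in $\mathcal{N}_T$ with the set of $p$-adic places of $E_T$, under which the involution $\sigma\colon\mathcal{N}_T\to\mathcal{N}_T$ corresponds to $\mathfrak{p}\mapsto\mathfrak{p}^\sigma$. Under this dictionary, the two places of $E_T$ above a $p$-adic place $\mathfrak{q}$ of $F_T$ either coincide (non-split case, so $\sigma$ fixes the orbit and $y,\sigma(y)$ are $\mathcal{G}_{\Q_p}$-conjugate) or are swapped by $\sigma$ (split case, so $y$ and $\sigma(y)$ lie in distinct orbits). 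Consequently the two assertions of the lemma are equivalent, and it suffices to prove that no $p$-adic place of $F_T$ splits in $E_T$.

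Suppose for contradiction that $\mathfrak{q}$ is a $p$-adic place of $F_T$ which splits as $\mathfrak{p}\mathfrak{p}^\sigma$ in $E_T$. The idempotent $e_\mathfrak{p}\in E_T\otimes\Q_p$ cutting out the factor $E_{T,\mathfrak{p}}$ is $\mathcal{G}_{\Q_p}$-invariant, hence $\Q_p$-rational, so
\[
V_\mathfrak{p} := e_\mathfrak{p}\bigl(V_T\otimes\Q_p\bigr)
\]
is a $\Q_p$-rational subspace of $V_T\otimes\Q_p$ of $\Q_p$-dimension $[E_{T,\mathfrak{p}}:\Q_p]\geq 1$. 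Over $\overline{\Q}_p$, $V_\mathfrak{p}$ is the sum of the weight lines $V_\chi$ for $\chi$ ranging over the $\mathcal{G}_{\Q_p}$-orbit associated with $\mathfrak{p}$. Since $\sigma$ sends this orbit to the distinct orbit associated with $\mathfrak{p}^\sigma$, no two weights appearing in $V_\mathfrak{p}$ are inverses of one another; combined with the orthogonality relation $V_\chi\perp V_{\chi'}$ unless $\chi'=\chi^{-1}$, this shows $V_\mathfrak{p}$ is totally isotropic.

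It remains to show $V_\mathfrak{p}$ (or $V_{\mathfrak{p}^\sigma}$) is orthogonal to $v_x$. Let $\mathfrak{p}_x$ denote the place of $E_T$ attached to $x$ via $\varphi_x$, and $\mathfrak{q}_x$ the place of $F_T$ below it. If $\mathfrak{q}\ne \mathfrak{q}_x$, then both $\mathfrak{p}$ and $\mathfrak{p}^\sigma$ differ from $\mathfrak{p}_x$ and $\mathfrak{p}_x^\sigma$, so $V_\mathfrak{p}\perp V_{\mathfrak{p}_x}$ and in particular $V_\mathfrak{p}\perp v_x$. If $\mathfrak{q}=\mathfrak{q}_x$, one of $\mathfrak{p},\mathfrak{p}^\sigma$, say $\mathfrak{p}$, equals $\mathfrak{p}_x$; then $v_x\in V_\mathfrak{p}\otimes\overline{\Q}_p$, and total isotropy of $V_\mathfrak{p}$ forces $v_x$ to pair trivially with every element of $V_\mathfrak{p}$. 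In either case $V_\mathfrak{p}$ is a non-zero $\Q_p$-rational totally isotropic subspace contained in $v_x^\perp$, and any $\Q_p$-rational line in it yields an element of $Q(\Q_p)$ orthogonal to $v_x$, contradicting $x\in X_p$. The only delicate case is $\mathfrak{q}=\mathfrak{q}_x$: a priori the hyperplane $v_x^\perp\cap V_\mathfrak{p}$ is only defined over $\overline{\Q}_p$, and the key observation is that total isotropy of $V_\mathfrak{p}$ upgrades this to orthogonality of $v_x$ with the full $\Q_p$-rational subspace $V_\mathfrak{p}$ itself.
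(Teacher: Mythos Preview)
Your proof is correct but takes a genuinely different route from the paper. The paper argues structurally: it decomposes $T_{\Q_p}\cong\prod_{\mathfrak{q}\in S_p}\mathrm{U}(E_\mathfrak{q},\sigma)$, observes that a split place $\mathfrak{q}$ gives $\mathrm{U}(E_\mathfrak{q},\sigma)\cong\mathrm{Res}_{F_\mathfrak{q}/\Q_p}\mathbb{G}_m$, which contains a $\Q_p$-split subtorus, and then invokes the GIT characterisation of $X_p$ (Proposition~\ref{GITXp}) to conclude that no $T$-fixed point can lie in $X_p$. Your argument bypasses Proposition~\ref{GITXp} entirely: you directly exhibit the $\Q_p$-rational totally isotropic subspace $V_\mathfrak{p}=e_\mathfrak{p}(V_T\otimes\Q_p)$ and check by hand, via the weight-space orthogonality relations, that it is orthogonal to $v_x$. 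This is essentially what the proof of Proposition~\ref{GITXp} would unravel to in this situation, so your approach is more elementary and self-contained, while the paper's is shorter because it has already packaged the relevant mechanism into the GIT proposition. Your observation that the two assertions of the lemma are equivalent (rather than one being a consequence of the other) is also a nice clarification.
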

\begin{proof}
Let $S_p$ be the set of $p$-adic places of $F_T$.
The \'etale $\Q_p$-algebra $F_T \otimes \Q_p$ decomposes into the product over the completions $F_{\mathfrak{p}}$, $\mathfrak{p}\in S_p$.
For $\mathfrak{p} \in S_p$ we put $E_{\mathfrak{p}} = E_T \otimes_{F_T} F_{\mathfrak{p}}$.
The involution $\sigma$ induces an $F_{\mathfrak{p}}$-linear involution on $E_{\mathfrak{p}}$.
We may decompose $T_{\Q_p}$ as follows:
\[
T_{\Q_p}\cong \mathrm{U}(E_T\otimes \Q_p ,\sigma)\cong \prod_{\mathfrak{p}\in S}\mathrm{U}(E_{\mathfrak{p}} ,\sigma) .
\]
Assume that there exists a place $\mathfrak{p}\in S_p$ that is split in $E_T$.
Then $E_{\mathfrak{p}}\cong F_{\mathfrak{p}} \times F_{\mathfrak{p}}$ and thus, the unitary group $\mathrm{U}(E_{\mathfrak{p}} ,\sigma)$ is isomorphic to the Weil restriction of $\mathbb{G}_{m,F_{\mathfrak{p}}}$ from $\F_p$ to $\Q_p$.
In particular, it contains a $\Q_p$-split subtorus.
Proposition \ref{GITXp} now implies that no fixed point of $T(\C_p)$ lies in $X_p$.
This proves the first claim.
The second claim is a direct consequence of the first one.
\end{proof}

Lemma \ref{nonsplit} implies that for every toric pair $(x,T)$ the subspace of $V_{\overline{\Q}}$ generated by the $\mathcal{G}_\Q$-conjugates of $x$ is a direct sum of hyperbolic planes and, in particular, it is non-degenerate.
By construction, it is the base change of a (non-degenerate) rational subspace $V_x\subseteq V$ that is stable under the action of $T$.
Let $T\rightarrow \SO(V_x)$ be the induced homomorphism, whose image is a maximal torus in $\SO(V_x)$ that we denote by $T_x$.
The inclusion $\SO(V_x)\hookrightarrow \SO(V)$ restricts to a homomorphism $T_x \hookrightarrow T$.
It follows that $T_x$ is a factor of $T$.
By construction, the \'etale $\Q$-algebra $E_{T_x}$ (with involution $\sigma$) associated to the torus $T_x$ is a field.
Applying Remark \ref{uniquetorus} to the quadratic space $V_x$ and the torus $T_x$ we see that $T_x$ does not depend on the choice of $T$.
In particular, the assignment $x\mapsto \sigma(x)$ is independent of the choice of the torus $T$ stabilizing $x$.
There is an equality $\varphi_x(E_T)=\varphi_x(E_{T_x})$ of subfields of $\C_p$.

\subsection{Special points and values of rigid meromorphic cocycles}
\label{sec:spdef}
After defining \emph{oriented special points} and the value of a rigid meromorphic cocycle at such a point a crude form of the algebraicity conjecture on these special values is formulated.
\subsubsection{Special points}

\begin{definition}
A point $x \in X_p$ is said to be {\em special} if there exists a maximal $\R$-split torus $T\subseteq G$ such that $T(\C_p)$ stabilizes $x$.
In that case $(x,T)$ is called a special pair.
\end{definition}

Let $(x,T)$ be a special pair.
When $V$ is three-dimensional,  the \'etale algebra $E_T$ with involution 
is a quadratic field in which $p$ is non-split.
It is imaginary when $s=0$, and real when $s=1$.

When $V$ is four-dimensional, the algebra $E_T$ can be of one of the following types:
\begin{enumerate}
\item  
A quadratic extension of a real quadratic field with exactly $2s$ real embeddings.
The \'etale $\Q_p$-algebra $E_T\otimes \Q_p$ is either a field, or a direct sum of two quadratic extensions of $\Q_p$.
\item A direct sum $E_1 \oplus E_2$ of quadratic fields in which the prime $p$ does not split, and where exactly $s$ of the fields are real quadratic.
\end{enumerate}
In all of these cases, the torus $T$ is equal to the stabilizer of $x$ in $G$.
 
For general $V$ of rank $n$, there is a decomposition
$$ E_T \otimes \Q_p =  E_1 \oplus E_2,$$
where $E_{1}$ is a field and $E_2$ an \'etale algebra over 
$\Q_p$ of rank $\le 4$ when $n$ is even, and rank $\le 2$ when $n$ is odd.
In particular,
\[
\dim E_{1} \geq \begin{cases}
n-3 & \mbox{ if $n$ is odd,}\\
n-4 & \mbox{ if $n$ is even.}
\end{cases}
\]
Only when equality is attained in these inequalities does the stabiliser of $x$ contain more than one maximal torus.

\subsubsection{Oriented special points}
The following proposition controls the rank of $p$-arithmetic subgroups of the stabilizer of special points of $X_p$.
\begin{proposition}\label{torusrank}
Let $(x,T)$ be a special pair.
The rank of every $p$-arithmetic subgroup of $T(\Q)$ is equal to $s$.
\end{proposition}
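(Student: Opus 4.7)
The plan is to invoke the Dirichlet--Ono--Shyr theorem on the rank of $S$-arithmetic subgroups of an algebraic torus: for a $\Q$-torus $T$ and a finite set of places $S \supseteq \{\infty\}$,
\[
\rk\bigl(T(\Z_S)\bigr) \;=\; \sum_{v \in S} \rk_{\Q_v}(T) \;-\; \rk_{\Q}(T),
\]
where $\rk_{k}(T)$ denotes the dimension of the maximal $k$-split subtorus. Applying this with $S = \{\infty, p\}$ to the torus $T \cong \mathrm{U}(E_T,\sigma)$ reduces the proposition to computing three rank numbers.

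First, I would compute the real rank. The description of $T(\R)_0$ recalled in Section \ref{sec:maximal-tori} yields $T(\R)_0 \cong \R^t \times (\mathbb{S}_1)^{\dim T - t}$, so the maximal $\R$-split subtorus of $T$ has rank $t$; thus $\rk_{\R}(T) = t$. Since by hypothesis $T$ is maximally $\R$-split, Proposition \ref{realrank} gives $t = s$, hence $\rk_{\R}(T) = s$.

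Next, I would compute the $\Q_p$-rank. The local decomposition $T_{\Q_p} \cong \prod_{\mathfrak{p} \mid p} \mathrm{U}(E_{T,\mathfrak{p}},\sigma)$ used in the proof of Lemma \ref{nonsplit} shows that each local factor corresponding to a place $\mathfrak{p}$ of $F_T$ is $\Q_p$-split if and only if $\mathfrak{p}$ splits in $E_T$. But Lemma \ref{nonsplit} asserts precisely that no $p$-adic place of $F_T$ splits in $E_T$, so every local factor is the norm-one torus of a quadratic field extension and is $\Q_p$-anisotropic. Therefore $\rk_{\Q_p}(T) = 0$, and by base change $\rk_{\Q}(T) \leq \rk_{\Q_p}(T) = 0$, so $\rk_{\Q}(T) = 0$ as well.

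Plugging into the Dirichlet--Ono--Shyr formula gives $\rk(T(\Z[1/p])) = s + 0 - 0 = s$. Since any $p$-arithmetic subgroup of $T(\Q)$ is by definition commensurable with $T(\Z[1/p])$ (for any choice of integral structure), it has the same rank, namely $s$. No step is really the hard part here: Proposition \ref{realrank} and Lemma \ref{nonsplit} have already packaged the two nontrivial geometric inputs (maximally $\R$-split $\Leftrightarrow$ $s$ real split places of $F_T$, and $X_p$-fixed $\Rightarrow$ no $p$-adic split places), so the only task is to combine them via the standard rank formula for tori.
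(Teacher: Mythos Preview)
Your proof is correct and follows essentially the same approach as the paper: both invoke Dirichlet's $S$-unit theorem for tori \cite{Shyr}, use the definition of a special pair to get real rank $s$, and show $\rk_{\Q_p}(T)=0$ (hence $\rk_{\Q}(T)=0$). The only cosmetic difference is that the paper cites Proposition~\ref{GITXp} directly to conclude that $T_{\Q_p}$ has no $\Q_p$-split subtorus, whereas you go through Lemma~\ref{nonsplit}, whose proof in turn rests on Proposition~\ref{GITXp}; the underlying argument is the same.
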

\begin{proof}
By Proposition \ref{GITXp} the base change of $T$ to $\Q_p$ does not contain any $\Q_p$-split subtorus.
In particular, $T$ does not contain any $\Q$-split subtorus. 
Hence, the $\Z$-rank of every $p$-arithmetic subgroup of $T(\Q)$ is equal to the real rank of $T$ by Dirichlet's $S$-unit theorem for tori (see \cite{Shyr}).
The claim now follows from the definition of special points. 
\end{proof}

In particular, the subgroup $\Gamma_T=\Gamma\cap T(\Q)$ is a finitely generated abelian group of rank $s$.
Thus,  
\[
H_s(\Gamma_T,\Z) \simeq \Z \times \mbox{finite abelian group}.
\]
This motivates the following:
\begin{definition} 
\label{specialpair}
An \emph{oriented special point (of level $\Gamma$)} is a triple $\vec{x}=(x,T_{\vec{x}},o_{\vec{x}})$ consisting of a special pair $(x,T_{\vec{x}})$ together with a homology class $o_{\vec{x}} \in H_s(\Gamma_{T_{\vec{x}}}, \mathbb{Z})$.  
\end{definition}
In the present context, oriented special points play the role of special points in the classical theory of Shimura varieties.

\begin{remark}
\label{rem:boundedtorsion}
Since the order of torsion elements in $G(\Q)$ is bounded from above, it follows that the torsion in $H_s(\Gamma_T,\Z)$ is bounded from above, where $(x,T)$ ranges over all special pairs.
\end{remark}

\subsubsection{Regularity}
A meromorphic function $f\in \rM^\times$ is said to be \emph{regular at a point $\xi\in X_p$} if $\xi$ does not lie in the support of the divisor of $f$.
Let $\vec{x}$ be an oriented special point and $J\in H^{s}(\Gamma,\rM^\times)$.
Restriction to $\Gamma_{T_{\vec{x}}}$ followed by taking cap product with the orientation $o_{\vec{x}}$ yields a homomorphism
$$H^s(\Gamma, \rM^\times)\xlongrightarrow{\mathrm{res}} H^s(\Gamma_{T_{\vec{x}}}, \rM^\times)\xlongrightarrow{\cap o_{\vec{x}}} (\rM^\times)_{\Gamma_{T_{\vec{x}}}}$$
and we write $J_{\vec{x}}$ for the image of $J$ under this map.

\begin{definition}
\label{defregularity}
A class $J\in H^{s}(\Gamma,\rM^\times)$ is said to be {\em regular at the oriented special point $\vec{x}$} if the homology class $J_{\vec{x}}\in(\rM^\times)_{\Gamma_{T_{\vec{x}}}}$ admits a representative $\widetilde{J}_{\vec{x}}\in \rM^{\times}$ that is regular at $x$.
\end{definition}

\begin{remark}
In the case where $E_T$ is a field and $n$ is even, any function in $\rM^\times$ is automatically regular at every special point $x$, since no non-zero vector in $V$ is orthogonal to $x$.
\end{remark}

For a $\Gamma$-invariant subset $S \subseteq X_p,$ let $(\rM^\times)_S$ denote those rigid meromorphic functions on $X_p$ which are regular at all $\xi \in S$.
Suppose that a class $J$ has a lift to $H^s(\Gamma, (\rM^\times)_{\Gamma x})$,
then it is clearly regular at $(\vec{x},T)$.
Note that the $p$-adic Borcherds products constructed in Theorem \ref{maindefinite2} and \ref{mainhyperbolic2} naturally define classes in $H^s(\Gamma, (\rM^\times)_{S})$ where $S$ are certain unions of rational quadratic divisors.
 
\subsubsection{Values of cohomology classes at oriented special points}
\begin{lemma}\label{rmcspecialpairevaluation}
Let $J$ be a class in $H^{s}(\Gamma,\rM^\times)$ that is regular at $\vec{x}$ and let $\widetilde{J}_{\vec{x}}, \widetilde{J}_{\vec{x}}^{\prime}\in \rM^\times$ be two representatives of $J_{\vec{x}}$ that are regular at $x$.
Then  
\[ \widetilde{J}_{\vec{x}}(x)=\pm \widetilde{J}^{\prime}_{\vec{x}}(x).\]
\end{lemma}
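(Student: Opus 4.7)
The plan is to reduce the statement to a property of the subgroup $N\subseteq \rM^\times$ generated by elements of the form $(\gamma f)f^{-1}$ with $\gamma\in\Gamma_{T_{\vec x}}$ and $f\in\rM^\times$: this subgroup is exactly the kernel of the natural projection $\rM^\times\to (\rM^\times)_{\Gamma_{T_{\vec x}}}$. Two representatives of $J_{\vec x}$ therefore differ by multiplication by an element of $N$, and since both are assumed to be regular (and hence nonvanishing) at $x$, the ratio $\widetilde{J}_{\vec x}/\widetilde{J}_{\vec x}^{\prime}\in N$ is itself regular at $x$. The lemma thus reduces to the claim that any $h\in N$ which is regular (and nonvanishing) at $x$ satisfies $h(x)\in\{\pm 1\}$.

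To access $h(x)$ through the defining generators of $N$ (individual factors $(\gamma f)/f$ may well fail to be regular at $x$), I will construct a group homomorphism $\widetilde{\mathrm{ev}}_x\colon \rM^\times\to\C_p^\times$ that extends the usual evaluation on the subgroup of functions regular at $x$, and show that $\widetilde{\mathrm{ev}}_x$ carries $N$ into $\{\pm 1\}$. By local finiteness (Lemma~\ref{lemma:arch-padic} and its $p$-adic counterpart), only finitely many rational quadratic divisors $\Delta_{v,p}$ pass through $x$; let $\mathcal V_x = \{\Q v \subset V_{+} : \langle v, v_x\rangle=0\}$ and fix, once and for all, a local trivialization $\xi\mapsto v_\xi$ of the tautological line bundle near $x$ and a representative $v\in V_+$ for each line in $\mathcal V_x$. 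Any $f\in\rM^\times$ then factors locally near $x$ as $f=\tilde u\cdot\prod_{[v]\in\mathcal V_x}\ell_v^{\,a_{[v]}}$ with $\ell_v(\xi):=\langle v,v_\xi\rangle$ and $\tilde u$ a rigid-analytic unit at $x$, and one sets $\widetilde{\mathrm{ev}}_x(f):=\tilde u(x)$. This is well-defined once the local choices are fixed and a direct check shows it is a group homomorphism that agrees with $f\mapsto f(x)$ on the subgroup of functions regular at $x$.

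The core computation is then the identity $\widetilde{\mathrm{ev}}_x((\gamma f)/f)\in\{\pm 1\}$ for every generator of $N$. Two properties drive this: first, $\gamma\in\Gamma_{T_{\vec x}}$ fixes $x$, so $(\gamma\tilde u)(x)/\tilde u(x)=\tilde u(\gamma^{-1}x)/\tilde u(x)=1$; second, because $\gamma$ is an orthogonal transformation, it permutes $\mathcal V_x$, and on each line it acts by $\gamma\cdot v=\epsilon_v v'$ with $v'$ the chosen representative of $\gamma[v]$ and $\epsilon_v\in\{\pm 1\}$—the latter by Remark~\ref{divisorsequality} combined with $q(\gamma v)=q(v)$, which forces the proportionality constant to lie in $\{\pm 1\}$. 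Expanding $(\gamma f)/f$ using the local factorization of $f$ and collecting the contributions from the line-equations $\ell_v$ then yields, after cancellation along $\gamma$-orbits enforced by the regularity of $(\gamma f)/f$ at $x$, a product $\prod_{[v]}\epsilon_v^{\,a_{[v]}}\in\{\pm 1\}$.

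The main obstacle is the careful bookkeeping in this final step: the relation $v_{\gamma^{-1}\xi}=c_\xi\cdot\gamma^{-1}v_\xi$ introduces an auxiliary scalar factor $c_\xi$ with $c_\xi(x)=\chi_x(\gamma)$, and one must verify that the exponent with which $c_\xi$ appears in the unit part of $(\gamma f)/f$ is controlled by the telescoping of the divisor of $(\gamma f)/f$ along each $\gamma$-orbit in $\mathcal V_x$ (as imposed by its regularity at $x$), so that the resulting contribution collapses to the product of sign factors $\epsilon_v^{\,a_{[v]}}$. Once this cancellation is established, applying $\widetilde{\mathrm{ev}}_x$ to $\widetilde{J}_{\vec x}/\widetilde{J}_{\vec x}^{\prime}\in N$ delivers $\widetilde{J}_{\vec x}(x)/\widetilde{J}_{\vec x}^{\prime}(x)\in\{\pm 1\}$, completing the proof.
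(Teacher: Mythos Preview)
Your decomposition of $f\in\rM^\times$ near $x$ into a unit times a monomial in linear forms $\ell_v$, $[v]\in\mathcal V_x$, is exactly the device the paper uses (implemented there via an auxiliary $v_0\not\perp x$ and the functions $f_a^{v_0}$). The difference is strategic: you aim to prove $\widetilde{\mathrm{ev}}_x\bigl((\gamma f)/f\bigr)\in\{\pm 1\}$ for \emph{every} generator of $N$, whereas the paper works with the full quotient $\widetilde J_{\vec x}/\widetilde J'_{\vec x}$ and uses its regularity at $x$ to force the $\ell_v$--monomial part of the quotient to be identically $1$ before evaluating the remaining factor.

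Your generator--by--generator claim is false, and the $c_\xi$--contribution you single out as ``the main obstacle'' is precisely where it breaks. Take $n$ odd with $E_T$ a field: then $x^\perp\cap V = V_T^\perp$ is one--dimensional, so $\mathcal V_x$ consists of a single line $[v]$, and every $\gamma\in\Gamma_{T_{\vec x}}$ fixes $v$. All your signs $\epsilon_v$ are $+1$ and there is no $\gamma$--orbit telescoping whatsoever. For $f(\xi)=\langle v,\xi\rangle/\langle v_0,\xi\rangle$ one computes directly
\[
\frac{(\gamma f)}{f}(\xi)\;=\;\frac{\langle v_0,\xi\rangle}{\langle \gamma v_0,\xi\rangle},
\]
which is already regular and nonvanishing at $x$, with value $\chi_x(\gamma)$, the eigenvalue of $\gamma$ on the line $x$. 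This is not $\pm 1$ in general (a primitive cube root of unity if $\Gamma_{T_{\vec x}}$ contains an element of order $3$, a $p$--adic unit of infinite order when $s\ge 1$). Hence $\widetilde{\mathrm{ev}}_x$ does \emph{not} send $N$ into $\{\pm 1\}$, and the argument cannot be carried out one generator at a time. (The same example shows that the paper's equation \eqref{equalone} reads $\chi_x(t)^{\sum_v a_v}=1$ and likewise deserves scrutiny; whatever cancellation ultimately occurs must come from the global regularity constraint on the full product $\widetilde J_{\vec x}/\widetilde J'_{\vec x}$, not from properties of individual generators.)
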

\begin{proof}
We start with the following simple observation:
let $g$ be an element of $G(\Q)$ and $v\in V$, $q(v)\neq 0$, an eigenvector of $g$ with eigenvalue $\lambda$.
Then $\lambda$ is either equal to $1$ or $-1$ since
\[
q(v)=q(gv)=q(\lambda v)=\lambda^{2} q(v).
\]
Thus there exists a set $\mathcal{R}\subseteq V_{+}$ of vectors of positive length with the following properties:
\begin{itemize}
\item any two distinct elements in $\mathcal{R}$ are linearly independent,
\item any vector $v\in V_{+}$ with $x\in \Delta_{v,p}$ is a rational multiple of an element in $\mathcal{R}$ and
\item if $v$ is an element of $\mathcal{R}$ and $t\in T(\Q)$, then either $tv$ or $-(tv)$ is an element of $\mathcal{R}$.
\end{itemize}
We define an action $T(\Q)$ of $\mathcal{R}$ by setting $t\ast v:=\pm tv$, which in turn induces a linear action on the module $\bigoplus_{\mathcal{R}}\Z$.
Fix an auxiliary element $v_0\in V_{+}$ of positive length such that $x\notin \Delta_{v_0,p}$ and consider the map
\[
\bigoplus_{v\in \mathcal{R}}\Z \longrightarrow \rM^\times
,\quad a=(a_v)\longmapsto f_a^{v_0}(\xi):=\prod_{v\in \mathcal{R}}\left(\frac{\langle v, \xi \rangle}{\langle v_0, \xi\rangle}\right)^{a_v}.
\]
By construction,
\[
\divmap(f_a^{v_0})=\sum_{v\in\mathcal{R}} a_v  \cdot \Delta_{v,p}.
\]
Moreover, every meromorphic function $f\in\rM^\times$ can uniquely written as a product
\begin{equation}\label{divdecomp}f=f_a^{v_0} \cdot h\end{equation}
with $a\in \bigoplus_{\mathcal{R}}\Z$ and $h\in \rM^\times$ regular at $x$.
If $\widetilde{v}_0$ is a different choice of auxiliary element, the quotient
\[f_a^{\widetilde{v}_0/v_0}:=f_a^{\widetilde{v}_0}/f_a^{v_0}
=\prod_{v\in\mathcal{R}}\left(\frac{\langle v_0, x \rangle}{\langle \widetilde{v}_0, x\rangle}\right)^{a_v}\]
is clearly regular at $x$.
A straightforward calculation shows that for every $t\in T(\Q)$,  
\begin{equation}\label{equalone}
f_a^{tv_0/v_0}(x)=1.
\end{equation}
Moreover, it follows directly from the defintions that the equality
\begin{equation}\label{almostequivariance}
t.f_a^{v_0}=\pm f^{t v_0}_{t\ast a}
\end{equation}
holds for every $t\in T(\Q).$
We may write the quotient $\widetilde{J}_{\vec{x}}/\widetilde{J}^{\prime}_{\vec{x}}$ in the form
\[
\widetilde{J}_{\vec{x}}/\widetilde{J}^{\prime}_{\vec{x}}
= \prod_{i=1}^{m}\frac{t_i.(f_{a_i}^{v_0}\cdot h_i)}{f_{a_i}^{v_0}\cdot h_i}
= \prod_{i=1}^{m}\frac{t_i.f_{a_i}^{v_0}}{f_{a_i}^{v_0}}\cdot \prod_{i=1}^{m}\frac{t_i. h_i}{h_i}
\]
with $t_i\in \Gamma_x$, $h_i\in \rM^\times$ meromorphic functions that are regular at $x$ and $a_1,\ldots,a_m$ tuples as above.
As the functions $h_i$ are regular at $x$, it follows that
$$\frac{t_i.h_i(x)}{h_i(x)}=\frac{h_i(t_i^{-1}x)}{h_i(x)}=\frac{h_i(x)}{h_i(x)}=1$$
for all $i=1,\ldots, m$.
Using \eqref{almostequivariance} we may rewrite the remaining term as follows:
\[
\prod_{i=1}^{m}\frac{t_i . f_{a_i}^{v_0}}{f_{a_i}^{v_0}} 
= \pm \prod_{i=1}^{m} \frac{f_{t_i\ast a_i}^{t_i v_0}}{f_{a_i}^{v_0}} 
= \pm \prod_{i=1}^{m} \frac{f_{t_i\ast a_i}^{v_0}}{f_{a_i}^{v_0}} \cdot \prod_{i=1}^{m} f_{t_i\ast a_i}^{t_i v_0/v_0}.
\]
As this function is regular at $x$ and the second factor is regular at $x$, the uniqueness of the decomposition \eqref{divdecomp} implies that the first factor is equal to one.
The value of the second factor at $x$ is equal to one by \eqref{equalone}.
This proves the claim.
\end{proof}

\begin{definition}
\label{defvalue}
Let $J$ be a class in $H^{s}(\Gamma,\rM^\times)$ that is regular at the oriented special point $\vec{x}$.
The \emph{value of $J$ at $\vec{x}$} is defined by
\[ J[\vec{x}]:=\widetilde{J}_{\vec{x}}(x) \in \C_p^\times/\{\pm 1\}\]
where $\widetilde{J}_{\vec{x}}\in \rM^\times$ is any representative of $J_{\vec{x}}$ that is regular at $x$.
\end{definition}
Note that the value $J[\vec{x}]$ lies in the field of definition of $x$ over $\Q_p$.
In case $s=0$, one has $H^0(\Gamma_x,\Z)=\Z$.
If the orientation of $\vec{x}$ corresponds to the integer $n$ under this identification, then $J[\vec{x}]$ is simply the $n$-th power of the value of the $\Gamma$-invariant function $J$ at $x$.
For general $s$ scaling the orientation defines an action of $\Z$ on the set of oriented special points.
One clearly has
\[
J[n\cdot\vec{x}]=J[\vec{x}]^{n}.
\]
In particular, if the orientation of $\vec{x}$ is torsion, $J[\vec{x}]$ is a root of unity whose order is bounded from above by Remark \ref{rem:boundedtorsion}.

\subsubsection{$\Gamma$-invariance of special values}
Let $(x,T)$ be a special pair and $g$ an element of $G(\Q)$.
Then $(g x, g T g^{-1})$ is also a special pair and  
$\Gamma_{g T g^{-1}}=g\Gamma_x g^{-1}$.
Write
\[
c_g\colon H_s(\Gamma_T,\Z)\rightarrow H_s(\Gamma_{g T g^{-1}},\Z)
\]
for the homomorphism induced by conjugation with $g$.
If $\vec{x}=(x,T,o)$ is an oriented special point, then so is $g\vec{x}:=(gx, g T g^{-1}, c_g(o))$.
Since conjugation by $\gamma\in\Gamma$ induces the identity on $H^{s}(\Gamma, \rM^{\times})$ we immediately get the following:
\begin{proposition} \label{invariantevaluation}
Suppose that a class $J\in H^{s}(\Gamma,\rM^\times)$ is regular at the oriented special point $\vec{x}$.
Then $J$ is regular at $\gamma.\vec{x}$ for all $\gamma\in \Gamma$.
Moreover, the equality
\[ J[\vec{x}] = J[\gamma.\vec{x}]\]
holds.
\end{proposition}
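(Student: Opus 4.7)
The plan is to reduce the statement to the standard fact that, for any group $\Gamma$ acting on a module $M$ and any $\gamma \in \Gamma$, the inner automorphism $c_\gamma\colon \Gamma \to \Gamma$ together with the action of $\gamma$ on $M$ induces the identity map on $H^s(\Gamma, M)$. Applied to $M = \rM^\times$, this will let me transport a representative of $J_{\vec{x}}$ to a representative of $J_{\gamma\vec{x}}$ by applying $\gamma$, after which the equality of values will follow from the definition of the $\Gamma$-action on $\rM^\times$.

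More concretely, I would first observe that the diagram
\[
\xymatrix{
H^s(\Gamma,\rM^\times) \ar[r]^-{\res} \ar@{=}[d] & H^s(\Gamma_{T_{\vec{x}}},\rM^\times) \ar[r]^-{\cap o_{\vec{x}}} \ar[d]^{c_\gamma \otimes \gamma} & (\rM^\times)_{\Gamma_{T_{\vec{x}}}} \ar[d]^{\gamma} \\
H^s(\Gamma,\rM^\times) \ar[r]^-{\res} & H^s(\Gamma_{T_{\gamma\vec{x}}},\rM^\times) \ar[r]^-{\cap c_\gamma(o_{\vec{x}})} & (\rM^\times)_{\Gamma_{T_{\gamma\vec{x}}}}
}
\]
commutes, where $T_{\gamma\vec{x}} = \gamma T_{\vec{x}} \gamma^{-1}$ and the left-hand vertical map is the identity by the conjugation principle recalled above. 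Consequently, if $\widetilde{J}_{\vec{x}} \in \rM^\times$ is any representative of $J_{\vec{x}}$, then $\gamma \cdot \widetilde{J}_{\vec{x}}$ is a representative of $J_{\gamma\vec{x}}$.

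Regularity at $\gamma x$ now follows immediately from the fact that the divisor map $\divmap\colon \rM^\times \to \Div(X_p)$ is $G(\Q)$-equivariant, so $\divmap(\gamma \cdot \widetilde{J}_{\vec{x}}) = \gamma \cdot \divmap(\widetilde{J}_{\vec{x}})$; since $x$ is not in the support of $\divmap(\widetilde{J}_{\vec{x}})$, the point $\gamma x$ is not in the support of the divisor of $\gamma \cdot \widetilde{J}_{\vec{x}}$. For the equality of values, the definition $(\gamma \cdot f)(\xi) = f(\gamma^{-1}\xi)$ gives
\[
J[\gamma\vec{x}] = (\gamma \cdot \widetilde{J}_{\vec{x}})(\gamma x) = \widetilde{J}_{\vec{x}}(\gamma^{-1} \gamma x) = \widetilde{J}_{\vec{x}}(x) = J[\vec{x}]
\]
in $\C_p^\times/\{\pm 1\}$, where the final equality uses Definition \ref{defvalue} and the well-definedness established in Lemma \ref{rmcspecialpairevaluation}.

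There is no real obstacle: the content is entirely formal, and the proof amounts to unwinding the definitions plus the standard conjugation invariance of group cohomology. The only minor point requiring care is ensuring that the cap product pairing transforms correctly under conjugation, i.e.\ that the orientation $o_{\vec{x}} \in H_s(\Gamma_{T_{\vec{x}}},\Z)$ pushes forward to $c_\gamma(o_{\vec{x}}) \in H_s(\Gamma_{T_{\gamma\vec{x}}},\Z)$ compatibly with the $\gamma$-action on cochains; this is a routine check on the bar complex.
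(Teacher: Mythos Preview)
Your proposal is correct and follows exactly the same approach as the paper, which simply states ``Since conjugation by $\gamma\in\Gamma$ induces the identity on $H^{s}(\Gamma, \rM^{\times})$ we immediately get the following'' before the proposition. Your write-up is more detailed---spelling out the commutative diagram, the equivariance of the divisor map, and the evaluation---but the content is identical.
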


\subsubsection{Algebraicity of special values}
In general we do not expect the values of rigid meromorphic cocycles at oriented special points to be algebraic.
Indeed, in signature $(n,0)$ every constant function with values in $\Q_p^\times$ is a rigid meromorphic cocycle.
More generally, since $p$-arithmetic groups are of type (VFL), the homomorphism
\[
H^s(\Gamma,\Z)\otimes_\Z \Q_p^\times \longrightarrow H^s(\Gamma,\Q_p^\times)
\]
has finite kernel and cokernel and the group $H^s(\Gamma,\Z)$ is finitely generated.
It follows that for every $J\in H^s(\Gamma,\Q_p^\times)$ there exists a finitely generated submodule $\Pi_J\subseteq \Q_p^\times$ such that
\[
J[\vec{x}]\in \Pi_J
\]
for all oriented special points $\vec{x}$.
Moreover, the rank of $\Pi_J$ is bounded by the rank of $H^s(\Gamma,\Z)$.
We now state a crude form of the algebraicity conjecture:
\begin{conjecture}\label{crudeconjecture}
Let $J\in\mathcal{RMC}(\Gamma)$ be a rigid meromorphic cocycle.
There exists a finitely generated submodule $\Pi_J\subseteq \Q_p^\times$
such that
\[
J[\vec{x}]\in \overline{\Q}^\times \Pi_J 
\]
for every oriented special point $\vec{x}$ at which $J$ is regular.
Moreover, the rank of $\Pi_J$ is less or equal to the rank of $\rk_\Z H^s(\Gamma,\Z)$.
\end{conjecture}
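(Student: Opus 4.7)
The natural strategy is to decompose the problem using the short exact sequence
\[
1 \lra \Q_p^\times \lra \rM^\times \lra \rM^\times/\Q_p^\times \lra 1
\]
and the long exact sequence in cohomology that it induces. I would first verify that for any oriented special point $\vec{x}$ at which $J$ is regular, the evaluation map of Definition \ref{defvalue} descends to a well-defined homomorphism $H^s(\Gamma,\rM^\times/\Q_p^\times) \to \C_p^\times/\Q_p^\times$, compatible with the map $H^s(\Gamma,\Q_p^\times) \to \Q_p^\times$ obtained on the constant piece. This reduces the conjecture to two essentially independent claims: (a) the image $\bar{J}$ of $J$ in $H^s(\Gamma,\rM^\times/\Q_p^\times)$ satisfies $\bar{J}[\vec{x}] \in \overline{\Q}^\times \cdot \Q_p^\times / \Q_p^\times$, and (b) the constant ambiguity arising from the image of $H^s(\Gamma,\Q_p^\times)$ under evaluation is, as $\vec{x}$ ranges over the special points at which $J$ is regular, contained in a single finitely generated subgroup of $\Q_p^\times$ of $\Z$-rank at most $\rk_\Z H^s(\Gamma,\Z)$.

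Claim (b) is essentially formal. Since $\Gamma$ is $p$-arithmetic and hence of type $(VFL)$, as already used in the proof of Proposition \ref{cyclezero}, the abelian group $H^s(\Gamma,\Z)$ is finitely generated and a universal-coefficients argument yields $H^s(\Gamma,A)\cong H^s(\Gamma,\Z)\otimes_\Z A$ for any torsion-free $\Z$-module $A$. Splitting $\Q_p^\times \cong p^\Z \times \mu_{p-1} \times \Z_p$ and treating each factor separately, the image of evaluation on $H^s(\Gamma,\Q_p^\times)$ lies in a subgroup controlled by $\rk_\Z H^s(\Gamma,\Z)$ copies of $\Q_p^\times$; this subgroup will serve as $\Pi_J$ once it has been verified that the algebraic ambiguity in claim (a) does not enlarge it beyond $\overline{\Q}^\times$.

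The substance of the argument is claim (a). For the $p$-adic Borcherds products constructed in Propositions \ref{convergencedefinite} and \ref{convergenhyperbolic}, the value $J[\vec{x}]$ is formally a convergent infinite product of factors $\langle v,x\rangle^{a_v}$ with $v\in \latt$. Under the identification of Lemma \ref{lemma:eva} the pairing $\langle v,x\rangle$ equals $\varphi_x(v)$ up to a scaling independent of $v$, so each finite truncation of the product lies in the number field $\varphi_x(E_{T_{\vec{x}}})$. The action of the $\Z$-rank $s$ group $\Gamma_{T_{\vec{x}}}$ on the product, combined with the cocycle property used in forming the orientation class $o_{\vec{x}}$, should collapse the limit into a finite expression involving relative norms from $E_{T_{\vec{x}}}$ down to its reflex subfield. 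For cocycles not of Borcherds type, one would appeal to the density of Borcherds classes in $\mathcal{RMC}(\Gamma)$ modulo the ambiguities already controlled in (b), reducing the general case to the Borcherds case.

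The principal obstacle lies in making this collapsing rigorous: establishing that the resulting finite expression actually lies in $\overline{\Q}^\times \cdot \Q_p^\times$ is precisely a Shimura-reciprocity statement for the reflex field of $\vec{x}$. In signature $(2,1)$ this is the conjecture of~\cite{DV1}, amply supported by numerical and theoretical evidence but still open in general. In signature $(r,0)$ with $r\leq 5$, the conjecture is known because $\Gamma\backslash X_p$ carries an algebraic structure and classical CM theory applies. In arbitrary signature a uniform proof would seem to require either a $p$-adic deformation bridge between special points on $X_p$ and CM points on orthogonal Shimura varieties of signature $(n,2)$, mediated perhaps by a $p$-adic lift of the Kudla--Millson theta correspondence, or an extension of the Shimura--Taniyama theory of complex multiplication to non-CM reflex fields. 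Either route constitutes a formidable open problem that would drive the proof.
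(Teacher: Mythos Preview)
The statement you have been asked to prove is a \emph{conjecture}: the paper does not prove it, and there is no ``paper's own proof'' to compare your proposal against. Conjecture~\ref{crudeconjecture} is presented as the crude form of the main algebraicity conjecture of the article, and is later refined into Conjecture~\ref{conj:main}. The paper offers only motivation (the paragraph preceding the conjecture, showing that for classes coming from $H^s(\Gamma,\Q_p^\times)$ the conclusion holds trivially with $\Pi_J$ of the stated rank), the observation that it is vacuous when the orientation is torsion, the remark that the signature $(r,0)$ cases with $r\le 5$ follow from classical CM theory via $p$-adic uniformisation, and numerical evidence in Chapter~\ref{sec:examples}.

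Your proposal is honest about this: you correctly separate the formal part~(b), which is exactly the argument the paper gives in the paragraph before the conjecture to motivate the rank bound, from the substantive part~(a), and you correctly identify that~(a) amounts to a Shimura-reciprocity statement for the reflex field of $\vec{x}$ that lies beyond current techniques except in the cases already handled by classical CM theory. That is precisely why the paper states this as a conjecture. One small correction: your suggestion that ``density of Borcherds classes in $\mathcal{RMC}(\Gamma)$'' would reduce the general case to the Borcherds case is not supported by anything in the paper---the Borcherds construction is only carried out in a few low signatures, and no density statement of this kind is proved or even asserted.
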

By the discussion following Definition \ref{defvalue} the conjecture is vacuous for all points whose orientation is torsion.

\begin{remark}
With similar methods as in the proof of Proposition \ref{cyclezero} one can determine the rank of $H^s(\Gamma,\Z)$.
For example, in signature $(r,1)$ one deduces that $H^1(\Gamma,\Z)$ is always torsion and, hence, Conjecture \ref{crudeconjecture} predicts that the special values of rigid meromorphic cocycles are in fact algebraic.
\end{remark}

\subsection{Class group action on special points}
\label{sec:galoisaction}
In this section we associate to a toric fixed  point $x$ a cocharacter $\mu_x$, a reflex field $E_x$, and a reflex norm $r_x$.
This leads to an action of the idele class group of $E_x$ on the set of oriented special points modulo $\Gamma$ imitating the Galois action on CM points of Shimura varieties.

\subsubsection{The cocharacter $\mu_x$ and its reflex field}
\label{cocharacter}
Let $T\subseteq G$ be a maximal torus and $x\in Q(\C_p)$ a point stabilized by $T(\C_p)$.
Remember that $x$ corresponds to a non-trivial weight $\chi$ of $T$ and, in particular, $x$ is algebraic.
We consider $x$ as a line in $V_{\overline{\Q}}$.
As before, write $\sigma(x)$ for the $T$-weight space corresponding to the weight $\chi^{-1}$ and consider the hyperbolic summand $H_{x}=x\oplus \sigma(x)$ of $V_{\overline{\Q}}$ attached to $x$ by \eqref{eqn:hyperbolicdecomp}.
The assignment
\begin{align*}
\mu_{x,T}(t).v=\begin{cases}
t\cdot v & \mbox{if}\ v\in x,\\
t^{-1}\cdot v & \mbox{if}\ v\in \sigma(x),\\
v & \mbox{if}\ v\in H_{x}^\perp.
\end{cases}
\end{align*} 
defines a cocharacter $\mu_{x,T}\colon \mathbb{G}_{m,\overline{\Q}} \rightarrow T_{\overline{\Q}}$.

Assume for the moment that $x$ is an element of $X_p$.
Then, by the discussion following Lemma \ref{nonsplit} the point $\sigma(x)$ does only depend on $x$ and not $T$.
Hence, the cocharacter $\mu_{x,T}$ is independent of $T$ and defines a homomorphism
\[
\mu_{x}\colon \mathbb{G}_{m,\overline{\Q}} \longrightarrow T_{x,\overline{\Q}},
\]
where $T_x$ is the factor of $T$ associated to $x$ in Section \ref{sectionnonsplit}.

\begin{definition}
Let $x$ be a toric fixed point.
The \emph{reflex field $E_x \subseteq \C_p$ of $x$} is the field of definition of $\mu_x$.  
\end{definition}
The fields of definition of the points $x$ and $\sigma(x)$ agree and, thus, the reflex field $E_x$ is just the field of definition of the point $x\in X_p$.
It is a finite extension of $\Q$.
More precisely, one has the following equality of fields:
\[\varphi_x(E_T)=\varphi_x(E_{T_x})=E_x\subseteq \C_p.\]
The isomorphism $\varphi_x\colon E_{T_x}\rightarrow E_x$ induces an isomorphism
\[
\varphi_x\colon T_x\xlongrightarrow{\cong} \mathrm{U}(E_x,\sigma)
\]
of algebraic groups over $\Q$.

\subsubsection{Abstract reflex norm associated to a cocharacter}
\label{sec:abstractreflexnorm}
Let $T$ be a torus over $\mathbb{Q}$ with cocharacter lattice $X_\ast(T)$.
Let $\mu\colon \mathbb{G}_{m,E} \rightarrow T_E$ be a cocharacter defined over a finite extension $E$ of $\Q$.
Write $\underline{E}^\times:=\mathrm{Res}_{E/\Q}(\mathbb{G}_{m,E})$ for the Weil restriction of the multiplicative group $\mathbb{G}_{m,E}$ from $E$ to $\Q$.
As explained in the proof of \cite[Chapter II, Theorem 2.4]{Oesterle} there exists a canonical isomorphism
\begin{align}
\label{eq:inducedcochar}
X_\ast(\underline{E}^\times)\cong 
\mathrm{Ind}_{\mathcal{G}_{E}}^{\mathcal{G}_{\Q}} \mathbb{Z}
:= \left\{ f\colon \mathcal{G}_{E} \backslash \mathcal{G}_{\Q} \rightarrow \mathbb{Z} \right\}.
\end{align}
By Frobenius reciprocity, the $\mathcal{G}_{E}$-equivariant map of cocharacter lattices
\[
\overline{\mu}\colon \mathbb{Z} \cong X_\ast(\mathbb{G}_{m}) \longrightarrow X_\ast(T)
\]
induces the $\mathcal{G}_{\Q}$-equivariant homomorphism
\[
\overline{r_\mu}\colon \mathrm{Ind}_{\mathcal{G}_{E}}^{\mathcal{G}_{\Q}}\mathbb{Z} \longrightarrow X_\ast(T),
\quad f \longmapsto \sum_{\tau \in \mathcal{G}_{E} \backslash \mathcal{G}_{\Q}} f(\tau)\cdot \tau^{-1}.\mu
\]
We are led to the following definition:
\begin{definition}
Let $T / \mathbb{Q}$ be a torus and $\mu\colon \mathbb{G}_{m,E} \rightarrow T_E \in X(T)$ be a cocharacter defined over a finite extension $E$ of $\Q$.
The \emph{reflex norm attached to the pair $(\mu,E)$} is the unique $\Q$-rational homomorphism
$$r_\mu\colon \underline{E}^\times \longrightarrow T$$
that induces the homomorphism $\overline{r_\mu}$ on cocharacter lattices.
\end{definition}

\subsubsection{Reflex norm $r_x$ attached to fixed points in $X_p$} \label{reflexnormsection} 
\label{reflexnorm}
The above construction can be applied to the cocharacter $\mu_x$ of Section \ref{cocharacter}.
\begin{definition}
Let $x\in X_p$ be a toric fixed point with corresponding cocharacter $\mu_x$ and reflex field $E_x$. 
The \emph{associated reflex norm}
\[
r_x\colon \underline{E}_x^\times\rightarrow T_x
\]
is the \emph{abstract reflex norm} associated to the pair $(\mu_x, E_x)$.
\end{definition}

\begin{lemma}
\label{stabilizercommuteswithreflexnorm}
Let $x \in X_p$ be a toric fixed point and let $g \in G(\mathbb{Q})$.
Then $g$ stabilizes $x$ if and only if $g$ commutes with $\mu_x$.
In particular, if $g$ stabilizes $x$, then $g$ commutes with the image of the reflex norm $r_x$.   
\end{lemma}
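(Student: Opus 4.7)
I will establish the two directions of the equivalence separately and then obtain the last assertion as an immediate corollary.

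The easy direction is immediate. If $g$ commutes with $\mu_x$, then $g$ preserves every weight space of $\mu_x$ in $V_{\overline{\Q}}$. Since $x$ is by construction the weight-one eigenspace for $\mu_x$ (it is the line on which $\mu_x(t)$ acts by $t$), $g$ stabilises $x$.

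For the converse, suppose $g \in G(\Q)$ stabilises $x$. Because $g$ is $\Q$-rational it commutes with the $\mathcal{G}_\Q$-action on $V_{\overline{\Q}}$, so $g$ also stabilises every Galois conjugate $\tau(x)$ for $\tau\in\mathcal{G}_\Q$. By the discussion in Section \ref{sectionnonsplit}, the \'etale algebra $E_{T_x}$ is a \emph{field}; consequently every homomorphism $\varphi_y\colon E_{T_x}\to\overline{\Q}$ attached to a fixed point $y\in\mathcal{N}_{T_x}$ is injective, so all such homomorphisms share the same kernel. By the criterion recalled just before Lemma \ref{nonsplit}, this means that $\mathcal{N}_{T_x}$ is a single $\mathcal{G}_\Q$-orbit. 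Hence the Galois conjugates of $x$ exhaust the set of weight lines of $T_x$ in $V_{x,\overline{\Q}}$, and $g$ preserves every weight space of $T_x$. The standard computation $g t v = g\chi(t)v = \chi(t)g v = t g v$, valid for $t\in T_x$ and $v$ in the weight-$\chi$ line, then shows that $g|_{V_x}$ centralises $T_x$. Since $\mu_x$ factors through $T_{x,\overline{\Q}}\subseteq \mathrm{GL}(V_{x,\overline{\Q}})$, the restriction $g|_{V_x}$ commutes with $\mu_x$. On $V_x^\perp$ the cocharacter $\mu_x$ acts trivially by construction, so commutativity there is automatic, and assembling the two pieces gives $g\mu_x = \mu_x g$ on all of $V_{\overline{\Q}}$.

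The ``in particular'' statement is then a direct consequence: the image of $r_x$ lies in $T_x$, and we have just shown that $g$ centralises $T_x$ on $V_x$, while $T_x$ acts trivially on $V_x^\perp$. Hence $g$ commutes with every element in the image of $r_x$. The only delicate point in the argument is the identification of the full $\mathcal{G}_\Q$-orbit of $x$ with all weight lines of $T_x$, which relies essentially on the field property of $E_{T_x}$---a property built into the very definition of $T_x$ in Section \ref{sectionnonsplit} (and which fails in general for the original torus $T$).
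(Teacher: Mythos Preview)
Your proof is correct. The approach is essentially the same as the paper's, with one cosmetic difference: the paper works with the smaller decomposition $V_{\overline{\Q}} = H_x \oplus H_x^\perp$, where $H_x = x \oplus \sigma(x)$ is just the single hyperbolic plane, and invokes only the Galois conjugacy of $x$ and $\sigma(x)$ from Lemma~\ref{nonsplit} to conclude that $g$ preserves both lines. You instead use the larger decomposition $V = V_x \oplus V_x^\perp$ and the field property of $E_{T_x}$ to show that $g$ preserves \emph{every} weight line of $T_x$, hence centralises all of $T_x$. This is slightly more than is needed for the equivalence with $\mu_x$, but it pays off: since the image of $r_x$ lies in $T_x$ (indeed equals $T_x$ by Proposition~\ref{pro:reflexexplicit}), your ``in particular'' is immediate, whereas the paper's one-line justification implicitly relies on the $\Q$-rationality of $g$ and the construction of $r_x$ from Galois translates of $\mu_x$.
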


\begin{proof}
The cocharacter $\mu_x$ defines an action of $\mathbb{G}_{m,E}$ on $V_E$ with distinct one-dimensional weight spaces spanned by $x$ and $\sigma(x)$.
So if $g$ commutes with $\mu_x$, then $g$ must preserve these weight spaces and so must stabilize $x$ and $\sigma(x)$.  

Conversely, by Galois conjugacy of $x$ and $\sigma(x)$ (see Lemma \ref{nonsplit}), $g \in G(\mathbb{Q})$ stabilizes $x$ if and only if it stabilizes $\sigma(x)$.
Since $g$ scales the lines spanned by $x$ and $\sigma(x)$ and preserves the orthogonal decomposition $H \oplus H^{\perp},$ where $H$ is the hyperbolic plane spanned by $x$ and $\sigma(x)$ it follows that $g$ commutes with $\mu_x$.  
If $g$ stabilizes $x$, and hence commutes with $\mu_x$ by the above, it follows directly that $g$ commutes with $r_x$.
\end{proof}

\subsubsection{Cocharacter lattices of maximal tori in orthogonal groups}
In the following we will give an explicit description of the reflex norm.
We first give a characterization of the cocharacter lattice $X_\ast(T)$ of a maximal $\Q$-rational torus $T\subseteq G$ in terms of the set $\mathcal{N}_T$ of isotropic eigenlines of $T$.
To this end, put
$$X_\ast^{\geom}(T) := \{ f\colon  \mathcal{N}_T \rightarrow \mathbb{Z} \ \vert\  f(x) + f(\sigma(x)) = 0 \ \forall x\in \mathcal{N}_T \}.$$
The absolute Galois group $\mathcal{G}_\Q$ of $\Q$ acts naturally on $\mathcal{N}_T$ and so it also does on $X_\ast^{\geom}(T)$.
Given $f\in X_\ast^{\geom}(T)$ and $t\in \C_p^\times$ we define the orthogonal operator
\[
\mu_f\colon V_{\C_p}\longrightarrow V_{\C_p}
\]
via $\mu_f(t)v=t^{f(x)}v$ if $v\in x$, $x\in\mathcal{N}_T$ and $\mu_f(t)v=v$ if $v$ is orthogonal to every $x\in \mathcal{N}_T$.
One easily checks that $\mu_f(t)\in T(\C_p)$ and thus $\mu_{f}$ defines a cocharacter of $T$.

\begin{lemma}
\label{cochargeom}
Let $T\subseteq G$ be a $\Q$-rational maximal torus.
The mapping
\begin{align*}
X_\ast^{\geom}(T) &\longrightarrow X_\ast(T) \\
f &\longmapsto \mu_f
\end{align*}
is a $\mathcal{G}_\Q$-equivariant isomorphism.
\end{lemma}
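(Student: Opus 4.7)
The plan is to identify $T_{\overline{\Q}}$ as a split maximal torus of $\SO(V_{\overline{\Q}})$ via the weight decomposition, read off its cocharacter lattice explicitly, and then verify Galois equivariance using Lemma \ref{galoischi}.

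First I would observe that the weight decomposition $V_{\overline{\Q}} = V^T \oplus \bigoplus_{x \in \mathcal{N}_T} x$, where $V^T$ denotes the $T$-fixed subspace (zero when $n$ is even, one-dimensional when $n$ is odd), realises $T_{\overline{\Q}}$ as a subtorus of the ``diagonal'' torus $D \subseteq \SO(V_{\overline{\Q}})$ of elements acting by scalars on each eigenline $x \in \mathcal{N}_T$ and trivially on $V^T$. The orthogonality relation $\langle V_\chi, V_{\chi'}\rangle = 0$ unless $\chi' = \chi^{-1}$ forces an element of $D$ to act by reciprocal scalars on the pair $(x,\sigma(x))$, so $D \cong \G_m^{|\mathcal{N}_T|/2}$ has rank $\dim T$. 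Since $T_{\overline{\Q}}$ is itself maximal of rank $\dim T$, the inclusion $T_{\overline{\Q}} \subseteq D$ must be an equality. From this description one reads off
\[
X_\ast(T_{\overline{\Q}}) \;\cong\; \{(a_x)_{x \in \mathcal{N}_T} \in \Z^{\mathcal{N}_T} : a_x + a_{\sigma(x)} = 0\},
\]
which is exactly $X_\ast^{\geom}(T)$, and the map $f \mapsto \mu_f$ becomes the identity under this identification. This shows the map is an isomorphism of abelian groups.

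For Galois equivariance, I would exploit the canonical pairing $\langle\cdot,\cdot\rangle\colon X^\ast(T) \times X_\ast(T) \to \Z$, which satisfies $\langle \tau\chi, \tau\mu\rangle = \langle \chi, \mu\rangle$ for all $\tau \in \mathcal{G}_\Q$. By construction $\langle \chi_x, \mu_f\rangle = f(x)$ for every $x \in \mathcal{N}_T$, while Lemma \ref{galoischi} asserts $\chi_{\tau(x)} = \tau \chi_x \tau^{-1}$, identifying the Galois action on $\mathcal{N}_T$ with the natural one on $\mathcal{W}_T \subseteq X^\ast(T)$. Combining these gives $\langle \chi_{\tau(x)}, \tau\mu_f\rangle = f(x) = (\tau f)(\tau(x)) = \langle \chi_{\tau(x)}, \mu_{\tau f}\rangle$ for every $x$, and since a cocharacter is determined by its pairings with all characters, $\tau\mu_f = \mu_{\tau f}$.

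The main obstacle will be cleanly justifying the identification $T_{\overline{\Q}} = D$: the rank comparison is immediate, but one should carefully invoke the fact that a maximal torus of $\SO_n$ over an algebraically closed field of characteristic different from two equals the full scalar stabiliser of any complete orthogonal decomposition into hyperbolic planes (plus an anisotropic line, when $n$ is odd). Once this is settled, the remaining verifications of injectivity, surjectivity, and Galois compatibility reduce to bookkeeping with the natural pairing between characters and cocharacters.
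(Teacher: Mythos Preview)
Your proposal is correct. The paper's proof is organised slightly differently but rests on the same ingredients: instead of identifying $T_{\overline{\Q}}$ with the full diagonal torus $D$, the paper observes directly that the minuscule cocharacters $\mu_{x,T}$ (images of $f_x = \mathbf{1}_x - \mathbf{1}_{\sigma(x)}$) generate $X_\ast(T)$, which gives surjectivity, and then appeals to the same rank comparison you use. For Galois equivariance the paper checks the identity $\tau\mu_{x,T} = \mu_{\tau(x),T}$ on these generators, invoking Corollary~\ref{galoissigma} to see that $\tau f_x = f_{\tau(x)}$, whereas you argue dually via the pairing with the weight characters $\chi_x$ and Lemma~\ref{galoischi}. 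Your route is marginally more structural (it pins down $T_{\overline{\Q}}$ completely before reading off the cocharacter lattice), while the paper's is more economical since it never needs to name $D$; both arrive at the same place with no real difference in difficulty.
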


\begin{proof}
Via this homomorphism the function $f_x= \mathbf{1}_x - \mathbf{1}_{\sigma(x)}$ is mapped to $\mu_{x,T}$..
The set of miniscule cocharacters $\{\mu_x\ \vert\ x\in \mathcal{N}_T \}$ generates $X_\ast(T)$.
This proves surjectivity.
Bijectivity follows immediately since both modules are finitely generated free $\Z$-modules of the same rank.
Corollary \ref{galoissigma} implies that $\tau(f_x)=f_{\tau(x)}$.
Thus, for proving Galois equivariance it suffices to show that
\[
\tau \circ \mu_{x,T} \circ \tau^{-1} = \mu_{\tau(x),T}
\]
for all $\tau\in\mathcal{G}_\Q$, which follows along the same lines as Lemma \ref{galoischi}.
\end{proof}

Given a toric pair $(x,T)$ define $\mathcal{N}_x\subseteq \mathcal{N}_T$ to be the $\mathcal{G}_\Q$-orbit of $x$ and write $X_{\ast}^{\geom}(T_x)\subseteq X_\ast^{\geom}(T)$ for the subset of those functions, which are supported on $\mathcal{N}_x$.
By Lemma \ref{cochargeom} there is a canonical identification
\begin{align}
\label{geomident}
X_\ast(T_x)\xlongrightarrow{\cong} X_\ast^{\geom}(T_x).
\end{align}
Since $E_x$ is the field of definition of $x$, the map
\[
\mathcal{N}_x\longrightarrow \mathcal{G}_{\Q}/\mathcal{G}_{E_x},\quad \tau(x)\longmapsto \tau,
\]
is a well-defined isomorphism of $\mathcal{G}_{\Q}$-sets,
which induces the isomorphism of $\mathcal{G}_{\Q}$-modules
\begin{align}
\begin{split}
\label{eq:indisom}
\{f\colon \mathcal{N}_x \to \Z\} &\xlongrightarrow{\cong} \mathrm{Ind}_{\mathcal{G}_{E}}^{\mathcal{G}_{\Q}}\mathbb{Z} \\
f &\longmapsto [\tau \mapsto f(\tau(x))\cdot \tau^{-1}].
\end{split}
\end{align}
By \eqref{eq:inducedcochar} the right hand side of this homomorphism can be identified with the cocharacter lattice of $\underline{E}_x^\times$.
Thus, restricting \eqref{eq:indisom} to $X_\ast^{\geom}(T_x)$ yields the map
\[
X_\ast^{\geom}(T_x) \longrightarrow X_\ast(\underline{E}_x^\times),
\]
which is induced by the homomorphism of tori
\[
T_x  \xlongrightarrow{\varphi_x} \mathrm{U}(E_x,\sigma) \subseteq \underline{E}_x^\times.
\]

\subsubsection{Concrete identification of the reflex norm}
The \emph{concrete reflex norm} attached to a toric fixed point $x\in X_p$ is the homomorphism
\[
r_\sigma\colon \underline{E}_x^\times \longrightarrow \mathrm{U}(E_x,\sigma),\quad a \longmapsto a/a^{\sigma}
\]
of algebraic groups over $\Q$.
\begin{proposition}\label{pro:reflexexplicit}
For every toric fixed point $x\in X_p$
the equality
\[
r_\sigma=\varphi_x \circ r_x
\]
holds.
In particular, the kernel of $r_x$ is equal to the subgroup $\underline{F}_x^\times\subseteq \underline{E}_x^\times$.
\end{proposition}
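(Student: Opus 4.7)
My plan is to prove $r_\sigma = \varphi_x \circ r_x$ by comparing the maps induced on cocharacter lattices, exploiting the fact that a $\Q$-rational morphism of tori is uniquely determined by its $\mathcal{G}_\Q$-equivariant effect on cocharacters. Under the identifications developed in Section \ref{reflexnorm}, both $X_\ast(\underline{E}_x^\times)$ and $X_\ast(T_x)$ can be described as lattices of integer-valued functions on $\mathcal{N}_x$ (the latter sitting inside the former as the $\sigma$-antiinvariants), so once the two cocharacter maps are shown to agree as endomorphisms of $\{f\colon \mathcal{N}_x \to \Z\}$, the equality of morphisms will follow.

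The first step is to verify the compatibility of two \emph{a priori} distinct involutions on $\mathcal{N}_x$. One is the intrinsic involution $\sigma\colon \mathcal{N}_x \to \mathcal{N}_x$ from Section \ref{sec:maximal-tori}, sending $y$ to the other isotropic line $\sigma(y)$ in the hyperbolic plane $H_y$; the other is the involution induced on cocharacters $\mathrm{Hom}(E_x,\overline{\Q}) \to \Z$ by the automorphism of $\underline{E}_x^\times$ coming from $\sigma\colon E_x \to E_x$, which acts by precomposition with $\sigma$. I would verify their coincidence by unwinding the relation $\varphi_{\sigma(y)} = \varphi_y \circ \sigma_{E_{T_x}}$ noted at the end of Section \ref{sectionnonsplit}, combined with Corollary \ref{galoissigma}. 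This compatibility is the most technical point of the argument, but it is essentially bookkeeping.

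With this in place, I would compute the two cocharacter maps explicitly. For $r_x$, the defining formula of the abstract reflex norm in Section \ref{sec:abstractreflexnorm} shows that $f\colon \mathcal{N}_x \to \Z$ is sent to $\sum_{y \in \mathcal{N}_x} f(y) \cdot \mu_y$, where $\mu_y$ is the Galois translate of $\mu_x$ corresponding to $y$. Lemma \ref{cochargeom} identifies $\mu_y$ with $\mathbf{1}_y - \mathbf{1}_{\sigma(y)} \in X_\ast^{\geom}(T_x)$, so the sum collapses to $f - f\circ\sigma$. Post-composing with the embedding $\varphi_x$, which realises $X_\ast(T_x)$ as the $\sigma$-antiinvariant sublattice of $X_\ast(\underline{E}_x^\times)$, yields the same formula $f \mapsto f - f\circ\sigma$ on the full lattice. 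For the concrete reflex norm $r_\sigma\colon a \mapsto a/a^\sigma$, the induced map on cocharacters is manifestly $f \mapsto f - f\circ\sigma$ by the compatibility established in Step 1. Thus $\overline{\varphi_x \circ r_x} = \overline{r_\sigma}$, proving the first claim.

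For the final assertion, recall that $\varphi_x\colon T_x \xrightarrow{\sim} \mathrm{U}(E_x,\sigma)$ is an isomorphism of $\Q$-rational tori (cf.~Section \ref{cocharacter}), so $\ker(r_x) = \ker(\varphi_x \circ r_x) = \ker(r_\sigma)$. The kernel of $a \mapsto a/a^\sigma$ consists of $\sigma$-fixed elements of $\underline{E}_x^\times$, which is by definition the Weil restriction $\underline{F}_x^\times$ of the $\sigma$-invariant subalgebra $F_x \subseteq E_x$. Hence $\ker(r_x) = \underline{F}_x^\times$, as claimed.
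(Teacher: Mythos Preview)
Your proposal is correct and follows essentially the same approach as the paper: reduce to cocharacter lattices, identify $\mu_y$ with $\mathbf{1}_y - \mathbf{1}_{\sigma(y)}$ via Lemma \ref{cochargeom}, and verify that both sides induce $f \mapsto f - f\circ\sigma$, then deduce the kernel statement from $\varphi_x$ being an isomorphism. Your explicit verification that the two involutions on $\mathcal{N}_x$ agree is a point the paper leaves implicit, but otherwise the arguments are the same.
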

\begin{proof}
For the first claim it is enough to prove that both homomorphism induce the same map on cocharacter groups.
Under the identification \eqref{geomident} the reflex norm induces the following map on cocharacter groups:
\[
\overline{r_x}\colon \mathrm{Ind}_{\mathcal{G}_{E_x}}^{\mathcal{G}_{\Q}}\mathbb{Z} \longrightarrow X_\ast^{\geom}(T),
\quad f \longmapsto \sum_{\tau \in \mathcal{G}_{E} \backslash \mathcal{G}_{\Q}} f(\tau)\cdot \tau^{-1}.(1_x-1_\sigma(x)).
\]
Thus, the composition $\varphi_x \circ r_x$ induces the map
\[
\mathrm{Ind}_{\mathcal{G}_{E_x}}^{\mathcal{G}_{\Q}}\mathbb{Z} \longrightarrow \mathrm{Ind}_{\mathcal{G}_{E_x}}^{\mathcal{G}_{\Q}}\mathbb{Z},
\quad f \longmapsto \sum_{\tau \in \mathcal{G}_{E} \backslash \mathcal{G}_{\Q}} f(\tau)\cdot (\tau - \sigma\tau),
\]
which is readily identified as the one induced by $r_\sigma$.

The second claim follows since $\ker(r_\sigma)=\underline{F}_x^\times$ and $\varphi_x\colon T_x \to \mathrm{U}(E_x,\sigma)$ is an isomorphism.
\end{proof}

Proposition \ref{pro:reflexexplicit} together with Corollary \ref{cor:spinornorm} immediately implies the following:
\begin{corollary}
Let $x \in X_p$ be a toric fixed point.
The formula
\[
\mathrm{sn}_\Q(r_x(e))= \mathrm{N}^{E_x}_{\Q}(e) \bmod (\Q^\times)^2
\]
holds for all $e\in E_x^\times$.
\end{corollary}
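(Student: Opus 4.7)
The plan is to combine Proposition \ref{pro:reflexexplicit} with Corollary \ref{cor:spinornorm} after verifying that the spinor norm of $r_x(e)$ computed in the ambient orthogonal group $G = \SO(V)$ coincides with the one computed in $\SO(V_x)$.

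First I would recall the geometric picture. By Section \ref{sectionnonsplit}, the rational subspace $V_x\subseteq V$ on which $T_x$ acts non-trivially is non-degenerate, and the orthogonal decomposition $V = V_x \oplus V_x^\perp$ is $T_x$-stable with $T_x$ acting trivially on $V_x^\perp$. Lemma \ref{lemma:eva} applied to $T_x$ then identifies $(V_x, q|_{V_x})$ with $(E_x, q_\nu)$ for a suitable $\nu \in F_x^\times$, and under this identification $\varphi_x\colon T_x \xrightarrow{\cong} \mathrm{U}(E_x, \sigma)$ is simply the multiplication action on $E_x$. Proposition \ref{pro:reflexexplicit} says that $\varphi_x(r_x(e)) = e/e^\sigma$, so $r_x(e)$ acts on $V_x \cong E_x$ as multiplication by $e/e^\sigma$, and acts trivially on $V_x^\perp$.

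Next I would handle the compatibility of spinor norms under the orthogonal decomposition. This is a general fact: if $g \in \SO(V_x)$ is extended by the identity on $V_x^\perp$ to an element of $\SO(V)$, then its spinor norms in $\SO(V_x)$ and $\SO(V)$ agree, because any factorisation of $g$ into reflections $\tau_{w_1}\cdots \tau_{w_k}$ with $w_i \in V_x$ extends to a factorisation in $\SO(V)$ by the same vectors, and each reflection $\tau_{w_i}$ contributes the same value $q(w_i)$ to the spinor norm in either group. Thus
\[
\mathrm{sn}_\Q(r_x(e)) = \mathrm{sn}_{\SO(V_x),\Q}(\varphi_x(r_x(e))^{-1} \text{ (acting as mult.\ by } e/e^\sigma)).
\]

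Finally I would apply Corollary \ref{cor:spinornorm} with $k = \Q$, $k_1 = F_x$, $l = E_x$, and with the given $\nu$. This yields
\[
\mathrm{sn}_{\SO(V_x),\Q}(\text{mult.\ by } e/e^\sigma) = \mathrm{N}^{E_x}_\Q(e) \bmod (\Q^\times)^2,
\]
which combined with the previous paragraph gives the desired formula. No step looks genuinely hard; the only point that deserves care is the compatibility between the spinor norm on $\SO(V_x)$ and the spinor norm on $G = \SO(V)$ restricted to the subgroup fixing $V_x^\perp$, which as noted above is an immediate consequence of writing elements as products of reflections inside $V_x$.
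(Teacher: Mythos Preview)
Your proof is correct and follows the same approach as the paper, which simply cites Proposition~\ref{pro:reflexexplicit} and Corollary~\ref{cor:spinornorm}; you add the (correct and worthwhile) observation that the spinor norm of an element of $\SO(V_x)$ extended by the identity on $V_x^\perp$ agrees whether computed in $\SO(V_x)$ or in $\SO(V)$, a point the paper leaves implicit. The stray ``$^{-1}$'' in your displayed formula appears to be a typo and should be removed.
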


\subsubsection{Class group action on toric fixed points}
\label{sec:classgroup}
Denote by $\A$ (resp.~$\A^{p}_{f}$) the ring of adeles (resp.~the ring of adeles away from $p$ and $\infty$).
If $E$ is a finite extension of $E$, put $\A_E:=\A\otimes_\Q E$ and $\I_E:=\A_E^\times$.
For an open compact subgroup $U^{p}\subseteq G(\A^{p}_{f})$, an open subgroup $U_p\subseteq G(\Q_p)$ containing $G(\Q_p)^+$, and
put $U:=U^{p}U_p G(\R)^+$.
For a coset $h\in G(\A)/U$ consider the stabilizer $\Gamma_h:=G(\Q)\cap hUh^{-1}$ of $h$ in $G(\Q)$.
This is a $p$-arithmetic subgroup that lies in the kernel of the real spinor norm.
The set
\[
\Sigma_U :=\{\mbox{toric fixed points on}\ X_p\} \times G(\A) / U.
\]
of \emph{adelic toric fixed points (of level $U$)} is equipped with the diagonal $G(\Q)$-action.
For a toric fixed point $x$ let $x_U\subseteq \Sigma_U$ be the set of pairs with first coordinate equal to $x$ and write $\overline{x_U}:=\G(\Q)\backslash G(\Q)x_U$ for its image in $\G(\Q)\backslash \Sigma_U$.
Given a point $(x,h)\in \Sigma_U$ the reflex norm defines an action of the idele group $\I_{E_x}=\underline{E}_x^\times(\A)$ on $x_U$:
\begin{align*}  
t \star (x,h) &:= (r_x(t_p)x, r_x(t) \cdot h) \\
&\hphantom{:}= (x, r_x(t) \cdot h). 
\end{align*}

\begin{lemma}
\label{descendedclassgroupaction}
Let $x\in X_p$ be a toric fixed point.
The action of $\I_{E_x}$ on $x_{U}$ descends to a well-defined action on $\overline{x_U}$.
\end{lemma}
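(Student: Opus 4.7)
My plan is as follows. The formula
\[
t \star (x, h) \ = \ (r_x(t_p) x, \ r_x(t) \cdot h) \ = \ (x,\ r_x(t)\cdot h)
\]
manifestly preserves the fibre $x_U$, since $r_x$ takes values in the torus $T_x \subseteq G$ and $T_x(\C_p)$ fixes $x$; so I obtain at the outset a well-defined action of $\I_{E_x}$ on $x_U$. To descend the action to $\overline{x_U} = G(\Q) \backslash G(\Q) x_U$, I must verify that whenever $(x, h_1)$ and $(x, h_2)$ lie in a common $G(\Q)$-orbit, their translates under $t \star (-)$ do too.

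Given such a pair with $(x, h_2) = g \cdot (x, h_1) = (gx, gh_1)$ for some $g \in G(\Q)$, the equality $gx = x$ says exactly that $g$ lies in the $\Q$-stabiliser of $x$ in $G$. A direct computation then gives
\[
g \cdot \bigl( t \star (x,h_1) \bigr) \ = \ g \cdot (x,\, r_x(t) h_1) \ = \ (x,\ g\, r_x(t)\, h_1),
\]
whereas
\[
t \star (x, h_2) \ = \ (x,\ r_x(t)\, g\, h_1).
\]
Thus the two candidates agree in $\Sigma_U$ as soon as $g$ and $r_x(t)$ commute inside $G(\A)$, and in that case $g$ itself witnesses the desired $G(\Q)$-equivalence $t \star (x,h_1) \sim t \star (x,h_2)$.

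The required commutation is precisely the content of Lemma \ref{stabilizercommuteswithreflexnorm}: a $\Q$-rational element $g \in G(\Q)$ stabilises $x$ if and only if it commutes with $\mu_x$, and in particular it centralises the entire image of the reflex norm $r_x \colon \underline{E}_x^\times \to T_x$. Since this centralisation is a $\Q$-rational identity of algebraic groups, it persists after passing to $\A$-points and yields $g\, r_x(t) = r_x(t)\, g$ in $G(\A)$ for every $t \in \I_{E_x}$.

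In truth there is no serious obstacle here; the real work was done upstream in Lemma \ref{stabilizercommuteswithreflexnorm}, and the present lemma is essentially a bookkeeping consequence. The only point worth flagging is the mild passage from the $\Q$-rational statement of that earlier lemma to its adelic avatar, which is automatic because commutation is a Zariski-closed condition defined over $\Q$.
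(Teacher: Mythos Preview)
Your proof is correct and follows essentially the same approach as the paper: both reduce the descent to showing that any $g \in G(\Q)$ with $gx = x$ commutes with $r_x(t)$ in $G(\A)$, and both invoke Lemma~\ref{stabilizercommuteswithreflexnorm} for this. Your additional remark that the $\Q$-rational commutation extends to $\A$-points is a harmless clarification the paper leaves implicit.
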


\begin{proof}
Suppose $ (x,h)$ and $ (x,h')$ are $G(\mathbb{Q})$-equivalent points of $x_U$, i.e., $ g.(x,h) = (x,h')$ for some $g \in G(\mathbb{Q})$.
In particular, $gx = x$ holds, which by Lemma \ref{stabilizercommuteswithreflexnorm} implies that $g$ commutes with the image of the reflex norm.
It follows that
\begin{equation*}
t \star (x,h')=t \star \left(g.(x,h)  \right) = g.\left( t \star (x,h)  \right),
\end{equation*}
which proves the claim.
\end{proof}

To an adelic toric fixed point $(x,h)\in\Sigma_U$ we attach the relative class group 
\begin{align}\label{def:classgroup}
C_{x,h} := E_x^\times \I_{F_x}\backslash \I_{E_x}/ r_x^{-1}(h U h^{-1}).
\end{align}
\begin{corollary}\label{cor:classgroup}
Let $(x,h)$ be an adelic toric fixed point.
The action of $\I_{E_x}$ on the orbit of $(x,h)$ in $\overline{x}_U$ factors through $C_{x,h}$.
\end{corollary}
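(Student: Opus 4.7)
The plan is to show that each of the three subgroups generating $E_x^\times \cdot \I_{F_x} \cdot r_x^{-1}(hUh^{-1})$ acts trivially on the class of $(x,h)$ in $\overline{x_U}$; since the $\I_{E_x}$-action factors through $\overline{x_U}$ by Lemma \ref{descendedclassgroupaction}, this suffices.

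First I would handle $E_x^\times$. Since $r_x\colon \underline{E}_x^\times \to T_x \subseteq G$ is defined over $\Q$, for $e \in E_x^\times = \underline{E}_x^\times(\Q)$ we have $r_x(e) \in G(\Q)$. By Lemma \ref{stabilizercommuteswithreflexnorm} applied to the maximal torus stabilizing $x$, the element $r_x(e)$ commutes with $\mu_x$ and hence fixes $x$. Thus
\[
r_x(e).(x,h) = (r_x(e)x,\, r_x(e)h) = (x,\, r_x(e)h) = e \star (x,h),
\]
so $e\star (x,h)$ and $(x,h)$ lie in the same $G(\Q)$-orbit.

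Next I would handle $\I_{F_x}$. By Proposition \ref{pro:reflexexplicit}, the kernel of the algebraic homomorphism $r_x\colon \underline{E}_x^\times \to T_x$ equals $\underline{F}_x^\times$. Taking adelic points, which preserves kernels of morphisms of tori, we obtain $\I_{F_x} \subseteq \ker\bigl(r_x\colon \I_{E_x} \to T_x(\A)\bigr)$, so for $t \in \I_{F_x}$,
\[
t \star (x,h) = (x,\, r_x(t)h) = (x,h).
\]

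Finally, for $t \in r_x^{-1}(hUh^{-1})$, we have $r_x(t)h = hu$ for some $u \in U$, so $r_x(t)h$ and $h$ represent the same element of $G(\A)/U$, giving $t \star (x,h) = (x,h)$ already on the level of $\Sigma_U$.

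Combining, any element of the product $E_x^\times \cdot \I_{F_x} \cdot r_x^{-1}(hUh^{-1})$ acts trivially on $[(x,h)] \in \overline{x_U}$, and since the action descends to $\overline{x_U}$ by Lemma \ref{descendedclassgroupaction}, it factors through the quotient $C_{x,h}$. No step presents a serious obstacle; the only subtlety worth double-checking is that $r_x$ being a $\Q$-morphism indeed sends $E_x^\times$ into $G(\Q)$ (clear from functoriality of $\underline{(-)}^\times$) and that $E_x^\times \cdot \I_{F_x}$ is well-defined as a subgroup of $\I_{E_x}$ (immediate from the diagonal embedding $E_x^\times \hookrightarrow \I_{E_x}$).
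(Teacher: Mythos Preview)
Your proof is correct and follows essentially the same approach as the paper's: check separately that $E_x^\times$, $\I_{F_x}$, and $r_x^{-1}(hUh^{-1})$ each act trivially on the class of $(x,h)$ in $\overline{x_U}$. The only minor redundancy is your appeal to Lemma \ref{stabilizercommuteswithreflexnorm} to show $r_x(e)$ fixes $x$; this is automatic since $r_x$ already takes values in $T_x$, which stabilizes $x$ by construction.
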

\begin{proof}
By Proposition \ref{pro:reflexexplicit} the subgroup $\I_{F_x}$ is identified with $\ker(r_x)(\A)$, which acts trivially on $x_U$ and, hence, also on $\overline{x}_U$.
Since $r_x(E_x^\times)$ is a subgroup of $G(\mathbb{Q})$, the subgroup $E_x^\times$ acts trivially on $\overline{x}_U$.
Finally $r_x^{-1}(h U h^{-1})$ acts trivially on $h$ by definition.
The result follows.
\end{proof}

\subsubsection{Class group action on oriented special points}
\begin{definition}
An \emph{adelic oriented special point $(\vec{x},h)$ of level $U$} a coset $h\in G(\A)/U$ and an oriented special point $\vec{x}$ of level $\Gamma_h$.
The set of oriented adelic special points of level $U$ will be denoted by $\Sigma_U^{\mathrm{or}}$.
\end{definition}

The set $\Sigma_U^{\mathrm{or}}$ carries a natural $G(\Q)$-action:
let $(\vec{x},h)$ be an adelic oriented special point given by the tuple $(x,T,h,o)$ and $g\in G(\Q)$.
The adelic oriented special pair $g(\vec{x},h))$ is given by the tuple $(gx,g T g^{-1}, gh, c_g(o))$, where as before
\[
c_g\colon H_s(\Gamma_{h,T})\rightarrow H_s(\Gamma_{gh,gTg^{-1}})
\]
denotes the homomorphism induced by conjugation by $g$.
Consider the subset $x_U^{\mathrm{or}}\subseteq \Sigma_U^{\mathrm{or}}$ of all adelic oriented special points with first coordinate equal to $x$.
As in Section \ref{sec:classgroup}, the reflex norm induces an action of $\I_{E_x}$ on $x_U^{\mathrm{or}}$:
given $t\in \I_{E_x}$ and $(x,T,h,o)\in x_U^{\mathrm{or}}$ then $t\star(\vec{x},h)$ is associated to the special pair $(x,T)$, the coset $r_x(t)h$ and the orientation $c_t(o)$ where
\[
c_t\colon H_s(\Gamma_{h,T})\rightarrow H_s(\Gamma_{r_x(t)h,T})
\]
is the homomorphism induced by conjugation with $r_x(t)$.
With similar arguments as in Lemma \ref{descendedclassgroupaction} and Corollary \ref{cor:classgroup} one proves the following:
\begin{proposition}
Let $(\vec{x},h)$ be an adelic oriented special point.
The action of $\I_{E_x}$ on $x_U$ descend to an action on $\overline{x}_U^{\mathrm{or}}:=G(\Q)\backslash G(\Q)x_U^{\mathrm{or}}$.
Moreover, the action of $\I_{E_x}$ on the orbit of the image of $(\vec{x},h)$ in $\overline{x}_U^{\mathrm{or}}$ factors through the relative class group $C_{x,h}$.
\end{proposition}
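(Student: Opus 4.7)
The plan is to mirror the argument of Lemma \ref{descendedclassgroupaction} and Corollary \ref{cor:classgroup}, with the additional bookkeeping needed to track orientations. The key observation throughout is that the torus $T = T_{\vec{x}}$ is \emph{commutative}: consequently, for any $a \in T(\A)$, conjugation by $a$ acts as the identity on $T(\Q)$, and hence induces the identity on the homology groups $H_s(\Gamma_{\bullet,T},\Z)$.

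First I would verify that the $\I_{E_x}$-action descends to $\overline{x}_U^{\mathrm{or}}$. Suppose $(\vec{x},h)$ and $(\vec{x}',h')$ are two points of $x_U^{\mathrm{or}}$ with $g.(\vec{x},h) = (\vec{x}',h')$ for some $g \in G(\Q)$. Then in particular $gx = x$, so by Lemma \ref{stabilizercommuteswithreflexnorm}, $g$ commutes with the image of $r_x$; in particular $gT g^{-1} = T$, and $g$ commutes with $r_x(t)$ for every $t \in \I_{E_x}$. Given $t \in \I_{E_x}$, comparing the tuples
\begin{align*}
t \star (g.(\vec{x},h)) &= \bigl(x,\, T,\, r_x(t) g h,\, (c_t \circ c_g)(o)\bigr), \\
g.(t \star (\vec{x},h)) &= \bigl(x,\, T,\, g r_x(t) h,\, (c_g \circ c_t)(o)\bigr),
\end{align*}
reduces to checking that $c_t \circ c_g = c_g \circ c_t$ on the relevant stabilizer subgroup. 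Since $r_x(t) \in T(\A)$ and $T$ is commutative, $c_t$ is the identity map on any subgroup of $T(\Q)$; the equality of the two tuples follows.

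Next I would verify that this descended action factors through $C_{x,h}$. There are three subgroups to treat. The subgroup $\I_{F_x}$ acts trivially on $x_U^{\mathrm{or}}$ already, because $\ker(r_x) = \underline{F}_x^\times$ by Proposition \ref{pro:reflexexplicit}. For $e \in E_x^\times$ (diagonally embedded), the element $r_x(e) \in T_x(\Q) \subseteq G(\Q)$ acts on $(\vec{x},h)$ via
\[
r_x(e).(\vec{x},h) = \bigl(r_x(e)x,\, r_x(e)Tr_x(e)^{-1},\, r_x(e)h,\, c_{r_x(e)}(o)\bigr) = \bigl(x, T, r_x(e)h, o\bigr) = e \star (\vec{x},h),
\]
where the first equality uses $r_x(e)x = x$ (Lemma \ref{stabilizercommuteswithreflexnorm}), the centrality of $r_x(e)$ in $T$, and again the fact that $c_{r_x(e)}$ acts trivially on $\Gamma_{h,T} \subseteq T(\Q)$. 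Hence $e \star (\vec{x},h)$ and $(\vec{x},h)$ have the same image in $\overline{x}_U^{\mathrm{or}}$. Finally, for $t \in r_x^{-1}(hUh^{-1})$, one has $r_x(t)h \in hU$, so the $G(\A)/U$-component is unchanged; moreover a direct check using the commutativity of $T$ shows $\Gamma_{r_x(t)h,T} = \Gamma_{h,T}$ and $c_t = \mathrm{id}$ on this group, so the orientation is preserved as well. Combining the three cases, the action descends to the relative class group $C_{x,h}$ defined in \eqref{def:classgroup}.

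The main (and essentially only) subtle point is the bookkeeping of the orientation data under the $\I_{E_x}$-action: were the torus $T$ non-commutative, one would have to worry about the fact that conjugation by $r_x(t)$ is not generally an inner automorphism of $\Gamma_{h,T}$. Commutativity of $T$ collapses all of these conjugation maps to the identity, making the orientation component essentially passive and reducing the argument to the unoriented case already established.
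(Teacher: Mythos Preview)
Your proposal is correct and follows exactly the approach the paper itself indicates (it simply points back to Lemma~\ref{descendedclassgroupaction} and Corollary~\ref{cor:classgroup}); the key extra observation---that commutativity of $T$ forces $c_t$ to be the identity on $\Gamma_{\bullet,T}$, making the orientation component passive---is precisely what is needed.

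One small slip worth noting: the assertion $gTg^{-1}=T$ does not follow from Lemma~\ref{stabilizercommuteswithreflexnorm}, which only says that $g$ centralises the image $T_x$ of $r_x$, not that $g$ normalises the ambient maximal torus $T$. This is harmless for your argument: both tuples you compare actually carry torus component $T'=gTg^{-1}$, and since $g$ centralises $T_x$ one has $T_x=gT_xg^{-1}\subseteq T'$, so $r_x(t)\in T'(\A)$ and $c_t$ is still the identity on subgroups of $T'(\Q)$. With $T$ replaced by $T'$ in your displayed tuples, the comparison goes through verbatim.
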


\subsubsection{Connected components}
\begin{definition}
Two cosets $h,h'\in G(\A)/U$ lie in the same \emph{compononent} if their images in $G(\Q)\backslash G(\A)/U$ agree.
\end{definition}
Assume that $h,h'\in G(\A)/U$ lie in the same component and chose $g\in G(\Q)$ with $h =g h'$.
The element $g$ is unique up to left multiplication by elements of $\Gamma_h$.
Let $J$ be an element of $H^{s}(\Gamma_h,\rM^\times)$ and $(\vec{x},h')$ an adelic oriented special point.
Then $g.\vec{x}$ is a special point of level $\Gamma_h$.
We say that $J$ is regular at $(\vec{x},h')$ if it is regular at $g.\vec{x}$.
This regularity condition is independent of the choice of $g$.
Moreover, Proposition \ref{invariantevaluation} implies that the \emph{special value}
\[
J[(\vec{x},h')]:=J(g.\vec{x})
\] 
is independent of the choice of $g$ as well.

The product of the local spinor norm maps defined a group homomorphism
\begin{align}\label{adelicspinornorm}
\mathrm{sn}_\A\colon G(\A)\longrightarrow \I_\Q^2\backslash \I_\Q.
\end{align}
Since the spin group of a non-degnerate quadratic space is semi-simple and $G(\Q_p)$ is not compact, strong approximation implies that \eqref{adelicspinornorm} induces a bijection
\[
G(\Q)\backslash G(\A)/U=\Q^\times \I_\Q^2\backslash \I_\Q/\mathrm{sn}_\A(U)=:C_{U}.
\]
Let $E_U/\Q$ be the abelian extension associated to $C_{x,h}$ by class field theory.
The description of the Galois action on connected components of orthogonal Shimura varieties in \cite[Section 1]{kudla-cycles} suggests that the ``connected components'' of the space $G(\Q)\backslash (X_\infty \times X_p \times G(\A)/U)$ are ``defined over $E_U$''.

Let $x\in X_p$ be a toric fixed point. The diagram
\begin{center}
 \begin{tikzpicture}
    \path
		(0,0) node[name=A]{$\I_{E_x}$}
		(2,0) node[name=B]{$G(\A)$}
		(0,-1.5) node[name=C]{$\I_\Q$}
		(2,-1.5) node[name=D]{$\I_\Q^2\backslash \I_\Q$};
		\draw[->] (A) -- (B) node[midway, above]{$r_x$};
		\draw[->] (A) -- (C) node[midway, left]{$\mathrm{N}^{E_x}_{\Q}$};
		\draw[->] (B) -- (D) node[midway, right]{$\mathrm{sn}_{\A}$};
		\draw[->] (C) -- (D);
  \end{tikzpicture} 
\end{center}
is commutative by Corollary \ref{cor:spinornorm} and Proposition \ref{pro:reflexexplicit}.
Thus, for every $h\in G(\A)/U$ the norm map $\mathrm{N}^{E_x}_{\Q}\colon \I_E \rightarrow \I_\Q$ descends to a homomorphism $C_{x,h}\rightarrow C_{U}$. We denote its kernel by $C_{x,h}^{U}\subseteq C_{x,h}$.
The commutativity of the diagram above implies that $h$ and $r_x(t)h$ lie in the same component for all $t\in C_{x,h}^{U}$.
In particular, if $(\vec{x},h)$ is an adelic oriented special point and $t\in C_{x,h}^{U}$, the special value $J[t\star(\vec{x},h)]$ is defined for all $J\in H^{s}(\Gamma_h,\rM^\times)$.

\subsection{The conjectural reciprocity law}
\label{sec:conjecture}
We formulate a reciprocity law regarding the algebraicity and field of definition of the values of rigid meromorphic cocycles at oriented special pairs, akin to the classical Shimura reciprocity law for singular moduli on Shimura varieties.
Throughout this section we fix an open subgroup $U^p \subseteq G(\A^{p,\infty})$ and an open subgroup $U_p\subseteq G(\Q_p)$ containing $G(\Q_p)^+$ and put $U=U^p U_p G(\R)^+$.
As before, for a coset $h\in G(\A)/U$ put $\Gamma_h:=G(\Q)\cap h U h^{-1}$.
Let $(\vec{x},h)$ be an oriented adelic special point and $C_{x,h}$ the by \eqref{def:classgroup} associated relative class group.
Denote by $H_{x,h} / E_x$ the abelian extension associated to $C_{x,h}$ by class field theory, that, is the Artin reciprocity map furnishes a canonical isomorphism
\[
C_{x,h} \xlongrightarrow{\rec} \Gal(H_{x,h} / E_x).
\]
By functoriality of the Artin reciprocity map the subgroup $C_{x,h}^{U}$ corresponds to the group of those field automorphisms that act trivially on $H_{x,h}\cap E_U$.
Regarding rationality properties of adelic rigid meromorphic cocycles and their values at adelic special pairs, we are guided by the following two principles:
\begin{itemize}
\item
The set $G(\Q)\backslash\Sigma_U^{\mathrm{or}}$ behaves like the collection of special points on a Shimura variety with its attendant canonical model.
The Shimura reciprocity law for classical Shimura varieties then makes the following plausible: 
The image of the adelic special pair $(\vec{x},h)$ in $G(\Q)\backslash\Sigma_U^{\mathrm{or}}$ is defined over the abelian extension $H_{x,h} / E_x$ and $\tau \in \Gal(H_{x,h} / E_x)$ acts via $\rec^{-1}(\tau)$ on it.
\item
Rigid meromorphic cocycles $J$ behave as though they are meromorphic functions with divisors defined over $E_U$ on a ``Shimura variety" $Y_U$ whose collection of special points coincides with $G(\Q)\backslash\Sigma_U^{\mathrm{or}}$.     
\end{itemize}
These principles lead to the following analogue of the Shimura reciprocity law:
\begin{conjecture}
\label{conj:main}
Let $h\in G(\A)/U$ be a coset.
For every rigid meromorphic cocycle $J\in \mathcal{RMC}(\Gamma_h)$ of level $\Gamma_h$
there exists a finitely generated submodule $\Pi_J\subseteq \Q_p^\times$ of rank less or equal to the rank of $H^s(\Gamma_h,\Z)$
such that
\[
J[(\vec{x},h)]\in H_{x,h} E_U \Pi_J  
\]
for every oriented special pair $(\vec{x},h)$ at which $J$ is regular.
Moreover, for every $\alpha\in \Pi_J$ such that $J[(\vec{x},h)]/\alpha \in H_{x,h} E_U$ there exists $\alpha^{\prime}\in \Pi_J$ such that
\[
\rec(t)\left(\frac{J[(\vec{x},h)]}{\alpha}\right)=\frac{J[t\star (\vec{x},h)]}{\alpha^{\prime}}\quad \forall t\in C_{x,h}^{U}.
\]
\end{conjecture}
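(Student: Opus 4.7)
The plan is to reduce to the explicit $p$-adic Borcherds products constructed in Theorems \ref{maindefinite2} and \ref{mainhyperbolic2}. Every class in $\mathcal{RMC}(\Gamma_h)$ differs from such a Borcherds product by a class in $H^s(\Gamma_h,\cA^\times)$, whose contribution at $(\vec x,h)$ should be absorbed by $\Pi_J$ (it is a product of values at $x$ of rigid analytic functions, and the full group $H^s(\Gamma_h,\cA^\times)$ is finitely generated up to torsion since $\Gamma_h$ is of type (VFL)). It therefore suffices to analyse the cocycles $\hat J_{\underline{c}}$ of \eqref{productexpansion}. After unwinding the cap-product pairing of a representative of $\hat J_{\underline{c}}$ against an orientation $o_{\vec x}\in H_s(\Gamma_{T_{\vec x}},\Z)$, and using $\Gamma_{T_{\vec x}}$-equivariance to reindex the infinite product over toric orbits in $\cO_\latt(m_i,\beta_i)$, I expect an identity of the shape
\[
\hat J_{\underline{c}}[(\vec x,h)] \;=\; \prod_{v} \langle v_x, v\rangle^{N_v},
\]
where $v$ runs over representatives of $\Gamma_{T_{\vec x}}$-orbits in the support of $\sD_{\underline{c}}$ and the integers $N_v$ encode the pairing of $o_{\vec x}$ with toric orbit sums of basic divisors.

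\textbf{Arithmetic identification.} By Lemma \ref{lemma:eva} one may identify $V_{T_{\vec x}}$ with $E_{T_{\vec x}}$ equipped with a trace form, and under this identification $v_x$ corresponds to the element $1\in E_{T_{\vec x}}\otimes\C_p$ specialised through $\varphi_x$, while any other vector $v$ becomes a scalar multiple $\lambda_v\in E_{T_{\vec x}}$, with $\Gamma_{T_{\vec x}}$-orbit contained in the coset of $\lambda_v$ under the relative unit group $\cO_{E_{T_{\vec x}}/F_{T_{\vec x}}}^\times$. By Proposition \ref{torusrank}, $\Gamma_{T_{\vec x}}$ is abelian of rank exactly $s$, matching the cohomological degree, so that cap-product against $o_{\vec x}$ telescopes the orbit-wise product of pairings $\langle v_x, tv\rangle$ into a Stark--Shintani style regulator expression whose value lies in $\varphi_x(E_{T_{\vec x}})^\times = E_x^\times$. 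Summing the finitely many orbits occurring in $\underline{c}$ yields an element of $E_x$ up to a correction coming from the finitely many boundary terms left uncancelled by telescoping; these correction factors generate the required $\Pi_J$, of rank bounded by $\rk_\Z H^s(\Gamma_h,\Z)$.

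\textbf{Reciprocity and the main obstacle.} The Shimura-type reciprocity law is then formal: by Proposition \ref{pro:reflexexplicit} the action of $t\in C_{x,h}^U$ through $r_x$ coincides with multiplication by $t/t^\sigma$ on $E_{T_{\vec x}}$, and under class field theory for $E_x/F_x$ this matches $\rec(t)$ acting on $E_x^\times\subseteq H_{x,h}$; the compatibility with $E_U$ is guaranteed because $\sD_{\underline{c}}$ is $G(\A_f)$-rational. The genuine obstacle is the very first algebraicity step: the transcendentally defined $p$-adic product above must be proved equal to such a Stark unit. This is already the content of the Stark--Heegner conjectures in signature $(2,1)$, where only partial results are available via comparison with $p$-adic $L$-functions or $p$-adic deformations of automorphic forms (cf.~\cite{DV2,Ge-quaternionic,GMX}). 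I expect the analogous step in arbitrary signature will require a theory of $p$-adic families of orthogonal automorphic forms in which rigid meromorphic cocycles appear as specialisations of families whose classical members have algebraic periods, along the lines of the forthcoming \cite{GeBa} and \cite{BDGR} for low-rank definite settings.
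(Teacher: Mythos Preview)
The statement you are attempting to prove is a \emph{conjecture}; the paper does not prove it. Conjecture~\ref{conj:main} is presented as the main open conjecture of the work, motivated by analogy with the Shimura reciprocity law and supported only by the numerical experiments of Chapter~\ref{sec:examples}. There is thus no ``paper's own proof'' to compare against.

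Your proposal is an honest heuristic outline, and you correctly identify the central obstruction at the end: the transcendentally defined infinite $p$-adic product must be shown to equal an algebraic Stark-type unit, and this is wide open even in signature $(2,1)$. But several earlier steps are also not justified. First, the claim that every rigid meromorphic cocycle differs from a $p$-adic Borcherds product by a class in $H^s(\Gamma_h,\cA^\times)$ is not established in the paper: Theorems~\ref{maindefinite2} and~\ref{mainhyperbolic2} only construct Borcherds lifts in signatures $(n,0)$, $(3,1)$, $(4,1)$, and only for those $\underline{c}$ lying in the kernel of the pairing with $M_{n/2,\D_\latt}(\cgroup_0(p))$; an arbitrary Kudla--Millson divisor need not be of this form. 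Second, even granting a Borcherds representative, your ``telescoping'' of the orbit-wise product against $o_{\vec x}$ into a finite Stark--Shintani regulator is asserted rather than argued; in signature $(2,1)$ this is precisely the delicate step carried out in \cite{DV1,DV2} via the interplay with $p$-adic $L$-functions, and no analogue is known in higher signature. Third, the claim that values of classes in $H^s(\Gamma_h,\cA^\times)$ at special points are absorbed into a finitely generated $\Pi_J\subseteq\Q_p^\times$ confuses the constant-coefficient subgroup $H^s(\Gamma_h,\Q_p^\times)$ with the much larger $H^s(\Gamma_h,\cA^\times)$. In short, what you have written is a plausible roadmap whose decisive steps remain conjectural, which is consistent with the status of the statement itself.
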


%%%%%%%%%%%%%%%%%%%%%%%%%%%%%%%%%%%%%%%%%%%%%

\section{Examples and numerical experiments}
\label{sec:examples}

This closing chapter reports on a few numerical experiments illustrating Conjecture \ref{conj:main}.
These experiments only scratch the surface and it would be worthwhile to extend their scope by developing
systematic, practical approaches for computing rigid meromorphic cocycles and their special values.

Note that the case $s=0$ of definite quadratic spaces represents the most tractable  setting for  Conjecture \ref{conj:main}. Because $G(\R)$  is compact, the $p$-arithmetic group $\Gamma$ acts discretely on $X_p$, and a 
rigid meromorphic cocycle is just a $\Gamma$-invariant rigid meromorphic function on $X_p$ whose divisor is supported on a finite union of $\Gamma$-orbits of rational quadratic divisors.
In ranks $3$ and $4$, the theory of $p$-adic uniformization of
Shimura curves and quaternionic Shimura surfaces places Conjecture \ref{conj:main}
within the purview of the classical theory of complex multiplication by reducing it to the study of CM points on certain orthogonal Shimura varieties. (See the forthcoming work \cite{GeBa} for a detailed discussion.)
This setting has already been explored from a practical,  computational angle in the 
 literature, notably in
\cite{greenberg-matt}, \cite{negrini1}, and \cite{giampietro}
for the special case of signature $(3,0)$, and the upcoming work \cite{negrini2} of Negrini describes similar calculations in signature $(4,0)$.
The remainder of this chapter will therefore focus on the hyperbolic case where $s=1$ and rigid meromorphic cocycles are one-cocycles on $p$-arithmetic groups.

\subsection{Signature $(2,1)$ and class fields of real quadratic fields}
\label{sec:example-21}
A prototypical example of a 
three-dimensional quadratic space  can be 
 obtained by letting $B$ be a quaternion algebra over $\Q$,  and setting
\begin{equation}
\label{excisom1-Q}
V :=  B^{{\rm Tr}=0},  \ \ \ q(v) =  \pm {\rm Norm}(v),
\end{equation}
where ${\rm Norm}$ denotes the reduced norm.
For example, when $V=\mathrm{M}_2(\Q)$, equipped with the negative of the trace form, 
this quadratic space is the direct sum of a hyperbolic plane and the form $ x^2$, and is of
signature $(2,1)$.
More generally, $V$ is of signature $(2,1)$ whenever $\mathrm{M}_2(\Q)$ is replaced by an indefinite quaternion algebra
over $\Q$ in this construction.

The exact sequence \eqref{eqn:spin-gen}
with $k=\Q$ then becomes
\begin{equation}
\label{eqn:spin-3}
1 \rightarrow \{\pm 1\} \rightarrow B_1^\times \rightarrow
  B^\times/\Q^\times   \rightarrow  \Q^\times/ (\Q^\times)^2,
\end{equation}
where the penultimate map is induced from the natural inclusion of $B_1^\times$ into $B^\times$, and the rightmost map is induced from the norm modulo squares.
 In particular,  one then has
 $$ G(\R) = \PSL_2(\R).$$
Rigid meromorphic cocycles attached to quadratic spaces of
signature $(2,1)$ have already been considered in the literature, notably in 
\cite{DV1}  where $V $ is the space 
of trace zero elements in $\mathrm{M}_2(\Z[1/p])$ and 
$\Gamma = \SL_2(\Z[1/p])$, and in 
\cite{GMX} and \cite{Ge-quaternionic}, where more general quadratic spaces arising from the trace zero elements in certain indefinite quaternion algebras 
are considered. The constructions of \textit{loc.cit.~}are in one sense finer than what will be described in this section
 because they exploit the circumstance, peculiar to rank $3$, that the rational quadratic divisor  $\Delta_{v,p}$ on $\cH_p$  attached to a vector $v\in B$  is of the form
$$\Delta_{v,p} = (\tau_v) + (\tau_v'),$$
where $\tau_v$ and $\tau_v'$ are defined over a quadratic 
extension of $\Q_p$ and interchanged by the Galois automorphism.
With some care, it is possible to construct divisor valued cohomology classes whose divisor involves $\tau_v$ {\em but not $\tau_v'$}.
Furthermore,  the collection of $\Gamma$-orbits of  vectors of length $d$ is   more rich and subtle in the rank $3$ settings, since it is a principal homogeneous space for the  narrow class group  of
(an order in) $\Q(\sqrt{d})$.
A rigid meromorphic cocycle with divisor concentrated  on a single orbit is in some sense ``defined over" the associated narrow ring class field and does not arise in general as a Borcherds lift.
 A similar phenomenon occurs in classical Borcherds theory, where not every  meromorphic function  on a modular or Shimura curve  with CM divisor  can be realised as a Borcherds lift, although such converse results have been proved in certain cases
  for quadratic spaces of higher rank
 (cf.~\cite{bruinier-converse} for example). 

 Let $V$ be the space of matrices of the form 
 $$ V = \left\{ \left(\begin{array}{rr} -b & -c \\ a & b \end{array}\right) \mbox{ with } a,b,c \in \Q \right\},$$
 endowed with the negative of the trace form, for which  $q(v) = -\det(v)$. The matrix 
\begin{equation}
\label{eqn:matrix-in-V}
v =  \left(\begin{array}{rr} -b & -c \\ a & b \end{array}\right) \in V 
\end{equation}  
corresponds to the quadratic form $[a,2b,c]$ of discriminant $4(b^2-ac)$,whose roots,
 $$ \frac{-b-\sqrt{b^2-ac}}{a}, \qquad  \frac{-b+\sqrt{b^2-ac}}{a}, $$
  have opposite signs precisely
 when $ac<0$. 
 Let $\Phi$ be the Schwartz function on the finite ad\`elic 
 space $V_{{\mathbb A}_f}$ 
 whose component  $\Phi_\ell$ at odd primes $\ell$ 
 is the characteristic function of $M_2(\Z_\ell)_0$, 
 and whose component at $2$ is defined in terms of the
  odd Dirichlet character $\chi_4$ of conductor $4$ by setting
 $$ \Phi_2 \left(\begin{array}{rr} -b & -c \\ a & b \end{array}\right) = \left\{ \begin{array}{cl} 
 \chi_4(a)  & \mbox{ if } a,b,   c  \in \Z_2, \\
 0 & \mbox{ otherwise.}  \end{array}\right.$$
 Let $\Phi^{(p)}$ be the Schwartz function on the ad\`elic space
 $V_{{\mathbb A}_f^{(p)}}$
  with the component at $p$ ignored.
 The stabiliser of this Schwarz function
   is the $p$-arithmetic analogue of the Hecke congruence group $\Gamma_0(2)$:
 $$ \Gamma = \left\{  \left(\begin{array}{rr} a & b \\ c & d \end{array}\right)  \in \SL_2(\Z[1/p])  \mbox{ with } 2|c
 \right\}.
$$
  
For each $d>0$, let 
$$ \Delta_d(0,\infty) := \sum_{ \substack{\langle v,v\rangle = d \\ ac<0} } \delta_{v} \Phi^{(p)}(v)   \Delta_v,
$$
be the  locally finite divisor on $\cH_p$ attached to $\Phi^{(p)}$ and $d$, 
where the sum is taken over the matrices
$v$ as in \eqref{eqn:matrix-in-V}
of length $d = b^2-ac$ satisfying $ac<0$, and $\delta_v = {\rm sign}(a)$.
 The degree of   $\Delta_d(0,\infty) \cap \cH_p^{\le n}$  is equal 
  to the $d p^{2n}$-th Fourier coefficient  $b(dp^{2n})$ of the modular form
 of  weight 
  $3/2$  and level $4$ given by
  $$ g(q) = \theta(q)  E_1(1,\chi_4)(q) = \sum  b(n) q^n, $$
  where $\theta(q) = \sum_{n\in \Z} q^{n^2}$ is the usual unary theta series and $E_1(1,\chi_4)$ is the weight one Eisenstein series attached to $\chi_4$. Some of the coefficients of this generating series are listed in the table below:
  $$
  \begin{array}{r|rrrrrr}
  d & 2 & 5 & 8 & 11 & 14 & 17 \\ \hline
  b(d) & 3 & 6 & 3 & 6 & 12 & 12 
  \end{array} $$
When $p=3$, the relevant space of modular forms of  weight $3/2$  and level $12$ is spanned by $g(q)$ and  
$g(q^3)$. It follows that the locally finite divisors
\begin{eqnarray*}
 \Delta_5(0,\infty) - 2 \Delta_2(0,\infty), &&  \qquad
\Delta_8(0,\infty) - \Delta_2(0,\infty), \\
 \Delta_{11}(0,\infty) - \Delta_5(0,\infty), && \qquad \Delta_{14}(0,\infty) - \Delta_{17} (0,\infty)
 \end{eqnarray*}
are  the divisors of a  rigid meromorphic period function  in the sense of \cite{DV1} (but on 
$\Gamma$ rather than
on $\SL_2(\Z[1/p])$).
Let 
$$J_1, J_2, J_3, J_4  \in H^1(\Gamma,\rM^\times)$$
 denote the rigid meromorphic cocycles with these  divisors.
They were calculated with $200$ digits of $3$-adic accuracy and evaluated at the RM point 
$\tau_{11} = \frac{1+\sqrt{11}}{2}$ of discriminant $44$.
Since the narrow Hilbert class field of $\Q(\sqrt{11})$ is
the biquadratic field $\Q(i, \sqrt{11})$, Conjecture
\ref{conj:main}
asserts that $J_t[\tau_{11}]$ is defined over this field, for $t=1,2,3,4$. Indeed, one finds that, at least to within this accuracy,
{\small
\begin{eqnarray*}
 J_1[\tau_{11}]  &\stackrel{?}{=} & \frac{992481+ 322880\cdot i}{17 \cdot 29^2 \cdot 73}, \\
 J_2[\tau_{11}]  &\stackrel{?}{=} &\frac{ -19245079  -2983200 \cdot i}{29\cdot 61\cdot 101\cdot109}, \\
 J_3[\tau_{11}]  &\stackrel{?}{=} & \frac{19229239383465  + 7867810272448 \cdot i}{13^4\cdot 17^2 \cdot 29^2\cdot  41\cdot  73}, \\
 J_4[\tau_{11}]  &\stackrel{?}{=}& \frac{
 4967915642907602238905 -13926798659822783142912\cdot i}{17^ 3\cdot 29\cdot 41\cdot 73\cdot 109\cdot 149\cdot  193\cdot 197 \cdot 233 \cdot 241}.
\end{eqnarray*}
}
These Gaussian integers are all of norm one and the primes 
that divide their denominators are all inert  in $\Q(\sqrt{11})$. 
This experiment extends the framework of 
\cite{DV1} ever so slightly by working with a lattice that differs 
from the space of all integral binary quadratic forms.
It also illustrates a feature of  Conjecture  \ref{conj:main},
 namely that the field of definition of $J[\tau]$ 
is a class field that depends only on $\tau$ and not on
$J$, insofar as the rigid meromorphic cocycles constructed in the orthogonal group framework are all ``defined over $\Q$".

\subsection{Signature $(3,1)$ and Bianchi cocycles}
\label{sec:example-31}

Let $V  :=  \Q^4$ be the standard  rational
 Minkowski space,  endowed with the quadratic 
form 
$$ q(x,y,z,t) := x^2+y^2+z^2 -t^2.$$
It   admits a convenient description as a    subspace of $\mathrm{M}_2(K)$, with $K:= \Q(i)$. The matrix  ring $\mathrm{M}_2(K)$
 is endowed with the standard 
anti-involution $M\mapsto M^*$ as well as an 
involution $M \mapsto \overline{M}$ coming from the Galois 
automorphism of $K$, defined by
$$ \mat{a}{b}{c}{d}^{\dagger} := \mat{d}{-b}{-c}{a}, \qquad \overline{\mat{a}{b}{c}{d}} := 
\mat{\overline a}{\overline  b}{\overline c}{\overline d},$$
and we define
\begin{eqnarray*}
  V &=&  \{M\in \mathrm{M}_2(K) \mbox{ satisfying } M^{\dagger} =-\overline{M} \}
   \\
  &=&  \left\{ \left(\begin{array}{rr} 
\overline{\alpha}  & -b \\ c & -\alpha  \end{array}\right) \mbox{ with } \alpha\in K, \quad b,c\in \Q \right\}, 
\end{eqnarray*}
equipped with the quadratic form 
$$q(M) = -\det(M).$$
We will adopt the convenient notational shorthand
$$ [\alpha; b,c] := \left(\begin{array}{cc} 
\alpha  & -b \\ c & -\overline{\alpha} \end{array}\right), \qquad q([\alpha;b,c]) = \alpha\overline{\alpha} -bc.
$$ 
The group $\SL_2(K)$ operates as isometries of $V$ by twisted conjugation,
$$ \gamma \dagger M := \gamma \cdot M \cdot \overline{\gamma}^{-1} =
\gamma \cdot M \cdot \overline{\gamma}^{\dagger},$$
leading to a homomorphism 
$$\SL_2(K) \longrightarrow  \SO_{V}(\Q).$$
The real symmetric space $X_\infty$ is a real $3$-dimensional manifold, and is  identified with $\C \times \R^{>0}$ by the rule sending the line spanned by the vector $[\beta;u,v]$
of negative norm to 
$$ [\beta; u,v] \mapsto (z,t) = (\beta/v, (uv-\beta\overline{\beta})/v^2).$$
The $p$-adic symmetric space 
 $X_p$ is identified with $\cH_p \times \cH_p$  via the association
\[
\begin{pmatrix}
 (\tau_1,\tau_2) & -1 \\
 \tau_1 \tau_2 & -(\tau_2,\tau_1) \end{pmatrix}\longleftrightarrow(\tau_1, \tau_2).
\]
For any $M=[\alpha; b,c]\in V$ of positive norm, a function $F_M\in\rM^\times$ having $\Delta_{M,p}$ as divisor can be defined by setting
$$
F_M(\tau_1,\tau_2) = (-1,\tau_1) \left(\begin{array}{cc}
\overline{\alpha} & -b \\ c & -\alpha \end{array}\right) \left(\begin{array}{c} \tau_2 \\ 1\end{array}\right) = c\tau_1 \tau_2 -\alpha \tau_1 -\overline{\alpha}\tau_2 + b.
$$

Consider the standard lattice
\begin{eqnarray*}
 \latt &=&  \{M\in \mathrm{M}_2(\Z[i][1/p]) \mbox{ satisfying } M^{\dagger} =-\overline{M} \},
\end{eqnarray*}
on which the group $\Gamma = \SL_2(\Z[i][1/p])$ acts naturally by isometries, as above.
Concrete instances of rigid meromorphic cocycles for the Bianchi group $\Gamma$
can be described using modular symbols, as we now explain.
The group $\Gamma$ acts naturally on the boundary
$\C \cup \{\infty\} = \mathbb{P}_1(\C)$ of $X_\infty$ by M\"obius transformations, and preserves the subset 
$\mathbb{P}_1(K)$ of $K$-rational points, whose stabilisers in $\Gamma$ are parabolic subgroups. Given $r,s\in \mathbb{P}_1(K)$,
denote by $(r,s)$ the open hyperbolic geodesic path joining 
$r$ and $s$ in $X_\infty$. 

Given an integer $d>0$, one can {\em formally}
define
 a modular symbol with values in  $\rM^\times$
 by setting, for all $r,s\in \mathbb{P}_1(K)$,
 $$  J_d(r,s)(\tau_1,\tau_2) := 
 \prod_{\substack{ M \in \latt \\ 
 \langle M,M\rangle = d }}  F_M(\tau_1,\tau_2)^{(\Delta_{M,\infty}\cdot (r,s))_\infty},$$
 where $F_M\in \rM^\times$ is the rational function on 
 $X_p$ having $\Delta_{M,p}$ as divisor,
 and $\Delta_{M,\infty}$ is the real two-dimensional cycle in the three-manifold $X_\infty$  attached to 
 the vector $M$.
If these expressions converge for all $r,s$, then they clearly define a modular symbol with values in 
$\rM^\times$. One also hopes that they satisfy
some type of $\Gamma$-equivariance property, since 
$$ F_{\gamma M}(\gamma\tau_1, \gamma \tau_2) = 
j(\gamma;\tau_1,\tau_2) F_M(\tau_1, \tau_2),$$
where 
$$j(\gamma; \tau_1,\tau_2) = (c_1 \tau_1+d)(c_2\tau_2+d_2), \qquad \gamma = \mat{a}{b}{c}{d},$$
is a multiplicative factor of automorphy  that   depends on $\gamma$ and $\tau$ but not on $M$.

Assuming that the assignment $(r,s) \mapsto J_d(r,s)$ is
$\Gamma$-equivariant, the theory of modular symbols
allows us to reduce the calculation of $J_d$ to that of  $J_d(0,\infty)$.
Following a terminology of Zagier that is taken up in 
\cite{DV1}, the  function
$$ \Phi_d  := J_d(0,\infty)$$
is called the 
{\em rigid meromorphic period function} on $X_p$
associated to $J_d$ (or to the integer $d$).
The formula defining this  rigid meromorphic period function is given by
 $$ \Phi_{d}(\tau_1,\tau_2) = \prod_{j=0}^\infty \Phi_{d,j}(\tau_1,\tau_2), $$
where 
\begin{equation}
\label{eqn:Phidj}
\Phi_{d,j}(\tau_1,\tau_2) = \prod_{\substack{ M\in V_\Z, \\\langle M, M\rangle = dp^{2j} }} F_M(\tau_1,\tau_2)^{(\Delta_{M,\infty} \cdot (0,\infty))}. 
\end{equation}
To make it more concrete, we invoke the following lemma

\begin{lemma} 
Let 
$M = [\alpha; b,c]\in V$ be a 
vector of positive norm with associated real two-dimensional cycle $\Delta_{M,\infty} \subseteq X_\infty$.
Then
$$
\Delta_{M,\infty} \cdot  
(0,\infty)  = \left\{
\begin{array}{cl}
{\rm sign}(c) & \mbox{ if } bc<0; \\
 0 & \mbox{ if } bc>0.
 \end{array}
 \right.
$$
\end{lemma}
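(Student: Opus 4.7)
The plan is to reduce the intersection number to solving a single linear equation in one real variable, and then to track orientations carefully.

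First, I will work in the identification $X_\infty \cong \C \times \R^{>0}$ given in the excerpt. Under this identification, a point $(z,t)$ corresponds to the negative-norm line spanned by $[z;\, t+z\overline{z},\, 1]$; indeed $q([z;\, t+z\overline{z},\, 1]) = z\overline{z}-(t+z\overline{z}) = -t < 0$. A direct computation using $\langle M,N\rangle = \det(M)+\det(N)-\det(M+N)$ with $M=[\alpha;b,c]$ and $N=[z;\, t+z\overline{z},\, 1]$ gives the clean formula
\[
\langle M, N\rangle \;=\; \alpha\overline{z}+\overline{\alpha}z - b - c(t+z\overline{z}).
\]
So the cycle $\Delta_{M,\infty}$ is cut out in the coordinates $(z,t)$ by the equation
\[
\alpha\overline{z}+\overline{\alpha}z - b - c(t+z\overline{z}) \;=\; 0.
\]

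Next, the geodesic $(0,\infty)$ is the vertical geodesic $\{(0,t) : t>0\}$. Restricting the equation above to $z=0$ gives the single linear equation $b + ct = 0$. This has a positive solution $t_0 = -b/c$ if and only if $c\neq 0$ and $bc<0$, in which case the intersection is manifestly transverse (the defining function is linear in $t$ with non-zero slope $-c$). This already establishes the vanishing statement in the case $bc>0$, and it also handles the degenerate possibilities $b=0$ or $c=0$, which correspond to tangential or empty intersection along the geodesic.

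Finally, I would pin down the sign in the case $bc<0$. The vertical geodesic is oriented from $0$ to $\infty$, i.e.\ in the direction of increasing $t$. Its tangent vector at $t_0$ pairs with the gradient of the defining function $t\mapsto b+ct$, whose sign is $\mathrm{sign}(c)$. Combining this with the orientation of $\Delta_{M,\infty}$ fixed in \S\ref{sec-orientations} (via the convention that $\{M\}$ is a positive basis of the normal line $\R M\subseteq V_\R$, and that a positive basis of $T_Z\Delta_{M,\infty}$ completed by a positive basis of the normal space gives a positive basis of $T_Z X_\infty$), the local intersection index at $t_0$ comes out to $\mathrm{sign}(c)$, which is the claimed formula.

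The routine part is the bilinear-form calculation; the only delicate point, and the main obstacle, is verifying that the orientation conventions agree in sign (rather than disagree), since this amounts to pushing the standard orientation of $V_\R$ through the identification $X_\infty \cong \C\times\R^{>0}$ and through the isomorphism $T_Z X_\infty \cong \Hom_\R(Z, Z^\perp)$. This can be carried out once and for all at the base point $(z,t)=(0,t_0)$ by computing an explicit ordered basis of the tangent space and comparing with the tangent vector $\partial_t$ of the geodesic.
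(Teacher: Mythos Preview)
Your proposal is correct and takes essentially the same approach as the paper: both derive the equation of $\Delta_{M,\infty}$ in the half-space coordinates $(z,t)$ and determine when the vertical geodesic $z=0$ meets it. The paper reads off the crossing condition geometrically (the boundary circle of the hemisphere $\Delta_{M,\infty}$ on $\{t=0\}$ contains $z=0$ in its interior iff $bc<0$) rather than by directly solving $b+ct=0$, and in fact glosses over the orientation/sign verification that you rightly flag as the delicate point.
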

 \begin{proof}
 The two-dimensional region attached to $M = [\alpha,b,c]$ is the set 
 of $(z,t) = (\beta/v, (uv-\beta\overline{\beta})/v^2) \in \C\times \R^{>0}$ satisfying 
 the equality
 $$ {\rm Tr}\left(\begin{pmatrix}\alpha & -b \\ c & -\overline{\alpha} \end{pmatrix}
 \cdot
 \begin{pmatrix} \beta & -u \\ v & -\overline{\beta} \end{pmatrix} \right) =0,$$
 that is,
 $$ \alpha\beta + \overline {\alpha} \overline{\beta} - bv -cu = 0.$$
 This equation can be re-written in terms of the coordinates $(z,t)$, as
 $$ \alpha z + \overline{\alpha} \overline{z} - b - c (t+z\overline{z}) =0,$$
 or equivalently, after dividing by $-c$,
 $$ (z\overline{z}+t) - \frac{\alpha}{c} z - \frac{\overline{\alpha}}{c} \overline{z} + \frac{b}{c} = 0.$$
 This equation can be further re-written as
 $$ \left(z-\frac{\overline{\alpha}}{c}\right)\left(\overline{z}-\frac{\alpha}{c}\right)    = \frac{\alpha \overline{\alpha} -bc}{c^2}  - t.$$
  This two-dimensional region intersects the boundary of $ \C \times \R^{>0}$ (with equation $t=0$  in the  circle on the complex $z$-plane  centred at $\overline{\alpha}/c$ and
  of square-radius $(\alpha\overline{\alpha}-bc)/c^2$.  
Since this circle contains $z=0$ in its interior precisely when $bc<0$, the lemma follows.
    \end{proof}

Thanks to this lemma, equation 
\eqref{eqn:Phidj}
 can be  written as
 $$  \Phi_{d,j}(\tau_1,\tau_2) = \prod_{\substack{\alpha\overline{\alpha}-bc = d p^{2j}\\ bc<0, }} (c\tau_1 \tau_2 - \alpha\tau_1 -\overline{\alpha}\tau_2 +b)^{{\rm sgn}(c)}. $$
There are two difficulties that arise  in this definition:
\begin{enumerate}
\item The term ${\rm sgn}(c)$ is problematic to make sense of  when $c=0$, 
which occurs when the two-cycle in $X_\infty$ attached to $[\alpha;b,c]$
intersects $(0,\infty)$ improperly.
\item Even if such improper intersections do not occur,
one has $\Phi_{d,j}(\tau_1,\tau_2) = \pm 1$
for all $j$, because  the symmetry $[\alpha; b,c]\mapsto -[\alpha;b,c]$ forces trivial cancellation in the product.
\end{enumerate}
To remedy the first problem, it can simply be  assumed  that $d$ is not a norm from $\Z[i]$, i.e., is not a sum of two squares,
to ensure that there are no  length $d$ vectors $[\alpha;b,c]$ in the lattice with $bc=0$.
This holds, for instance
if $d$  is  exactly divisible by a  prime which is congruent to $3$ modulo $4$.

To address the second problem,  
the sum over a full lattice needs to be replaced by a sum over a coset of one lattice in an other (or a linear combination of such sums) that is not preserved by 
multiplication by $-1$. For instance, 
let  $\chi_4$ be the odd Dirichlet character of conductor $4$ and weight the factor  attached to $[\alpha;b,c]$ in the product defining $\Phi_{d,j}$  by a further 
exponent of $\chi_4(c)$,  by setting 
 \begin{equation}
 \label{eqn:Phidj2}
\Phi_{d,j}(\tau_1,\tau_2) := \prod_{\substack{\alpha\overline{\alpha}-bc = d p^{2j}\\ bc<0, }} (c\tau_1 \tau_2 - \alpha\tau_1 -\overline{\alpha}\tau_2 +b)^{\chi_4(c){\rm sgn}(c)}. 
\end{equation}
This further weighting by an odd character causes the factors attached to $[\alpha; b,c]$ and $[-\alpha; -b, -c]$ to reinforce each other rather than cancelling out.  The {\em weight} of
$\Phi_{d,j}$ is defined to be the sum of the exponents appearing in the product expansion   \eqref{eqn:Phidj2}.
The following suggestive   formula  relates this weight
 to the Fourier coefficients  of modular forms. 
\begin{lemma}
\label{lemma:degphidj}
For all $j\geq 1$ and all  $d$ which are not a sum of two squares,  the rational function $\Phi_{d,j}$ has weight equal to
$$ \deg \Phi_{d,j} =  16 \sigma(d p^{2j}), \mbox{ where } \sigma(n) = \sum_{4\nmid d |n} d.$$
\end{lemma}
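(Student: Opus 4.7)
By definition, the weight $\deg \Phi_{d,j}$ is the sum
\[
W \;:=\; \sum_{\substack{(\alpha,b,c)\in \Z[i]\times \Z\times \Z \\ \alpha\overline{\alpha}-bc \,=\, N \\ bc<0}} \chi_4(c)\,\mathrm{sign}(c),
\qquad N:=dp^{2j}.
\]
The plan is to rewrite $W$ as a lattice convolution, identify the resulting generating series as a combination of powers of the Jacobi theta series, and invoke Jacobi's four-square theorem.

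First I would exploit the fact that $\chi_4(c)=0$ for even $c$, so only odd $c$ contribute, and for odd $c$ the identity $\chi_4(-1)=-1$ gives $\chi_4(c)\mathrm{sign}(c)=\chi_4(|c|)$. Introducing the auxiliary variable $m:=-bc>0$, the constraints decouple: the ``Gaussian'' factor $\alpha\overline\alpha=N-m$ is counted by $r_2(N-m)$, and for each fixed $m$ the set of integer pairs $(b,c)$ with $bc=-m$ and $c$ odd is enumerated by the positive odd divisors of $m$, contributing (after summing the two sign choices $c>0$ and $c<0$) exactly $2\rho(m)$, where $\rho(m):=\sum_{\delta\mid m}\chi_4(\delta)$. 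Combining these two counts gives
\[
W \;=\; 2\sum_{m=1}^{N} r_2(N-m)\,\rho(m).
\]

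Next I would pass to generating series. Writing $\theta(q)=\sum_{n\in\Z}q^{n^2}$ and $F(q)=\sum_{n\geq 1}\rho(n)q^n$, the classical identity $\theta(q)^2 = 1 + 4F(q)$ yields
\[
\theta(q)^4 \;=\; 1 + 8F(q) + 16F(q)^2.
\]
Jacobi's four-square theorem asserts $\theta(q)^4 = 1 + 8\sum_{n\geq 1}\sigma(n)\,q^n$ with $\sigma$ as in the statement. Comparing coefficients of $q^N$ on both sides produces the convolution identity
\[
\sum_{m=1}^{N-1}\rho(m)\rho(N-m) \;=\; \tfrac{1}{2}\bigl(\sigma(N)-\rho(N)\bigr),
\]
and separating off the boundary term $m=N$ (where $r_2(0)=1$) from the bulk $1\leq m\leq N-1$ (where $r_2(N-m)=4\rho(N-m)$) in the formula for $W$ above yields an expression purely in terms of $\sigma(N)$ and $\rho(N)$.

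Finally, the hypothesis that $d$ is not a sum of two squares implies, via the prime factorization criterion for sums of two squares, that $N = d p^{2j}$ is likewise not a sum of two squares, hence $\rho(N)=0$. Substituting back produces the claimed closed formula $\deg \Phi_{d,j}$ in terms of $\sigma(N)$. The main obstacle is purely bookkeeping: tracking the combined factors of $2$ coming from (i) the two sign cases for $c$, (ii) the normalization $r_2=4\rho$ on positive integers, and (iii) the coefficient $16$ in $\theta^4 = 1+8F+16F^2$; these must be combined with the orientation conventions implicit in $\mathrm{sign}(c)$ to recover the stated numerical constant.
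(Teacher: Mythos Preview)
Your approach is correct and essentially the same as the paper's: both rewrite the weight as a convolution of $r_2$ with $\rho(m)=\sum_{\delta\mid m}\chi_4(\delta)$, use the identity $\theta^2 = 1+4F$ (equivalently $r_2 = 4\rho$ on positive integers, the simplest instance of Siegel--Weil), and then reduce to Jacobi's four-square theorem. The only cosmetic difference is that the paper packages the final step as recognising $E_1(1,\chi_4)^2$ as a scalar multiple of the weight-two level-$4$ Eisenstein series $E_2(q)-4E_2(q^4)$, whereas you expand $\theta^4=(1+4F)^2$ by hand and match coefficients; your caution about the final numerical constant is well-placed, since the paper's own bookkeeping on that point is not entirely self-consistent.
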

\begin{proof}
By \eqref{eqn:Phidj2}, 
 \begin{align*}
{\rm wt} \Phi_{d,j} &= 
 \sum_{\substack{\alpha\overline{\alpha}-bc = d p^{2j}\\ bc<0, }} \!\!\! \chi_4(c)\cdot {\rm sgn}(c)  
   \ \ =  \ \ \ \ 2\!\!\!\!\! \!\!\!\!\!\sum_{\substack{r^2+s^2 + tu = d p^{2j}\\ t,u>0  }}  \!\!\!\chi_4(t)  \\
  &= \ \  2 \!\!\! \sum_{\substack{r,s \in \Z\\
  r^2+s^2 < dp^{2j} }}  \left(
  \sum_{t | dp^{2j}- (r^2+s^2)}    \chi_4(t) \right).
  \end{align*}
  The inner sum is readily recognised as the 
  $dp^{2j}-(r^2+s^2)$-th coefficient of the weight one Eisenstein series attached to $\chi_4$:
  \begin{align*}
  E_1(1,\chi_4)(q) &=  \  \frac{1}{4} + \sum_{n=1}^\infty \left(\sum_{t|n}\chi_4(t) \right) q^n \\
  &= \sum_{n=0}^\infty a(n) q^n, 
  \end{align*}
  and hence we can write
  \begin{align*}
{\rm wt} \Phi_{d,j} &= 2 \sum_{\substack{r,s \in \Z \\ r^2+s^2 < dp^{2j} }} a(dp^{2j}-(r^2+s^2))  \\
 &= 2 \sum_{m\leq dp^{2j}} r_2(m) a(dp^{2j}-m),
 \end{align*}
 where $r_2(m)$ is the number of ways of representing $m$ as a sum of two squares,
 which is also the $m$-th Fourier coefficient
 of the binary theta series attached to the quadratic form
 $x^2+y^2$. This theta series is also 
 equal to a multiple of $E_1(1,\chi_4)$, a very simple instance of the Siegel-Weil formula, and thus one finds
   \begin{align*}
{\rm wt} \Phi_{d,j} &=   8 \sum_{m\leq dp^{2j}} a(m) a(dp^{2j}-m) \\ 
&= \ \ 8  b(dp^{2j}),
\end{align*}
where $b(n)$ is the $n$-th Fourier coefficient  of
$ 2 E_1(1,\chi_4)^2$. This modular form of weight two,
level $4$  and trivial nebentypus character is the quaternary theta series whose Fourier coefficients give the number of ways of representing an integer as a sum of $4$ squares. It is also the Eisenstein series of weight two and level $4$ with 
$q$-expansion given by 
$$ E_2(q) - 4 E_2(q^4)  :=
\frac{1}{8} + \sum_{n=1}^\infty \left(\sum_{4\nmid t|n} t\right) q^n.$$
 Hence, the equation
\begin{equation}
\label{eqn:coeffsEis24}
  \deg\Phi_{d,j} = 8 b(n) = 8  \sum_{4\nmid t|n} t, 
 \end{equation}
 holds, as was to be shown.
  \end{proof}
 
 Since $\sigma(3) =4 $ and $\sigma(7) = 8$, 
 we may let $\sD := 2 \Delta_3 -\Delta_7$ and conclude that
 the rational functions  
 $$ \Phi_{\sD,j}(\tau_1,\tau_2) :=  \frac{\Phi_{3,j}^2}{\Phi_{7,j}}$$
  are of weight $0$ for all $j$.
  This weight zero condition is not sufficient in and of itself to ensure the convergence of the infinite product
  $\prod_j \Phi_{\sD,j}$  as a rigid meromorphic function. 
  Numerical calculations indeed reveal that this product does not converge, even for 
 $p=5$, the smallest prime that is split in $\Q(i)$.
  The Borcherds theory developed in the body of this paper suggests that the obstruction to this product converging as a rigid meromorphic function on the $5$-adic analytic space
 $X_5$ lies in the space 
 $M_2(20)$ of modular forms of weight $2$ and level $20$.
In addition to the weight two Eisenstein series, this space contains a unique normalised cusp form
 \begin{align*}
g(z) &=  \eta(2z)^{2}\eta(10z)^{2} \\
&= q\prod_{n=1}^\infty(1 - q^{2n})^{2}(1 - q^{10n})^{2}\\
&= q - 2q^{3} - q^{5} + 2q^{7} + q^{9} + 2q^{13} + 2q^{15} - 6q^{17} - 4q^{19} - 4q^{21} + \cdots
\end{align*}
The fact that $2a_3(g) - a_7(g) =-6 \ne 0$ explains why 
the product of the $\Phi_{\sD,j}$ fails to converge for this
choice of $\sD$.
Inspection reveals that
  the functional 
  $$f \mapsto  a_3(f) -a_6(f) +  a_7(f)$$
   vanishes identically on  the cusp form $g$, and on the weight two Eisenstein series, since the coefficients 
  $b_n$ of  \eqref{eqn:coeffsEis24}
  satisfy
  $$ b_3 = 4, \qquad b_6 = 12, \qquad a_7 = 8.
  $$
 This motivates setting
  $$ \sD:=  \Delta_3 - \Delta_6 + \Delta_7,$$
  and studying the rational functions 
  $$\Phi_{\sD,j} := \Phi_{3,j} \div \Phi_{6,j}\times \Phi_{7,j},$$
  as well as 
   the infinite product 
 $$ \Phi_{\sD}(\tau_1,\tau_2) := \prod_{j=0}^\infty \Phi_{\sD,j}(\tau_1,\tau_2),$$
which should - as the the $p$-adic Borcherds theory suggests - converge uniformly to a rigid meromorphic function on each affinoid subset of $\cH_p\times \cH_p$.
 
 The function $ [ \alpha; b,c] \mapsto \chi_4(c)$ is invariant under the action of $\Gamma$, and hence there is  a unique
$\rM^\times$-valued modular symbol $J_\sD$ satisfying
 $$ J_{\sD}(0,\infty) = \Phi_{\sD}, \qquad
 J_\sD(\gamma r, \gamma s)(\gamma \tau_1,\gamma \tau_2)  = J_{\sD}(r,s)(\tau_1,\tau_2), \mbox{ for all } \gamma \in \Gamma.$$
This rigid meromorphic cocycle  (or rather, its associated rigid meromorphic period function $\Phi_{\sD}$)
was calculated on the computer to an accuracy of $160$
significant $5$-adic digits, using the same kind of 
iterative algorithms that are described in 
\cite{DV1} in signature $(2,1)$ and in \cite{negrini2} in signature $(4,0)$. 
The calculation took around a week on a standard machine. The $5$-adic rigid meromorphic period function $\Phi_{\sD}$ is eventually stored as a $6\times 6$ array of power series in two variables of degree $\leq 160$  with coefficients in $\Z/5^{160}\Z$. Once it has been computed and recorded in a file, evaluating the associated rigid meromorphic cocycle $J_\sD$ at a special point typically requires only a few seconds.

\medskip
Special points on $X_p= \cH_p\times \cH_p$ can be divided into three types: 
\begin{enumerate}[(a)]
\item The {\em small RM points} coming from an embedding of  the multiplicative group of an order in a real quadratic field into 
$\SL_2(\Z)$, composed with the natural  inclusion $\SL_2(\Z) \subseteq \SL_2(\cO_K)$. These special points are of the form $(\tau,\tau)$, where $\tau$ is an RM point in $\cH_p$. The reflex field of such a points is the real quadratic field $\Q(\tau)$.
\item The {\em small CM points}, which are of the form
$(\tau,\tau')$ where $\tau$ is an RM point in $\cH_p$ and $\tau'$ is its Galois conjugate.
The reflex field of such a point is the imaginary  quadratic subfield of the biquadratic field $\Q(\tau,i)$ which differs from $\Q(i)$.
\item The {\em big special points} whose reflex fields are ATR extensions of real quadratic fields, that is, quartic extensions of $\Q$ with exactly one complex place.
\end{enumerate}

We discuss the three classes in turn.

\medskip\noindent
{\em Evaluations at small $RM$ points}.
Adopt the shorthand 
$$ J_{\sD}[\tau] := J_{\sD}[\tau,\tau], \qquad \tau\in \cH_p^{\rm}.$$
The following  identities   give a good illustration of Conjecture \ref{conj:main}:
\begin{eqnarray}
\label{eqn:type1-2}
 J_{\sD}\left[\frac{1}{\sqrt{2}} \right] &=& 
\frac{ -289 + 480 i -204\sqrt{2} + 340\sqrt{-2}}{3 \cdot 11}  \bmod{5^{160}}, \\
\label{eqn:type1-3}
  J_{\sD}\left[\frac{1+\sqrt{3}}{2}\right] 
 &=& \frac{ -329 + 96i + 
  188\sqrt{3} - 56\sqrt{-3}}{7^2}  \bmod{5^{160}}, \\
  \nonumber
J_{\sD}\left[  \frac{1+\sqrt{17}}{4}\right] &{=}&
 \frac{ a + b\cdot i + c \cdot \sqrt{17} + d\cdot\sqrt{-17}}
 {3 \cdot 5^3 \cdot 7^2 \cdot 17 \cdot 23}  \bmod{5^{160}},
 \end{eqnarray}
 with 
\begin{equation}
   \label{eqn:type1-17}
   (a,b,c,d) \ =\  (8561065121,  - 13089950772, 
  -2076362976, \ 3174779132).
  \end{equation}
  The  algebraic numbers on the right-hand side of these
  putative evaluations all
  belong to an unramified  abelian extension 
  of the reflex field $\Q(\tau)$,
and    have norm $1$ to this reflex field. More precisely, they are   in the minus part (multiplicatively) for the action of complex conjugation which lies in the center
of  the absolute Galois group of 
$\Q(\tau)$.  In particular, they lie in the unit circle relative to all complex embeddings of $\overline\Q$.
It is also worth noting    that  the  primes 
that arise in the factorisation of $J_{\sD}[\tau]$ lie above rational primes that are 
either inert  or ramified in the reflex field.
These observations evoke patterns that were already observed in  \cite{DV1}, and   this is no coincidence.
  Indeed, the points $(\tau,\tau)$ are the images of the  RM point $\tau\in
\cH_5$ under   the diagonal embedding $\cH_5\subseteq \cH_5\times \cH_5$, and  the restriction of $J_{\sD}$ to the diagonal gives rise to a rigid meromorphic
cocycle for a subgroup of $\SL_2(\Z[1/p])$, namely, the intersection of $\Gamma$ with $\SL_2(\Q)$,  i.e., 
the $\Gamma_0(2)$-type congruence subgroup
of $\SL_2(\Z[1/p])$.
Hence, the evaluations 
\eqref{eqn:type1-2}, 
\eqref{eqn:type1-3},
and
\eqref{eqn:type1-17}  do
 not   represent a truly new verification of Conjecture
 \ref{conj:main}  for a Bianchi cocycle, since they can be reduced to the conjectures of \cite{DV1}.

 \medskip\noindent
{\em Evaluations at small $CM$ points}.
Values at small CM points of the form
 $(\tau,\tau')$ where $\tau$ is an RM point
 in $ \cH_5$
---  for which  the $p$-adic ``CM type"  
  has been flipped,  so that the  associated 
  reflex field   is now an imaginary quadratic field ---
 have no   counterpart in the setting of rigid meromorphic cocycles for $O(2,1)$,  and  are thus significantly  more interesting 
a priori.
The first numerical evaluations reveal that
\begin{eqnarray}
\label{eqn:type2-2}
 J_{\sD}\left[\frac{1}{\sqrt{2}},\frac{-1}{\sqrt{2}}\right]
 &=& 1
 \bmod{5^{160}}, \\
 \label{eqn:type2-3}
 J_{\sD}\left[\frac{1+\sqrt{3}}{2}, \frac{1-\sqrt{3}}{2}\right] 
 &=& 
1 \bmod{5^{160}}, \\ 
\label{eqn:type2-17}
 J_{\sD}\left[ \frac{1+\sqrt{17}}{4}, \frac{1-\sqrt{17}}{4}\right] &=&
  \frac{  2^2   \cdot (4-i)^2 \cdot i }{5^3} \bmod{5^{160}}.\end{eqnarray}
Comparing with \eqref{eqn:type1-2},
\eqref{eqn:type1-3} and \eqref{eqn:type1-17},
 it seems that these more exotic values tend to be of
 significantly smaller  height. This is confirmed by further experiments.  Letting  $\tau_D\in \cH_5$  be an RM point of discriminant $D>0$, it was observed that
 $$ J_{\sD}[\tau_D,\tau_D'] \stackrel{?}{=} 1, \mbox { for } D =\begin{array}{l}
  32, 88, 92, 152, 168, 172, 188, 232, 248,  
  252,\\ 268, 272, 308, 312, 328, 332, 348,  368, 
  $\ldots$
  \end{array}$$
  Another novel phenomenon is the relatively frequent occurrence of instances where $J_{\sD}[\tau_D,\tau_D']$ cannot be evaluated because the point $(\tau_D,\tau_D')$ occurs among the poles and zeroes of the rational functions in the infinite product defining $J_{\sD}\{r,s\}$ with $r,s\in {\mathbb P}_1(\Q(i))$. 
 This happens for
 $$ D = 33, 52, 73, 97, 113, 137, 177, 193, 208, 212, 217, 228, 233, 288, 292, 313, 337,  \ldots,$$
 
 By contrast, the diagonal restriction of $J_{\sD}\{r,s\}$ has zeroes and poles on a collection of   RM points with finitely many discriminants, and it is expected that 
 $J_{\sD}[\tau,\tau]=1$  occurs only in extremely rare instances as well.
In spite of their relative paucity, interesting 
algebraic values at small CM points do   arise.  For instance,  numerical  evaluations  of 
 $$J_{9\cdot 17} := J_{\sD}\left[ \frac{1+3 \sqrt{17}}{4}, \frac{1-3 \sqrt{17}}{4}\right], \qquad
 J_{37} := 
J_\sD \left[\frac{1+\sqrt{37}}{4}, \frac{1-\sqrt{37}}{4}\right],
  $$
 to $160$ digits of $5$-adic precision suggest  that 
 \begin{eqnarray}
 \label{eqn:J917} &&
  J_{9\cdot 17}   \stackrel{?}{=} 
  \frac{ 19^2  \cdot (1+i)^8 \cdot (1-4i)^5  \cdot (5+2i)^3 \cdot (5+4i) \cdot (5+6i)^2 }
  {(1+2i)^5 \cdot (1-2i)^6  \cdot (1-6i)^2 \cdot (9-4i)^2 \cdot (7-8i)^2\cdot (2-13i)^2}, \\  \nonumber & & \\
   \label{eqn:J37} &&
   J_{37} \stackrel{?}{=}\frac{     (2-i)^4 \cdot (3-2i) \cdot (2+5i)^2 \cdot (1+6i) \cdot (5+8i)^2 \cdot (8+7i) \cdot (12+7i) }{3^5 \cdot  7^4 \cdot (2+i)^2 \cdot 
 (5+6i) \cdot (3+10i)}.
  \end{eqnarray}
  Unlike what happens for the small $RM$ values, these
  algebraic invariants have non-trivial norms to 
  $\Q$:
\begin{eqnarray}
\label{eqn:norm917}
 {\rm Norm}(J_{9\cdot 17}) &\stackrel{?}{=} &
   \frac{  2^8 \cdot 17^5  \cdot 19^4  \cdot 29^3 \cdot 41 \cdot 61^2 }
  {5^{11}    \cdot 37^2 \cdot 97^2 \cdot 113^2\cdot 173^2}, \\ \nonumber & & \\
  \label{eqn:norm37}
  {\rm Norm}(J_{37})   &\stackrel{?}{=} &
  \frac{ 5^2 \cdot 13 \cdot 29^2 \cdot 37 \cdot 89^2 \cdot 113 \cdot 193 }{3^{10} \cdot  7^{8} \cdot   
61 \cdot 109}.
  \end{eqnarray}
 Observe that the  primes  that appear in the right hand sides of 
\eqref{eqn:norm917} and
\eqref{eqn:norm37}
 are all ramified
 or inert  in the  respective reflex fields $\Q(\sqrt{-17})$ and $\Q(\sqrt{-37})$.

 The  numerical evaluations in
 \eqref{eqn:type2-17}, 
 \eqref{eqn:J917} and 
 \eqref{eqn:J37}   
  provide the first substantial 
 piece of experimental evidence  for  conjecture
 \ref{conj:main}  in the    case of   Bianchi cocycles,   beyond what directly follows from this conjecture  in the signature $(2,1)$ case.
  
  \medskip
  \medskip\noindent
  {\em  Evaluations at big special points}.
 
 Non-trivial values of $J_{\sD}$ at big special points are expected to be defined over fields of fairly large degree  (abelian extensions of quadratic extensions of real quadratic  fields with a single complex place) and to be of  sizeable height. A straightforward recognition of these values  seems to lie somewhat  beyond the $5$-adic precision
    for $J_{\sD}$ that the authors were able to calculate.
 A more systematic and sophisticated approach to the computer calculation  of rigid meromorphic Bianchi cocycles is clearly required for these more ambitious tests. 
 The second author plans to turn to this question in  an ongoing project with Xavier Guitart and Marc Masdeu. 
  
\bibliographystyle{abbrv}
\bibliography{bibfile}

\begin{thebibliography}{10}

\bibitem{AtkinLehner}
A.~O.~L. Atkin and J.~Lehner.
\newblock Hecke operators on {$\Gamma _{0}(m)$}.
\newblock {\em Math. Ann.}, 185:134--160, 1970.

\bibitem{GeBa}
S.~Bartling and L.~Gehrmann.
\newblock Heegner divisors and p-adic uniformization.
\newblock In progress.

\bibitem{BDGR}
L.~Beneish, H.~Darmon, L.~Gehrmann, and M.~Roset~Julia.
\newblock The {G}ross--{K}ohnen--{Z}agier theorem via p-adic uniformization.
\newblock In progress.

\bibitem{BFG}
D.~Blasius, J.~Franke, and F.~Grunewald.
\newblock Cohomology of {$S$}-arithmetic subgroups in the number field case.
\newblock {\em Invent. Math.}, 116(1-3):75--93, 1994.

\bibitem{BorcherdsGKZ}
R.~Borcherds.
\newblock The {G}ross-{K}ohnen-{Z}agier theorem in higher dimensions.
\newblock {\em Duke Math. J.}, 97(2):219--233, 1999.

\bibitem{BorcherdsReflection}
R.~Borcherds.
\newblock Reflection groups of {L}orentzian lattices.
\newblock {\em Duke Math. J.}, 104(2):319--366, 2000.

\bibitem{Borelcohomology}
A.~Borel.
\newblock Sur la cohomologie des espaces fibr\'{e}s principaux et des espaces
  homog\`enes de groupes de {L}ie compacts.
\newblock {\em Ann. Math. (2)}, 57:115--207, 1953.

\bibitem{BoCN}
A.~Borel.
\newblock Some finiteness properties of adele groups over number fields.
\newblock {\em Inst. Hautes \'{E}tudes Sci. Publ. Math.}, 16:5--30, 1963.

\bibitem{bruinier-converse}
J.~H. Bruinier.
\newblock On the converse theorem for {B}orcherds products.
\newblock {\em J. Algebra}, 397:315--342, 2014.

\bibitem{Bruinierstein}
J.~H. Bruinier and O.~Stein.
\newblock The {W}eil representation and {H}ecke operators for vector valued
  modular forms.
\newblock {\em Math. Z.}, 264(2):249--270, 2010.

\bibitem{BCKM}
R.~Brusamarello, P.~Chuard-Koulmann, and J.~Morales.
\newblock Orthogonal groups containing a given maximal torus.
\newblock {\em J. Algebra}, 266(1):87--101, 2003.

\bibitem{CFS}
M.~Chen, L.~Fargues, and X.~Shen.
\newblock On the structure of some {$p$}-adic period domains.
\newblock {\em Camb. J. Math.}, 9(1):213--267, 2021.

\bibitem{Chenevier}
G.~Chenevier.
\newblock Statistics for {K}neser p-neighbors (with an appendix by {O}.
  {T}aïbi).
\newblock {\em Bull. Soc. Math. France}, 150(3):473--516, 2022.

\bibitem{dpv2}
H.~Darmon, A.~Pozzi, and J.~Vonk.
\newblock The values of the {D}edekind-{R}ademacher cocycle at real
  multiplication points.
\newblock {\em J. Eur. Math. Soc.}, to appear.

\bibitem{DV2}
H.~Darmon and J.~Vonk.
\newblock { Singular moduli for real quadratic fields: intersection theta
  series}.
\newblock In progress.

\bibitem{DV1}
H.~Darmon and J.~Vonk.
\newblock {Singular moduli for real quadratic fields: A rigid analytic
  approach}.
\newblock {\em Duke Math. J.}, 170(1):23--93, 2021.

\bibitem{DVBorcherds}
H.~Darmon and J.~Vonk.
\newblock Real quadratic {B}orcherds products.
\newblock {\em Pure Appl. Math. Q.}, 18(5):1803--1865, 2022.

\bibitem{FunkeMillson}
J.~Funke and J.~Millson.
\newblock Cycles in hyperbolic manifolds of non-compact type and {F}ourier
  coefficients of {S}iegel modular forms.
\newblock {\em Manuscripta Math.}, 107(4):409--444, 2002.

\bibitem{Ge-quaternionic}
L.~Gehrmann.
\newblock On quaternionic rigid meromorphic cocyles.
\newblock {\em Math. Res. Lett.}, 29:1429--1444, 01 2022.

\bibitem{giampietro}
S.~Giampietro and H.~Darmon.
\newblock A {$p$}-adic approach to singular moduli on {S}himura curves.
\newblock {\em Involve}, 15(2):345--365, 2022.

\bibitem{greenberg-matt}
M.~Greenberg.
\newblock Heegner point computations via numerical {$p$}-adic integration.
\newblock In {\em Algorithmic number theory}, volume 4076 of {\em Lecture Notes
  in Comput. Sci.}, pages 361--376. Springer, Berlin, 2006.

\bibitem{GMX}
X.~Guitart, M.~Masdeu, and X.~Xarles.
\newblock A quaternionic construction of {$p$}-adic singular moduli.
\newblock {\em Res. Math. Sci.}, 8:Paper No. 45, 2021.

\bibitem{Kiehl}
R.~Kiel.
\newblock Theorem {A} und {T}heorem {B} in der nichtarchimedischen
  {F}unktionentheorie.
\newblock {\em Invent. math.}, 2:256--273, 1966.

\bibitem{Kneserneighbours}
M.~Kneser.
\newblock Klassenzahlen definiter quadratischer {F}ormen.
\newblock {\em Arch. Math.}, 8:241--250, 1957.

\bibitem{kudla-cycles}
S.~Kudla.
\newblock {Algebraic cycles on {S}himura varieties of orthogonal type}.
\newblock {\em Duke Math. J.}, 86(1):39 -- 78, 1997.

\bibitem{kudla-msri}
S.~Kudla.
\newblock Special cycles and derivatives of eisenstein series.
\newblock In H.~Darmon and S.-W. Zhang, editors, {\em Heegner Points and Rankin
  L-Series}, Mathematical Sciences Research Institute Publications, pages
  243--270. Cambridge University Press, 2004.

\bibitem{KM}
S.~Kudla and J.~Millson.
\newblock Intersection numbers of cycles on locally symmetric spaces and
  {F}ourier coefficients of holomorphic modular forms in several complex
  variables.
\newblock {\em Inst. Hautes \'{E}tudes Sci. Publ. Math.}, 71:121--172, 1990.

\bibitem{Li}
W.~Li.
\newblock Newforms and functional equations.
\newblock {\em Math. Ann.}, 212:285--315, 1975.

\bibitem{Margulis}
G.~A. Margulis.
\newblock {\em Discrete subgroups of semisimple {L}ie groups}, volume~17 of
  {\em Ergebnisse der Mathematik und ihrer Grenzgebiete (3)}.
\newblock Springer Berlin, Heidelberg, 1991.

\bibitem{McGraw}
W.~J. McGraw.
\newblock The rationality of vector valued modular forms associated with the
  {W}eil representation.
\newblock {\em Math. Ann.}, 326(1):105--122, 2003.

\bibitem{negrini2}
I.~Negrini.
\newblock p-adic theta functions on definite orthogonal groups of rank three
  and four.
\newblock In progress.

\bibitem{negrini1}
I.~Negrini.
\newblock On the computation of p-adic theta functions arising from the
  {H}urwitz quaternions.
\newblock Master's thesis, Mc Gill University, Montreal, Canada, 2017.

\bibitem{Oesterle}
J.~Oesterlé.
\newblock Nombres de {T}amagawa et groupes unipotentes en caract{é}ristique p.
\newblock {\em Invent. math.}, 78:13--88, 1984.

\bibitem{scharlau-book}
W.~Scharlau.
\newblock {\em Quadratic and {H}ermitian Forms}, volume 270 of {\em Grundlehren
  der mathematischen {W}issenschaften}.
\newblock Springer Berlin, Heidelberg, 1985.

\bibitem{serre-cohomologie}
J.-P. Serre.
\newblock Cohomologie des groupes discrets.
\newblock In {\em Prospects in Mathematics}, volume~70 of {\em Ann. Math.
  Stud.}, pages 77--170. Princeton University Press, 1971.

\bibitem{Shen}
X.~Shen.
\newblock On some generalized {R}apoport-{Z}ink spaces.
\newblock {\em Can. J. Math.}, 72(5):1111--1187, 2020.

\bibitem{Shyr}
J.~Shyr.
\newblock A generalization of {D}irichlet's unit theorem.
\newblock {\em J. Number Theory}, 9(2):213--217, 1977.

\bibitem{Takeuchi}
M.~Takeuchi.
\newblock On {P}ontrjagin classes of compact symmetric spaces.
\newblock {\em J. Fac. Sci. Univ. Tokyo Sect. I}, 9:313--328, 1962.

\bibitem{Totaro}
B.~Totaro.
\newblock {Tensor products in $p$-adic Hodge theory}.
\newblock {\em Duke Math. J.}, 83(1):79--104, 1996.

\bibitem{VoganZuckerman}
D.~A. Vogan, Jr. and G.~J. Zuckerman.
\newblock Unitary representations with nonzero cohomology.
\newblock {\em Compos. Math.}, 53(1):51--90, 1984.

\bibitem{VoskuilvanderPut}
H.~Voskuil and M.~van~der Put.
\newblock Symmetric spaces associated to split algebraic groups over a local
  field.
\newblock {\em J. Reine Angew. Math.}, 1992(433):69--100, 1992.

\bibitem{Weibel}
C.~A. Weibel.
\newblock {\em An Introduction to Homological Algebra}.
\newblock Cambridge Studies in Advanced Mathematics. Cambridge University
  Press, 1994.

\bibitem{YZZ}
X.~Yuan, S.-W. Zhang, and W.~Zhang.
\newblock The {G}ross-{K}ohnen-{Z}agier theorem over totally real fields.
\newblock {\em Compos. Math.}, 145(5):1147--1162, 2009.

\bibitem{Zemel}
S.~Zemel.
\newblock A {$p$}-adic approach to the {W}eil representation of discriminant
  forms arising from even lattices.
\newblock {\em Ann. Math. Qu\'{e}.}, 39(1):61--89, 2015.

\end{thebibliography}

\end{document}